\renewcommand*{\backref}[1]{}
\renewcommand*{\backrefalt}[4]{
  \ifcase #1 %
   [No citations.]%
  \or
   [#2]%
  \else
   [#2]%
  \fi
}
 \let\oldmarginpar\marginpar
 \renewcommand\marginpar[1]{\oldmarginpar[\raggedleft\footnotesize #1]%
 {\raggedright\footnotesize #1}}
\def\smallskip{\vspace{.15cm}}
\def\medskip{\vspace{.3cm}}
\def\text{\mbox}
\def\rh2{{\mathbb R}{\mathbb H}^2}
\def\RR{{\mathbb R}}
\def\core{\operatorname{Core}}
\def\ZZ{{\mathbb Z}}
\def\HH{{\mathbb H}}
\def \NN{{\mathbb N}}
\def \CC{{\mathbb C}}
\def\RP2{{\mathbb{RP}}^2}
\def\RP3{{\mathbb{RP}}^3}
\def\PSL{\operatorname{PSL}}
\def\PGL3{PGL(3{,\mathbb R})}
\def\PGL4{PGL(4{,\mathbb R})}
\def\H2R{{\mathbb H}^2\times {\mathbb R}}
\def\Isom{\operatorname{Isom}}
\def\interior{{\rm int}\thinspace}
\def\Dcal{\mathcal D}
\def\Th{\operatorname{Th}}
\def\Hom{\operatorname{Hom}}\def\devGX{\mathcal Dev}
\def\cl{\rm{cl}\thinspace}
\def\PSL{\rm{PSL}}
\def\qed{ $\sqcup\!\!\!\!\sqcap$}
\def\Acal{\mathcal A}
\def\Ccal{{\mathcal C}}
\def\Fcal{{\mathcal F}}
\def\Jcal{\mathcal J}
\def\Lcal{\mathcal L}
\def\Mcal{\mathcal M}
\def\Ncal{\mathcal N}
\def\Qcal{{\mathcal Q}}
\def\Scal{{\mathcal S}}
\def\Tcal{{\mathcal T}}
\def\Vcal{\mathcal V}
\def\Wcal{\mathcal W}
\def\z2{{\mathbb Z}/2}
\newcommand{\area}{\operatorname{area}}
\def\cl{\operatorname{cl}}
\def\CH{\operatorname{CH}}
\def\prefab{Z}
\def\Core{\operatorname{Core}}
\newcommand{\bdy}{{\partial}}
\newcommand{\isom}{{\operatorname{Isom}}}
\renewcommand{\setminus}{\smallsetminus}
\newcommand{\neb}{{\mathcal N}}
\newcommand{\len}{\operatorname{len}}
\newcommand{\Zunion}{{Z}}
\newcommand{\Qthick}{{Q}}
\newcommand{\from}{\colon} 
\newtheorem{theorem}{Theorem}[section]
\newtheorem{lemma}[theorem]{Lemma}
\newtheorem{corollary}[theorem]{Corollary}
\newtheorem{proposition}[theorem]{Proposition}
\newtheorem{claim}[theorem]{Claim}
\theoremstyle{definition}
\newtheorem{remark}[theorem]{Remark}
\newtheorem{definition}[theorem]{Definition}
\newcommand{\refthm}[1]{Theorem~\ref{Thm:#1}}
\newcommand{\reflem}[1]{Lemma~\ref{Lem:#1}}
\newcommand{\refprop}[1]{Proposition~\ref{Prop:#1}}
\newcommand{\refcor}[1]{Corollary~\ref{Cor:#1}}
\newcommand{\refrem}[1]{Remark~\ref{Rem:#1}}
\newcommand{\refclaim}[1]{Claim~\ref{Claim:#1}}
\newcommand{\refitm}[1]{\eqref{Itm:#1}}
\newcommand{\refdef}[1]{Definition~\ref{Def:#1}}
\newcommand{\refsec}[1]{Section~\ref{Sec:#1}}
\newcommand{\reffig}[1]{Figure~\ref{Fig:#1}}
\begin{document}
\title[Ubiquitous quasi-Fuchsian surfaces  in cusped hyperbolic $3$--manifolds]{Ubiquitous quasi-Fuchsian surfaces \\ in cusped hyperbolic $3$--manifolds}
\date{\today}
\author{Daryl Cooper} 
\address{Department of Mathematics, University of California, Santa Barbara, CA 93106}
\email{cooper@math.ucsb.edu}

\author{David Futer}
\address{Department of Mathematics, Temple University, Philadelphia,
	PA 19122}
\email{dfuter@temple.edu}

\begin{abstract}
This paper proves that every finite volume hyperbolic $3$--manifold $M$ contains a ubiquitous collection of closed, immersed, quasi-Fuchsian surfaces. These surfaces are \emph{ubiquitous} in the sense that their preimages in the universal cover separate any pair of disjoint, non-asymptotic geodesic planes. The proof relies in a crucial way on the corresponding theorem of Kahn and Markovic for closed $3$--manifolds. As a corollary of this result and a companion statement about surfaces with cusps, we recover Wise's theorem that the fundamental group of $M$ acts freely and cocompactly on a CAT(0) cube complex.
\end{abstract}

\thanks{ {Cooper was partially supported by NSF grants DMS--1065939, 1207068 and  1045292.}
{Futer was partially supported by NSF grant DMS--1408682.} 
{Both authors acknowledge support from NSF grants DMS--1107452, 1107263, 1107367 ``RNMS: GEometric structures And Representation varieties'' (the GEAR Network). Both authors also acknowledge support from the Institute for Advanced Study.} }

\subjclass[2010]{57M50, 30F40, 20H10, 20F65}

\maketitle

\section{Introduction}\label{Sec:Intro}

A collection 
of immersed surfaces in a hyperbolic $3$--manifold $M=\HH^3/\Gamma$ is called
\emph{ubiquitous} if, for any pair of hyperbolic planes $\Pi, \Pi' \subset \HH^3$
whose distance is $d(\Pi, \Pi') > 0$, 
 there is some surface $S$
  in the collection
  with an embedded preimage $\widetilde{S} \subset \HH^3$ 
 that separates $\Pi$ from $\Pi'$.
The main new result of this paper is the following.

\begin{theorem}\label{Thm:ClosedQFSurfaces} 
Let $M=\HH^3/\Gamma$ be a complete, finite volume hyperbolic $3$--manifold. Then
the set of closed immersed quasi-Fuchsian surfaces in $M$ is ubiquitous.
 \end{theorem}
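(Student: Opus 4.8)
The plan is to reduce the statement to the Kahn--Markovic ubiquity theorem for \emph{closed} hyperbolic $3$--manifolds, using hyperbolic Dehn filling and effective geometric limits. I begin by reformulating ubiquity in terms of limit sets. Given disjoint, non-asymptotic planes $\Pi,\Pi'\subset\HH^3$ with $d(\Pi,\Pi')=d>0$, let $\gamma$ be their common perpendicular and $p$ its midpoint, and let $\Pi_0$ be the perpendicular bisector plane of $\gamma$; then $p\in\Pi_0$ and the ideal circle $\partial\Pi_0$ separates $\partial\Pi$ from $\partial\Pi'$ in $S^2_\infty$ with a definite gap $\delta=\delta(d)>0$, measured in the visual metric based at $p$. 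It suffices to produce a closed immersed quasi-Fuchsian surface $S\subset M$ together with $g\in\Gamma$ such that the limit set of $g\widetilde S$ is a Jordan curve lying in the $\eps$--neighbourhood of $\partial\Pi_0$, for a suitable $\eps\ll\delta(d)$: such a $g\widetilde S$ is a properly embedded quasi-plane separating $\Pi$ from $\Pi'$. Moreover, once $S$ has a Jordan-curve limit set it is automatically quasi-Fuchsian, since a closed surface subgroup of the lattice $\Gamma$ that is geometrically infinite is a virtual fibre (tameness together with Canary's covering theorem), and a virtual fibre has limit set all of $S^2_\infty$. So the whole content is to build a closed surface in $M$ whose lift closely follows the plane $\Pi_0$ near $\gamma$.

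If $M$ is closed this is precisely the Kahn--Markovic theorem, invoked with parameter $\eps$. If $M$ has cusps, fix the standard compact core $C=M\setminus(\text{open horoball cusp neighbourhoods})$ and enlarge it to a compact core $K$ of depth roughly $\depth(\gamma)+R(\eps)$ into each cusp (where $R(\eps)$ is the scale needed to pin down the quasicircle), chosen so that $\pi_1K\to\pi_1M$ remains an isomorphism. Choose Dehn-filling slopes long enough that: (i) by Thurston's hyperbolic Dehn surgery theorem the filled manifold $N=M(\text{slopes})$ is a closed hyperbolic $3$--manifold; and (ii) by an effective form of the geometric convergence $N\to M$ (via Neumann--Zagier, or the Hodgson--Kerckhoff drilling/filling estimates) there is a $(1+\eps)$--bilipschitz embedding $\iota\from K\hookrightarrow N$ whose image is disjoint from the cores $\delta_i$ of the filling solid tori and from their Margulis tubes $U_i$, with $\vol(N)\to\vol(M)$ and the $U_i$ occupying only a bounded fraction of the volume. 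Transport the configuration: there is a geodesic plane $\Pi_0^N\subset N$ whose portion near $\iota(p)$ is $2\eps$--close to $\iota(\Pi_0)$ and lies in $\iota(K)$, and the planes $\Pi,\Pi'$ transport to disjoint non-asymptotic planes in $\widetilde N=\HH^3$.

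Now apply the Kahn--Markovic construction in the closed manifold $N$ to obtain a closed quasi-Fuchsian surface $S^N$ whose lift separates the transported $\Pi$ and $\Pi'$, and — the crucial point — which is assembled entirely out of good pants lying in $\iota(K)$, hence disjoint from $\bigcup_iU_i$. This is where one must open up the Kahn--Markovic machinery rather than cite it: good pants of a fixed large length are exponentially abundant and equidistribute in $N$, so those contained in $\iota(K)$ (a region of definite relative volume) are still plentiful enough to run the matching/assembly and the good-pants-homology cancellation, while one retains the freedom to force the surface to follow $\Pi_0^N$ near $\iota(p)$. (Alternatively, one keeps the original $S^N$, observes that any essential curve of $S^N$ freely homotopic into some $U_i$ would have to be a power of the very short core geodesic $\delta_i$ and can be excluded in the construction, and then homotopes $S^N$ off $\bigcup_iU_i$.) Pulling $S^N$ back through $\iota$ gives an immersed closed surface $S\subset K\subset M$. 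The immersion induces an injection $\pi_1S\hookrightarrow\pi_1N$ that factors through $\pi_1K$, and $\pi_1K\cong\pi_1M$, so $S$ is $\pi_1$--injective in $M$; its lift is $2\eps$--close to $\widetilde{\Pi_0}$ near $\gamma$ and has Jordan-curve limit set within $O(\eps)$ of $\partial\Pi_0$, so (choosing $\eps\ll\delta(d)$ at the outset) a translate $g\widetilde S$ separates $\Pi$ from $\Pi'$, and by the remark above $S$ is quasi-Fuchsian. This proves ubiquity.

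I expect the main obstacle to be exactly the interaction of the last two steps: on one hand, making the geometric convergence $N\to M$ quantitative enough to secure a bilipschitz core $K$ of arbitrarily prescribed depth lying off the filling tubes (so that planes which plunge deep into the cusps of $M$ are still captured — this forces the bookkeeping via the $\depth$ function and a careful comparison of cusp neighbourhoods in $M$ with solid tubes in $N$); and on the other hand, certifying that the Kahn--Markovic surface in $N$ can be chosen to avoid the Margulis tubes of the filling geodesics — so that it genuinely descends to $M$ — while still following the prescribed plane closely enough near $\gamma$ to do the separating. Verifying that the descended surface is not merely $\pi_1$--injective but quasi-Fuchsian in $M$ is comparatively soft, by the Jordan-curve/virtual-fibre dichotomy, but it does depend on the surface's quasicircle being controlled, which circles back to the same quantitative analysis.
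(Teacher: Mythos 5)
Your proposal shares the paper's opening move (Dehn fill $M$ to a closed $N$ and invoke Kahn--Markovic there), but diverges at the critical juncture, and the divergence contains a genuine gap.

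The step that fails is the claim that the Kahn--Markovic surface $S^N \subset N$ can be made disjoint from the filling tubes $U_i$, or homotoped off them. Generically, $S^N$ meets the filling cores $\delta_i$ transversely, and such an intersection cannot be removed by homotopy: the disk component of $S^N \cap U_i$ containing a transverse intersection point has boundary a meridian of $U_i$, and that meridian does not bound a disk in $N \setminus U_i$ (a sphere formed from two such disks would have to bound a ball containing the non-nullhomotopic geodesic $\delta_i$, contradicting irreducibility). Your parenthetical ``exclude curves homotopic into $U_i$'' criterion only detects annular intersection components, not the disk components that meridional crossings create, so it does not certify that $S^N$ can be pushed off $\bigcup U_i$. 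Your other suggestion --- restricting the good-pants assembly to pants lying in $\iota(K)$ --- is not a detail to be filled in: the KM equidistribution/matching/cancellation arguments use mixing over all of $T^1 N$, and it is not established (and is a real research question) that pants confined to a compact proper subset retain the equidistribution needed for the boundary matching and good pants homology to close up. Indeed the paper explicitly goes the other way: it \emph{forces} the KM surface to meet every $\delta_i$ transversely (via the ergodic disk of \refclaim{ErgodicDisk} and condition (KM3)), precisely because it expects meridional intersections to occur.

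The actual route in the paper is therefore two-stage and does not pull a closed surface through $\iota$. Stage one (\refthm{CuspedSurfacePancake}) produces a \emph{cusped} QF surface in $M$ by building a convex envelope $Z = Q \cup \Vcal$ around the KM surface and the filling tubes via the asymmetric combination theorem (\refthm{CombAsymmetric}), then drilling out the cores and surgering the midsurface (\refprop{QFDrilling}); the transverse intersections with $\delta_i$ become cusps of the resulting surface. Stage two (\refsec{ClosedUbiquitous}, using \refthm{ConstructPrefab} and the prefabricated-manifold machinery of Baker--Cooper) glues finite covers of \emph{two} such cusped QF surfaces, plus cusp pieces, into a closed QF surface whose lift still separates the pancake, via the convex combination theorem and the asymmetric version to keep the boundary close to one of the cusped pieces near the pancake. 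Your proposal omits stage two entirely, because it assumes the surface descending from $N$ is already closed --- but by the argument above it will generally be cusped, so the gluing step is not a convenience, it is essential. The observation that a geometrically infinite closed surface subgroup in a lattice is a virtual fiber (hence Jordan-curve limit set implies QF) is correct and coincides with the paper's use of \refthm{BonahonCanary}, but that is the soft part; the hard part is producing the closed surface, and the mechanism you propose does not do it.
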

 
We refer the reader to \refsec{QFBasics} for the definition of a \emph{quasi-Fuchsian surface} (abbreviated QF). 
Informally, a quasi-Fuchsian subgroup of isometries of $\HH^3$ preserves a small deformation of a totally geodesic hyperbolic plane.
  
\refthm{ClosedQFSurfaces} resolves a question posed by Agol \cite[Problem 3.5]{Cooperfest}. 
The case of \refthm{ClosedQFSurfaces} where $M$ is closed is a theorem of Kahn and Markovic  \cite{KahnM}, and forms a crucial ingredient in our proof of the general case.
 Very recently, Kahn and Wright \cite{KahnWright} proved a version of \refthm{ClosedQFSurfaces} with additional control on the quasiconformal constants of the QF surfaces. Their proof extends  the dynamical methods of Kahn and Markovic \cite{KahnM}, including the good pants homology \cite{KahnMarkovic:Ehrenpreis}.

A \emph{slope} on a torus $T$ is an isotopy class of essential simple closed curves on $T$, or equivalently a primitive homology class (up to sign) in $H_1(T)$. 
When $M$ is a \emph{cusped} hyperbolic $3$--manifold --- that is, non-compact with finite volume ---
it follows from the work of Culler and Shalen \cite{CS1} that $M$ contains at least two embedded 
QF surfaces with cusps, and furthermore that the boundaries of these surfaces have distinct slopes on each cusp torus of $M$.  Masters and Zhang \cite{MZ1, MZ2}, as well as 
Baker and Cooper \cite{BCQFS}, found ways to glue together covers of these cusped QF surfaces 
to produce a closed, immersed QF surface. However, it is not clear whether these constructions can 
produce a ubiquitous collection of QF surfaces. 

If an embedded essential surface $S \subset M$ has all components of $\bdy S$ homotopic to the same slope $\alpha \subset M$, we say that $\alpha$ is an  \emph{embedded boundary slope}.
An \emph{immersed boundary slope} in  $M$ is a slope $\alpha$ in a cusp torus of $M$,
for which there is an integer $m>0$ and an essential immersed surface $S$ whose boundary maps to loops each  homotopic 
to $\pm m\cdot\alpha$. Such a surface $S$ is said to have \emph{immersed slope $\alpha$}.
 We prove the following.

 \begin{theorem}\label{Thm:CuspedSurfacesOneSlope} 
Let $M=\HH^3/\Gamma$ be a cusped hyperbolic $3$--manifold, and let $\alpha$ be a 
slope on a cusp of $M$.  Then the set of cusped quasi-Fuchsian surfaces immersed in $M$ 
with immersed slope $\alpha$ is ubiquitous. 
\end{theorem}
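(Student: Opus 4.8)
The plan is to deduce \refthm{CuspedSurfacesOneSlope} from \refthm{ClosedQFSurfaces} using hyperbolic Dehn surgery and the drilling theorem. Fix planes $\Pi, \Pi' \subset \HH^3$ with $d(\Pi, \Pi') > 0$; it suffices to produce one cusped QF surface in $M$ with immersed slope $\alpha$ having an embedded preimage in $\HH^3$ that separates $\Pi$ from $\Pi'$. First, it is enough to do this in a finite cover $M' \to M$: the universal cover is the same $\HH^3$ with the same metric, an embedded preimage in $\HH^3$ of a surface $S' \subset M'$ is also an embedded preimage of its image in $M$, and if $\bdy S'$ lies on a cusp of $M'$ with slope over a multiple of $\alpha$ then the image in $M$ has immersed slope $\alpha$. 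Since $\pi_1 M$ is residually finite, we may choose $M'$ so that the peripheral $\ZZ^2$ of the cusp carrying $\alpha$ meets $\pi_1 M'$ in a sublattice that unwraps far in the $\alpha$--direction; then, with $m$ least such that $m\alpha$ lies in this sublattice, $m\alpha$ is a primitive slope $\widehat\alpha$ on a cusp $C' \subset M'$ over $C$, and $\len(\widehat\alpha)$, measured in a maximal cusp, can be made as large as we wish.

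So we work in $M'$ with $\widehat\alpha$ a very long slope. Dehn fill $C'$ along $\widehat\alpha$, leaving the other cusps of $M'$ unfilled, to get $N' = M'(\widehat\alpha)$. By Thurston's hyperbolic Dehn surgery theorem $N'$ is a finite--volume hyperbolic $3$--manifold whose filling core geodesic $\gamma$ is short, with $N' \setminus \gamma = M'$; by the drilling theorem (Hodgson--Kerckhoff; Brock--Bromberg) the complete hyperbolic metric on $M'$ is, away from thin parts, bi-Lipschitz close to that of $N'$ with a Margulis tube about $\gamma$ removed, with constant tending to $1$ as $\len(\widehat\alpha) \to \infty$. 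Hence, once $\len(\widehat\alpha)$ is large enough in terms of $d(\Pi,\Pi')$, the planes $\Pi, \Pi'$ correspond to nearly geodesic planes $\Pi_{N'}, \Pi'_{N'} \subset \HH^3 = \widetilde{N'}$ at positive distance, and it suffices to find a \emph{closed} QF surface $S \subset N'$ that (i) meets $\gamma$ essentially (with an intersection not removable by homotopy) and (ii) has an embedded preimage separating $\Pi_{N'}$ from $\Pi'_{N'}$. Then, deleting from $S$ a small disk around each point of $S \cap \gamma$ (so the new boundary circles are meridians of $\gamma$, i.e.\ have slope $\widehat\alpha$) yields a surface in $M'$ with an embedded preimage separating $\Pi$ from $\Pi'$; that it is quasi-Fuchsian with cusps should follow from a drilling statement for QF surfaces --- the surface group is unchanged and the new peripheral loops become parabolic --- in the spirit of Kleinian deformation theory.

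Finding $S$ is where \refthm{ClosedQFSurfaces}, applied to $N'$, comes in, and it is the heart of the argument. Ubiquity in $N'$ directly yields a closed QF surface $S_1$ with an embedded preimage separating $\Pi_{N'}$ from $\Pi'_{N'}$; applying ubiquity instead to two planes perpendicular to a fixed lift of $\gamma$ at far-apart points yields a closed QF surface $S_2$ whose separating preimage that lift must cross, so $S_2$ meets $\gamma$ essentially. The obstacle is to merge these into a single closed QF surface with both properties --- equivalently, to upgrade \refthm{ClosedQFSurfaces} for $N'$ to the relative statement that QF surfaces essentially crossing $\gamma$ are ubiquitous in $N'$. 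The route I expect is a combination/exchange argument: after adjusting $S_1, S_2$ (passing to finite covers of them if necessary) so that some component $c$ of $S_1 \cap S_2$, chosen away both from $\gamma$ and from the region between $\Pi_{N'}$ and $\Pi'_{N'}$, is a common geodesic meeting at a dihedral angle in the range covered by a Klein--Maskit / Baker--Cooper type combination theorem, form the cut-and-cross-paste surface $S = S_1 \star_c S_2$; it is again quasi-Fuchsian and inherits the essential crossing of $\gamma$ from $S_2$ and the separation of the two planes from $S_1$. Making this precise --- controlling the intersection curves and dihedral angles well enough to apply the combination theorem, which is exactly where the good-pants structure underlying \refthm{ClosedQFSurfaces} should be used --- is the main difficulty; with $S$ in hand, deleting disks around $S \cap \gamma$, transporting to $M'$, and composing with $M' \to M$ finishes the proof.
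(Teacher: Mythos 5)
Your fill-and-drill strategy is genuinely different from the paper's proof of this theorem: the paper first constructs a single cusped QF surface with immersed slope $\alpha$ via a ``modified prefabricated manifold'' (\refthm{QFslope}), then combines it with a closed QF surface from \refthm{ClosedQFSurfaces} by taking large conservative covers and gluing via \refprop{Glue2QF} together with the asymmetric combination theorem, which forces the glued-up boundary to fellow-travel the closed piece near the pancake. Your approach is instead structurally parallel to the paper's proof of the \emph{weaker} \refthm{CuspedSurfacesManySlopes} (fill, find a closed surface hitting the core, drill), and it runs into exactly the two difficulties that the proof of that theorem has to solve, but without the tools that solve them.

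First and most seriously, the step you flag as ``the heart of the argument'' is a genuine gap, and the cut-and-cross-paste idea will not close it as stated. Ubiquity is a \emph{per-pair-of-planes} existence statement; it does not let you simultaneously prescribe separation of $\Pi_{N'},\Pi'_{N'}$ and an essential crossing of $\gamma$ with one surface. Gluing $S_1 \star_c S_2$ along a common geodesic does not preserve quasi-Fuchsianness in general (the result can acquire accidental parabolics or compressions), and nothing in \refthm{ClosedQFSurfaces} gives you access to dihedral-angle control or ``good pants'' data --- that structure lives inside the Kahn--Markovic proof, not in the black-box statement. The paper sidesteps this entirely in \refthm{CuspedSurfacePancake} by first producing a totally geodesic disk $D$ that both separates the pancake and crosses \emph{every} filling core transversely (\refclaim{ErgodicDisk}, via Shah/Ratner), and then invoking Kahn--Markovic to get a nearly geodesic closed surface $\epsilon$-close to $D$. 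That is, the two properties are imposed on a single object from the start, not merged afterwards.

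Second, even granted such an $S$, deleting disks around $S\cap\gamma$ does not obviously produce a QF surface in $M'$. You need the resulting surface to be incompressible and free of accidental parabolics in $M'$, and ``the new peripheral loops become parabolic'' is the type-preserving half, not the no-accidental-parabolics half. The paper handles this in \refprop{QFDrilling} by enclosing $S$ and the filling tube in a convex envelope $Z=Q\cup\Vcal$, isotoping $S$ into a normal position relative to the tubes, and then running Jaco's annulus theorem inside $Z$ (\refclaim{NoAccidental}); without that envelope there is no control over essential annuli from $F$ to the new torus. Relatedly, your passage from $\Pi,\Pi'$ in $\widetilde{M'}$ to planes in $\widetilde{N'}$ is not as stated: the bi-Lipschitz diffeomorphism is only defined on the thick part and does not carry geodesic planes to geodesic planes; the paper's pancake device (\refdef{Pancake}, \reflem{Pancake}) exists precisely to reduce plane-separation to a compact, thick-part-supported statement that survives Dehn filling. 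Your outline would need all three of these repairs before it could be considered a proof.
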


Hatcher showed that a compact manifold bounded
by a torus has only finitely many embedded boundary slopes  \cite{HATCHER}.
 Hass, Rubinstein, and Wang \cite{HRW} (refined by Zhang \cite{ZHANG}) 
showed that  there are only finitely many immersed boundary slopes whose surfaces have bounded Euler characteristic.

Baker \cite{BAKER} gave the first example of a hyperbolic manifold with infinitely many
immersed boundary slopes, while Baker and Cooper \cite{BC0}  showed that all slopes of even numerator in the figure-eight knot complement 
 are virtual boundary slopes. 
Oertel \cite{OERTEL} found a manifold with one cusp so that all slopes are immersed boundary slopes.
Maher \cite{MAHER} gave many families, 
including all 2--bridge knots, for which every slope is an immersed boundary slope.
Subsequently, Baker and Cooper \cite[Theorem 9.4]{BC1}  showed that all slopes of one-cusped manifolds
are immersed boundary slopes. Przytycki and Wise \cite[Proposition 4.6]{Przytycki-Wise:mixed-3manifolds} proved the same result for all slopes of multi-cusped hyperbolic manifolds.

The surfaces constructed in those papers are not necessarily quasi-Fuchsian, because they may contain annuli parallel to the boundary. By contrast, \refthm{CuspedSurfacesOneSlope} produces a ubiquitous collection of QF surfaces realizing every slope as an immersed boundary slope.

\subsection{Applications to cubulation and virtual problems}
Theorems~\ref{Thm:ClosedQFSurfaces} and~\ref{Thm:CuspedSurfacesOneSlope} have an application to the study  of $3$--manifold groups acting on CAT(0) cube complexes. We refer the reader to \refsec{Cubulation} for the relevant definitions. 


Theorems~\ref{Thm:ClosedQFSurfaces} and~\ref{Thm:CuspedSurfacesOneSlope},  combined with
 results of Bergeron and Wise \cite{BergeronWise} and Hruska and Wise \cite{Hruska-Wise:RelativeCocompact}, have the following immediate consequence. 

\begin{corollary}\label{Cor:Cubulation}
Let $M$ be a cusped hyperbolic 3--manifold. Choose a pair of distinct slopes $\alpha(V), \beta(V)$ for every cusp  $V \subset M$.
Then
$\Gamma = \pi_1 (M)$ acts freely and cocompactly on a CAT(0) cube complex $\widetilde X$ dual to finitely many immersed quasi-Fuchsian surfaces $S_1, \ldots, S_k$. Every surface $S_i$ is either closed or has immersed slope $\alpha(V)$ or $\beta(V)$ for one cusp $V \subset M$.

Thus $M$ is homotopy equivalent to a compact non-positively curved cube complex $X = \widetilde X / \Gamma$, whose immersed hyperplanes correspond to immersed quasi-Fuchsian surfaces $S_1, \ldots, S_k$.
\end{corollary}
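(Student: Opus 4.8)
The plan is to feed the ubiquitous families produced by Theorems~\ref{Thm:ClosedQFSurfaces} and~\ref{Thm:CuspedSurfacesOneSlope} into Sageev's cubulation, using the two prescribed slopes per cusp to control the peripheral structure, and then to invoke the boundary criterion of Bergeron--Wise \cite{BergeronWise} together with the relative cocompactness criterion of Hruska--Wise \cite{Hruska-Wise:RelativeCocompact}. Let $\Scal$ be the collection of all closed immersed QF surfaces in $M$, together with all cusped immersed QF surfaces of immersed slope $\alpha(V)$ or $\beta(V)$ for some cusp $V \subset M$. Each $S \in \Scal$ is $\pi_1$--injective, and its subgroup is quasiconvex in $\Gamma$ when $S$ is closed and relatively quasiconvex with respect to the cusp subgroups when $S$ has cusps. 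Consequently $S$ determines a wall in $\HH^3$: the $\pi_1(S)$--invariant, properly embedded quasiplane bounded at infinity by the limit Jordan curve of $\pi_1(S)$, which therefore separates $\HH^3$. Let $\Wcal$ be the $\Gamma$--invariant set of all $\Gamma$--translates of these walls, over all $S \in \Scal$; Sageev's construction yields a CAT(0) cube complex $\widetilde X$ dual to $\Wcal$, carrying a cellular $\Gamma$--action.

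Next I would verify that the action is proper, hence free, since $\Gamma$ is torsion free and proper actions on CAT(0) cube complexes have finite stabilizers. The group $\Gamma$ is hyperbolic relative to its cusp subgroups, with Bowditch boundary the ideal sphere $S^2_\infty = \partial \HH^3$, and the relatively hyperbolic form of the Bergeron--Wise criterion requires that every pair of distinct points of $S^2_\infty$ be separated by the limit curve of some wall. Here ubiquity does the work: given distinct $p, q \in S^2_\infty$, take totally geodesic planes $\Pi, \Pi'$ bounding small disjoint round disks about $p$ and $q$; these are disjoint and non-asymptotic, so Theorem~\ref{Thm:ClosedQFSurfaces} or~\ref{Thm:CuspedSurfacesOneSlope} produces a wall in $\Wcal$ separating $\Pi$ from $\Pi'$ in $\HH^3$, and hence a limit Jordan curve separating $p$ from $q$. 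Parabolic fixed points require no special treatment, being merely points of $S^2_\infty$. Thus $\Wcal$ meets the separation hypothesis and $\Gamma$ acts properly on $\widetilde X$.

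The main step is cocompactness, and this is precisely where the two chosen slopes matter. Fix a cusp $V$ with peripheral subgroup $\ZZ^2 \cong \pi_1(V)$ stabilizing a horoball $B$. A wall coming from a cusped QF surface of immersed slope $\alpha(V)$ that accumulates on $\partial B$ meets $\partial B \cong \RR^2$ in a line whose direction is the translation $\alpha(V) \in \ZZ^2$, and its $\ZZ^2$--orbit is a cocompact family of parallel lines; the same holds for $\beta(V)$. Since $\alpha(V)$ and $\beta(V)$ are distinct slopes, these two line families are transverse, so together they subdivide $\partial B$ into a $\ZZ^2$--cocompact grid-like pattern --- equivalently, the subcomplex of $\widetilde X$ stabilized by $\ZZ^2$ is cocompact. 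Surfaces carried by the other cusps contribute no walls meeting $\partial B$, so this exhausts the peripheral walls. With every peripheral subgroup acting cocompactly, the Hruska--Wise relative cocompactness criterion upgrades the proper action to a proper cocompact action of $\Gamma$ on a CAT(0) cube complex, which I continue to denote $\widetilde X$ (passing to a $\Gamma$--invariant subcomplex if necessary). Cocompactness leaves only finitely many $\Gamma$--orbits of walls, so finitely many surfaces $S_1, \dots, S_k \in \Scal$ suffice, each closed or of immersed slope $\alpha(V)$ or $\beta(V)$ for one cusp $V$.

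Finally, set $X = \widetilde X / \Gamma$. As $\widetilde X$ is CAT(0) --- hence contractible --- and $\Gamma$ acts freely, cellularly, and cocompactly, $X$ is a compact non-positively curved cube complex with $\pi_1(X) \cong \Gamma$ and universal cover $\widetilde X$; being aspherical with fundamental group $\Gamma$, it is homotopy equivalent to $M$. Each hyperplane of $\widetilde X$ is dual to a wall with stabilizer a conjugate of some $\pi_1(S_i)$, so its image in $X$ is an immersed hyperplane carried by, and homotopy equivalent to, the surface $S_i$. I expect the genuine obstacle to lie not in any single deep step but in the bookkeeping at the cusps: one must confirm that the walls arising from immersed-slope surfaces really do cut each horotorus into two transverse $\ZZ^2$--cocompact families, so that the Hruska--Wise hypotheses apply, and that the ubiquity conclusion of Theorems~\ref{Thm:ClosedQFSurfaces} and~\ref{Thm:CuspedSurfacesOneSlope} translates faithfully into the boundary-separation hypothesis of the Bergeron--Wise criterion in its relatively hyperbolic form.
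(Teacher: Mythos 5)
Your proposal is correct and follows essentially the same route the paper takes: assemble a ubiquitous collection of QF surfaces whose cusps realize exactly two slopes per cusp of $M$, then obtain freeness from the Bergeron--Wise boundary-separation criterion and cocompactness from the Hruska--Wise relative cocompactness theorem (the paper packages these two verifications as Theorems~\ref{Thm:Proper} and~\ref{Thm:ProperCocompact}, which you re-derive inline). The one economy the paper makes that you do not is that it avoids Theorem~\ref{Thm:CuspedSurfacesOneSlope} entirely: ubiquity of the collection is supplied by the closed surfaces of Theorem~\ref{Thm:ClosedQFSurfaces}, so for the cusped pieces one needs only a \emph{single} QF surface of each prescribed immersed slope, which is the weaker Theorem~\ref{Thm:QFslope}. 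Also, your claim that ``surfaces carried by the other cusps contribute no walls meeting $\partial B$'' is not literally true (walls from closed or other-cusp surfaces can enter any horoball), but the correct version of your argument --- that such surfaces have trivial intersection with $\pi_1(V)$ and hence contribute nothing to the Hruska--Wise peripheral cube complex $\widetilde{C_j}$ --- is exactly what the paper uses.
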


\refcor{Cubulation} is not new. 
The statement that $\pi_1 M$ acts freely and cocompactly on a cube complex $\widetilde X$ is an important theorem due to Wise  \cite{Wise:Hierarchy}. In Wise's work, this result is obtained in the final step of 
his inductive construction of a virtual quasiconvex hierarchy for $\pi_1 M$.  The purpose of this inductive construction is to establish the following stronger statement \cite[Theorem 17.14]{Wise:Hierarchy}.

\begin{theorem}\label{Thm:VirtuallySpecial}
Let $M$ be a cusped hyperbolic $3$--manifold. Then $M$ is homotopy equivalent to a compact non-positively cube complex $X$, which has a finite special cover $\hat X$. The finite-index special subgroup $\pi_1 \hat X \subset \pi_1 M$ embeds into a right-angled Artin group. 
\end{theorem}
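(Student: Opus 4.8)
The plan is to deduce \refthm{VirtuallySpecial} by combining \refcor{Cubulation} with Agol's theorem that cocompactly cubulated (relatively) hyperbolic groups are virtually special. Since $M$ is a cusped hyperbolic $3$--manifold, $\Gamma = \pi_1 M$ is torsion-free, and it is hyperbolic relative to the collection $\Pcal = \{P_V\}$ of maximal peripheral subgroups, one for each cusp $V$, with each $P_V \cong \ZZ^2$. In particular every peripheral subgroup is itself special, indeed already the fundamental group of a compact special cube complex, namely the torus.

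Fix a pair of distinct slopes $\alpha(V), \beta(V)$ on each cusp $V$. By \refcor{Cubulation}, $\Gamma$ acts freely and cocompactly on a CAT(0) cube complex $\widetilde X$ that is dual to a finite collection of immersed quasi-Fuchsian surfaces $S_1, \dots, S_k$, and $X = \widetilde X/\Gamma$ is a compact non-positively curved cube complex homotopy equivalent to $M$. The point I would emphasize is that the hyperplane stabilizers of the $\Gamma$--action are, up to conjugacy and finite index, the surface subgroups $\pi_1 S_i$. Since each $S_i$ is quasi-Fuchsian, each $\pi_1 S_i$ is geometrically finite, hence relatively quasiconvex in $(\Gamma, \Pcal)$ (see \refsec{QFBasics}); thus the cubulation has relatively quasiconvex hyperplane stabilizers that are compatible with the relatively hyperbolic structure.

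With this data in hand, I would invoke Agol's theorem \cite{Agol:VHC}: a group $G$ that is hyperbolic relative to a collection of virtually special subgroups, acts freely and cocompactly on a CAT(0) cube complex $\widetilde X$, and has relatively quasiconvex hyperplane stabilizers, is virtually special --- there is a finite-index subgroup $\Gamma' \le G$ so that $\widetilde X / \Gamma'$ is a compact special cube complex. (For hyperbolic $G$ this is exactly Agol's theorem; in the relatively hyperbolic case one runs the same argument, replacing the Malnormal Special Quotient Theorem by Wise's relative version from \cite{Wise:Hierarchy} and using that the $P_V$ are special.) Applying this to $\Gamma$ produces a finite special cover $\hat X \to X$, and by the Haglund--Wise characterization of specialness, $\pi_1 \hat X$ embeds in the right-angled Artin group on the finitely many hyperplanes of $\hat X$. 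Since $X \simeq M$, this is \refthm{VirtuallySpecial}.

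The main obstacle is precisely the relative-hyperbolicity bookkeeping in the last step: one must check that Agol's weak separation and coloring argument can be carried out relative to the cusps, i.e.\ that the relevant double cosets of the relatively quasiconvex hyperplane subgroups are separable in $\Gamma$ and that the Dehn-filling inputs of the relative Malnormal Special Quotient Theorem apply. All of these hold for geometrically finite subgroups of cusped hyperbolic $3$--manifold groups by Wise's work, so the role of the present paper is to supply the missing geometric input --- a cocompact cubulation with quasi-Fuchsian walls (\refcor{Cubulation}) --- after which \refthm{VirtuallySpecial} is a formal consequence. An alternative, more self-contained route would be to cut $M$ along the cusped quasi-Fuchsian surfaces produced by \refthm{CuspedSurfacesOneSlope} to build a relatively quasiconvex hierarchy for $\Gamma$ by hand, and then apply the Malnormal Special Quotient Theorem inductively; but this retraces Wise's original argument rather than exploiting the cubulation directly.
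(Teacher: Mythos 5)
You have correctly identified the route the paper itself endorses, but you should be aware that the paper does not actually prove \refthm{VirtuallySpecial} --- it is stated as a known theorem of Wise \cite[Theorem~17.14]{Wise:Hierarchy}, and the surrounding discussion explains that \emph{after this paper was distributed}, Groves and Manning \cite{GrovesManning:Quasiconvexity} showed precisely that \refcor{Cubulation} implies \refthm{VirtuallySpecial}. Your proposal, ``cocompact cubulation with relatively quasiconvex QF walls, then a relative version of Agol's specialization theorem,'' is exactly the Groves--Manning route, so you are recapitulating the proof the paper credits to them rather than Wise's original hierarchy argument. You should cite \cite{GrovesManning:Quasiconvexity} for the second step; as written you cite a phantom reference \verb|Agol:VHC| that does not exist in this paper's bibliography.

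Two technical cautions. First, your parenthetical ``one runs the same argument, replacing the Malnormal Special Quotient Theorem by Wise's relative version from \cite{Wise:Hierarchy}'' understates what is involved: extending Agol's weak separation and coloring argument from hyperbolic groups to groups hyperbolic relative to $\ZZ^2$ peripherals is the entire content of \cite{GrovesManning:Quasiconvexity}, not a routine substitution. Second, and more importantly for the spirit of the paper, if you really do reach for Wise's relative MSQT you have re-imported exactly the machinery this paper is trying to route around --- the point of the ``second-generation proof'' emphasized in the introduction is that Groves--Manning manage with Agol \cite{Agol} plus older inputs of Minasyan \cite{Minasyan:GFERF} and Haglund--Wise \cite{Haglund-Wise:Coxeter}, rather than the full strength of \cite{Wise:Hierarchy}. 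Your alternative paragraph (cutting along the cusped QF surfaces of \refthm{CuspedSurfacesOneSlope} to build a hierarchy by hand) is correctly identified as retracing Wise, and so is not what the paper is after. In short: the strategy is right, the attribution is missing, and the claim that the relative extension of Agol's theorem is ``the same argument'' needs to be replaced by a citation to where that extension is actually carried out.
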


\refthm{VirtuallySpecial} has far-reaching consequences. It implies that the manifold $M$ is virtually fibered \cite{agol:fibering-criterion}, and that the fundamental group $\pi_1 M$  is large, subgroup separable, and linear over $\ZZ$.

The same statements also hold for a closed hyperbolic $3$--manifold $M$. In this setting,  \refcor{Cubulation} was proved by Bergeron and Wise \cite[Theorem 1.5]{BergeronWise}, using the closed case of \refthm{ClosedQFSurfaces} due to Kahn and Markovic \cite{KahnM}. Subsequently, Agol \cite{Agol} gave a direct argument to show that the cube complex $X$ is virtually special, establishing \refthm{VirtuallySpecial} 
for closed manifolds. 

After this paper was first distributed, Groves and Manning \cite{GrovesManning:Quasiconvexity} used the work of Agol \cite{Agol}, combined with older results of Minasyan \cite{Minasyan:GFERF} and Haglund--Wise \cite{Haglund-Wise:Coxeter}, to show that the cube complex $X$ constructed in \refcor{Cubulation} is virtually special. That is, \refcor{Cubulation} implies \refthm{VirtuallySpecial}. Consequently, the results of this paper constitute part of a  direct second-generation proof of virtual specialness and virtual fibering for cusped hyperbolic $3$--manifolds, following the same outline as for closed manifolds.
This is considerably easier and more direct than Wise's proof.

In addition to the existence of the cube complex $X$,
 \refcor{Cubulation} asserts that the surfaces used to cubulate $\pi_1 M$ can be chosen to have prescribed immersed slopes. This statement is also not new, as it was proved by Tidmore \cite[Theorem 1.7]{Tidmore:cocompact}. His proof uses the full strength of Wise's work \cite{Wise:Hierarchy}, including \refthm{VirtuallySpecial} and a relative version of the special quotient theorem.
  These strong tools enable Tidmore to settle some open questions about fundamental groups of mixed $3$--manifolds, including biautomaticity and integrality of $L^2$ betti numbers. Our direct proof of \refcor{Cubulation} 
using  \refthm{CuspedSurfacesOneSlope} and \cite{BergeronWise, Hruska-Wise:RelativeCocompact} can be inserted into Tidmore's argument to provide a more straightforward route to these results.

\subsection{Proof outline and organization}
This paper is organized as follows. In \refsec{Background}, we review QF manifolds, convex thickenings, and geometric estimates during Dehn filling. We also extend the prior work of Baker and Cooper \cite{BC1, BCQFS} to prove the \emph{asymmetric combination theorem}, \refthm{CombAsymmetric}, which roughly says that the convex hull of a union of convex pieces stays  very close to one of the pieces.
In \refsec{Auxiliary}, we prove several useful lemmas and characterize ubiquitous collections of surfaces using the notion of a compact \emph{pancake} (see \refdef{Pancake}).
 In \refsec{KMDrill}, we assemble these ingredients to prove  the following weaker version of Theorems~\ref{Thm:ClosedQFSurfaces} and \ref{Thm:CuspedSurfacesOneSlope}.

\begin{theorem}\label{Thm:CuspedSurfacesManySlopes} 
Let $M = \HH^3 / \Gamma$ be a cusped hyperbolic $3$--manifold.  Let $\alpha_1, \ldots, \alpha_n$ be a collection of slopes on cusps of $ M$.
Then there is a ubiquitous  set of cusped QF surfaces 
 immersed in $M$, with the property that for each $\alpha_i$, at least one cusp of each surface is mapped to 
  a multiple $k_i \alpha_i$. 
\end{theorem}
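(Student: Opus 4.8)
The plan is to bootstrap from the Kahn--Markovic closed case (the case of \refthm{ClosedQFSurfaces} for closed $M$) via Dehn filling, using the combination/gluing technology already assembled in \refsec{Background}. Given a cusped $M = \HH^3/\Gamma$ and slopes $\alpha_1,\dots,\alpha_n$, I would fix a compact \emph{pancake} $P$ (in the sense of \refdef{Pancake}) that we wish to ``cut'' by a surface: by the pancake characterization of ubiquity from \refsec{Auxiliary}, it suffices to produce, for \emph{every} such pancake, a single cusped QF surface $S$ immersed in $M$ with an embedded preimage separating the two faces of $P$, and with at least one cusp of $S$ mapping to a multiple of each $\alpha_i$. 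So the target becomes a one-pancake-at-a-time statement.

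**Step 1 (Dehn fill to a closed manifold).** Choose slopes $\gamma_V$ on the cusps of $M$ so that the filled manifold $M_\gamma$ is a closed hyperbolic $3$--manifold, and so that the filling slopes are \emph{long} — long enough that, by the geometric Dehn filling estimates recalled in \refsec{Background}, the hyperbolic metric on the thick part of $M_\gamma$ is as close as we like (in the relevant bilipschitz/geometric sense) to that of $M$, the core geodesics of the filling solid tori are very short, and the chosen pancake $P$ sits in a region of $\HH^3$ that is essentially unaffected. Crucially, we keep the cusp we want to carry $\alpha_i$ \emph{unfilled in spirit} by instead working with a deep cusp neighborhood: the Kahn--Markovic surface we extract will cross this region, and we will later arrange its intersection with a horotorus to have homology class a multiple of $\alpha_i$.

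**Step 2 (Extract and straighten a KM surface).** Apply Kahn--Markovic to $M_\gamma$ to get a closed QF surface whose lifted preimage separates the (transplanted) faces of $P$; by their construction one may take the surface to be a union of ``good pants'' carried by a geodesic pants decomposition, and in particular to avoid a neighborhood of the short core geodesics. Pull this surface back into $M$ (the complement of the filling tori). It is no longer closed: it may wrap around the core curves, so cutting along small horotori in $M$ turns it into a cusped surface $S_0$ in $M$ whose boundary curves are some slopes on the cusps. The geometry is controlled: because the pants avoid the cores and the metrics are close, $S_0$ is still QF with a preimage separating the faces of $P$ — here is where the \emph{asymmetric combination theorem}, \refthm{CombAsymmetric}, does the work, guaranteeing that gluing the pieces back (or equivalently that the convex hull of the assembled pants stays near one pants) does not destroy quasi-Fuchsianness or the separation property.

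**Step 3 (Fix the slopes by gluing in correction surfaces).** The boundary slopes of $S_0$ on the $i$-th cusp need not be multiples of $\alpha_i$. To correct this, I would glue on finitely many copies of the standard cusped QF surfaces coming from Culler--Shalen / the boundary-slope surfaces of $M$ (as in Baker--Cooper \cite{BC1, BCQFS}), using the combination theorems of \refsec{Background}: since any two slopes on a torus span $H_1(T)$ rationally, a suitable $\ZZ$-linear combination of boundary curves of $S_0$ and of an auxiliary surface can be made homologous, on each cusp, to a multiple $k_i\alpha_i$, and the combination theorem lets us realize this as an actual QF surface $S$ in $M$ — again with the preimage-separating-$P$ property preserved, provided the auxiliary pieces are pushed deep into the cusps, far from $P$. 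This is essentially the pattern by which \refthm{CuspedSurfacesManySlopes} is deduced from the closed case.

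**The main obstacle** I expect is Step 2/Step 3 bookkeeping: ensuring simultaneously that (a) the extracted KM surface genuinely separates the faces of the \emph{chosen} pancake $P$ (which requires knowing KM produces surfaces cutting any prescribed pancake, not just ``enough'' surfaces — this should follow from ubiquity in the closed case applied to the transplanted $P$), (b) the surface, after drilling and after the slope-correcting gluings, remains quasi-Fuchsian with a genuinely \emph{embedded} preimage — the embeddedness of $\widetilde S$ near $P$ is the delicate point and is exactly what \refthm{CombAsymmetric} is designed to control, and (c) all the auxiliary gluings happen uniformly far from $P$ so they cannot interfere. Managing the quantifiers — choose $P$, then choose how long to fill, then choose how deep to push the correction surfaces, all before invoking the combination theorem with its own threshold — is the real content; the individual geometric estimates are routine given the tools of \refsec{Background}.
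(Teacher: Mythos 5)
Your high-level framework (pancake reduction, Dehn fill, Kahn--Markovic, drill, combination theorem) matches the paper's, but you miss the single most important idea, and this error propagates into two further problems.

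\textbf{The crucial idea you miss: fill along the target slopes.} The paper's proof of \refthm{CuspedSurfacePancake} performs the Dehn filling along large multiples $k\alpha_i$ of the \emph{given} slopes, producing a closed orbifold $N = M(k\alpha_1, \ldots, k\alpha_n)$ (then a manifold cover via Selberg). After drilling the cores, the cusps of the resulting surface are meridians of the filled solid tori --- and a meridian of a filling along $k\alpha_i$ projects to $k\alpha_i$ by definition. So the boundary slopes come out right \emph{automatically}, with no correction step. You instead propose filling along unrelated slopes $\gamma_V$ and then trying to fix the slopes afterward. That choice is what forces your Step 3 into existence, and Step 3 is where the proposal breaks down.

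\textbf{Internal inconsistency in Step 2.} You assert that the Kahn--Markovic surface can be taken to ``avoid a neighborhood of the short core geodesics.'' This is precisely the wrong requirement. If the surface in $N$ avoids the core geodesics $\Delta$, then after drilling ($M = N \setminus \Delta$) the surface is still closed and has no cusps at all, so there is nothing to control. To produce a cusped surface with cusps along the right slopes, the paper needs the surface to \emph{cross every core transversally and meridionally}. This is the content of \refclaim{ErgodicDisk}: a Shah/Ratner ergodicity argument produces a large geodesic disk separating the pancake and meeting every core transversally, and ubiquity of Kahn--Markovic surfaces lets one choose a surface fellow-traveling this disk. The meridional intersections are then classified via \refdef{SkirtMerLong} and \refprop{QFDrilling}, which turn each such crossing into a cusp of the drilled surface with meridian slope. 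Your Step 2 also elides the passage to covers (residual finiteness to embed the pancake and separate the slopes onto distinct cusps, Agol's subgroup separability or the workaround of \refrem{Separability} to embed the thickened convex core, Selberg to desingularize the orbifold); these are not cosmetic.

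\textbf{Step 3 cannot be carried out as described, and is unnecessary.} Gluing covers of QF surfaces with different cusp slopes via the Baker--Cooper machinery is exactly how the paper constructs a \emph{closed} surface (\refsec{ClosedUbiquitous}) or a surface with a \emph{single prescribed} slope (\refsec{UbiquitousSlope}, via modified prefabricated manifolds and a delicate cyclic-covering trick). That machinery is considerably heavier than \refthm{CombAsymmetric} alone, requires the ample-spiders condition (P3)/(M3) to kill accidental parabolics, and does not simply let you take a ``$\ZZ$-linear combination'' of boundary curves on a cusp and declare the result homologous to $k_i\alpha_i$. Homology classes of boundary curves of immersed $\pi_1$-injective surfaces are not freely adjustable, and the combination theorem by itself does not produce a surface whose boundary is an arbitrary prescribed class. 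In the paper's argument none of this is needed for \refthm{CuspedSurfacesManySlopes}: the filling slopes are the $\alpha_i$, so the drilled boundary slopes are correct by construction, and the combination theorem's only role is to build the convex envelope $\Zunion = Q \cup \Vcal$ so that \refprop{QFDrilling} applies and the resulting surface is convex-cocompact hence QF.

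In short: replace your $\gamma_V$ with $k\alpha_i$, require the KM surface to \emph{cross} all the cores transversally (not avoid them), and delete Step 3 entirely --- then you are essentially reconstructing the paper's proof.
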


It is worth observing that \refthm{CuspedSurfacesManySlopes} already implies a weak version of \refcor{Cubulation}, namely \emph{co-sparse} cubulation. See \refcor{WeakCubulation} for a precise statement.

Here is the idea of the proof of \refthm{CuspedSurfacesManySlopes}. First, we perform a large Dehn filling on the cusps of $M$ to produce a closed hyperbolic 3--manifold $N$.
Then, by results of Kahn and Markovic \cite{KahnM} and Agol \cite{Agol}, there is a finite cover $\hat N$ of $N$ that contains a closed, embedded, 
almost geodesic QF surface. 
(See \refrem{Separability} for a way to avoid relying on Agol \cite{Agol}.)
A small convex neighborhood of this surface is a 
 compact
QF manifold $Q \subset \hat N$, with strictly convex boundary. The preimages of the filled cusps form
a collection $\Wcal$ of solid tori in $\hat N$. Gluing these onto $Q$ and thickening gives
a compact convex manifold $\Zunion \subset \hat N$ with strictly convex boundary that is
far from the core geodesics $\Delta \subset \Wcal$. Deleting $\Delta$ gives a finite cover
$\hat M$ of $M$, with the property that
the  hyperbolic metric on $\hat M \setminus \Wcal$ is very close
to the hyperbolic metric on $\hat N \setminus \Wcal$. It follows that $\bdy \Zunion$ is also 
locally convex in $\hat M$,
so $Y = \Zunion \setminus \Delta$ is a convex submanifold of $\hat M$. One now surgers
$\partial Y$ inside $Y$ along disks and annuli in $Y$ running out into the cusps of $\hat M$ to produce an embedded, geometrically finite
incompressible surface $F \subset Y$ without accidental parabolics. It follows that $F$ is quasi-Fuchsian,
and the projection of $F$ into $M$ is an immersed QF surface with cusps. This use of the convex envelope $\Zunion$ is similar 
to the method used by Cooper and Long \cite{SSSSS} to show that most Dehn  fillings of a hyperbolic manifold contain a surface subgroup.

To derive  \refthm{ClosedQFSurfaces} from \refthm{CuspedSurfacesManySlopes}, we need to call upon several results and techniques developed by Baker and Cooper \cite{BCQFS}. 
We review these results in  \refsec{BCGluing}. 
Given enough cusped surfaces,  
one can glue together finite covers of copies of convex thickenings of these surfaces, together with some finite covers
of the cusps of $M$, to create a convex manifold $Z$ called a \emph{prefabricated manifold} (see \refdef{Prefab}). This prefabricated manifold
is immersed in $M$ by a local isometry, and each component of $\bdy Z$ is closed and quasi-Fuchsian. 
Projecting $\bdy Z$ down to $M$ yields a closed, immersed QF surface. 
 A mild variation of this technique proves \refthm{CuspedSurfacesOneSlope} in \refsec{UbiquitousSlope}.

Finally, in \refsec{Cubulation}, we explain how to combine Theorems~\ref{Thm:ClosedQFSurfaces} and~\ref{Thm:CuspedSurfacesOneSlope}  with
 results of Bergeron--Wise \cite{BergeronWise} and Hruska--Wise \cite{Hruska-Wise:RelativeCocompact} to show that 
 $M$ is homotopy equivalent to a compact cube complex.

\subsection{Acknowledgements} We thank Ian Agol, Daniel Groves, Chris Hruska, Jason Manning, Matthew Stover, and Daniel Wise for a number of helpful suggestions.  We also thank the referee for helpful comments. We are grateful to the Institute for Advanced Study, including the Ellentuck Fund and the Lunder Fund, for their generosity and hospitality.
%
%
Particular thanks are due to Chef Michel Reymond for nurturing the lunchtime conversation that led to this work.

\section{Background}\label{Sec:Background}

This section lays out the definitions, conventions, and background material that  are used in subsequent arguments. Almost all of the results stated here are widely known and appear elsewhere in the literature. The one result with any novelty is \refthm{CombAsymmetric}, the \emph{asymmetric combination theorem}. This is a mild generalization of the convex combination theorem of Baker and Cooper \cite{BC1, BCQFS}. The generalized statement described here may be of some independent interest.
 
 \subsection{Convex and complete manifolds}\label{Sec:ConvexBackground}
A \emph{hyperbolic $n$--manifold} is a  smooth $n$--manifold,
possibly with boundary, equipped with a metric so that every point has a neighborhood
that is isometric to a subset of hyperbolic space, $\HH^n$.
A connected hyperbolic $n$--manifold $M$ is \emph{convex} if every pair of points in the universal cover $\widetilde{M}$ is connected
by a geodesic. It is \emph{complete} if the universal cover is isometric to
$\HH^n$. 

We emphasize that the hyperbolic manifolds considered in this paper are not necessarily complete. On the other hand, all manifolds in this paper are presumed   connected and orientable, unless noted otherwise. As we describe at the start of \refsec{QFDrill}, disconnected manifolds are typically denoted with calligraphic letters.

The following facts are straightforward; see \cite[Propositions 2.1 and 2.3]{BC1}.

\begin{lemma}\label{Lem:ConvexProperties}
Let $M$ be a convex hyperbolic $n$--manifold. Then
  the developing map embeds   $\widetilde{M}$ isometrically into $\HH^n$, and the covering transformations
of $\widetilde{M}$ extend to give a group $\Gamma  \subset \isom \,  \HH^n$.
Consequently, $M$ is isometric to a submanifold of $N = \HH^n/\Gamma$, where $N$ is unique up to isometry.  

If $M$ is convex and 
$f \from M \to N$
is a local isometry into a hyperbolic $n$--manifold $N$, then $f$ is $\pi_1$--injective. \qed
\end{lemma}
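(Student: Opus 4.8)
The plan is to deduce both assertions from basic properties of the developing map $D \from \widetilde{M} \to \HH^n$, which is a local isometry because $M$ carries a hyperbolic structure. First I would show $D$ is injective: if $D(x) = D(y)$, then convexity of $M$ gives a geodesic $\gamma$ in $\widetilde{M}$ from $x$ to $y$, and since $D$ is a local isometry $D \circ \gamma$ is a locally geodesic path in $\HH^n$, hence a genuine geodesic segment; being a loop, it must be constant, so $\gamma$ is constant and $x = y$. An injective local isometry from a convex manifold is an isometric embedding (the image of a geodesic is \emph{the} geodesic between its endpoints in $\HH^n$, so $D$ is distance preserving), and the same observation shows that $C := D(\widetilde{M})$ is a convex subset of $\HH^n$.

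Next I would organize the holonomy. The deck action of $\pi_1 M$ on $\widetilde M$ is free and properly discontinuous, and $D$ is equivariant for a holonomy homomorphism $\rho \from \pi_1 M \to \isom\HH^n$; injectivity of $D$ together with freeness of the deck action forces $\rho$ to be injective, so $\Gamma := \rho(\pi_1 M)$ acts freely and properly discontinuously on $C$. Since $\widetilde M$ is convex it is contractible, hence $M$ is aspherical and $\Gamma$ is torsion free. Because $C$ has nonempty interior, a short limit argument (if $\gamma_k \to 1$ in $\isom\HH^n$ then $\gamma_k p \to p$ for $p$ an interior point of $C$, violating proper discontinuity on $C$) promotes this to discreteness of $\Gamma$; a discrete torsion-free subgroup of $\isom\HH^n$ acts freely and properly discontinuously on all of $\HH^n$, so $N = \HH^n/\Gamma$ is a hyperbolic $n$--manifold and $D$ descends to an isometric embedding of $M$ onto a submanifold of $N$. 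Any other developing map differs from $D$ by post-composition with an isometry of $\HH^n$, which conjugates $\Gamma$, so $N$ is well defined up to isometry.

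For the $\pi_1$--injectivity statement, I would lift $f \from M \to N$ to $\widetilde f \from \widetilde M \to \widetilde N$ and post-compose with a developing map $D_N \from \widetilde N \to \HH^n$ of $N$. Since the transition maps of a hyperbolic structure are restrictions of global isometries, $D_N \circ \widetilde f$ is again a local isometry $\widetilde M \to \HH^n$, hence a developing map for $M$, so by the first part it is injective; therefore $\widetilde f$ is injective. As $\widetilde f$ intertwines the deck actions through $f_* \from \pi_1 M \to \pi_1 N$, any $\gamma \in \pi_1 M$ with $f_*(\gamma) = 1$ satisfies $\widetilde f(\gamma x) = \widetilde f(x)$ for every $x$, so $\gamma x = x$ and hence $\gamma = 1$; thus $f$ is $\pi_1$--injective.

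Routine verifications aside, the step I expect to be the main obstacle is globalizing from the convex image $C$ to all of $\HH^n$ in the first part: one must use that $\widetilde M$ is $n$--dimensional (so that $C$ has nonempty interior) and that $\pi_1 M$ is torsion free in order to conclude that $\Gamma$ is discrete and acts freely on $\HH^n$, not merely on $C$. A secondary subtlety, which I would handle by arguing through geodesics and convexity rather than through open-map or covering-space intuition, is that developing maps and local isometries of manifolds with boundary need not be open maps.
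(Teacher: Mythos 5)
The paper does not actually supply a proof of this lemma; it simply declares the statements ``straightforward'' and cites Propositions 2.1 and 2.3 of Baker--Cooper's paper \cite{BC1}. So there is no in-paper argument to compare against. Your proof is a correct and essentially complete version of the standard argument, and it is almost certainly the same route that \cite{BC1} follows. Two small remarks. First, for the step showing $\Gamma$ acts freely on all of $\HH^n$, you invoke torsion-freeness of $\pi_1M$ via contractibility of $\widetilde M$ and finite cohomological dimension; a slightly more self-contained alternative is to note that a finite-order $\gamma\in\Gamma$ would fix the circumcenter of a $\langle\gamma\rangle$--orbit of any point in $C$, and that circumcenter lies in the convex set $C$, contradicting freeness of the action on $C \cong \widetilde M$. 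Second, in the $\pi_1$--injectivity argument, the claim that $D_N\circ\widetilde f$ is a developing map for $M$ is cleanest if phrased as: any two local isometries $\widetilde M\to\HH^n$ agree after post-composing one by a global isometry (by connectedness of $\widetilde M$ and rigidity of local isometries of $\HH^n$), so $D_N\circ\widetilde f = g\circ D$ for some $g\in\isom\HH^n$, hence is injective. Neither of these is a gap; your write-up is sound.
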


%
%
%

The geodesic compactification of $\HH^n$ is the closed ball $\overline \HH^n=\HH^n\sqcup \bdy \HH^n$.
In the main case of interest, $n = 3$, we  write $ \bdy \HH^3=S^2_{\infty}$.
The \emph{limit set} of a subset $A\subset \HH^n$ is $\Lambda(A)=\overline{A} \cap  \bdy \HH^n$. If $\Gamma \subset \isom ( \HH^n)$ is a discrete group, then $\Lambda(\Gamma)$ is the limit set of an orbit $\Gamma x$, for an arbitrary $x \in \HH^n$.

The convex hull of a set $A \subset \overline\HH^n$, denoted $\CH(A)$, is the intersection of $\HH^n$ and all the convex subsets containing $A$. 
If $M$ is a convex hyperbolic manifold, then by \reflem{ConvexProperties}, $M$ isometrically embeds into a complete manifold $N = \HH^n / \Gamma$. We define the  \emph{convex core} of $M$ to be $\core(M) =\CH(\Lambda(\Gamma)) / \Gamma$.  
Then $\core(M) = \core(N)$, and  $\core(M)\subset \overline{M} \subset N$.
                 
A convex hyperbolic $n$--manifold is \emph{geometrically finite} if some (any) 
$\epsilon$--neighborhood of $\core(M)$ has finite volume. 
We  focus our attention on two special kinds of geometrically finite hyperbolic $3$--manifolds.

\subsection{Quasi-Fuchsian basics}\label{Sec:QFBasics}
A \emph{Fuchsian group} is a discrete, torsion-free, orientation-preserving subgroup $\Gamma \subset \isom(\HH^2)$, such that the quotient $S = \HH^2 / \Gamma$ has finite area. We call $S$  a \emph{finite area hyperbolic surface}. A Fuchsian group $\Gamma$ stabilizing a hyperbolic plane $\HH^2 \subset \HH^3$ can also be considered a subgroup of $\isom(\HH^3)$. 
Note that the convex core of a Fuchsian group is $\core(\HH^3 / \Gamma) =  \HH^2/\Gamma = S$, which has 3--dimensional volume $0$.

A convex hyperbolic $3$--manifold  $M$ is called \emph{quasi-Fuchsian} (or \emph{QF} for short) if there is a finite-area hyperbolic surface $S$ such that
 $\core(M)$ has finite volume and is homeomorphic $S \times I$ or to $S$. 
    To overcome this mild
    technical irritation, we define a convex $3$--manifold by $Q(S)=\core(M)$ unless $S$ is
    Fuchsian, in which case $Q(S)$ is a small convex neighborhood of $S$.   Thus $Q(S) \cong S \times I$ in all cases.
 
By \reflem{ConvexProperties}, every quasi-Fuchsian $3$--manifold $Q$ isometrically embeds into a complete $3$--manifold $N \cong S \times \RR$. We call $N$ the \emph{universal thickening} of $Q$ (compare \refsec{Thickening}).
  We call
 $S \times \{ 0 \} \subset N$ a \emph{quasi-Fuchsian surface}.
  If $f\from  N \to M$ is 
a covering map of complete hyperbolic $3$--manifolds, and $N$ is quasi-Fuchsian,
the restriction $f \from  S \times \{ 0 \} \to M$ gives an \emph{immersed quasi-Fuchsian surface} in $M$. Note that immersed QF surfaces are automatically $\pi_1$--injective.

Let $S$ be a finite-area hyperbolic surface. 
A representation $\rho\from  \pi_1 S \to \isom(\HH^3)$ is called \emph{type-preserving} if $\rho$ sends peripheral loops in $S$ to parabolic isometries and non-peripheral loops to non-parabolic isometries.
If a loop $\gamma \in \pi_1 S$ is \emph{not} homotopic into a puncture but $\rho(\gamma)$ is parabolic, we say that $\gamma$ is an \emph{accidental parabolic} for $\rho$. Thus type-preserving representations cannot have accidental parabolics.

%

 We will deduce that certain  surfaces are QF via the following classical result.

\begin{theorem}\label{Thm:BonahonCanary}
Let $Y$ be a convex, finite volume hyperbolic $3$--manifold with $\bdy Y \neq \emptyset$. Let $f \from S \to Y$ be an immersion from a hyperbolic surface $S$, such that  $f_* \from \pi_1 S \to \pi_1 Y$ is faithful and type-preserving.  
Then $S$ is quasi-Fuchsian.
\end{theorem}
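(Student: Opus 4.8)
The plan is to reduce the statement to the Covering Theorem of Thurston, Canary, and Bonahon (hence the name I have attached to the theorem). First I would replace $S$ by the cover $S'$ of $Y$ corresponding to the subgroup $f_*(\pi_1 S) \subset \pi_1 Y$; since $f_*$ is faithful, $\pi_1 S' = f_*(\pi_1 S)$ and the immersion $f$ lifts to a homotopy equivalence $\widetilde f \from S \to S'$ which is $\pi_1$--injective and surjective, so $S'$ is an infinite cyclic-free, finitely generated, geometrically finite-by-construction target to understand. The manifold $S'$ is a complete hyperbolic $3$--manifold whose fundamental group is isomorphic (via a type-preserving representation) to a surface group. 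By Bonahon's theorem on ends of hyperbolic $3$--manifolds with incompressible ends (or equivalently the tameness of surface groups, which precedes the general tameness theorem), $S'$ is topologically tame, i.e.\ homeomorphic to the interior of $S \times I$.

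Next I would invoke the Covering Theorem: because $S'$ covers the finite-volume manifold $\widehat Y = Y \cup (\text{Dehn filling / completion})$... more carefully, $Y$ itself has finite volume and nonempty convex boundary, so one passes to the complete manifold $N = \HH^3 / \pi_1 Y$ containing $Y$ and applies the Covering Theorem to the cover $S' \to N$. The theorem says that if a tame hyperbolic $3$--manifold covers another tame (in particular finite-volume) hyperbolic $3$--manifold and the cover has an end $E$ that is geometrically infinite, then $E$ has a neighborhood $V$ that is (virtually) an interval bundle essentially covering an end of the base. But $N$ is a finite-volume manifold, so its only ends are cusps; a geometrically infinite end of $S'$ would have to ``wrap around'' a cusp of $N$, which the type-preserving hypothesis forbids — the peripheral structure of $\pi_1 S$ maps precisely to the cusps, leaving no non-peripheral loops available to be degenerate. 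Hence both ends of $S' \cong \interior(S \times I)$ are geometrically finite, and $\pi_1 S'$ is geometrically finite with no accidental parabolics (again by type-preservation). A geometrically finite surface group with no accidental parabolics is by definition quasi-Fuchsian, so $S' = Q(S)$ in the notation of \refsec{QFBasics}, and therefore $S$ is quasi-Fuchsian.

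An alternative, perhaps cleaner, route avoids tameness machinery: since $Y$ has finite volume with $\bdy Y \neq \emptyset$, its double $DY$ along $\bdy Y$ (or the complete manifold $N \supset Y$) has $\pi_1 Y$ sitting inside a lattice, so $\pi_1 S \hookrightarrow \pi_1 Y \hookrightarrow \isom(\HH^3)$ is a finitely generated subgroup of infinite index in a lattice. One then wants to argue directly that an infinite-index, finitely generated, geometrically infinite subgroup of a lattice cannot be type-preserving on a surface group — but this is essentially the content of the Covering Theorem again, so I do not expect to gain much. I would cite Canary's survey and Bonahon's original paper and keep the argument short.

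\textbf{Main obstacle.} The crux is ruling out a geometrically infinite (degenerate) end: a priori a type-preserving surface subgroup of a lattice could be a ``doubly degenerate'' or ``singly degenerate'' group, and excluding this is exactly where finiteness of the volume of $Y$ (so that $N$ has no infinite-volume ends and the Covering Theorem has full force) is used in an essential way. Everything else — lifting the immersion, identifying the cover, checking that type-preserving implies no accidental parabolics and the right peripheral structure — is routine. I would therefore spend the bulk of the written proof on a careful invocation of the Covering Theorem and on the observation that a cusp of the base cannot be ``unwrapped'' by a type-preserving map.
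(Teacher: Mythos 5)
Your overall strategy is the same as the paper's (pass to the cover $S'$ corresponding to $f_*(\pi_1 S)$, use Bonahon's tameness, invoke the Covering Theorem to rule out a geometrically infinite end), but there is a genuine error in the logic of how the Covering Theorem is brought to bear.

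You assert that $N = \HH^3/\pi_1 Y$ has finite volume, writing ``But $N$ is a finite-volume manifold, so its only ends are cusps.'' This is false, and in fact the hypotheses force the opposite: if $N$ had finite volume, then $\pi_1 Y$ would be a lattice, so $\Lambda(\pi_1 Y) = S^2_\infty$ and $\core(N) = N$; since $Y$ is convex, $\core(N) \subset Y$, hence $Y = N$ and $\bdy Y = \emptyset$, contradicting the hypothesis. So $N$ must have infinite volume, and you cannot apply the Covering Theorem as a statement about covers of finite-volume manifolds. The paper's proof uses the Covering Theorem in the opposite logical direction: assume for contradiction that $M_S$ (your $S'$) is geometrically infinite; then the Covering Theorem together with tameness forces the base $M$ (your $N$) to have \emph{finite} volume with $\pi_1 S$ a virtual fiber subgroup; but finite volume of $M$ is incompatible with $\bdy Y \neq \emptyset$ by the convex-core argument just given. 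Thus geometric infiniteness is ruled out, and then type-preservation (no accidental parabolics) is what upgrades ``geometrically finite surface group'' to ``quasi-Fuchsian.''

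A secondary issue is the heuristic that a geometrically infinite end would ``wrap around a cusp of $N$'' and that type-preservation forbids this because ``no non-peripheral loops are available to be degenerate.'' This conflates two distinct phenomena: type-preservation rules out accidental parabolics, while geometric infiniteness of an end (a simply degenerate end) has nothing to do with cusps or parabolics. In the virtual-fiber case of the Covering Theorem the degenerate end corresponds to the closed fiber surface, not to any cusp; it is the volume obstruction above, not type-preservation, that eliminates this case.
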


\begin{proof}
By \reflem{ConvexProperties}, $Y$ has an isometric embedding into a complete $3$--manifold $M$. Let $M_S$ be the cover of $M$ corresponding to $f_* (\pi_1 S) \subset \pi_1 M$. We claim that $M_S$ is geometrically finite. 

Suppose $M_S$ is geometrically infinite. Then Thurston's covering theorem \cite{Canary}, combined with tameness for surface groups \cite{Bonahon:Bouts}, says that $M$ must have finite volume and $f_*(\pi_1 S)$ must be a virtual fiber subgroup of $\pi_1 M$. In this case $\Lambda(\pi_1 M) = \Lambda(f_* (\pi_1 S)) = S^2_\infty$, hence $\core(M) = M$. But $\core(M) \subset Y$ by the convexity of $Y$, hence $\bdy Y = \emptyset$, a contradiction.

Since $M_S$ is geometrically finite and has no accidental parabolics, it is a standard fact that it is quasi-Fuchsian. Thus $S$ is an immersed QF surface in $Y$.
\end{proof}

\subsection{Convex thickenings}\label{Sec:Thickening}
Let $M$ and $N$ be (possibly disconnected) hyperbolic $3$--manifolds with $M \subset N$. 
We say
$N$ is a \emph{thickening} of $M$ if the inclusion $\iota\from  M \hookrightarrow N$ is a homotopy equivalence.
If, in addition, each component of $N$ is convex, then 
 $N$ is called  a \emph{convex thickening} of $M$.  If $M$ is connected and has a convex thickening,
 then the  \emph{convex hull} of $M$ is 
   $\CH(M)=\CH(\widetilde M) / \Gamma$ where $\Gamma$ is the holonomy of $M$.

     Convex thickenings only change the boundary of a $3$--manifold by isotopy.

\begin{lemma}\label{Lem:ConvexCollar} 
Let $Y$ be a (connected) hyperbolic $3$--manifold. Let $Z$ be a  convex thickening of $Y$ with finite volume and incompressible boundary.
Suppose that  every 
end of $Z$ is a rank--$1$ or rank--$2$ cusp that contains a corresponding cusp of $Y$.
Then $\bdy Y$ is isotopic in $Z$ to $\bdy Z$.
\end{lemma}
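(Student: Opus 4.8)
The plan is to use convexity of $Z$ to show that the inclusion $Y \hookrightarrow Z$ is "collar-like" near the boundary, and then bootstrap the homotopy equivalence into an isotopy using the incompressibility hypothesis and the structure of the ends. First I would pass to the universal cover: $\widetilde{Z}$ embeds isometrically into $\HH^3$ as a convex set (by \reflem{ConvexProperties}), and $\widetilde{Y}$ is a $\Gamma$--invariant subset whose inclusion is a homotopy equivalence, so $\Gamma = \pi_1 Y = \pi_1 Z$ acts on both with the same limit set. Since $Y$ has incompressible boundary and the same ends as $Z$ up to the cusp structure, each boundary component $\Sigma \subset \bdy Z$ is $\pi_1$--injective, and the corresponding covering space $Z_\Sigma$ of $Z$ is a convex hyperbolic $3$--manifold whose convex core is a neighborhood of a surface (the components of $\bdy Z$, being boundaries of a finite-volume convex manifold with incompressible boundary, are quasi-Fuchsian). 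The point is that $\bdy Z$ and $\bdy Y$ both carry the peripheral structure of $\pi_1 Z = \pi_1 Y$, so they are homotopic in $Z$.

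The key steps, in order: (1) Observe $Z$ is a geometrically finite convex manifold with finite volume, so $Z$ is homeomorphic to the interior-plus-boundary of a compact manifold $\overline Z$ with toral boundary (from the cusps) and higher-genus boundary (from $\bdy Z$); in fact $Z \cong \overline{Z}$ minus some toral boundary components, by the standard structure theory of geometrically finite hyperbolic $3$--manifolds. (2) Since $\iota\from Y \hookrightarrow Z$ is a homotopy equivalence between aspherical manifolds (both are $K(\pi,1)$'s, being hyperbolic), and since $Y$ has incompressible boundary with ends matching those of $Z$, apply a relative version of the fact that a homotopy equivalence of compact aspherical $3$--manifolds inducing the right peripheral structure is homotopic to a homeomorphism — here using Waldhausen's theorem (homotopy equivalences of Haken manifolds are homotopic to homeomorphisms, in the relative/peripheral-preserving form). (3) Conclude that $Y$ sits inside $Z$ as the complement of a collar of $\bdy Z$ together with (truncated) cusp neighborhoods, so $\bdy Y$ is isotopic in $Z$ to $\bdy Z$. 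The hypothesis that every end of $Z$ is a rank--$1$ or rank--$2$ cusp containing a corresponding cusp of $Y$ is exactly what guarantees the ends match up and no boundary component of $Z$ is "missed" by $Y$.

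I expect the main obstacle to be step (2): making precise that the homotopy equivalence $\iota$ can be promoted to an ambient isotopy carrying $\bdy Y$ onto $\bdy Z$, rather than merely a homeomorphism $\overline Y \to \overline Z$. The subtlety is the cusps: one must work with a compact core (truncate all cusps of both $Y$ and $Z$ along horospherical tori/annuli) and check that $\iota$ restricted to the truncated manifold is a homotopy equivalence of pairs $(\overline Y, \bdy_0 \overline Y) \to (\overline Z, \bdy_0 \overline Z)$ respecting the peripheral $\pi_1$. The hypothesis that the cusps of $Y$ sit inside the corresponding cusps of $Z$ ensures the truncations are compatible, so that after isotoping the truncating tori to agree, Waldhausen's theorem applies to the compact pieces and gives a homeomorphism isotopic to the inclusion; an ambient isotopy of $Z$ then moves $\bdy Y$ to $\bdy Z$. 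A cleaner alternative, which I would pursue first, is purely metric: a convex thickening adds a "convex collar" to each boundary component, and one shows directly that $Z \setminus \interior Y$ deformation retracts onto $\bdy Z$ along nearest-point projection, so $\bdy Y$ — being the frontier of $Y$ in $Z$ — is isotopic through the product region $Z \setminus \interior Y \cong \bdy Z \times I$ to $\bdy Z$; this avoids Waldhausen entirely and the only thing to check is that $Z \setminus \interior Y$ really is a product, which again follows from incompressibility of $\bdy Z$ plus the end hypothesis.
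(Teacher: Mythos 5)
Your proposal correctly identifies the ingredients (truncate the cusps, use irreducibility and incompressibility, apply Waldhausen) and correctly flags where the difficulty lies, but both of your suggested routes have genuine gaps.

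The ``cleaner alternative'' you say you would pursue first --- nearest-point projection giving a deformation retraction of $Z \setminus \interior Y$ onto $\bdy Z$ --- is not justified by the hypotheses. A convex thickening only requires $Y \subset Z$ and $\iota \from Y \hookrightarrow Z$ to be a homotopy equivalence; $Y$ itself need not be convex and $\bdy Y$ need not be smooth, so nearest-point projection has no a priori good behavior on $Z \setminus \interior Y$, and when you say ``the only thing to check is that $Z \setminus \interior Y$ really is a product, which again follows from incompressibility of $\bdy Z$ plus the end hypothesis,'' you are asserting the whole content of the lemma rather than proving it.

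The Waldhausen route is closer to the paper's argument, but you leave two things unresolved. First, the rank--$1$ cusps: after truncating along horospherical annuli $\Acal$ and tori $\Tcal$, the compact manifold $Z^-$ does \emph{not} have incompressible boundary (the horospherical tori and annuli are compressible, and the pieces of $\bdy Z$ become surfaces with boundary lying on $\Acal$), so Waldhausen's theorem for Haken manifolds does not directly apply. The paper handles this by \emph{doubling} $Z^-$ and $Y^- = Z^- \cap Y$ along $\Acal$, which closes up the annular cusps and turns the relevant boundary pieces into closed incompressible surfaces; your proposal has no analogue of this step. Second, you invoke the ``homotopy equivalence is homotopic to a homeomorphism'' form of Waldhausen and then correctly worry that this gives a homeomorphism rather than an ambient isotopy. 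The paper sidesteps this by instead applying the Waldhausen--Stallings \emph{product} theorem to the region $P$ between (the doubles of) $\bdy Y$ and $\bdy Z$: $P$ is compact, irreducible, with an incompressible closed boundary component $\Sigma$, and the homotopy-equivalence hypothesis forces $\pi_1 \Sigma \to \pi_1 P$ to be an isomorphism, so $P \cong \Sigma \times I$ outright. One then makes $\Acal \cap P$ vertical in this product and undoes the double. This yields the isotopy directly, without the homotopy-to-isotopy upgrade you identify as the obstacle. So the missing ideas are the doubling trick and the application of Waldhausen to the collar region $P$ rather than to the full manifolds.
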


\begin{proof} 
We construct a compact manifold $Z^- \subset Z$ by cutting off the non-compact ends of $Z$ along a union of horospherical tori $\Tcal$ and a union of horospherical annuli $\Acal$. Set $Y^- \! =Z^-\cap Y$. 
Let $N$ be the 3--manifold obtained by taking two copies of $Z^-$ and gluing them by the identity map along $\Acal$. Let $M\subset N$ be the corresponding
double of $Y^-$ along $\Acal$. Let $\Sigma \subset \bdy N$ be a component
that is not in the double of $\Tcal$. Then $\Sigma$ is the double of part of the boundary of $Z^-$.

 Let $P$ be the closure of the component of $N \setminus M$ that contains $\Sigma$. 
 Since $\bdy Z$ is incompressible,
  $\Sigma$ is a closed surface that is incompressible
 in both $N$ and $P$. Since $Z$ is convex, it is irreducible, hence so  is $P$. The inclusions $Y\hookrightarrow Z$ and $M \hookrightarrow N$ are homotopy equivalences by the definition of ``thickening.''
 By Waldhausen's theorem
      \cite{waldhausen:suff-large},
  $P\cong \Sigma\times I$. The components 
 of $\Acal\cap P$ can be isotoped to be vertical in this product. Thus $\Sigma \cap N$ is properly isotopic into $\bdy Y$.
 \end{proof}

  If $M$ is a subset of a metric space $N$,  the (closed) \emph{$r$--neighborhood} of $M$ in 
$N$ is  
\[
\neb_{r}(M;N)=\{x\in N \, : \, d(x,M)\leq r \} .
\]
We will omit the second argument of $\neb_r(\cdot \, , \cdot)$ when it is clear from context, for instance when the ambient set is $\HH^n$. If $M$ his a convex hyperbolic $n$--manifold, recall from \reflem{ConvexProperties} that the developing map gives an isometric embedding $\widetilde M \to \HH^n$, and identifies $\pi_1 M$ with a discrete subgroup $\Gamma \subset \isom (\HH^n)$. 
We define the \emph{$r$--thickening} of $M$ to be
 \[
 \Th_{r}(M)\: =\:  \neb_{r} \big( \widetilde{M}; \, \HH^n \big) / \Gamma,
\qquad  \text{and set } \qquad
\Th_{\infty}(M)  \: = \: \bigcup_{r > 0} \Th_{r}(M)  \: =\:   \HH^n  / \Gamma.  \]
By construction, the \emph{universal thickening} $\Th_{\infty}(M)$ is complete.

If $S \subset M$ is a smoothly embedded surface and $v \in T_x S$,
 the \emph{extrinsic curvature} of $S$ along $v$ is (the absolute value of) the curvature in $M$ of the geodesic in $S$ that has tangent vector $v$.
Thus totally geodesic surfaces have constant extrinsic curvature $0$, while horospheres have constant extrinsic curvature $1$. 

  Given $\kappa \geq 0$, a surface $S$ in a hyperbolic $3$--manifold $M$ is \emph{$\kappa$--convex} if for each
$x\in S$ there is a smooth surface $D \subset M$ of constant extrinsic curvature $\kappa$, such
  such that $x \in D\cap S$ and $S$ is locally on the convex side of $D$. 
  This is sometimes called  \emph{$\kappa$--convex in the  barrier sense}.  
  When $\kappa=0$, then $D$ is totally geodesic, and we say $S$ is \emph{locally convex} at $x$.
  Thus $\kappa$--convexity implies local convexity. 
Observe that if $\Pi \subset \HH^3$ is a totally geodesic copy of $\HH^2$, then $\bdy \neb_r(\Pi)$
  has constant extrinsic curvature $\kappa= \kappa(r) > 0$. 
This implies

\begin{lemma}\label{Lem:KappaConvex} There is a continuous, monotonically increasing function $\kappa\from  \RR_+ \to(0,1)$ such that
 if $X \subset \HH^3$
is a closed convex set and $r > 0 $, then $\bdy \Th_r(X)$ is 
$\kappa(r)$--convex. \qed
\end{lemma}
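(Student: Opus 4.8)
The plan is to reduce the statement to a single computation in a totally geodesic copy of $\HH^2 \subset \HH^3$, using the fact that $r$--neighborhoods of convex sets are characterized by supporting hyperplanes. First I would recall that for a closed convex set $X \subset \HH^3$ and a boundary point $y \in \bdy \Th_r(X)$, there is a nearest point $x \in X$ with $d(x,y) = r$, and the geodesic segment $[x,y]$ meets $\bdy \Th_r(X)$ orthogonally at $y$. Let $\Pi_y \subset \HH^3$ be the totally geodesic hyperplane through $x$ orthogonal to $[x,y]$ (a supporting hyperplane of $X$ at $x$, by convexity); then $X$ lies on one side of $\Pi_y$, so $\neb_r(X) \supseteq \neb_r(\Pi_y)$, and moreover $y \in \bdy \neb_r(\Pi_y)$. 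Since $\neb_r(X) \subseteq \neb_r(\Pi_y)$ fails in general but the \emph{containment} $\neb_r(\Pi_y) \subseteq \neb_r(X)$ together with the common boundary point $y$ is exactly what is needed: the surface $D = \bdy \neb_r(\Pi_y)$ is a smooth equidistant surface passing through $y$, and $\bdy \Th_r(X)$ is locally on the convex (i.e. $X$--) side of $D$ near $y$.

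Next I would pin down the curvature $\kappa(r)$ of $D = \bdy\neb_r(\Pi)$ where $\Pi$ is a geodesic hyperplane. By homogeneity this is independent of $\Pi$ and of the point on $D$, so it suffices to compute it in $\HH^2$ (taking a geodesic in $D$ through a point, its curvature in $\HH^3$ equals the curvature of the corresponding equidistant curve in the $\HH^2$ spanned by that geodesic direction and the normal direction, and the worst case is the equidistant curve itself). In the upper half-plane or disk model, an equidistant curve at distance $r$ from a geodesic has constant geodesic curvature $\tanh r$, so I would set $\kappa(r) = \tanh r$. This is continuous, monotonically increasing, and maps $\RR_+$ into $(0,1)$, as required. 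I would remark that a geodesic hyperplane direction inside $D$ gives an equidistant curve of curvature $\tanh r$, while directions tangent to the "boundary at infinity" direction give curvature between $0$ and $\tanh r$; since $\kappa$--convexity in the barrier sense only requires \emph{some} constant-curvature barrier surface with $S$ on its convex side, using $D$ itself (curvature exactly $\tanh r$) is the cleanest choice, and then every point of $\bdy \Th_r(X)$ has such a barrier.

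To pass from $\HH^3$ down to the quotient: by \reflem{ConvexProperties} applied in the setting of \refsec{Thickening}, the developing map embeds $\widetilde{M}$ into $\HH^3$ with $\pi_1 M$ acting as $\Gamma \subset \isom(\HH^3)$, and $\Th_r(M) = \neb_r(\widetilde M)/\Gamma$. Convexity of $\widetilde M$ in $\HH^3$ means $\widetilde M$ (or rather its closure, a closed convex set) has supporting hyperplanes at every boundary point, so the argument above applies with $X = \cl(\widetilde M)$; the barrier surfaces $D$ descend to the quotient because the construction is $\Gamma$--equivariant and the barrier condition is local.

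The main obstacle I anticipate is purely bookkeeping rather than conceptual: being careful that "locally on the convex side" is stated with the correct orientation (the barrier $D$ curves \emph{away} from $X$, and $\bdy\Th_r(X)$ lies on the side of $D$ containing $X$), and handling the fact that $X = \cl(\widetilde M)$ may be lower-dimensional or noncompact — but convexity plus closedness is all that the nearest-point projection and supporting-hyperplane arguments require, so no compactness is needed. The curvature computation $\kappa(r) = \tanh r$ is standard and I would either cite it or carry it out in one line in the disk model.
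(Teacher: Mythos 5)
Your approach matches the paper's, which simply observes that $\bdy\neb_r(\Pi)$ has constant extrinsic curvature $\kappa(r)$ for a geodesic plane $\Pi$ and declares the lemma follows. The supporting-hyperplane-plus-equidistant-barrier strategy, and the value $\kappa(r) = \tanh r$, are right. But there are three slips, one of which breaks the argument as written.

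The containment $\neb_r(\Pi_y) \subseteq \neb_r(X)$ is false, not just the reverse: if $X = \{x\}$ then $\neb_r(X)$ is a ball while $\neb_r(\Pi_y)$ is a slab. What is true, and what you actually need, is $\neb_r(X) \subseteq \neb_r(H)$, where $H$ is the closed half-space bounded by $\Pi_y$ containing $X$; this follows from $X \subseteq H$, and $\bdy\neb_r(H)$ on the $y$-side is exactly the equidistant surface $D$ through $y$. From that containment one reads off that $\Th_r(X)$ lies locally on the $X$-side of $D$, which is the convex side. The conclusion you wrote is right, but it does not follow from the containment you assert, so that step needs to be repaired.

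The other two slips are factual but not logically damaging. Equidistant surfaces in $\HH^3$ are totally umbilic, so the extrinsic curvature of $D$ is $\tanh r$ in \emph{every} tangent direction; your claim that some directions give curvature strictly between $0$ and $\tanh r$ is mistaken. (Any geodesic plane $P$ through a point of $D$ containing the normal direction meets $\Pi_y$ perpendicularly, so reflection in $P$ preserves $D$ and forces $D \cap P$ to be a geodesic of $D$; and $D \cap P$ is an equidistant curve at distance $r$ in $P$, of curvature $\tanh r$.) This actually works in your favor, since the barrier in the definition of $\kappa$-convexity is required to have constant extrinsic curvature. Finally, $D$ curves \emph{toward} $\Pi_y$ and $X$, not away: the curvature vector of an equidistant curve points toward the geodesic it is equidistant from. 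Your identification of the convex side of $D$ as the $X$-side is nonetheless correct.
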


 The point of using barrier surfaces in the definition of $\kappa$--convexity is that in most cases, $\bdy \core(M)$ and $\bdy \Th_r (\core(M))$ are not smooth. However, \reflem{KappaConvex} still applies to these surfaces.

The next result is a variant of the convex combination theorem of Baker and Cooper \cite{BC1, BCQFS}. 

\begin{theorem}[Asymmetric combination theorem]
\label{Thm:CombAsymmetric}
For every $\epsilon>0$, there is $R=R(\epsilon)>8$ such that the following holds.
Suppose that
\begin{enumerate}
\item  $Y=Y_0 \cup \ldots \cup Y_m$  and $M = M_0 \cup \ldots \cup M_m$ are connected hyperbolic $3$--manifolds.
\item  $M_i$ and $Y_i$ are convex hyperbolic $3$--manifolds, such that $Y_i$ is a thickening of $M_i$.
\item  $ Y_i \supset  \Th_8(M_i)$.
\item  If $Y_i \cap Y_j \neq \emptyset$, then $i = j$ or one of $i,j$ is $0$.
\item  Every component of $Y_i \cap Y_j$ contains a point of $M_i \cap M_j.$
\end{enumerate}
Then
\(
\CH(M) \: \subset \: \neb_{\epsilon}(M_0; \, Y) \cup \neb_R(M_1 \cup \ldots \cup M_m; \, Y) .
\)
   \end{theorem}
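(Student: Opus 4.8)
The plan is to pass to the universal cover, use the ``star'' intersection pattern of hypothesis (4) to reduce the claim to an inclusion of convex sets in $\HH^3$, and then induct on $m$, with the convex combination theorem of Baker and Cooper \cite{BC1, BCQFS} doing the heavy lifting and the new content being a quantitative refinement near the distinguished piece $M_0$.

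\emph{Reduction to $\HH^3$.} By \reflem{ConvexProperties} every convex $M_i$ and $Y_i$ develops isometrically into $\HH^3$. A van Kampen argument over the intersection pattern (using (4) and (5)) shows $M\hookrightarrow Y$ is a $\pi_1$--isomorphism onto $\Gamma:=\pi_1 Y$, so developing $\widetilde Y$ develops everything at once, and the preimages of the $M_i$ and the $Y_i$ become $\Gamma$--invariant families of convex sets in $\HH^3$ --- the \emph{$M_i$--tiles} and \emph{$Y_i$--tiles}. Hypothesis (3) makes these families tame: the $8$--neighbourhood of each $M_i$--tile lies in the corresponding $Y_i$--tile, and distinct $Y_i$--tiles are disjoint (being distinct components of a preimage), while $Y_i$-- and $Y_j$--tiles are disjoint whenever $i\ne j$ are both positive by (4); it follows that distinct $M$--tiles of the same index are more than $8$ apart, and $M_i$-- and $M_j$--tiles ($i\ne j\ge1$) are more than $16$ apart. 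Hypothesis (4) then forces the incidence structure of the tiles to be of tree type, with the $M_0$--tiles as hubs and the $M_i$--tiles ($i\ge1$) as leaves, and (5) aligns tile overlaps with overlaps of the $M$--tiles; this is exactly the configuration the Baker--Cooper machinery processes, and in particular the developing map is injective on the union of tiles. So we may view $\widetilde Y\subset\HH^3$, and the theorem becomes the $\Gamma$--equivariant statement that, in $\HH^3$,
\[
\CH(\mathcal P)\ \subset\ \neb_\epsilon(\mathcal P_0)\ \cup\ \neb_R(\mathcal P_{\ge1}),
\]
where $\mathcal P$, $\mathcal P_0$, $\mathcal P_{\ge1}$ denote the union of all $M_i$--tiles, of the $M_0$--tiles, and of the $M_i$--tiles with $i\ge1$, respectively.

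\emph{Coarse bound and induction on $m$.} Conditions (3)--(5) are precisely what the convex combination theorem of \cite{BC1, BCQFS} requires, and it yields $\CH(\mathcal P)\subset\neb_8(\mathcal P)$; in particular $\CH(M)\subset\bigcup_i Y_i=Y$, so the neighbourhoods in the conclusion make sense, and we obtain the conclusion with $\epsilon=R=8$. We now induct on $m$. If $m=0$ then $M=M_0$ is convex, so $\CH(M)=M_0$ and any $R>8$ and any $\epsilon>0$ work. For the inductive step, put $Y'=Y_0\cup\dots\cup Y_{m-1}$ and $M'=M_0\cup\dots\cup M_{m-1}$; these satisfy (1)--(5), so by induction $\CH(M')\subset\neb_\epsilon(M_0;Y')\cup\neb_R(M_1\cup\dots\cup M_{m-1};Y')$ with the \emph{same} $R=R(\epsilon)$. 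Since $Y_m$ meets $Y'$ only inside $Y_0$, the new $M_m$--tiles attach only to $M_0$--tiles. For $y\in\CH(M)$: if $d(y,M_m)\le R$ then $y\in\neb_R(M_1\cup\dots\cup M_m;Y)$; otherwise the step is completed by a \emph{shielding lemma}: if $R=R(\epsilon)$ is chosen large enough, every point of $\CH(M)$ at distance $>R$ from all $M_m$--tiles already lies in $\CH(M')$, after which the inductive hypothesis finishes the argument with the same $R$.

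\emph{The shielding lemma is the crux.} It asserts that adjoining the $M_m$--tiles to the convex hull cannot reach a point far from them, because each $M_m$--tile $T$ meets only its parent $M_0$--tile $P$, along an overlap $O=T\cap P$ which contains a point of $M_0\cap M_m$ (by (5)) and lies well inside the $8$--thickening of $P$, so $T$ is ``screened behind $P$'' from any distant vantage point. I would prove it by nearest--point projection in $\HH^3$: for $y$ with $d(y,T)>R$, let $\Pi$ be the hyperplane orthogonal to the geodesic from $y$ to $T$ at the point of that geodesic at distance $R-c$ from $y$; convexity of $T$ places $T$ --- and, tracing through the tree, the whole branch of tiles lying beyond $P$ on the $T$--side, thickened by $c$ --- strictly on the far side of $\Pi$, while the coarse bound confines $\CH(M)$ to the $8$--neighbourhood of the tiles, so the portion of $\CH(M)$ on the $y$--side of $\Pi$ is unchanged by deleting $T$. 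Running this over all $M_m$--tiles simultaneously is legitimate because they are pairwise more than $8$ apart, so at most one is ever relevant near a given $\Pi$; this is also what keeps $R$ independent of $m$. The genuinely new work --- everything beyond the cited combination theorem --- is in making these estimates quantitative: extracting a value $R=R(\epsilon)$ depending on $\epsilon$ alone, and propagating the screening through the tree so that a hull point more than $\epsilon$ from $M_0$ which escapes every such $\Pi$ really does lie within $R$ of an $M_m$--tile. The reduction to $\HH^3$, the base case, and the passage through the coarse bound are routine given \cite{BC1, BCQFS}.
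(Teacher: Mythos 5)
Your approach starts sensibly: you correctly identify that the coarse bound $\CH(M)\subset\Th_8(M_0)\cup\cdots\cup\Th_8(M_m)\subset Y$ follows from the Baker--Cooper convex combination theorem, and the paper makes the same reduction. But your inductive step rests on a ``shielding lemma'' that is not true, and this is a genuine gap.

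The shielding lemma asserts that every point of $\CH(M)$ at distance $>R$ from all $M_m$--tiles lies in $\CH(M')$, where $M'=M_0\cup\cdots\cup M_{m-1}$. This fails already for $m=1$, where $\CH(M')=M_0$. Take $M_0$ and $M_1$ to be thin convex tubes around two long geodesics crossing transversally at a point. Then $\CH(M_0\cup M_1)$ is, up to a small thickening, the ideal quadrilateral spanned by the four ideal endpoints --- a fully two-dimensional region --- while $M_0$ is a thin tube. The portion of $\CH(M)$ far from $M_1$ is a wedge of width roughly $\hat R(\epsilon)$ flanking $M_0$, not contained in $M_0$. The same phenomenon appears generically: a geodesic segment from a boundary point of $M_0$ to a far-away point of $M_m$ lies in $\CH(M)$, and just after it exits $M_0$ it contains points that are outside $\CH(M')$, very close to $M_0$, and still at distance $>R$ from $M_m$. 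What is actually true is the weaker $\CH(M)\setminus\neb_R(M_m)\subset\neb_{\delta}(\CH(M'))$ for some $\delta=\delta(R)\to 0$, a thin-triangle estimate. Your hyperplane argument also conflates ``$T$ lies beyond $\Pi$'' with ``the hull on the near side of $\Pi$ is unchanged when $T$ is deleted'' --- convex hulls are global, and deleting a far piece can shrink the hull on both sides of $\Pi$.

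Once the shielding lemma is weakened to an $\epsilon$--neighborhood statement, the induction no longer closes with an $R$ independent of $m$: each step contributes a fresh additive error $\delta$, and after $m$ steps the constant depends on $m$, which is precisely what the theorem must avoid. The paper's proof sidesteps induction entirely. For the coarse bound it observes that the proof of \cite[Theorem 2.9]{BC1} runs verbatim with $M_1$ replaced by the disjoint union $M_1\sqcup\cdots\sqcup M_m$, since hypothesis (4) keeps these pieces pairwise disjoint, which is all that argument uses. For the refinement it takes $x\in\CH(M)\setminus Z_R$, uses Carath\'eodory to place $x$ in a geodesic $3$--simplex $\Delta$ with vertices in $M$, truncates $\Delta$ so that its vertices lie in $M_0\cup(Z_8\cap\Th_8(M_0))$, projects each vertex to its nearest point in $M_0$ to obtain a simplex $\Delta'\subset M_0$, and then compares the edges of $\Delta$ and $\Delta'$ via the explicit area/thin-triangle estimate of \reflem{FellowTravel}. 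That single comparison gives $d(x,\Delta')<\epsilon$ in one shot, with $R(\epsilon)=\hat R(\epsilon)+16$ manifestly independent of $m$. If you want to salvage your approach, you would need to prove a quantified version of your shielding estimate in which $\delta$ decays fast enough in $R$ that the total accumulated drift is bounded, and it is not clear such a bound holds; the paper's non-inductive argument is the cleaner route.
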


\refthm{CombAsymmetric} strengthens  \cite[Theorem 1.3]{BCQFS} in two small but useful ways. First, it allows a larger number of convex pieces to combine. Second, it records the conclusion that, far away from the pieces $M_1, \ldots, M_m$, the convex hull of $M$  is $\epsilon$--close to $M_0$.

\begin{proof}
First, we verify that 
 \[
 \CH(M) \: \subset \:  \Th_8(M_0)\cup \ldots \cup \Th_8(M_m) \: \subset \: Y.
 \]
When $m = 1$, this is a special case of  \cite[Theorem 1.3]{BCQFS}. (In that theorem, the pair $M_0, M_1$ are denoted $M_1, M_2$.) 
Indeed, hypotheses (C1), (C2) of that theorem are restated in (1), (2). Hypotheses (C3), (C4), (C5) of that theorem are all implied by (2) and (3). Hypothesis (C6) of that theorem is restated in (5). Thus the desired conclusion holds when $m = 1$. 

The proof of \cite[Theorem 1.3]{BCQFS} combines a lemma about thin triangles with  \cite[Theorem 2.9]{BC1} to conclude that $r=8$ is a sufficient thickening constant. Thus almost all of the work is performed in  \cite[Theorem 2.9]{BC1}.

For $m > 1$, we observe that the proof of \cite[Theorem 2.9]{BC1} goes through verbatim when $M_1$ is replaced by the disjoint union $M_1 \sqcup \ldots \sqcup M_m$. Note that condition (4) guarantees that $Y_1, \ldots, Y_m$ are pairwise disjoint, hence
 $M_1, \ldots, M_m$ are as well. This concludes the proof that $\CH(M) \subset Y$.

To prove the stronger containment claimed in the theorem, we need the following easy lemma. We use the notation $[a,b]$ to denote a geodesic segment with endpoints at $a$ and $b$.

\begin{lemma}\label{Lem:FellowTravel}
For every $\epsilon>0$, there is $\hat{R}=\hat{R}(\epsilon) >0$ such that the following properties hold. 
\begin{enumerate}
\item\label{Itm:Tri} For every triangle in $\HH^3$ with vertices $a,b,c$, the geodesic $[a,c]$ satisfies
\[
[a,c] \subset \neb_{\hat{R}}([a,b]) \cup \neb_{\epsilon/4}( [b,c]).
\]
\item\label{Itm:Quad} Every (skew) quadrilateral in $\HH^3$ with vertices $a_1, b_1, b_2, a_2$ satisfies 
\[
[a_1, a_2] \: \subset \:  \neb_{\hat{R}} ([a_1, b_1]) \: \cup \:  \neb_{\hat{R}} ([a_2, b_2]) \: \cup \: \neb_{\epsilon/2} ([b_1, b_2] ).
\]
\end{enumerate}

\end{lemma}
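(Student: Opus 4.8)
The plan is to reduce both statements to a single fact about $\delta$-hyperbolicity of $\HH^3$: geodesic triangles are $\delta$-thin, and more precisely, a geodesic side is contained in a bounded neighborhood of the union of the other two sides. Recall $\HH^3$ is $\delta$-hyperbolic with $\delta = \log 3$ (for the thin-triangles definition). For part (1), consider the triangle $abc$. By thinness, every point $x \in [a,c]$ lies within $\delta$ of $[a,b] \cup [b,c]$. This is almost what we want, but not quite: we need points near the $[b,c]$ side to be within $\epsilon/4$, not just $\delta$, at the cost of enlarging the neighborhood of $[a,b]$ to some possibly large $\hat R$. So I would argue as follows. Parametrize $[a,c]$ from $a$; let $x_0$ be the last point on $[a,c]$ with $d(x_0, [a,b]) \le \hat R$ (to be chosen), and let $y$ be the segment $[x_0, c] \subset [a,c]$ beyond it. On $y$, every point is at distance $> \hat R$ from $[a,b]$, hence (by thinness) within $\delta$ of $[b,c]$. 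Now I claim that if $\hat R$ is large enough depending on $\epsilon$, then in fact every point of $y$ is within $\epsilon/4$ of $[b,c]$. This is a consequence of the standard "fellow traveler" / stability phenomenon: two geodesic segments that stay within Hausdorff distance $\delta$ of each other over a long sub-segment, and whose endpoints are pinned, are actually exponentially close in the interior — more usefully here, one invokes the fact that $d(\cdot, [a,b])$ and $d(\cdot, [b,c])$ cannot both be between $\epsilon/4$ and $\hat R$ for points far along, because a point at distance $\le \delta$ from $[b,c]$ and $\ge \epsilon/4$ from $[a,b]$ that were within $\hat R$ of $[a,b]$ would force a too-thin bigon. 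The clean way: choose $\hat R = \hat R(\epsilon)$ via the (standard, compactness-provable) statement that in $\HH^3$, if a geodesic segment $[p,q]$ satisfies $d(p,[b,c]) \le \delta$ and the whole segment is within $\delta$ of $[b,c]$, then $[p,q] \subset \neb_{\epsilon/4}([b,c])$ once we also know $d(q,[b,c])\le\delta$ and $q = c \in [b,c]$; since $c$ is an endpoint of $[b,c]$ and $x_0$ is within $\delta+\hat R$-ish of the relevant configuration, a uniform bound depending only on $\epsilon$ suffices. I would phrase this as: the "divergence" of geodesics in $\HH^3$ is exponential, so the region where $[a,c]$ is $\delta$-close to $[b,c]$ but not $\epsilon/4$-close has length bounded by a constant $c(\epsilon)$, and set $\hat R = \delta + c(\epsilon) + $ (diameter control) — but really the cleanest is a pure compactness argument, below.

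For part (2), the skew quadrilateral with vertices $a_1, b_1, b_2, a_2$: I would split it into two triangles by drawing the diagonal $[a_1, b_2]$. Apply part (1) to the triangle $a_1, b_2, a_2$ (in the roles $a = a_1$, $b = b_2$, $c = a_2$): this gives $[a_1, a_2] \subset \neb_{\hat R}([a_1, b_2]) \cup \neb_{\epsilon/4}([b_2, a_2])$. Now handle the segment $[a_1, b_2]$ using the triangle $a_1, b_1, b_2$: by part (1) with $a = a_1$, $b = b_1$, $c = b_2$, we get $[a_1, b_2] \subset \neb_{\hat R}([a_1, b_1]) \cup \neb_{\epsilon/4}([b_1, b_2])$, hence $\neb_{\hat R}([a_1,b_2]) \subset \neb_{2\hat R}([a_1,b_1]) \cup \neb_{\hat R + \epsilon/4}([b_1,b_2])$. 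Chaining these and absorbing constants — replacing the $\hat R$ of the statement by $2\hat R(\epsilon/2) + \epsilon/2$ and using $\hat R + \epsilon/4 \le \hat R'$, $\epsilon/4 \le \epsilon/2$ — yields $[a_1,a_2] \subset \neb_{\hat R'}([a_1,b_1]) \cup \neb_{\hat R'}([a_2,b_2]) \cup \neb_{\epsilon/2}([b_1,b_2])$, which is the claim after renaming $\hat R' \rightsquigarrow \hat R$. The only care needed is bookkeeping of which $\epsilon$ is fed into part (1): run part (1) with parameter $\epsilon/2$ throughout, so the $\epsilon/4$ there becomes $\epsilon/8 < \epsilon/2$, leaving plenty of room.

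The main obstacle is the quantitative statement hidden in part (1): upgrading "$\delta$-close to $[b,c]$" to "$\epsilon/4$-close to $[b,c]$" along the tail of $[a,c]$. The honest way to produce $\hat R(\epsilon)$ is a compactness / limiting argument: suppose not, so there are triangles $a_n b_n c_n$ and points $x_n \in [a_n, c_n]$ with $d(x_n, [a_n,b_n]) \to \infty$ but $d(x_n, [b_n,c_n]) > \epsilon/4$; normalize so $x_n$ is at the origin of $\HH^3$ and extract a geometric limit. The side $[a_n, b_n]$ escapes to infinity, so in the limit $[a_\infty, c_\infty]$ is a (possibly bi-infinite) geodesic through the origin and $[b_\infty, c_\infty]$ is a geodesic ray/line with $d(0, [b_\infty, c_\infty]) \ge \epsilon/4$; but thinness of the limiting (ideal) triangle forces the origin to lie on $[b_\infty, c_\infty]$ — since its distance to the degenerated side $[a_\infty,b_\infty]$ is infinite — a contradiction. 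This gives the existence of $\hat R(\epsilon)$ without an explicit formula, which is all the theorem asks for. I would present part (1) via this compactness argument (it is short), then derive part (2) purely formally as above.
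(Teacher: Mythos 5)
Your compactness argument for part~\refitm{Tri} is correct, but it takes a genuinely different route from the paper. The paper gives an explicit bound by an area estimate: it inscribes in the triangle $\Delta = abc$ the maximal ``rounded rectangle'' of height $\epsilon/4$ with base on $[b,c]$, uses $\area(\text{rectangle}) = \sinh(\epsilon/4)\cdot \len(\text{base}) < \area(\Delta) < \pi$ to bound the base length, and reads off the explicit constant $\hat R(\epsilon) = \frac{\pi}{\sinh(\epsilon/4)} + \frac{\epsilon}{2}$. Your limiting argument establishes existence of some $\hat R(\epsilon)$ without producing a formula, which is sufficient for the lemma as stated. The trade-off: the paper's proof is one paragraph of Euclidean-style estimation and gives quantitative control; yours is shorter in spirit but less effective.

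For part~\refitm{Quad}, however, there is a genuine gap, and it is caused by how you assign the roles $a,b,c$ in the first application of~\refitm{Tri}. You apply~\refitm{Tri} to the triangle $a_1, b_2, a_2$ with $a=a_1$, $b=b_2$, $c=a_2$, obtaining
\[
[a_1,a_2] \subset \neb_{\hat R}([a_1,b_2]) \cup \neb_{\epsilon/4}([a_2,b_2]),
\]
so the \emph{large} tolerance $\hat R$ is attached to the diagonal $[a_1,b_2]$. When you then expand $\neb_{\hat R}([a_1,b_2])$ using the triangle $a_1,b_1,b_2$, the diagonal's $\hat R$ propagates onto $[b_1,b_2]$, and you end up with $\neb_{\hat R + \epsilon/4}([b_1,b_2])$, which is much larger than the required $\neb_{\epsilon/2}([b_1,b_2])$. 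Your claim that this can be absorbed into the constants ($\hat R + \epsilon/4 \le \hat R'$) cannot rescue this: the radius attached to $[b_1,b_2]$ in the conclusion must be $\epsilon/2$, not a large $\hat R'$, so this term cannot be relabeled. The fix is to swap the role assignment in the first step: apply~\refitm{Tri} to the same triangle but with $a=a_2$, $b=b_2$, $c=a_1$, so that $[a_1,a_2] \subset \neb_{\hat R}([a_2,b_2]) \cup \neb_{\epsilon/4}([a_1,b_2])$. Now the leg $[a_2,b_2]$ carries the large tolerance (which is allowed), while the diagonal carries only $\epsilon/4$. A point $x$ that is $\epsilon/4$-close to some $y \in [a_1,b_2]$, combined with $[a_1,b_2] \subset \neb_{\hat R}([a_1,b_1]) \cup \neb_{\epsilon/4}([b_1,b_2])$, yields $x \in \neb_{\hat R + \epsilon/4}([a_1,b_1]) \cup \neb_{\epsilon/2}([b_1,b_2])$, which gives the statement after enlarging $\hat R$ to $\hat R + \epsilon/4$. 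In short: when chaining, the diagonal must receive the \emph{small} tolerance each time it appears, so that only legs ever accumulate the large one.
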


\begin{proof}
We introduce the following terminology. A \emph{rounded rectangle} is a region $P \subset \HH^2$ with four corners at $w,x,y,z$, such that $[x,y]$ is a geodesic segment, $[w,x]$ and $[y,z]$ are geodesic segments of length $h$ perpendicular to $[x,y]$, and the fourth side is an arc from $w$ to $z$ that stays at constant distance $h>0$ from $[x,y]$. The side $[x,y]$ is called the \emph{base} of $P$, and $h$ is called the \emph{height} of $P$. A standard calculation shows that $\area(P) = \sinh(h) \len([x,y])$.

Conclusion \refitm{Tri} is proved by an area argument.
 Let $\Delta$  be the  triangle with vertices $a,b,c$.
If $[a,c] \subset \neb_{\epsilon/4}( [b,c]) $, the conclusion holds trivially. Otherwise, let $P \subset \Delta$ be the maximal rounded rectangle with height $\epsilon/4$ and base $[x,y]$ contained in $[b,c]$. Then, by maximality, at least one corner of $P$ (either $w$ or $x$) lies in $[a,b]$, while another corner
(labeled $z$) is the unique point of $[a,c]$ at distance $\epsilon/4$ from $[b,c]$. Since
\[
 \sinh(\epsilon/4) \len([x,y]) = \area(P)  < \area(\Delta) < \pi,
\]
we conclude that
\[
d([a,b], \, z) \: < \:  \len([w,x]) + \len([x,y]) + \len([y,z]) \: < \: \frac{\epsilon}{4} + \frac{\pi}{\sinh(\epsilon/4)} + \frac{\epsilon}{4}.
\]
Setting $\hat{R}(\epsilon) = \frac{\pi}{\sinh(\epsilon/4)} + \frac{\epsilon}{2}$ yields
\[
[a,z] \subset \neb_{\hat R(\epsilon)}([a,b])
\qquad \text{and} \qquad
[z,c] \subset \neb_{\epsilon/4}([b,c]),
\]
implying \refitm{Tri}.
Now, conclusion \refitm{Quad} follows from \refitm{Tri} by triangle inequalities.
\end{proof}

We now return to the proof of \refthm{CombAsymmetric}.
For any $\epsilon > 0$,  let $R = R(\epsilon) = \hat{R}(\epsilon) + 16$, where $\hat{R}(\epsilon)$ is the function of \reflem{FellowTravel}.
For every $r > 0$, let $Z_r = \neb_r(M_1 \cup \ldots \cup M_m; Y)$. We have already seen that
 \[
 \CH(M) \: \subset \:  \Th_8(M_0)\cup \ldots \cup \Th_8(M_m) 
 \: = \:   \Th_8(M_0) \cup Z_8.
 \]
Since $R > 8$, this implies
\[
\CH(M) \setminus Z_R \:  \subset \:  \CH(M) \setminus Z_8 
\: \subset \:  \CH \big( M_0 \cup (Z_8 \cap \Th_8(M_0)) \big),
\]
where $Z_8 \cap \Th_8(M_0) = \Th_8(M_0 \cap M_1) \cup \ldots \cup \Th_8(M_0 \cap M_m)$ is the thickened intersection.
 To see the second containment, suppose $x\in \CH(M) \setminus Z_8.$ By  Carath\'eodory's theorem  \cite[Proposition 5.2.3]{Papadopoulos},  there is a geodesic $3$--simplex $\Delta$ with vertices at $a,b,c,d\in M$ such that $x \in \Delta \subset\CH(M)$.
Let $[x,a']$ be the maximal sub-interval of $[x,a]$ that is disjoint from the interior of $Z_8$. 
If $a = a'$, then $a' \in M \setminus \interior(Z_8) \subset M_0$. Otherwise, $a \neq a'$, hence $a' \in \bdy Z_8 \cap \Th_8(M_0)$. Thus, in both cases,
\[ 
a'\in  M_0 \cup (Z_8 \cap \Th_8(M_0)).
\]
Define $b',c',d'$ in a similar fashion to $a'$, and let $\Delta'\subset\Delta$ be the geodesic simplex with vertices
$a',b',c',d'$. Then $x\in\Delta'\subset\CH(M_0\cup( Z_8\cap \Th_8(M_0)))$.

Given $x \in   \CH(M) \setminus Z_R $, our goal is to show that $x \in \neb_\epsilon(M_0)$.
The above characterization of $\CH(M) \setminus Z_R$, combined with Carath\'eodory, implies that $x$ lies in a geodesic $3$--simplex  $\Delta$ whose vertices $a_0, \ldots, a_3$ 
are in $M_0 \cup (Z_8 \cap \Th_8(M_0))$. Let $b_i$ be the point of $M_0$ closest to $a_i$. Then $d(a_i, b_i) \leq 8$.
In fact, if $a_i \neq b_i$, then  $a_i \notin M_0$, hence $a_i \in Z_8$, which means that $[a_i, b_i] \subset Z_{16}$. 

Consider how far the segment $[b_i, b_j]$ can be from $[a_i, a_j]$. If $a_i \neq b_i$ but $a_j = b_j = c$, \reflem{FellowTravel}\refitm{Tri} gives
\[
[a_i, a_j]  \: \subset \: \neb_{\hat{R}}([a_i,b_i]) \cup \neb_{\epsilon/4}([b_i,b_j])  
\: \subset \: \neb_{\hat{R}} (Z_{16})  \cup \neb_{\epsilon/2} ([b_i, b_j] ) \: = \: Z_R \cup  \neb_{\epsilon/2} ([b_i, b_j] ).
\]
Similarly, if $a_i \neq b_i$ and $a_j \neq b_j$, 
\reflem{FellowTravel}\refitm{Quad} gives
\begin{align*}
[a_i, a_j] 
& \subset \neb_{\hat{R}} ([a_i, b_i]) \cup \neb_{\hat{R}} ([a_j, b_j]) \cup \neb_{\epsilon/2} ([b_i, b_j] ) \\
& \subset \neb_{\hat{R}} (Z_{16}) \quad \cup \neb_{\hat{R}} (Z_{16}) \quad \cup \neb_{\epsilon/2} ([b_i, b_j] ) \\
& = Z_R \qquad \qquad \cup  \qquad \qquad \neb_{\epsilon/2} ([b_i, b_j] ). 
\end{align*}
Let $\Delta'$ be the simplex in $M_0$ with vertices $b_0, \ldots, b_3$. Then the corresponding sides of $\Delta$ and $\Delta'$ either lie in $Z_R$ or are $\epsilon/2$ fellow-travelers. Since $x \in \Delta \setminus Z_R$, it follows that $d(x,\Delta')<\epsilon$. 
But the convex manifold $M_0$ contains $\Delta'$, hence $x \in \neb_\epsilon(M_0)$
 as desired.
   \end{proof}

\subsection{Cusps, tubes, and Dehn filling}\label{Sec:FillingBasics}
A \emph{cusped} hyperbolic $3$--manifold is one that is complete, non-compact, and with finite volume. Every cusped manifold $M$ can be decomposed into a compact submanifold $A$ and a disjoint union of horocusps. Here, 
a $(3$--dimensional, rank--$2$) \emph{horocusp} is $C=B/\Gamma$, where
 $B\subset \HH^3$ is a horoball and $\Gamma \cong \ZZ \times \ZZ $  is a discrete group of parabolic isometries that
preserve $B$. The boundary $\bdy  C=\bdy  B /\Gamma$ is called a \emph{horotorus}. 

A \emph{tube} is a compact, convex hyperbolic solid torus. Any tube $W$ contains exactly one closed geodesic, called the \emph{core curve} and denoted $\delta(W)$. A \emph{round tube} is a uniform $r$--neighborhood about its core curve.

Given a hyperbolic manifold $M$ and $\epsilon > 0$, the \emph{$\epsilon$--thick part} $M^{\geq \epsilon}$ is the set of all points whose injectivity radius is at least $\epsilon/2$. The \emph{$\epsilon$--thin part} is $M^{\leq \epsilon} = \overline{M \setminus M^{\geq \epsilon}}$.  A famous consequence of the Margulis lemma is that there is a uniform constant $\mu_3$ such that for every $\epsilon \leq \mu_3$ and every complete hyperbolic $3$--manifold $M$, the thin part $M^{\leq \epsilon}$ is a disjoint union of horocusps and round tubes.

Let $M$ be a cusped hyperbolic $3$--manifold.
Given a horocusp $C \subset M$, and a slope $\alpha$ on $\bdy C$, the \emph{length of $\alpha$}, denoted $\ell(\alpha)$, is the length of a Euclidean geodesic representative. The \emph{normalized length} 
of $\alpha$ is the quantity $L(\alpha) = \ell(\alpha)/\sqrt{\area(\bdy C)}$, which is left unchanged when $C$ is expanded or contracted. The definitions of $\ell(\alpha)$ and $L(\alpha)$ extend linearly to non-primitive homology classes in $H_1(\bdy C)$.

For a slope $\alpha$ on $\bdy C$, \emph{Dehn filling $M$ along $\alpha$} is the process of removing a horocusp $C$ and gluing in a solid torus $W$ so that the meridian disk is mapped to $\alpha$. The resulting $3$--manifold is denoted $M(\alpha)$. For an integer $k > 1$, \emph{Dehn filling $M$ along $k\alpha$} produces a $3$--orbifold with base space  $M(\alpha)$ and singular locus of order $k$ along the core curve of the added solid torus $W$. The same definition applies to Dehn fillings of $M$ along multiple horocusps.

Thurston showed that the change in geometry under Dehn filling is controlled by the length of a filling slope \cite{WPT}. 
Hodgson and Kerckhoff made this control much more quantitative \cite{hk:univ, hk:shape}. The following theorem, building on their work, is essentially due to Brock and Bromberg \cite{brock-bromberg:density}.

\begin{theorem}\label{Thm:BilipDrillFill}
Let $\epsilon > 0$, $\kappa > 0$, and $J > 1$ be constants, where $\epsilon \leq \mu_3$. Then there exists a number $K = K(\epsilon, \kappa, J)$ such that the following holds for every cusped hyperbolic $3$--manifold $M$.

Let $C_1, \ldots C_n$ be a disjoint collection of horocusps, where each $C_i$ is a component of $M^{\leq \epsilon}$. Let $A = M \setminus \bigcup C_i$. 
Let $\alpha_1, \ldots, \alpha_n$ be (possibly non-primitive) homology classes on $\bdy C_1, \ldots, \bdy C_n$. Then, for any Dehn filling in which each $\alpha_i$ satisfies $L(\alpha_i) \geq K \sqrt{n}$, we have the following.
\begin{enumerate}
\item $N = M(\alpha_1, \ldots, \alpha_n)$ has a complete hyperbolic metric, in which the cores $\delta_1, \ldots, \delta_n$ of the added solid tori are closed geodesics.
\item\label{Itm:Bilip} There is a diffeomorphism $\varphi\from  M \to N \setminus (\delta_1\cup  \ldots \cup \delta_n)$, whose restriction to $A$ is $J$--bilipschitz.
\item\label{Itm:CurvBound} Let $S \subset A$ be a surface. If $S$ is $\kappa$--convex, then the image $\varphi(S)$ is $\kappa/2$--convex. Conversely, if $\varphi(S)$ is $\kappa$--convex, then $S$ is $\kappa/2$--convex.
\end{enumerate}
\end{theorem}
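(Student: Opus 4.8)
The plan is to deduce this theorem directly from the drilling/filling estimates of Hodgson--Kerckhoff \cite{hk:univ, hk:shape} as packaged by Brock--Bromberg \cite{brock-bromberg:density}. The starting point is the deformation theory of cone manifolds: if one starts with the complete structure on $M$ and performs cone-surgery along the slopes $\alpha_i$, interpolating the cone angle from $0$ (the cusped structure) to $2\pi$ (the closed filling $N$), then Hodgson--Kerckhoff's universal rigidity and the bounds on the change of metric give a uniform control, provided the normalized lengths $L(\alpha_i)$ are large enough. The hypothesis $L(\alpha_i) \geq K\sqrt{n}$ is exactly the shape of condition under which the total "tube radius" contribution over all $n$ cusps stays bounded, so that the deformation is nonsingular along the whole interpolation and the resulting filled manifold $N = M(\alpha_1,\ldots,\alpha_n)$ is hyperbolic with the cores $\delta_i$ becoming closed geodesics. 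This gives conclusion (1), and simultaneously produces the diffeomorphism $\varphi$ of conclusion (2): $\varphi$ is the interpolation map of the cone-deformation, and the Hodgson--Kerckhoff estimates on the pointwise change of metric, localized to the thick part $A = M \setminus \bigcup C_i$, say that for $L(\alpha_i)$ large (depending on the target bilipschitz constant $J$), the restriction $\varphi|_A$ is $J$--bilipschitz. One should cite \cite{brock-bromberg:density} for the precise statement that the bilipschitz constant tends to $1$ as $\min_i L(\alpha_i) \to \infty$, and absorb the $\sqrt{n}$ by noting that the relevant estimates are summed over the $n$ Margulis tubes/cusps.

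For conclusion (3), the observation is that $\kappa$--convexity in the barrier sense is a condition on the second fundamental form of a surface at each point, compared against the principal curvatures of the model surfaces of constant extrinsic curvature $\kappa$ (which lie strictly between $0$ and $1$ by \reflem{KappaConvex}, so away from the horospherical extreme). A $J$--bilipschitz diffeomorphism between hyperbolic manifolds distorts the second fundamental form of an embedded surface, and the intrinsic/extrinsic curvature data, by an amount controlled by $J$ and by $C^1$--closeness of $\varphi$ to an isometry. Since the cone-deformation map $\varphi$ is not merely bilipschitz but $C^\infty$--close to an isometry on $A$ (again with closeness improving as $L(\alpha_i)\to\infty$), one chooses $K$ large enough, as a function of $\kappa$, so that the extrinsic curvature of $\varphi(S)$ differs from that of $S$ by at most a factor of $2$. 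More precisely: if $S$ is $\kappa$--convex, then at each $x\in S$ there is a barrier surface $D$ of constant extrinsic curvature $\kappa$ touching $S$ from the convex side; the image $\varphi(D)$ is a surface touching $\varphi(S)$ from the convex side whose extrinsic curvature is, by the $C^1$--estimate, at least $\kappa/2$ everywhere, and one can further find a genuine constant-curvature-$(\kappa/2)$ barrier on the convex side of $\varphi(D)$. The converse direction is symmetric, using that $\varphi^{-1}$ is also close to an isometry.

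The main obstacle is conclusion (3): one must make sure that the $C^1$ (really $C^2$) control on $\varphi$ over the thick part $A$, rather than just the bilipschitz control, is genuinely available from the Hodgson--Kerckhoff / Brock--Bromberg machinery with the stated uniformity (one constant $K$ working for \emph{all} cusped $M$, depending only on $\epsilon, \kappa, J$). This is where one leans hardest on the literature: the harmonic-type deformation vector fields in \cite{hk:univ, hk:shape} have estimates not just on their size but on their derivatives, uniformly outside a definite-radius tube around the short geodesics, and it is this higher-order control that upgrades the bilipschitz statement to a statement about convexity of surfaces. I would state conclusion (3) as a consequence of a "$\varphi$ is $C^2$-close to an isometry on $A$" lemma extracted from those references, and note that $\kappa$--convexity being an open condition measured against the strictly-interior model curvatures $\kappa(r)\in(0,1)$ is precisely what makes the factor-of-$2$ slack comfortable. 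Finally, I would remark that the $\sqrt{n}$ in the hypothesis, rather than a constant independent of $n$, is needed because one performs $n$ simultaneous fillings and the error contributions add; Brock--Bromberg's estimates (or the summation in Hodgson--Kerckhoff) give exactly a bound of the form $\sum_i 1/L(\alpha_i)^2$, so requiring $L(\alpha_i) \geq K\sqrt n$ makes this at most $1/K^2$, uniformly in $n$.
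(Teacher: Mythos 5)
Your proposal takes essentially the same route as the paper --- both lean on the harmonic deformation theory of Hodgson--Kerckhoff \cite{hk:univ, hk:shape} as repackaged by Brock--Bromberg \cite{brock-bromberg:density}, with the $\sqrt{n}$ factor absorbed by the summation over cusps in the Hodgson--Kerckhoff estimate (the Remark containing Equation (37) of \cite{hk:shape}). The one step the paper makes explicit that you elide is Magid's translation \cite[Theorem 1.2]{magid:deformation}: Brock--Bromberg's Theorem 1.3 is stated with hypotheses on the filled manifold $N$ (short total core length), not on the cusped manifold $M$ (large normalized length). You frame the whole argument as a single cone-angle interpolation starting from $M$, which in principle avoids needing the translation; but then you should not be citing Brock--Bromberg for the bilipschitz estimate as if it applied directly, since their theorem runs in the drilling direction. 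Either commit fully to the Hodgson--Kerckhoff filling-direction account, or insert Magid to convert between the two sets of hypotheses. For conclusion (3), the paper simply cites \cite[Corollary 6.10]{brock-bromberg:density}, which already gives control of extrinsic curvature; your alternative derivation from ``$\varphi$ is $C^2$--close to an isometry on $A$'' is reasonable in spirit --- the harmonic vector field estimates do control derivatives, not just the metric --- but it is more work than is needed given that the conclusion is already available off the shelf, and you should be careful that the uniformity over all cusped $M$ claimed in the theorem (one $K$ for all $M$) is indeed delivered by the estimates in the form you want rather than asserted.
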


\begin{proof}
Brock and Bromberg proved the same result under hypotheses on $N$ rather than $M$ (one needs to assume that the total length of the cores  $\delta_1, \ldots, \delta_n \subset N$ is sufficiently small). See \cite[Theorem 1.3]{brock-bromberg:density} for the $J$--bilipschitz diffeomorphism and \cite[Corollary 6.10]{brock-bromberg:density} for the control of extrinsic curvature. (See also \cite{brock-bromberg:erratum}.) Using estimates by Hodgson and Kerckhoff \cite{hk:univ}, Magid translated their  result into hypotheses on normalized length in $M$ \cite[Theorem 1.2]{magid:deformation}. As Hodgson and Kerckhoff clarify in \cite{hk:shape}, the methods apply equally well to a simultaneous orbifold filling of multiple cusps, so long as each $\alpha_i$ has normalized length $L(\alpha_i) \geq K \sqrt{n}$. See the Remark containing \cite[Equation (37)]{hk:shape}.
\end{proof}

We remark that the length cutoff $K = K(\epsilon, \kappa, J)$ in \refthm{BilipDrillFill} is independent of the manifold $M$. The dependence of $K$ on the constants $\epsilon, \kappa, J$ is made explicit in forthcoming work of Futer, Purcell, and Schleimer \cite{FPS:EffectiveBilipschitz}. We will not need this here. In fact, \refthm{BilipDrillFill} is already stronger than what we need; see \refrem{NoBB}.

\subsection{Covers, lifts, and elevations}\label{Sec:Elevation}

Throughout the paper, we  deal with immersed objects in hyperbolic $3$--manifolds, as well as their preimages in 
 (finite or infinite) covering spaces. This requires some careful terminology.

Suppose that $X$ and $M$ are manifolds,  $p\from  \hat M \to M$ is a  covering map,  and $f\from  X \to M$ is an immersion. If $f_* (\pi_1 X) \subset p_* (\pi_1 \hat M)$, the map $f$ lifts to $\hat{f}\from  X \to \hat M$. Such a lift is determined by a local inverse to $p$ at a point of $f(X)$.
We call $\hat{f}$ the \emph{lift of $f$} and the image $\hat f(X)$ the \emph{lift of $X$} in $\hat M$.

More generally, if $\pi\from  \widetilde{X} \to X$ is the universal covering map, then $f \circ \pi$ always has a lift $\widetilde{f}\from  \widetilde{X} \to \hat M$. Again, the lift is determined by a local inverse to $p$ at a point of $f(X)$. We call the image $\widetilde{f}(\widetilde{X})$ an \emph{elevation} of $X$ in $\hat{M}$. If   $p\from  \hat M \to M$ is a finite cover, an elevation of $X$ in $\hat M$ is a lift of some finite cover of $X$. However, if $f$ is not $1$--$1$, an elevation of $X$ may fail to be a component of $p^{-1}(X)$.

A subgroup $H \subset G$ is called \emph{separable} if $H$ is the intersection of finite index subgroups of $G$. The group $G$ is called \emph{residually finite} if $\{ 1\}$ is separable, and \emph{subgroup separable} 
if all finitely generated subgroups are separable.
A deep observation of Scott \cite{Scott:LERF} 
is that  if $X$ is compact and $f\from X \to M$ is an immersion that lifts to an embedding in some infinite cover of $M$,
and  $f_* ( \pi_1X)$ is separable in $\pi_1(M)$, then $f$ also lifts to an embedding 
 $\hat{f}\from  X \to \hat{M}$ into a finite cover $\hat M$.

Hyperbolic manifold groups are residually finite by Selberg's lemma.
Scott showed that the fundamental groups of surfaces are subgroup separable \cite{Scott:LERF}. Agol  showed that all (finitely generated) fundamental groups of hyperbolic $3$--manifolds are subgroup separable \cite[Corollary 9.4]{Agol}, completing a program developed by Wise \cite{Wise:Hierarchy, Wise:Riches2Raags}. Our argument in \refsec{KMDrill} uses Agol's theorem, although this is mainly a matter of convenience; see \refrem{Separability}. 
The argument  in \refsec{UbiquitousSlope} uses subgroup separability in surfaces, 
and draws on the previous work of Baker and Cooper that does the same.

\section{Fat tubes, thin surfaces, and pancakes}\label{Sec:Auxiliary}

This section lays out some elementary results that  are needed in the proof of \refthm{CuspedSurfacesManySlopes}. In \refsec{NiceProduct}, we explore the notion of quasi-Fuchsian manifolds with \emph{nice product structures}, and show that  collared geodesics in such manifolds are naturally classified into three types (\refdef{SkirtMerLong}). In \refsec{Pancake}, we characterize ubiquitous collections of surfaces in a $3$--manifold using the notion of a \emph{pancake} (\refdef{Pancake}). The advantage of this point of view is that pancakes are compact objects, hence are well-behaved under Dehn filling.

\subsection{Nice product structures}\label{Sec:NiceProduct}

A \emph{product structure} on a quasi-Fuchsian manifold $Q \cong S \times I$ is a diffeomorphism
 $f\from S\times[-1,1]\rightarrow Q$. The two boundary components of $Q$ are $\bdy_{\pm}Q=f(S\times \{ \pm 1 \})$.
The arc $f(x\times[-1,1])$ is called \emph{vertical}.
A product structure determines a map $\pi_h\from Q\rightarrow S$ called \emph{horizontal projection}.
 The \emph{midsurface} of $Q$ is $f(S\times 0)$. The mid-surface is only well-defined up to isotopy, because it depends on the choice
 of product structure.

The constants in the following definition are convenient but somewhat arbitrary.
  
 \begin{definition} \label{Def:ThinNiceFat}   
  
  A properly embedded geodesic $\alpha$ in a convex hyperbolic manifold $M$  \emph{has a fat collar} if any path $\gamma$ that starts and ends on $\alpha$ and has $\ell(\gamma) \leq 0.01$ is homotopic into $\alpha$.  In particular, $\alpha$ must be length--minimizing on a scale up to $0.01$. 

A \emph{fat tube} is a tube that contains a neighborhood of radius $r = 0.01$ about its core geodesic. In particular, the core of a fat tube has a fat collar.

A product structure on $Q \cong S\times I$ 
 is \emph{nice} if every vertical arc $\beta$  has length and curvature
  at most $0.01$, and if  the endpoints of $\beta$ meet $\bdy Q$ almost orthogonally. Here, ``almost orthogonal'' means that  unit tangent vectors to $\beta$ and $\bdy Q$ have inner product less than $0.01$.

 The  \emph{thickness} of a convex QF
manifold $Q$ with boundary $\partial_-Q\sqcup\partial_+Q$ 
is 
\[
t(Q)=\max \{d(x_-,\partial_+Q), \, d(x_+,\partial_-Q)  :\ x_{\pm}\in\bdy_{\pm} Q\}.
\]
\end{definition}

\begin{lemma}[Almost flat implies thin]\label{Lem:AlmostFlatThin}  For  every $\tau>0$, there is a constant $\kappa = \kappa(\tau) >0$, such that if 
$S$ is a QF surface with extrinsic curvature
everywhere less than $\kappa$, then $t(Q(S))<\tau$.
\end{lemma}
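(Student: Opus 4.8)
The plan is to argue by contradiction using a compactness/limiting argument in the space of pointed quasi-Fuchsian manifolds, together with the fact that a QF surface with extrinsic curvature $\equiv 0$ is totally geodesic, hence Fuchsian, in which case $Q(S)$ is a small convex neighborhood of a hyperbolic plane and has thickness as close to $0$ as we like. First I would fix $\tau > 0$ and suppose, for contradiction, that there is a sequence of QF surfaces $S_n$ with extrinsic curvature everywhere less than $1/n$, but with $t(Q(S_n)) \geq \tau$. By definition of thickness, for each $n$ there is a point $x_n$ on one boundary component of $Q(S_n)$ whose distance to the opposite boundary component is at least $\tau$ (after passing to a subsequence we may assume it is always $\bdy_- Q(S_n)$, say, realizing the max). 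Lift everything to the universal thickening $N_n \cong S_n \times \RR$, choose a lift $\widetilde x_n \in \HH^3$ of $x_n$, and consider the pointed developed images $(\widetilde{Q(S_n)}, \widetilde x_n) \subset \HH^3$ together with the developed lift $\widetilde S_n$ of the surface passing near $\widetilde x_n$.

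The key geometric input is a local one: a smooth (or barrier-sense) surface in $\HH^3$ through a point $p$ with extrinsic curvature bounded by $\kappa$ stays, on a ball of fixed radius $\rho$ around $p$, within Hausdorff distance $\eta(\kappa,\rho)$ of the totally geodesic plane $\Pi_p$ tangent to it at $p$, where $\eta(\kappa,\rho) \to 0$ as $\kappa \to 0$ for fixed $\rho$; this is an elementary comparison estimate for geodesics of bounded curvature (they stay close to genuine geodesics), applied to the geodesics of the surface emanating from $p$ in all directions. Applying this to $\widetilde S_n$ at $\widetilde x_n$ with radius, say, $\rho = \tau + 1$, I get that $\widetilde S_n \cap B(\widetilde x_n, \tau+1)$ lies within $\eta(1/n, \tau+1)$ of a fixed plane $\Pi_n$ through $\widetilde x_n$; after rotating so that all $\Pi_n$ coincide with a fixed plane $\Pi$, these pieces of surface converge (in Hausdorff distance on compact sets) to $\Pi$ itself. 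The same estimate applies to $\bdy_\pm \widetilde{Q(S_n)}$ using \reflem{KappaConvex}, since those boundary surfaces are $\kappa$--convex with $\kappa$ going to $0$ as well (the extrinsic curvature of the core boundary is controlled by that of the surface — this is exactly the content one extracts from the nearby-flatness).

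Now I would finish as follows. Since $Q(S_n) \cong S_n \times I$ has a product structure with $\bdy_\pm Q(S_n)$ both staying $o(1)$-close to the single plane $\Pi$ near $\widetilde x_n$, and since $\widetilde x_n \in \bdy_- \widetilde{Q(S_n)}$, any point of $\bdy_+ \widetilde{Q(S_n)}$ within distance $\tau + 1$ of $\widetilde x_n$ is within $o(1)$ of $\Pi$ and hence within $o(1)$ of $\widetilde x_n$ along $\Pi$; convexity of $Q(S_n)$ forces the vertical segment from $\widetilde x_n$ to $\bdy_+$ to have length $o(1)$, so $d(x_n, \bdy_+ Q(S_n)) = o(1)$, contradicting $d(x_n, \bdy_+ Q(S_n)) \geq \tau$ for large $n$. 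Here I use that $\bdy_+ \widetilde{Q(S_n)}$ cannot ``escape'' the ball $B(\widetilde x_n, \tau+1)$ without first intersecting it, because the two boundary planes are uniformly close to $\Pi$ throughout that ball and $Q(S_n)$ is the convex region between them; alternatively one runs the estimate on a slightly larger ball.

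\textbf{Main obstacle.} The delicate point is propagating the smallness of extrinsic curvature of the \emph{surface} $S$ to smallness of the $\kappa$-convexity constant, hence near-flatness, of the \emph{boundary} $\bdy_\pm Q(S)$ of the convex core — a priori $\bdy \core$ is only $0$-convex (locally convex, not smooth), and one must rule out that it bends sharply even though the midsurface is almost flat. I expect this to be handled by the following observation: near $\widetilde x_n$ the surface $\widetilde S_n$ is trapped in a thin slab neighborhood of $\Pi$, and $\bdy_\pm\widetilde{Q(S_n)}$ are supporting locally convex surfaces on the two sides of $\widetilde S_n$ that must also lie in (a slightly thicker version of) that slab — any locally convex surface separating $\widetilde S_n$ from $\infty$ on one side, yet passing close to $\widetilde x_n$, is squeezed flat by the plane $\Pi$ acting as a barrier from one side and $\widetilde S_n$ from the other. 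Making this squeezing quantitative is the technical heart of the argument; once it is in place, the compactness/contradiction scheme above closes immediately.
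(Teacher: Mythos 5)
Your proposal takes a genuinely different route from the paper, and it has a gap that is not merely a matter of filling in details. The paper's proof is short and direct, with no contradiction or compactness: the curvature bound forces intrinsic geodesics of $\widetilde S$ to closely fellow-travel ambient geodesics of $\HH^3$ with the same endpoints, and then Carath\'eodory's theorem (every point of $\CH(\widetilde S)=\widetilde{Q(S)}$ lies in a geodesic simplex with vertices on $\widetilde S$) is used to conclude that every point of the convex hull is close to $\widetilde S$, hence $Q(S)$ is thin. The boundary surfaces $\bdy_\pm Q(S)$ are never discussed.

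The gaps in your approach are concentrated in exactly the step you flag as the ``technical heart,'' and I do not think they can be closed the way you indicate.
First, you apply the local flatness estimate to $\widetilde S_n$ \emph{at} $\widetilde x_n$, but $\widetilde x_n$ lies on $\bdy_- \widetilde{Q(S_n)}$, not on $\widetilde S_n$. If $Q(S_n)$ really is $\tau$--thick (the hypothesis you are trying to contradict), there is no reason the midsurface $\widetilde S_n$ passes anywhere near $\widetilde x_n$; a priori its nearest point could be $\tau/2$ away, and then the tangent-plane estimate tells you nothing about a neighborhood of $\widetilde x_n$. To make your scheme run you would need the \emph{global} shadowing statement (a complete surface of small extrinsic curvature lies within $\eta(\kappa)$ of a single totally geodesic plane), which is not what you invoke and is itself the heart of the matter.
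Second, the appeal to \reflem{KappaConvex} to conclude that $\bdy_\pm \widetilde{Q(S_n)}$ have small extrinsic curvature conflates two different notions. As defined in \refsec{Thickening}, $\kappa$--convexity is a one-sided barrier condition: the boundary of \emph{any} convex set is already $0$--convex, yet the boundary of the convex core of a quasi-Fuchsian group is a pleated surface, bent along a measured lamination, which is locally convex but certainly not extrinsically flat. So ``$\kappa$--convex with $\kappa\to 0$'' does not give you the near-flatness of $\bdy_\pm Q(S_n)$ that the squeezing argument requires; \reflem{KappaConvex} is a statement about $r$--thickenings and is not applicable here.
The moral difference is that the paper controls the entire convex hull $\CH(\widetilde S)$ in one stroke using the vertices (all of which lie on the nearly flat $\widetilde S$) rather than trying to control the non-smooth, pleated boundary of the hull directly; the latter is precisely what makes your route hard.
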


\begin{proof}
If $\kappa$ is small
enough, every geodesic in  the universal cover $\widetilde S$ is very close to a geodesic in $\HH^3$ with the same endpoints.
By Carath\'eodory \cite[Proposition 5.2.3]{Papadopoulos}, the convex hull of $\widetilde{S}$ 
is the union of tetrahedra with vertices in $\widetilde{S}$. Hence every point in $\widetilde{Q(S)}$ is very close to $\widetilde{S}$.
This implies $Q(S)$ is  thin.
\end{proof}

 \begin{lemma}\label{Lem:QFproduct} 
There exists $\tau > 0$ such that every convex QF manifold $Q$ with $t(Q) < \tau$ has a nice product structure.
 \end{lemma}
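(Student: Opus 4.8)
The plan is to produce a nice product structure on a thin convex QF manifold $Q \cong S \times I$ by building it up from a foliation of $\widetilde Q$ by copies of $\widetilde{\bdy_- Q}$ (or more symmetrically, by equidistant hypersurfaces), and then checking that the vertical arcs of the resulting product have small length, small curvature, and meet $\bdy Q$ nearly orthogonally once $\tau$ is small enough. Concretely, I would first fix a small $\tau$ (to be shrunk finitely many times) and let $Q$ be a convex QF manifold with $t(Q) < \tau$. Passing to the universal cover, $\widetilde Q \subset \HH^3$ is a convex region bounded by two disjoint, locally convex surfaces $\widetilde{\bdy_\pm Q}$, each of which is $C^{1,1}$ (the boundary of a convex set), and by $t(Q) < \tau$ every point of $\widetilde{\bdy_- Q}$ is within $\tau$ of $\widetilde{\bdy_+ Q}$ and vice versa. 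The group $\Gamma = \pi_1 S$ acts on $\widetilde Q$ preserving each boundary component.

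The main step is to define the vertical foliation $\Gamma$--equivariantly. One clean way: for $x \in \widetilde{\bdy_- Q}$, let $n(x)$ be the inward unit normal (defined a.e., and genuinely defined wherever the boundary is $C^1$; by convexity this is a Lipschitz choice after smoothing), and follow the geodesic ray from $x$ in direction $n(x)$ until it first hits $\widetilde{\bdy_+ Q}$. Because $\widetilde Q$ is convex with both boundary sheets locally convex and $\tau$--close, this geodesic segment has length at most, say, $2\tau$, it exits transversally, and the map sending $x$ to this segment (suitably reparametrized over $[-1,1]$) is a diffeomorphism $S \times [-1,1] \to Q$ after descending — here I would smooth $\widetilde{\bdy_- Q}$ slightly first, $\Gamma$--equivariantly, so that $n(x)$ depends smoothly on $x$ and the exit point on the other side depends smoothly on $x$; the smoothing can be taken $C^\infty$-close to the original, preserving convexity up to an error that vanishes with the smoothing scale. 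This gives a genuine product structure $f$; it remains to verify niceness. The length bound $\ell(\beta) \le 2\tau < 0.01$ is immediate. For the curvature bound: $\beta$ is a geodesic segment in $\HH^3$ except near its two endpoints where the smoothed normal field bends it; since the two boundary sheets are $\tau$--close and each is $C^{1,1}$ with second fundamental form controlled (locally convex, sitting inside a region of diameter $O(\tau)$ in the normal direction, so by a barrier/curvature-comparison argument their principal curvatures are $O(1/\sqrt\tau)$ at worst but the \emph{turning} of the vertical arc over a length-$O(\tau)$ segment is $O(\sqrt\tau)$), the total curvature of $\beta$ goes to $0$ with $\tau$. For near-orthogonality at $\bdy_- Q$ it is exact by construction (we started normal to the smoothed $\bdy_- Q$), and at $\bdy_+ Q$ one argues that a geodesic entering a locally convex $\tau$--collar normally to the opposite sheet must hit $\bdy_+ Q$ at angle close to $\pi/2$: otherwise the segment would have to be long, contradicting the $\tau$ bound, or would exit the convex region. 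Quantitatively, the angle deficit is $O(\sqrt\tau)$, so it is below $0.01$ once $\tau$ is small.

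I would organize the write-up as: (1) reduce to $\widetilde Q$ and record that $\widetilde{\bdy_\pm Q}$ are convex $C^{1,1}$ sheets that are mutually $\tau$--close (from $t(Q) < \tau$); (2) smooth $\bdy_- Q$ equivariantly and define the normal-exponential vertical foliation, noting it is a diffeomorphism for $\tau$ small by an inverse function theorem / transversality argument using convexity to rule out focal points within distance $2\tau$; (3) estimate $\ell(\beta)$, the extrinsic curvature of $\beta$, and the two boundary angles, each being $O(\tau)$ or $O(\sqrt\tau)$; (4) choose $\tau$ so all three quantities are below $0.01$.

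The hard part will be step (3), specifically controlling the curvature of the vertical arcs and the angle at $\bdy_+ Q$: one must show that following a normal geodesic from one $\tau$--close convex sheet to the other produces an arc that is nearly geodesic and nearly orthogonal at the far end, uniformly over all QF manifolds. The cleanest route is a compactness/comparison argument: a locally convex surface sitting between two parallel hypersurfaces at distance $\le \tau$ has principal curvatures that, \emph{integrated along a normal segment of length $\le 2\tau$}, contribute total turning $\to 0$; and a geodesic segment of length $\le 2\tau$ that starts orthogonal to one locally convex sheet and ends on the other locally convex sheet cannot meet it at an angle bounded away from $\pi/2$, again because the convex region has thickness $\le \tau$ in the relevant direction. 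I expect all the estimates to be elementary hyperbolic trigonometry, but assembling them equivariantly (so that the product structure descends to $Q$) and making the smoothing compatible with the group action is the one place where care is genuinely required.
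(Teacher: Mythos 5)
The paper's own proof is a different, softer construction: it covers $Q$ by balls of radius $0.001$, in each ball picks a unit vector field whose integral curves are geodesics crossing $Q$ roughly orthogonally to $\bdy Q$, and then patches these local fields with a partition of unity; since $Q$ is thin, any two local choices of the ``crossing direction'' at nearby points nearly agree, so the patched field has integral curves with small curvature, and these become the vertical arcs. This local/patchwork route never needs to smooth $\bdy Q$ or produce a globally coherent normal field on it.

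Your route is genuinely different --- a global normal-exponential map from a smoothed $\bdy_- Q$ --- and while the idea is reasonable, there is a concrete gap in step~(3). The key unproved assertion is that the smoothed $\bdy_- \widetilde Q$ has principal curvatures $O(1/\sqrt\tau)$. For a general convex QF manifold, $\bdy \core(Q)$ is a pleated surface whose bending lamination can have atoms: the surface has genuine ridges. After smoothing at scale $\delta$, the principal curvature near such a ridge is of order (bending angle)$/\delta$, which is not bounded in terms of $\tau$ alone, and your barrier heuristic (``sits inside a region of diameter $O(\tau)$ in the normal direction, hence curvature $O(1/\sqrt\tau)$'') is not a valid pointwise bound --- thin convex bodies do have boundary points of arbitrarily large curvature; what is bounded is the \emph{integrated} curvature, not the pointwise one. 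This matters twice: (i) a principal curvature $K$ on the smoothed $\bdy_-$ puts a focal point of the inward normal exponential map at distance $\approx \tanh^{-1}(1/K)$, so if $K \gg 1/\tau$ the map $\Phi(x,t)=\exp_x(t\,n(x))$ is not injective on $\{0\le t\le 2\tau\}$ and you do not get a product structure; and (ii) the length and exit-angle estimates for the normal geodesic both implicitly assume the smoothed normal field stays roughly ``across'' $Q$, which is exactly what high concentrated bending can spoil. You would need to prove that $t(Q)<\tau$ forces the bending measure to be small enough that a smoothing at scale comparable to $\tau$ yields curvature $< c/\tau$ with $c<1$ --- plausible but not obvious, and not supplied. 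Separately, your remark that ``$\beta$ is a geodesic segment except near its two endpoints where the smoothed normal field bends it'' is internally inconsistent with your own construction: if the vertical arc is defined by following the normal geodesic from $\bdy_-^s$ until it hits $\bdy_+ Q$, then $\beta$ is a geodesic throughout and has zero curvature; the curvature estimate is moot, and what actually needs proof is injectivity of the normal map, the length bound, and near-orthogonality at $\bdy_+$. The paper's local patching construction avoids all of these issues because it never requires a global smooth normal field on a potentially irregular convex boundary.
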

 
 \begin{proof} A  product structure can be constructed on $Q$ by 
 using a partition of unity to combine unit vector fields whose integral curves are geodesics and that are
defined in balls of radius $.001$ that cover $Q$ and are almost orthogonal to $\partial Q$. The flow
defined by this combined vector field can be reparameterized to give a nice product structure.  
 Further details are left to the reader.
  \end{proof}
 
  A $1$--manifold $\alpha \subset Q$ is called \emph{unknotted} if there is a product structure on $Q$ such that  $\pi_h \vert_\alpha$
 is injective.
Because any two product structures on $Q$ are isotopic,
 a closed curve  $\alpha \subset Q$  is unknotted if and only if it is isotopic to a simple closed curve in $\partial Q$.
An arc $\alpha$ properly embedded in $Q$ is unknotted  if and only if it properly isotopic
  to either a vertical arc, or to an arc in $\partial Q$. Geodesics in general QF manifolds can be knotted, but nice product structures preclude this  for geodesics with a fat collar:

  \begin{lemma}[Unknottedness]\label{Lem:Unknotted} 
  Suppose $Q$ is a QF manifold with a nice product structure. Let $\alpha \subset Q$ be a geodesic with a fat collar. Then $\alpha$ is compact and unknotted in $Q$.
 \end{lemma}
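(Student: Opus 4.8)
The plan is to work in the universal cover and use the nice product structure to contradict knottedness of a fat-collared geodesic. First I would recall that by \reflem{QFproduct} (or rather its hypothesis) we have a product structure $f\from S\times[-1,1]\to Q$ with every vertical arc of length and curvature at most $0.01$, meeting $\bdy Q$ almost orthogonally, together with the horizontal projection $\pi_h\from Q\to S$. Lift everything to the universal cover: $\widetilde Q\cong\widetilde S\times[-1,1]$, and let $\widetilde\alpha$ be an elevation of $\alpha$. Since $\alpha$ has a fat collar, by \refdef{ThinNiceFat} no path of length $\le 0.01$ returning to $\alpha$ is homotopic into $\alpha$; translating this to the cover, $\widetilde\alpha$ is a \emph{complete} geodesic line (or compact, if $\alpha$ itself is compact) in $\widetilde Q$ with the property that distinct points of $\widetilde\alpha$ are at distance $> 0.01$ apart unless they are consecutive along the geodesic — in particular the vertical fibers through $\widetilde\alpha$ are hit at most once each by $\widetilde\alpha$, which is exactly $\pi_h$-injectivity upstairs.

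The key geometric point is a comparison of directions: since the product structure is nice, the vertical direction makes an angle close to $\pi/2$ with $\bdy Q$, so the vertical fibers form a foliation whose leaves are "short and nearly straight." A geodesic segment of $\widetilde\alpha$ of length at most $0.01$ stays within a uniformly small ball, in which the foliation by vertical fibers looks like a nearly-Euclidean parallel family of short arcs; hence $\widetilde\alpha$ meets each nearby fiber transversally and at most once. Globally, suppose $\widetilde\alpha$ met some vertical fiber $\{x\}\times[-1,1]$ twice, say at heights $t_1<t_2$. Then the vertical subarc between those two points is a path of length $\le 0.01$ starting and ending on $\widetilde\alpha$; its projection to $Q$ is a path starting and ending on $\alpha$ of length $\le 0.01$. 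By the fat collar hypothesis it is homotopic into $\alpha$, so the two endpoints differ by an element of $\pi_1\alpha$. Chasing this through: either $\alpha$ is compact and the two intersection points are genuinely the same point of $\widetilde\alpha$ (impossible for $t_1\ne t_2$ on a fiber, since a geodesic meets a fiber transversally), or we get a contradiction with $\widetilde\alpha$ being embedded. Either way $\pi_h|_{\widetilde\alpha}$, hence $\pi_h|_\alpha$, is injective, so $\alpha$ is unknotted.

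For compactness of $\alpha$: if $\alpha$ were a non-compact properly embedded geodesic, it would run out a cusp of $Q$, but then near the cusp it would be forced (being a geodesic) either to spiral or to exit, and in a nice product structure a properly embedded geodesic going out a rank-one cusp would have to cross vertical fibers repeatedly — more cleanly, a $\pi_h$-injective properly embedded arc is properly isotopic to a vertical arc or to an arc in $\bdy Q$, and a geodesic cannot be asymptotically vertical-or-boundary while remaining length-minimizing at scale $0.01$ in the relevant way; so $\alpha$ must be a closed geodesic, hence compact, and then $\pi_h$-injectivity gives that it is isotopic to a simple closed curve in $\bdy Q$.

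The main obstacle I expect is making the local-to-global step rigorous: upgrading "a short geodesic arc meets each nearby vertical fiber at most once" (an infinitesimal/uniform-ball statement following from the $0.01$ bounds on length, curvature, and the near-orthogonality) into a clean global statement that $\pi_h|_{\widetilde\alpha}$ is injective, without circular appeals. The right packaging is probably to show directly that the composition $\pi_h\circ\widetilde\alpha$ is a locally injective map of a line (or circle) into $\widetilde S$ using the curvature/length bounds, and then invoke the fat-collar hypothesis exactly once to rule out the only failure of global injectivity, namely two points of $\widetilde\alpha$ on a common fiber.
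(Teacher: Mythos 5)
The proposal has a genuine gap: you try to prove that $\pi_h|_\alpha$ is always injective, but this is not true, and the failure case cannot be ruled out.

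Your argument runs: suppose a vertical fiber $\beta$ meets $\widetilde\alpha$ at two heights $t_1 < t_2$; the vertical subarc between them is a short path from $\alpha$ to $\alpha$, the fat-collar hypothesis makes it homotopic into $\alpha$, and then you claim this forces either the two points to coincide or a contradiction with embeddedness. But ``homotopic into $\alpha$'' just means the vertical segment is homotopic rel endpoints to the subarc of $\alpha$ joining them; it gives no contradiction and does not identify the two points. The scenario is geometrically real: if the geodesic $\alpha$ enters nearly tangent to a vertical fiber (which is itself allowed curvature up to $0.01$), it can cross that fiber twice, and the fat collar only tells you the subarc of $\alpha$ between the crossings is short and nearly vertical. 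Embeddedness of $\widetilde\alpha$ is not the same as injectivity of $\pi_h|_{\widetilde\alpha}$; a curve can be embedded while projecting non-injectively.

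The paper's proof does not attempt to rule this case out. It dichotomizes: if every vertical arc meets $\alpha$ at most once, then $\pi_h|_\alpha$ is injective and $\alpha$ is unknotted by definition; and if some vertical arc $\beta$ meets $\alpha$ twice, then the fat collar and the niceness constants together force $\alpha$ to be almost vertical, so $\alpha$ is properly isotopic to a vertical arc, which is unknotted by the characterization given just above \reflem{Unknotted}. Your proposal is missing this second branch entirely; to fix it you would need to replace the claimed contradiction with the argument that two intersections with a single vertical fiber imply $\alpha$ is a short, almost-vertical arc, and then invoke the fact that vertical arcs are unknotted. Your compactness argument, while vaguer than the paper's one-sentence version, is in the same spirit (fat collar prevents escaping into a cusp) and is not the problem.
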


 \begin{proof} 
The fat collar about $\alpha$
prevents it from accumulating on itself inside $Q$ or traveling too deep into any cusps of $Q$. Thus $\alpha$ is compact.

 Let $F$ be the union of all vertical arcs (with respect to the nice product structure) that contain a point of $\alpha$. If some vertical arc $\beta$
 contains two points of $\alpha$, the niceness of the product structure and the fat collar about $\alpha$  imply that $\alpha$ is almost vertical. Thus $\alpha$ is isotopic to a vertical arc, hence unknotted. 
 Otherwise, if every vertical line in $F$ contains a single point of $\alpha$, then  $\pi_h \vert_\alpha$ is injective, hence $\alpha$ is unknotted by definition.
  \end{proof}

As a consequence, we have the following classification of geodesics in thin QF manifolds.

\begin{definition}\label{Def:SkirtMerLong}
Let $Q \cong S \times I$ be a QF manifold with a nice product structure, and let $\Th_{\infty}(Q)$ be the 
universal thickening
of $Q$.
Let $\delta \subset \Th_{\infty}(Q)$ be an embedded geodesic with a fat collar, and let $\alpha = \delta \cap Q$. 
We say that $\delta$ is
\begin{itemize}
\item \emph{skirting} if $\alpha = \emptyset$ or an interval whose endpoints are on the same component of $\bdy Q$.
\item \emph{meridional} if $\alpha$ is an interval whose endpoints are on different components of $\bdy Q$.
\item \emph{longitudinal} if $\alpha = \delta$ is a closed, unknotted geodesic in $Q$.
\end{itemize}

The terminology can be explained as follows. 
 In the meridional case,  $\bdy_+ Q \setminus \delta$ has a puncture that is a meridian of $\delta$. In the longitudinal case, $\delta$ is isotopic into $\bdy_+ Q$, hence removing it creates a pair of loops in  $\bdy_+ Q$ that are longitudes of $\delta$. 
 \end{definition}

\begin{lemma}\label{Lem:SkirtMerLong}
Let $R = \HH^3 / \Gamma \cong S \times \RR$ be a complete QF manifold, and let $\delta \subset R$ be an embedded geodesic with a fat collar. Suppose that some convex thickening $Q$  of $\core(S)$ has a nice product structure. Then $\alpha = \delta \cap Q$ is either empty or connected. Furthermore, $\delta$  is one of the three types enumerated in \refdef{SkirtMerLong}. The type of $\delta$ is independent of the choice of thickening of $\core(S)$.
\end{lemma}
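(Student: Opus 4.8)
The plan is to reduce the statement to the already-established Unknottedness Lemma (\reflem{Unknotted}) together with a connectivity argument for $\alpha = \delta \cap Q$. First I would observe that since $\delta$ has a fat collar in $R = \HH^3/\Gamma$, it cannot accumulate on itself, so $\widetilde\delta \subset \HH^3$ is a single geodesic line whose endpoints lie on $S^2_\infty$, and its projection to $R$ is either a compact closed geodesic or a properly embedded line going out the ends of $R$. Pick the nice product structure $f\from S \times [-1,1] \to Q$ on the given convex thickening $Q$ of $\core(S)$, with horizontal projection $\pi_h$ and vertical arcs of length and curvature at most $0.01$ meeting $\bdy Q$ almost orthogonally.

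The key step is connectivity of $\alpha$. Suppose $\alpha = \delta \cap Q$ has two distinct components $\alpha_1, \alpha_2$. Each $\alpha_i$ is a compact sub-arc of $\delta$ (or all of $\delta$) with endpoints on $\bdy Q$; between them, $\delta$ exits $Q$ through $\bdy_\pm Q$, travels through the complementary region $\Th_\infty(Q) \setminus Q$, and re-enters. The point is that $\Th_\infty(Q) \setminus Q = (\bdy_- Q \times (-\infty,-1]) \sqcup (\bdy_+ Q \times [1,\infty))$ in the product coordinates on $R \cong S \times \RR$, and on each such collar the nearest-point projection to $\bdy_\pm Q$ is distance-nonincreasing, so a geodesic segment entering and leaving the same collar has a well-defined "outermost" point; the sub-arc of $\delta$ outside $Q$ between $\alpha_1$ and $\alpha_2$, together with short vertical push-offs, bounds. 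More carefully: if $\delta$ leaves $\bdy_+ Q$ and returns to $\bdy_+ Q$, then because $\bdy_+ Q \times [1,\infty)$ is convex in $R$ (it is the closure of a component of $R \setminus Q$ and $Q$ is convex, so the complementary collars are convex), the chord of $\delta$ connecting the two crossing points of $\bdy_+ Q$ lies entirely in $Q \cup (\bdy_+ Q \times [1,\infty))$; convexity of $\delta$ (it is a geodesic) then forces the arc of $\delta$ between those crossings to stay in that collar — so it never actually re-enters $Q$, contradicting the assumption that we had two components. Hence consecutive crossings must be on opposite sides, and then a parity/separation argument (each crossing of $\bdy_+ Q$ must be followed by a crossing of $\bdy_- Q$, but $\delta$ cannot cross $\bdy_- Q$ without first passing through all of $Q$, and the fat collar plus $t(Q) < \infty$ bounds how $\delta$ can wind) shows $\alpha$ has at most one component. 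I expect this connectivity step to be the main obstacle: it requires pinning down that the collars $\bdy_\pm Q \times [\pm 1, \pm\infty)$ are genuinely convex in $R$, which follows from convexity of $Q$ and the product structure of the complete manifold $R$.

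Once $\alpha$ is connected, the trichotomy is immediate. If $\alpha = \emptyset$, $\delta$ is skirting by definition. If $\alpha$ is an arc, its two endpoints lie on $\bdy Q = \bdy_- Q \sqcup \bdy_+ Q$: if on the same component, $\delta$ is skirting; if on different components, $\delta$ is meridional. If $\alpha = \delta$ (i.e. $\delta \subset Q$), then $\delta$ is a closed geodesic with a fat collar contained in the QF manifold $Q$ with its nice product structure, so \reflem{Unknotted} applies and $\delta$ is compact and unknotted in $Q$, i.e. longitudinal. (The case $\alpha$ a single point cannot occur: a geodesic meeting $\bdy Q$ transversally at one point continues on both sides; tangential contact with the locally convex boundary $\bdy Q$ is likewise ruled out since $\delta$ would then lie on the non-convex side locally.)

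Finally, for independence of the type from the choice of convex thickening $Q'$ of $\core(S)$: any two such thickenings are nested inside a common larger one, or more simply both are sandwiched between $\core(S)$ and some $\Th_r(\core(S))$, and the product structures are all isotopic via \reflem{QFproduct} and the uniqueness of product structures up to isotopy. The type of $\delta$ is detected by topological data on $\bdy_+ Q \setminus \delta$ — whether $\delta$ misses $Q$, punctures $\bdy_+ Q$ once (meridional), or is isotopic into $\bdy_+ Q$ (longitudinal) — and this data is invariant under isotopy of the product structure and under enlarging $Q$ to $Q'$, since $\core(S)$ already carries the homotopy type of $R$ and $\delta$'s position relative to $\core(S)$ is fixed. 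I would phrase this last point as: the inclusion-induced picture $(\delta \cap Q) \hookrightarrow Q$ and $(\delta \cap Q') \hookrightarrow Q'$ are related by an ambient isotopy of $R$ carrying $\bdy Q$ to $\bdy Q'$ and fixing $\delta$ setwise outside a compact set, which preserves all three defining conditions.
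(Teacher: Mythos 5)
Your argument reaches the right conclusions, but two steps are problematic.

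First, for connectivity of $\alpha = \delta \cap Q$, your reasoning hinges on the claim that the complementary collars $\bdy_\pm Q \times [\pm 1, \pm\infty)$ are convex in $R$, and you justify this by "it is the closure of a component of $R \setminus Q$ and $Q$ is convex, so the complementary collars are convex." This inference is false: the complement of a convex set is not convex, and indeed when $R$ is a genuinely quasi-Fuchsian (non-Fuchsian) manifold the boundary surfaces of $\core(R)$ are bent, so a geodesic between two points above $\bdy_+ Q$ can dip down through $Q$. Only in the Fuchsian case are the complementary collars half-spaces. Fortunately, you don't need this claim: the correct argument is the one-line observation that $\delta$ is a geodesic and $Q$ is convex, so the intersection lifts to $\widetilde\delta \cap \widetilde Q$, the intersection of a geodesic line with a $\Gamma$-invariant convex set in $\HH^3$, and this is an interval. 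This is exactly what the paper says ("by convexity of $Q$"). You essentially rediscover this at the end of your paragraph, but wrap it in a false and unnecessary detour.

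Second, for independence of the type from the choice of thickening, your ambient-isotopy argument is not really carried out: the assertion that an isotopy carrying $\bdy Q$ to $\bdy Q'$ and "fixing $\delta$ setwise outside a compact set" preserves the three conditions would need to track how $\delta$ transversally meets each $\bdy_\pm Q$, and you have not supplied that. The paper's argument is both cleaner and more conclusive: pass to the elevation $\widetilde\delta \subset \HH^3$ with endpoints $x_\pm \in S^2_\infty$, and observe that the three types are exactly characterized by whether $x_\pm$ lie in the same component of $S^2_\infty \setminus \Lambda(\Gamma)$, in different components, or in $\Lambda(\Gamma)$. This description mentions only $\Gamma$ and not $Q$, so independence is immediate. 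I'd recommend replacing your isotopy sketch with this limit-set characterization; besides being shorter, it simultaneously reproves the trichotomy itself.
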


\begin{proof}
Note that if $\alpha \neq \emptyset$, it must be connected by convexity of $Q$. In addition, $\alpha$ is compact and unknotted by \reflem{Unknotted}. Thus one of the above three possibilities must always hold.

Let $\widetilde \delta$ be an elevation of $\delta$ to $\widetilde R = \HH^3$. This is a bi-infinite geodesic with endpoints $x_\pm$. Let $\Lambda = \Lambda(\Gamma) \subset S^2_\infty$ be the limit set of $\Gamma$. Then $\delta$ is skirting if and only if $x_\pm$ lie in the same component of $S^2_\infty \setminus \Lambda$; meridional if and only if $x_\pm$ lie in different components of $S^2_\infty \setminus \Lambda$; and longitudinal if and only if $x_\pm \in  \Lambda$. This classification depends only on $\Gamma$, hence is independent of the choice of thickening of $\core(S)$.
\end{proof}

\subsection{Pancakes ensure ubiquity}\label{Sec:Pancake}

\begin{definition}\label{Def:Pancake}
For $0 < \eta < r$, a \emph{pancake} is 
\[ P(\eta, r ) = \neb_{\eta}(D_r ; \HH^3), \]
where $D_r$ is a closed disk of radius $r$ in a totally geodesic hyperbolic plane in $\HH^3$. The points of $\bdy P(\eta, r )$ that are distance $\eta$ from $\bdy D_r$ form an annulus called the  \emph{vertical boundary}, denoted $\bdy_V P(\eta, r )$. Meanwhile, $\bdy P(\eta, r ) \setminus \bdy_V P(\eta, r )$ consists of two disks, called the \emph{upper} and \emph{lower} boundary, and denoted $\bdy_\pm P(\eta, r )$. 

Let $X$ be a submanifold of $\HH^3$. We say that \emph{$X$ separates $P$} if $\bdy_-P$ and $\bdy_+ P$ are contained in different path-components of $P \setminus X$, and \emph{$X$ strongly separates $P$} if $\bdy_-P$ and $\bdy_+ P$ are contained in different path-components of $\HH^3 \setminus X$.
\end{definition}

\begin{lemma}\label{Lem:Pancake}
Let $\Pi_-, \Pi_+ \subset \HH^3$ be hyperbolic planes, such that $d(\Pi_-, \Pi_+) = 2\eta > 0$. Then there is a radius $r = r(\eta) > 0$ and a pancake $P = P(\eta, \, r(\eta))$ contained between $\Pi_-$ and $\Pi_+$, such that every convex submanifold $X \subset \HH^3$ that strongly separates $P$  also separates $\Pi_-$ from $\Pi_+$.
\end{lemma}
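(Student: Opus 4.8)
The plan is to reduce the separation of the two planes to the separation of the pancake by choosing the radius $r(\eta)$ large enough that the vertical boundary $\bdy_V P$ is "far" from both $\Pi_\pm$ in a quantitative sense, and then using convexity of $X$ to rule out the only way that strong separation of $P$ could fail to yield separation of $\Pi_-$ from $\Pi_+$. First I would set up coordinates: place the totally geodesic plane $\Pi_0$ equidistant from $\Pi_-$ and $\Pi_+$, so that $\Pi_\pm = \bdy_{\eta}(\Pi_0)$ (the two sheets of the $\eta$--neighborhood), and let $D_r \subset \Pi_0$ be the disk of radius $r$ centered at a basepoint $o$, giving the pancake $P = P(\eta, r)$. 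By construction $P$ is contained in the region between $\Pi_-$ and $\Pi_+$, and $\bdy_\pm P$ are disks sitting inside $\Pi_\pm$ respectively. So far $r$ is unconstrained; the choice of $r(\eta)$ is made below.

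Next I would argue by contradiction. Suppose $X \subset \HH^3$ is a convex submanifold that strongly separates $P$, i.e. $\bdy_- P$ and $\bdy_+ P$ lie in different components of $\HH^3 \setminus X$, but $X$ does \emph{not} separate $\Pi_-$ from $\Pi_+$. The latter means there is a path $\gamma$ from $\Pi_-$ to $\Pi_+$ in $\HH^3 \setminus X$; equivalently, since $X$ is closed and convex, its complement $\HH^3 \setminus X$ is connected only if $X$ has empty interior or misses a neighborhood of one plane, but more to the point, $\Pi_-$ and $\Pi_+$ lie in the \emph{same} component of $\HH^3 \setminus X$. Now pick a point $p_- \in \bdy_- P \subset \Pi_-$ and $p_+ \in \bdy_+ P \subset \Pi_+$; these are in different components of $\HH^3 \setminus X$ by strong separation, yet $\Pi_-$ and $\Pi_+$ are in the same component of $\HH^3 \setminus X$. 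Therefore there is a path in $\HH^3 \setminus X$ from $p_-$ to some point of $\Pi_-$, then (along $\Pi_-$, if $\Pi_- \cap X = \emptyset$ — which I'll need to arrange) staying in $\HH^3\setminus X$, and similarly on the $\Pi_+$ side, producing a contradiction with strong separation. The key geometric input that makes this work: because $X$ is convex and strongly separates $P$, $X$ must cross the "slab" of $P$ completely, and in particular $X$ must contain points at bounded distance — at most $\eta$ — from $\Pi_0$ within the cylinder over $D_r$; a convex set doing this, if it also meets $\Pi_-$ or exits through $\bdy_V P$, can be pushed to meet the far plane as well. I would make this precise with a width estimate: a convex set separating the two $\eta$--close disks $\bdy_\pm P$ inside the solid region, if it does not separate $\Pi_-$ from $\Pi_+$ globally, must "leak out the sides" of $P$, i.e. intersect $\bdy_V P$; and by convexity, a set that intersects $\bdy_V P$ and also reaches within $\eta$ of $\Pi_0$ near the center of $D_r$ subtends a definite angle, so for $r$ large enough it cannot stay between $\Pi_-$ and $\Pi_+$ — but that is automatically not required. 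Let me instead use the cleaner dichotomy below.

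The decisive step is the following dichotomy for a convex $X$ strongly separating $P$. Either (a) $X \cap \bdy_V P = \emptyset$, in which case $X \cap P$ is a convex set separating the two disk-faces of the pancake while avoiding its side wall, so $X$ literally separates the two planes $\Pi_\pm$ near the basepoint and, being convex, separates them globally — one checks that a convex set separating $\bdy_-P$ from $\bdy_+P$ inside $P$ and disjoint from $\bdy_V P$ extends to a separator of $\HH^3$ into the $\Pi_-$--side and the $\Pi_+$--side because any geodesic from $\Pi_-$ to $\Pi_+$ can be homotoped into the pancake where it must cross $X$; here convexity of $\HH^3 \setminus X$'s complement is used, i.e. the fact that the two "sides" of a convex hypersurface-like set are genuinely on opposite sides of its supporting hyperplanes. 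Or (b) $X \cap \bdy_V P \neq \emptyset$: then $X$ contains a point $q$ on the side wall at distance $\le \eta$ from $\Pi_0$ and at distance exactly $r$ (radially) from $o$, together with points $x_\pm$ near the centers of $\bdy_\pm P$ on the two faces — wait, strong separation only gives that $X$ separates the faces, not that $X$ meets them. I would instead argue: by strong separation there is no arc in $\HH^3\setminus X$ from $\bdy_-P$ to $\bdy_+P$; in particular the "detour around the edge" arc that goes from $\bdy_-P$ radially outward along $\Pi_-$, around outside $P$, and back in along $\Pi_+$ to $\bdy_+P$ must hit $X$; since the first and last legs lie in $\Pi_\mp$, either $X$ meets $\Pi_-\cup\Pi_+$ (and we are done: a convex set meeting one plane and strongly separating $P$ is forced by a supporting-hyperplane argument to separate the planes) or $X$ meets the exterior of $P$ arbitrarily far from $o$. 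The latter, combined with $X$ separating the two faces of $P$ near $o$ (so $X$ comes within $\eta$ of $\Pi_0$ near $o$), forces, by convexity, $X$ to contain a geodesic segment from near $o$ out to large radius staying in the $\eta$--slab, hence nearly parallel to $\Pi_0$ and lying between $\Pi_\pm$; taking $r$ large makes this segment so long that its convex hull with the separating part of $X$ near $o$ must protrude past $\Pi_-$ or $\Pi_+$ — no, a segment in the slab stays in the slab. The genuinely hard part, which I would need to nail down carefully, is exactly this: showing that a convex $X$ which separates the two faces of $P$ but stays strictly between $\Pi_-$ and $\Pi_+$ cannot reach the far vertical boundary for $r$ large; this is a hyperbolic-geometry estimate (a convex set that is $\eta$--thin transversally but spans a hyperbolic disk of large radius has limited reach, by convexity of the distance function), and it is where the choice $r = r(\eta)$ enters. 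Once that estimate is in hand, case (b) collapses to case (a), and the lemma follows.

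Thus the skeleton is: (1) normalize coordinates and define $P$ with $r$ to be chosen; (2) establish the hyperbolic estimate that a convex set spanning $\bdy_\pm P$ and staying between $\Pi_\pm$ must, for $r \ge r(\eta)$, be disjoint from $\bdy_V P$; (3) conclude that strong separation of $P$ by convex $X$ means $X\cap\bdy_V P = \emptyset$ and $X$ reaches within $\eta$ of $\Pi_0$, so by a supporting-hyperplane argument $X$ separates $\Pi_-$ from $\Pi_+$. I expect step (2) — the quantitative "convex sets can't span a wide thin pancake and still touch the rim" estimate — to be the main obstacle; everything else is bookkeeping with convexity and the structure of $P$.
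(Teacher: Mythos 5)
Your high-level strategy---first establish $X\cap\Pi_\pm=\emptyset$, then use that to route a path from $\bdy_-P$ to $\bdy_+P$ inside $\HH^3\setminus X$, contradicting strong separation---is the right one, and you correctly identify the key missing step as showing $X$ avoids the planes. But the argument you propose for that step does not hold up. A preliminary slip: $\bdy_\pm P$ does \emph{not} lie in $\Pi_\pm$. The surface $\bdy\neb_\eta(\Pi_0)$ is an equidistant surface with positive extrinsic curvature, whereas $\Pi_\pm$ is totally geodesic; they are tangent only at the single point where $\gamma$ meets $\Pi_\pm$ (repairable by using that tangent point, but it signals the geometric picture is off). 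The fatal issue is the dichotomy on $X\cap\bdy_V P$. Case (a), $X\cap\bdy_V P=\emptyset$, is vacuous: strong separation already gives $X\cap\bdy_\pm P=\emptyset$, so if in addition $X\cap\bdy_V P=\emptyset$ then the connected convex set $X$ is either contained in $\interior P$ or disjoint from $P$, and in either case one of the connected sets $\HH^3\setminus P$ or $P$ lies in $\HH^3\setminus X$ and meets both $\bdy_-P$ and $\bdy_+P$, so $X$ cannot strongly separate $P$. And the ``hard estimate'' you defer in case (b)---that a convex set separating the faces of $P$ and staying between $\Pi_\pm$ must, for $r$ large, miss $\bdy_V P$---is simply false: a totally geodesic plane near $\Pi_0$, or any thin convex slab about one, strongly separates $P$, stays between $\Pi_\pm$, and meets $\bdy_V P$ for every $r$. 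Step (2) of your skeleton is a dead end.

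What is missing is the actual device for choosing $r(\eta)$. The paper cones the midpoint $y$ of $\gamma$ to the ideal circles $\bdy\Pi_\pm$, forming a double cone $C$, and takes $r$ large enough that $\bdy_V P$ lies outside $C$. This guarantees that every geodesic line through $y$ and a point of $\Pi_\pm$ exits $P$ through $\bdy_\pm P$ rather than through $\bdy_V P$. Granting that, the argument for $X\cap\Pi_\pm=\emptyset$ is short: if $x\in X\cap\Pi_+$, the line $\beta$ through $x$ and $y$ meets both $\bdy_-P$ and $\bdy_+P$; strong separation forces $X$ to meet the subarc of $\beta$ between those two crossing points; and since $X$ is convex and also contains $x$, which lies on $\beta$ past the $\bdy_+P$ crossing, the interval $X\cap\beta$ must contain the $\bdy_+P$ crossing itself, contradicting $X\cap\bdy_+P=\emptyset$. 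Convexity is exploited along geodesics \emph{through $y$}, not through a dichotomy on whether $X$ touches the rim. You brush against this idea (``a convex set doing this \ldots\ can be pushed to meet the far plane as well'') but never find the cone condition on $r$ that makes the argument close.
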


\begin{figure}
\begin{overpic}[scale=0.8]{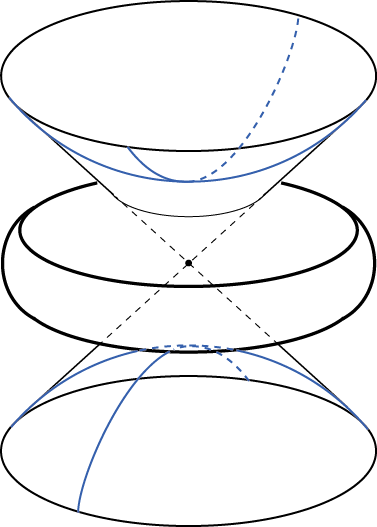}
\put(44,62){$C$}
\put(52,55){$\bdy_+ P$}
\put(52,41){$\bdy_V P$}
\put(44,80){$\Pi_+$}
\put(20,12){$\Pi_-$}
\put(32,95){$\bdy \Pi_+$}
\put(31,2){$\bdy \Pi_-$}
\put(35,53){$y$}
\end{overpic}
\caption{The construction of a pancake $P = P(\eta, r)$ in \reflem{Pancake}.}
\label{Fig:ConstructPancake}
\end{figure}

\begin{proof}
The pancake $P = P(\eta, \, r)$ is constructed as follows.
Let $\gamma$ be the unique geodesic segment of length $2 \eta$ connecting $\Pi_-$ to $\Pi_+$, and let $y$ be the midpoint of $\gamma$. Let $C \subset \HH^3$ be the double cone constructed by coning $y$ to $\bdy \Pi_\pm$.

The disk $D_r$  lies in the plane perpendicular to $\gamma$ at $y$. We choose a radius $r = r(\eta)$ large enough to ensure that the vertical boundary of $P(\eta, r ) = \neb_{\eta}(D_{r})$ lies entirely outside $C$. See \reffig{ConstructPancake}.

Let $X \subset \HH^3$ be a convex submanifold that strongly separates $P$. In particular, $X$ is disjoint from $\bdy_\pm P$.  Suppose, for a contradiction, that there is a point $x \in X \cap \Pi_+$. The geodesic line $\beta$ through $x,y$ must meet both $\bdy_-P$ and $\bdy_+ P$, hence contains points of $X$ between $\bdy_-P$ and $\bdy_+ P$. By convexity, the interval $\beta \cap X$  must intersect $\bdy_+ P$, since $x \in \beta \cap X$ is above $\bdy_+ P$. This contradicts the hypothesis that $X$ is disjoint from $\bdy_\pm P$.
 Thus $X$ must be disjoint from $\Pi_\pm$.

To show that $X$ strongly separates $\Pi_-$ from $\Pi_+$, consider a path $\beta \subset \HH^3$  from $\Pi_-$ to $\Pi_+$. Suppose, for a contradiction, that $\beta$ is disjoint from $X$. The geodesic segment $\alpha$ with the same endpoints as $\beta$ must 
intersect $P$ in an interval $I$, with $\bdy I \subset \bdy_\pm P$. Furthermore, $X \cap \alpha \subset P \cap \alpha = I$. By concatenating $\beta$ with the two components of $\alpha \setminus  \interior(I)$, we obtain 
a path from $\bdy_- P$ to $\bdy_+ P$ through $\HH^3 \setminus X$. This contradicts the hypothesis that $X$ strongly separates $P$.
\end{proof}

The point of \reflem{Pancake} is the following. If $M = \HH^3 / \Gamma$ is a cusped hyperbolic manifold, and $\Pi \subset \HH^3$ is a generic hyperbolic plane, the image of $\Pi$   is dense in $M$, and in particular  makes arbitrarily deep excursions into the cusps  \cite{Ratner:Topological, Shah:Closures}. When we Dehn fill $M$, we can control geometry using \refthm{BilipDrillFill}, but this control only works in regions of $M$ that stay out of the very thin  parts of the horocusps. Thus, in proving ubiquity, we cannot directly control what happens to $\Pi_\pm$  in Dehn fillings. By contrast,  the image of a pancake $P(\eta, \, r)$ in $M$ has bounded diameter, hence lies in $M^{\geq \epsilon}$ for some $\epsilon$. Thus we can use \refthm{BilipDrillFill} to understand what happens to the pancake during Dehn filling. This is used in the proof of \refthm{CuspedSurfacePancake}.

We will need to work with pancakes embedded in manifolds. If $f\from  \HH^3 \to M$ is a local isometry and $P \subset \HH^3$ is a pancake such that $f \vert_P$ is an embedding, we refer to the image $P_M = f(P) \subset M$ as a \emph{pancake in $M$}. The upper and lower boundary $\bdy_+ P_M$ and $\bdy_- P_M$ are well-defined via $f$. A submanifold $X \subset M$ is said to \emph{(strongly) separate $P_M$ in $M$} if there are elevations $\widetilde{X}$ of $X$ and $\widetilde{P}_M$ of $P_M$ to the universal cover $\widetilde{M}$, such that $\widetilde{X}$ (strongly) separates $\bdy_- \widetilde{P}_M$ from $\bdy_+ \widetilde{P}_M$. Since universal coverings are regular, one may first choose a lift $\widetilde{P}_M$ and then find an appropriate elevation $\widetilde{X}$.

  \section{Drilling Kahn--Markovic surfaces}\label{Sec:KMDrill}

The main goal of this section is to prove \refthm{CuspedSurfacesManySlopes}. That result  follows immediately from the following reformulated statement. 


\begin{theorem}\label{Thm:CuspedSurfacePancake} 
Let $M = \HH^3 / \Gamma$ be a cusped hyperbolic $3$--manifold. Let $\alpha_1, \ldots, \alpha_n$ be a collection of slopes  on cusps of $M$.
In addition, let $P = P(\eta, r)$ be a pancake in $\HH^3$. Then there are constants $k_i \in \NN$ and an immersed QF surface $F \to M$, such that the following hold:
\begin{enumerate}
\item\label{Itm:BoundarySlopes} For every $i$, at least one boundary component of $F$ is mapped to a multiple $k_i \alpha_i$.

\item\label{Itm:SepPancake} There is an elevation of $\core(F)$ to $\HH^3$ that strongly separates $P$.

\end{enumerate}
\end{theorem}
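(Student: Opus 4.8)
The plan is to follow the proof outline sketched in the introduction, assembling the ingredients from Sections~\ref{Sec:Background} and~\ref{Sec:Auxiliary}. The starting point is a large Dehn filling of $M$ together with a Kahn--Markovic surface in the closed manifold. Concretely, I would first fix the target bilipschitz constant $J$ close to $1$ and a curvature constant $\kappa$ (to be determined by \reflem{AlmostFlatThin} and \reflem{QFproduct}, so that $\kappa$--convex surfaces thicken to thin QF manifolds with nice product structures), obtain the cutoff $K=K(\mu_3,\kappa,J)$ from \refthm{BilipDrillFill}, and then choose filling slopes $\beta_1,\ldots,\beta_N$ on \emph{all} cusps of $M$ (including the cusps carrying $\alpha_1,\ldots,\alpha_n$, where we fill along a slope distinct from $\alpha_i$) with normalized length $\geq K\sqrt{N}$. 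This produces a closed hyperbolic manifold $N_0 = M(\beta_1,\ldots,\beta_N)$ together with the bilipschitz diffeomorphism $\varphi\from M \to N_0 \setminus \Delta$, where $\Delta = \delta_1 \cup \cdots \cup \delta_N$ is the union of core geodesics; note $\varphi$ carries the pancake $P_M \subset M^{\geq\epsilon}$ to an almost-isometric pancake $P_{N_0}$ in $N_0$.

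Next I would invoke Kahn--Markovic \cite{KahnM} together with Agol's separability theorem \cite{Agol} to pass to a finite cover $\hat N \to N_0$ containing a closed \emph{embedded} QF surface $\Sigma$ whose extrinsic curvature is everywhere $<\kappa$ and which, by \reflem{Pancake}, can be arranged so that an elevation of a small convex thickening $Q$ of $\Sigma$ strongly separates a lift of $P_{N_0}$. (Here one uses the ubiquity conclusion in the closed case: the Kahn--Markovic surfaces strongly separate the two totally geodesic planes bounding the pancake $P$, hence strongly separate $P$ itself; passing to a cover where the relevant elevation is embedded uses separability.) Let $\Wcal \subset \hat N$ be the preimage of the filled solid tori --- a disjoint union of tubes around the preimages $\Delta' \subset \hat N$ of $\Delta$ --- and let $\hat M = \hat N \setminus \Delta'$ be the corresponding finite cover of $M$. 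Now apply the asymmetric combination theorem \refthm{CombAsymmetric} with $M_0 = Q$ (suitably thickened) as the distinguished piece and $M_1,\ldots,M_m$ the tubes in $\Wcal$: the convex hull $Z = \CH(Q \cup \Wcal)$ stays $\epsilon$--close to $Q$ away from the tubes, and in particular $\bdy Z$ is strictly convex and stays far from the core geodesics $\Delta'$. By \refthm{BilipDrillFill}\refitm{CurvBound} and the fact that the geometries of $\hat M \setminus \Wcal$ and $\hat N \setminus \Wcal$ agree outside the drilled tubes, $\bdy Z$ is still locally convex in $\hat M$, so $Y = Z \setminus \Delta'$ is a convex finite-volume submanifold of $\hat M$ whose ends are rank-$1$ and rank-$2$ cusps.

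Then I would perform the surgery step: starting from $\bdy Y$, which may be compressible or contain annuli running out the cusps of $\hat M$, compress along disks and annuli inside $Y$ to obtain a properly embedded, $\pi_1$--injective, geometrically finite surface $F \subset Y$ with no accidental parabolics, whose boundary slopes on each reopened cusp are controlled --- in particular, on the cusp of $\hat M$ covering the $\alpha_i$-cusp of $M$, the boundary of $F$ is a multiple $k_i \alpha_i$ (this is where choosing the filling slope $\beta_i \neq \alpha_i$ matters, so that the reopened meridian disk forces the boundary curves of the surgered surface onto the $\alpha_i$-direction). By \refthm{BonahonCanary}, $F$ is QF, and its projection to $M$ is the desired immersed QF surface; conclusion~\refitm{BoundarySlopes} follows from the slope control, and conclusion~\refitm{SepPancake} follows because $\core(F)$ lies in $Z$ which strongly separates (an elevation of) $P$ --- one must check that the surgery, being done inside $Y$ along compressions, does not destroy the separation property, which holds because $\core(F)$ is homologous into $\bdy Z$ in the relevant sense and an innermost/outermost argument confines everything to the separating region.

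\textbf{Main obstacle.} The delicate part is the surgery step and the verification that the surgered surface $F$ both realizes the prescribed slopes $k_i\alpha_i$ and still separates the pancake. One must arrange the compressions and annulus-boundary-compressions so that: (i) the result is incompressible and boundary-incompressible and has no accidental parabolics (so \refthm{BonahonCanary} applies); (ii) exactly one boundary slope on each $\alpha_i$-cusp survives and equals a multiple of $\alpha_i$; and (iii) the separation of $P$ is preserved under all of these modifications. Keeping these three requirements simultaneously satisfied --- while only manipulating surfaces \emph{inside} the convex region $Y$ so that convexity and the pancake-separation are never lost --- is the technical heart of the argument.
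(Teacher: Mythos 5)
Your outline correctly identifies the main ingredients (large Dehn filling controlled by Brock--Bromberg, a Kahn--Markovic surface, separability to embed a convex thickening, the asymmetric combination theorem to build a convex envelope $Z$, and a drilling/surgery step plus \refthm{BonahonCanary}). However, there is a decisive error in the slope bookkeeping, and the surgery step as described cannot produce a cusped surface at all.

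The fatal point is your choice to fill along slopes $\beta_i$ \emph{distinct} from $\alpha_i$, expecting the reopened cusp to ``force the boundary curves onto the $\alpha_i$-direction.'' This is exactly backwards. If you Dehn fill a cusp $V$ along $\beta$, the meridian disk of the added solid torus $W$ is glued to $\beta$, so $\beta$ \emph{is} the meridian of the core geodesic $\delta$. When $\delta$ is drilled back out and a surface $S$ meets $\delta$ once transversely, a small disk of $S$ around the intersection point is (isotopic to) a meridian disk of $W$, so the new boundary component of $S\setminus\delta$ is the meridian of $\delta$ --- that is, the filling slope $\beta$, not $\alpha$. There is no mechanism by which the drilled boundary can land on $\alpha_i$ if you filled along $\beta_i\neq\alpha_i$; the drilling construction produces exactly the slope you filled. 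The paper fills along $k\alpha_i$ precisely for this reason, so that the drilled-out boundary is a multiple of $\alpha_i$.

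There is a second, structural problem with the surgery step. You propose to obtain the cusped surface by compressing $\bdy Y$ along disks and annuli inside $Y=Z\setminus\Delta'$. But $\bdy Y = \bdy Z$ is a \emph{closed} surface: the drilled geodesics $\Delta'$ lie in the interior of $Z$, so removing them does not change the boundary, and no amount of compression in the interior will give you a surface with cusps running out to the reopened tori. To produce a cusped surface one must work with a surface that actually passes through the cores $\Delta'$ before drilling. This is what the paper does in \refprop{QFDrilling}: it isotopes the \emph{midsurface} $S$ of the QF piece $Q$ inside each tube (after classifying the tubes as skirting, meridional, or longitudinal), arranges that $S$ meets the cores of the meridional tubes once transversely, and then sets $\Fcal = S\setminus\Delta'$, whose punctures are the desired cusps. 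Relatedly, your outline omits the step (\refclaim{ErgodicDisk} in the paper, using Shah/Ratner) that arranges the Kahn--Markovic surface to actually hit an elevation of \emph{every} core geodesic transversely; without this, nothing guarantees that a single component of the drilled surface carries a cusp over each $\alpha_i$.
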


\begin{proof}[Proof of \refthm{CuspedSurfacesManySlopes}, assuming \refthm{CuspedSurfacePancake}]
For any pair of disjoint planes $\Pi_\pm$, whose distance is $2\eta$, construct a pancake $P = P(\eta, r)$ as in \reflem{Pancake}.
Let $F$ be a QF surface produced by  \refthm{CuspedSurfacePancake}. The elevation $\widetilde{\core}(F) \subset \HH^3$  that strongly separates $P$ is a convex set, hence $\widetilde{\core}(F)$ also separates $\Pi_-$ from $\Pi_+$. Thus the surfaces produced using \refthm{CuspedSurfacePancake} are ubiquitous.
\end{proof}

It is worth recalling that \refthm{CuspedSurfacePancake} and its consequence in  \refthm{CuspedSurfacesManySlopes} already implies a weak form of cubulation for $\pi_1(M)$. See \refcor{WeakCubulation} below for details.

The proof of \refthm{CuspedSurfacePancake} consists of two halves: filling and drilling. After passing to a cover where distinct slopes $\alpha_i$ lie on distinct cusps, we perform a long Dehn filling on $M$ along multiples of $\alpha_1, \ldots, \alpha_n$. The result is a closed hyperbolic $3$--orbifold $N$. For the sake of this outline, it helps to imagine that $N$ is a manifold. The Kahn--Markovic theorem gives a very thin immersed QF surface $S \to N$. We   study the intersection of a convex thickening $Q$ of $\core(S)$ with the union of Dehn filling tubes to build a \emph{convex envelope} $Z = \Qthick \cup \Vcal$, where $\Vcal$ consists of a subset of the tubes. This convex envelope is embedded in some cover of $N$; for now it helps to imagine that no cover is needed.

In the second half of the proof, we  surger the midsurface $S$ of $Q$, while taking care to stay within the convex envelope $Z$. We then drill out  the Dehn filling cores, recovering $M = N \setminus \Delta$. This produces a (possibly disconnected) surface $\Fcal \subset Z \setminus \Delta$. The convexity of $\bdy Z$ ensures that the components of $\Fcal$ have convex cores contained in $Z \setminus \Delta$, which implies they are quasi-Fuchsian.   

We begin the proof in \refsec{QFDrill} by laying out the drilling portion of the argument. See \refprop{QFDrilling} for a self-contained if somewhat lengthy statement. In \refsec{FillAndCover}, we lay out the Dehn filling argument, including repeated passage to covers, and incorporate \refprop{QFDrilling} to complete the proof of \refthm{CuspedSurfacePancake}.
In \refsec{MakingDo}, we sketch how the proof of \refthm{CuspedSurfacePancake} can be modified to avoid using several large hammers.

\subsection{Drilling a quasi-Fuchsian surface}\label{Sec:QFDrill}
We employ the following convention introduced by Baker and Cooper \cite{BCQFS}. From now until the start of \refsec{Cubulation}, calligraphic letters   always denote disjoint unions of objects (typically finitely many objects). The corresponding Roman letters denote the individual components. For instance, in the following proposition, $\Vcal$ 
denotes a disjoint union of tubes in a manifold $N$, whereas $V$ is a single tube forming a component of $\Vcal$. Similarly, $\delta(V)$ denotes the core of a tube $V$, while $\delta(\Vcal)$ is a disjoint union of all the cores.

\begin{proposition}\label{Prop:QFDrilling} 
Suppose that $N$ is a complete hyperbolic $3$--manifold and  $\Qthick \subset N$ is a compact, embedded QF submanifold with a nice product structure. Suppose that $\Vcal \subset N$ is a disjoint union of fat tubes, such that
every tube $V \subset \Vcal$ intersects $\Qthick$ in a single component of intersection and $\Qthick \cap \delta(V)$ is empty or has a fat collar.
Let $\Zunion = \Qthick \cup \Vcal$. Suppose $\Delta \subset N$ is a geodesic $1$--manifold such that 
$\Delta \cap \Zunion = \delta(\Vcal)$, the disjoint union of the cores of $\Vcal$.
  
Suppose that $M$  is a complete hyperbolic manifold with a diffeomorphism $\varphi\from  M \to N \setminus \Delta $, such that  $Y = \varphi^{-1}(\Zunion \setminus\Delta)$ is convex and has finite volume in the hyperbolic metric on $M$.

  Then   $M$ contains a (possibly disconnected) embedded surface $\Fcal$. Each component $F \subset \Fcal$ is quasi-Fuchsian, with $\core(F) \subset Y$. 
  The cusps of $\Fcal$ correspond to the components of $\Delta$ meeting $\Qthick$, as follows. 
For each meridional geodesic $\delta \subset \Delta$, one cusp of $\Fcal$  is a meridian of $\delta$. For each longitudinal geodesic $\delta \subset \Delta$, two cusps of $\Fcal$  are longitudes of $\delta$. Skirting geodesics do not contribute cusps of $\Fcal$.
  
 Finally, suppose that $P_M \subset M$ is an embedded pancake, such that $\Vcal \cap \varphi(P_M) = \emptyset$ and $\Qthick$ strongly separates $\varphi(P_M)$ in $N$. Then there is a component $F \subset \Fcal$ whose convex core strongly separates $P_M$. 
 \end{proposition}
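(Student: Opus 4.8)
The plan is to construct $\Fcal$ by a two-stage surgery on the midsurface $S$ of $\Qthick$, carried out entirely inside the convex envelope $\Zunion$, and then to transport the result into $M$ via the diffeomorphism $\varphi$ and apply \refthm{BonahonCanary}.

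\textbf{Step 1: Surgering the midsurface inside $\Zunion$.} Let $S = f(S\times 0)$ be the midsurface of $\Qthick$ coming from the nice product structure. For each tube $V\subset\Vcal$ with $\Qthick\cap\delta(V)$ nonempty, the geodesic $\delta(V) = \Delta\cap V$ is of one of the three types of \refdef{SkirtMerLong}, by \reflem{SkirtMerLong} (here we use that $\Qthick$ is a convex thickening of $\core(S)$ with a nice product structure, and that $\delta(V)$ has a fat collar). The horizontal projection $\pi_h$ restricted to $\delta(V)\cap\Qthick$ is injective by \reflem{Unknotted}, so $\pi_h(\delta(V)\cap\Qthick)$ is an embedded arc or circle in $S$; I isotope $S$ slightly so that $S$ is disjoint from $\delta(V)$ and meets $\partial V$ transversely, with $S\cap V$ a collection of disks (in the skirting case, isotope $S\cap V$ off $V$ entirely) and one annulus or two annuli running from $\partial V$ to itself along $\delta(V)$ in the longitudinal case, one annulus with a single boundary circle on $\partial V$ bounding a meridian disk in the meridional case. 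Concretely: in the meridional case, compress $S$ across a meridian disk of $V$ hitting $\delta(V)$ once, creating a boundary circle that is a meridian of $\delta(V)$; in the longitudinal case, since $\delta(V)$ is isotopic into $\bdy_+\Qthick$, remove an annular neighborhood of $\delta(V)$ from $S$, creating two boundary longitudes; in the skirting case, do nothing new. All these moves take place in a small product neighborhood of $\delta(V)\cap\Qthick$ inside $\Qthick\subset\Zunion$, so the resulting surface $S'$ has $S'\subset \Zunion\setminus\Delta$, and is isotopic to $S$ in $Z$ away from $\Delta$. The boundary slopes on each $\partial V$ are exactly as claimed: meridians for meridional $\delta$, pairs of longitudes for longitudinal $\delta$, none for skirting. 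Passing through the remaining tubes (those with $\Qthick\cap\delta(V)=\emptyset$, i.e. $\delta(V)$ skirting) requires nothing. Set $\Fcal = \varphi^{-1}(S')$, a properly embedded surface in $Y = \varphi^{-1}(\Zunion\setminus\Delta)$, and its components are the components $F$.

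\textbf{Step 2: Each $F$ is quasi-Fuchsian with convex core in $Y$.} Fix a component $F$. Since $Y$ is convex and finite volume by hypothesis, and $F\hookrightarrow Y$ is properly embedded, I need $\pi_1 F\to\pi_1 Y$ to be faithful and type-preserving, then \refthm{BonahonCanary} finishes it (and gives $\core(F)\subset Y$ by convexity of $Y$). Faithfulness: the inclusion $\Qthick\hookrightarrow\Zunion$ is a homotopy equivalence (each tube $V$ is glued along a single disk or annulus of intersection, which is a deformation retract consideration — more carefully, $\Zunion$ deformation retracts onto $\Qthick$ together with the cores, and removing $\Delta$ retracts $\Zunion\setminus\Delta$ onto $\Qthick$), so $\pi_1 Y\cong\pi_1\Qthick\cong\pi_1 S$ after identification via $\varphi$; and $S'$ is obtained from $S$ by surgeries along embedded essential arcs/curves, so each component $F$ is $\pi_1$-injective into $Y$ essentially by a standard innermost-disk / incompressibility argument (the surgered pieces $S\cap V$ were unknotted, hence boundary-parallel or vertical, so the surgeries do not create a compressible surface). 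Type-preserving: the cusps of $\Fcal$ are exactly the new boundary circles created in Step 1, which are meridians or longitudes of the drilled geodesics $\delta$, hence peripheral in $M = N\setminus\Delta$ and sent to parabolics; and there are no accidental parabolics because any essential non-peripheral loop in $F$ is freely homotopic in $Y$ into $\Qthick$, where the holonomy is quasi-Fuchsian and hence loxodromic on non-peripheral elements (the fat-collar and nice-product hypotheses ensure the surgery curves are genuinely peripheral, not accidental). So \refthm{BonahonCanary} applies.

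\textbf{Step 3: The pancake-separation conclusion.} Now assume $P_M\subset M$ is an embedded pancake with $\Vcal\cap\varphi(P_M)=\emptyset$ and $\Qthick$ strongly separates $\varphi(P_M)$ in $N$. Since $\varphi(P_M)$ is disjoint from all of $\Vcal$, it lies in $N\setminus\Vcal \subset N\setminus\Delta$, and $\varphi$ restricted there is a diffeomorphism onto a subset of $M$; moreover $\bdy_\pm\varphi(P_M)$ lie in different components of $\widetilde N\setminus\widetilde\Qthick$ for suitable elevations. Pick an elevation $\widetilde{P_M}$ of $P_M$ to $\HH^3 = \widetilde M$ and, using that universal covers are regular, an elevation $\widetilde{\Qthick}$ of $\Qthick$ (equivalently of $Y$, after $\varphi$) that separates $\bdy_-\widetilde{P_M}$ from $\bdy_+\widetilde{P_M}$. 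Since $S'$ was obtained from $S\subset\Qthick$ by surgeries supported near $\Delta$ and $\widetilde{P_M}$ is disjoint from (the preimage of) $\Vcal\supset$ a neighborhood of $\Delta$, the surface $S'$ and the original $S$ agree near $\widetilde{P_M}$, so the separating wall $\widetilde{\Qthick}$ can be traded for the component of $\varphi^{-1}(S')$ through it: concretely, the component $F$ of $\Fcal$ whose elevation lies "between" $\bdy_-\widetilde{P_M}$ and $\bdy_+\widetilde{P_M}$ has $\widetilde{\core}(F)\supset$ a surface separating the two disks, because $\core(F)\subset Y$ and $\widetilde Y$ (this elevation) strongly separates the pancake. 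A little care is needed to argue that one single component $F$, rather than several, does the separating — this follows because $\Qthick$, being connected with connected midsurface $S$, gives a connected separating wall, and the surgeries of Step 1 do not disconnect the wall in the region near $\widetilde{P_M}$ (they happen inside the $\Vcal$-neighborhood of $\Delta$, away from $\widetilde{P_M}$). Thus $\widetilde{\core}(F)$ strongly separates $P_M$.

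\textbf{Main obstacle.} The delicate point is Step 3 together with the incompressibility/type-preserving verification in Step 2 in the \emph{longitudinal} case: there, removing an annular neighborhood of $\delta(V)$ can in principle disconnect $S'$, and one must check both that the resulting pieces remain incompressible (so that each is genuinely quasi-Fuchsian and not, say, an accidental annulus or a sphere) and that exactly one piece still carries the pancake-separating wall. This is where the hypotheses that $\delta(V)$ has a fat collar and that $\Qthick$ has a nice product structure are essential: they force $\delta(V)\cap\Qthick$ to be unknotted, hence parallel into $\bdy_+\Qthick$, so the annular surgery is a controlled boundary-parallel modification rather than an arbitrary one, and the complementary region near $\widetilde{P_M}$ is untouched. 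Making this precise — in particular tracking which component of the possibly-disconnected $\Fcal$ inherits the separation — is the crux of the argument.
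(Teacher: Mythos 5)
Your overall strategy matches the paper's: classify the tubes by how their cores meet $Q$, isotope the midsurface $S$ according to type so that $\Fcal = S \setminus \Delta$ has the prescribed cusps, and then verify $\Fcal$ is quasi-Fuchsian via \refthm{BonahonCanary} and argue the separation. The moves you propose in Step 1 are essentially the same as the paper's. However, there are genuine gaps in Steps 2 and 3.

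\textbf{Step 2 contains a false claim.} You assert that $\Zunion\setminus\Delta$ deformation retracts onto $\Qthick$ and conclude $\pi_1 Y \cong \pi_1\Qthick \cong \pi_1 S$. This fails for meridional tubes. For such a tube $V$, the intersection $H = \Qthick\cap V$ is a convex ball meeting $\bdy\Qthick$ in two disks $D_\pm\subset\bdy_\pm\Qthick$, so $\Qthick\cup V$ is $\Qthick$ with a $1$--handle attached, and $\pi_1(\Qthick\cup V)$ is strictly larger than $\pi_1\Qthick$. Removing the core $\delta$ does not restore the retraction onto $\Qthick$; on the contrary, $(\Qthick\cup V)\setminus\delta$ acquires a rank--$2$ cusp. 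The longitudinal case is also problematic for your retraction since $\delta\subset\Qthick$ there, so removing $\Delta$ already changes $\Qthick$. Because this isomorphism is false, your proposed reduction of $\pi_1$--injectivity of $F$ in $Y$ to a subsurface statement in $S$ does not go through. The paper instead argues incompressibility of $F$ \emph{in $M$ directly}, using that $S$ is incompressible in $N$ and that $N$ is irreducible: a compressing disk $D$ for $F$ would bound together with some $D'\subset S$ a separating $2$--sphere, which is forced to miss $\Delta$, putting $D'\subset F$ and killing the compression. You would need this argument (or a correct replacement); ``a standard innermost-disk argument'' is not a substitute because the ambient manifold is not a product.

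\textbf{The type-preserving claim is also not established.} You say essential non-peripheral loops in $F$ are ``freely homotopic in $Y$ into $\Qthick$,'' but this does not follow once $\pi_1 Y\neq\pi_1\Qthick$. The paper uses Jaco's Annulus Theorem to produce an embedded annulus from a putative accidental parabolic to a cusp of $Y$, and then carefully intersects that annulus with the planar surfaces $\Sigma(V)$ set up during the isotopy in Step 1, ruling out accidental parabolics by tracking the innermost intersection curve. Some version of this annulus argument is required; merely observing that the new boundary circles are meridians or longitudes does not exclude a non-peripheral curve of $F$ being parabolic.

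\textbf{Step 3 is acknowledged but not completed.} You flag the crux — showing a \emph{single} component $F$ inherits the separation — and gesture at the fact that the surgeries are supported away from the pancake. This is correct intuition but not a proof: even if $\Fcal$ agrees with $S$ near the pancake, an elevation of the full preimage of $\Fcal$ separating the pancake is not the same as an elevation of one component doing so. The paper resolves this by passing to the cover $M_F$ of $M$ corresponding to $\pi_1 F$, lifting the pancake there, showing any path from $\bdy_-\hat P_M$ to $\bdy_+\hat P_M$ must cross $\hat F$ (using that an essential crossing of any \emph{other} component would produce a path ending on the wrong lift), and then checking $\core(\hat F)$ is disjoint from $\bdy_\pm\hat P_M$ via $\core(F)\subset Y$. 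That cover argument is the content you would need to supply.
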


In the statement of the proposition, $\varphi(P_M) \subset N$ is diffeomorphic but not necessarily isometric to a pancake. Since the definition of (strongly) separating a pancake is purely topological, the statement that $\Qthick$ strongly separates $\varphi(P_M)$ is unambiguous.

The surface $\Fcal$ in the statement of \refprop{QFDrilling} is constructed as follows. The tubes of  $\Vcal$ can be 
subdivided into three types, according to how their cores intersect $\Qthick$. (See \refdef{SkirtMerLong}.) For each type of tube, we   perform a local isotopy of the midsurface $S$ of $Q \cong S \times I$. After this isotopy, we let $\Fcal = S \setminus \Delta$. Most of the proof is devoted to verifying that $\Fcal$ has all the desired properties; this verification is broken up into a number of claims.

 \begin{proof}[Proof of \refprop{QFDrilling}]  
 Let $V$ be a tube component of $\Vcal$, let $\delta = \delta(V)$ be the core of $V$, and let $\alpha = \delta \cap \Qthick$. 
 If $H = \Qthick \cap V$ is non-empty, then it
is connected and convex, hence \reflem{ConvexProperties} implies
  $\pi_1 H$ is isomorphic to a subgroup
of $\pi_1 V \cong \ZZ$. 
 If $\pi_1 H \cong \ZZ$,
then $H$ is a (convex) tube, hence  $\delta \subset H \subset \Qthick$. Otherwise, $\pi_1 H \cong \{1\}$ and $H$ is a convex ball, hence $\alpha = \delta \cap H$ is 
empty or an arc.
In every case where $\alpha \neq \emptyset$, it is compact and unknotted in $\Qthick$ by \reflem{Unknotted}. 

The universal thickening $\Th_{\infty}(\Qthick)$ is a cover of $N$.
If $\alpha = \delta \cap Q$ is non-empty, the isometric inclusion $Q \hookrightarrow \Th_{\infty}(\Qthick)$ determines a unique complete  geodesic $\hat \delta \subset \Th_{\infty}(Q)$ extending $\alpha$ along a tangent vector. 
   Following \refdef{SkirtMerLong}, we call $V \subset N$ a \emph{skirting tube}, \emph{meridian tube}, or \emph{longitude tube} according to the type of $\alpha = \hat \delta \cap Q$.

   Let $S$ be the midsurface of $\Qthick$ with respect to the chosen nice product structure.
For each type of tube, we   perform an  isotopy of $S$ in $H$. 
This can be done independently for
each tube $V\subset \Vcal$, because the tubes in $\Vcal$ are compact and pairwise disjoint.

\smallskip
\noindent{\bf Skirting tube}:  
If $\alpha \neq \emptyset$, then it has both endpoints on the
same component of $\bdy \Qthick$.
Isotop $S$ inside $H$ so it is disjoint  from $\alpha$. If $\alpha = \emptyset$, no isotopy is needed.
In either case, $\Qthick \cup V$ is homeomorphic
to $\Qthick$ with $V$ glued onto the boundary along a disk.

There is a disk $\Sigma(V) \subset \Qthick \cap V$, with $\bdy \Sigma(V) \subset \bdy \Qthick \cap \bdy V$, and with
 $\alpha \cap \Sigma(V) = \emptyset$  that separates $\alpha$ from $S$. For future usage, we isotop each of $\Qthick$ and $V$ to its own side of $\Sigma(V)$. (After this isotopy, $\Qthick$ and $V$ may no longer be convex. However, the topological setup where $V$ is glued to $\Qthick$ along $\Sigma(V)$  is helpful below.)
 
\smallskip
\noindent{\bf Meridian tube}: 
In this case,  
 $H \cap \bdy \Qthick$  contains two disks $D_{\pm}(V) \subset\bdy_{\pm} \Qthick$, each containing an endpoint of $\alpha$,
and each serving as a compressing disk for $V$. Thus, after an isotopy of $S$ inside $H$,
we may assume $S \cap \delta$ consists of one transverse intersection point.
In this case, $\Qthick \cup V$ is homeomorphic to the manifold
 obtained by attaching a 1--handle to $\Qthick$ along $D_{\pm} (V)$. Define $\Sigma(V) = (D_-(V) \cup D_+(V)) \setminus \delta$.
 
\smallskip
\noindent{\bf Longitude tube}: 
In this case, $ V$ is a  fat tube that contains  the unknotted circle $\alpha=\delta(V)$. 
As in the proof of \reflem{Unknotted},
the vertical arcs in $\Qthick$ meeting $\alpha$ form
 an embedded annulus $A(V)$ that is contained in $V$ because $V$ is fat.  Thus $A(V)$ is
 properly
embedded in $H = \Qthick \cap V$ and contains $\alpha$. 
This annulus has one boundary component in
each component of $\bdy_{\pm}\Qthick$. After an isotopy of $S$ in $H$, we may assume that $S$ contains $\alpha$.

In the longitudinal case, $\Qthick \cup V$ is homeomorphic to $\Qthick$. For future usage, we take two parallel copies of $A(V)$, denoted $A_0(V)$ and $A_1(V)$, and isotop the solid torus $V$ inward, until $H = \Qthick \cap V$ is the portion of $\Qthick$ contained between $A_0(V)$ and $A_1(V)$. After this isotopy, $V$ or $H$ may no longer be convex, but $H$  still contains the closed geodesic $\delta$. Define $\Sigma(V) = A_0(V) \cup A_1(V)$.

\smallskip

After performing the above isotopy of $S$ for each component of $ \Vcal$, the midsurface $S \subset \Qthick$ has the following 
  properties. If $\delta$ is the core
 of a longitude tube, then $\delta\subset S$. If $\delta$ is the core of a meridian tube,
 then $\delta$ intersects $ S$ once transversely. If $\delta$ is the core of a skirting
 tube, then $\delta\cap S=\emptyset$.

Set $M=N\setminus\Delta$. Then $\Fcal = S\setminus\Delta$  is a surface properly embedded in $M$, which  is
 disconnected if and only if the union of the longitude tubes separates $S$.

Observe that $\Fcal$ has punctures of two kinds. Each meridian tube gives rise to one puncture of $\Fcal$, whose slope is a meridian of a component of $\Delta$. Each longitude tube gives rise to two punctures of $\Fcal$, whose slopes are 
longitudes of a component of $\Delta$. The skirting tubes do not contribute punctures.

From now on, we focus attention on a component $F \subset \Fcal$.
The following sequence of claims shows that $F$ has all the required properties.

\begin{claim}\label{Claim:Incompressible}
$F$ is incompressible in $M$. 
\end{claim}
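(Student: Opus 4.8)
The plan is to show that $F$ is incompressible by showing that the inclusion-induced map $\pi_1 F \to \pi_1 M$ is injective, and the natural route is to recognize $F$ as sitting inside a convex submanifold of $M$ whose inclusion we understand. Recall that $Y = \varphi^{-1}(\Zunion \setminus \Delta)$ is a convex, finite-volume hyperbolic submanifold of $M$, and by construction $F \subset \varphi^{-1}(S \setminus \Delta) \subset Y$, since the midsurface $S$ lies in $\Qthick \subset \Zunion$ and $S \cap \Delta$ consists only of points removed in passing to $M$. By \reflem{ConvexProperties}, the inclusion $Y \hookrightarrow M$ is $\pi_1$--injective (it is a local isometry of a convex manifold into a hyperbolic manifold). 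So it suffices to prove that $F$ is incompressible in $Y$, or even just that $\pi_1 F \to \pi_1 Y$ is injective.

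The key point is that, \emph{before} drilling, the midsurface $S$ of $\Qthick \cong S \times I$ is incompressible in $\Qthick$ — indeed $\pi_1 S \to \pi_1 \Qthick$ is an isomorphism — and $\Qthick$ is incompressible in $N$ because it is a compact QF submanifold, hence $\pi_1$--injective in $N$ by convexity and \reflem{ConvexProperties}. The topological effect of passing from $\Qthick$ to $\Zunion = \Qthick \cup \Vcal$ and of the isotopies performed in the proof is, for each tube: in the skirting case, gluing a solid torus onto a boundary disk (not changing $\pi_1$); in the meridian case, attaching a $1$--handle; in the longitude case, not changing the homeomorphism type of $\Qthick$ at all, with $S$ deformed to contain the unknotted core geodesic $\delta$. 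After drilling out $\Delta$ to form $M = N \setminus \Delta$ and $Y = \Zunion \setminus \Delta$, the surface $\Fcal = S \setminus \Delta$ sits inside $Y$, and the claim is that each of its components remains incompressible. First I would handle the model cases separately: for a meridian tube, $\delta$ meets $S$ in a single transverse point, so drilling $\delta$ turns $S$ into $S$ minus a point; since $S$ was incompressible in $\Qthick \cup (\text{1-handle})$ and drilling a single transverse arc does not create compressing disks (a compressing disk for $S \setminus \delta$ in $Y$ would, after capping off the boundary around $\delta$, give one for $S$ in $\Zunion$, or else an essential curve bounding in the complement of $\delta$, contradicting that the puncture slope is a meridian). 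For a longitude tube, $\delta \subset S$ is an unknotted circle, and removing it from $S$ just removes an essential simple closed curve; a compressing disk for a component of $S \setminus \delta$ would again cap off to a compressing disk for $S$ in $\Zunion$ or would exhibit one of the longitude slopes as null-homotopic in $Y \setminus$(neighborhood of $\delta$), both impossible since $\delta$ is a longitudinal geodesic.

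The cleanest way to organize all of this is: (i) observe $F \hookrightarrow Y$; (ii) observe $Y \hookrightarrow M$ is $\pi_1$--injective by \reflem{ConvexProperties}; (iii) reduce to showing $\pi_1 F \to \pi_1 Y$ injective; (iv) note that $\pi_1 Y$ is obtained from $\pi_1 \Zunion$ by an HNN/amalgam description tracking the drilled curves $\Delta$, and that $S$ (with its isotoped punctures) generates a subgroup that injects because $S$ was incompressible in $\Zunion$ and the punctures of $\Fcal$ run along \emph{meridians} and \emph{longitudes} of the drilled geodesics exactly as recorded in the statement — a loop in $F$ that dies in $\pi_1 Y$ would, via the retraction of $Y$ onto $S$ away from small neighborhoods of $\Delta$, die already in $\pi_1 S$ modulo the peripheral data, and the peripheral loops of $F$ are genuinely non-trivial in $\pi_1 Y$ by the longitude/meridian bookkeeping. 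I would phrase the final argument via the standard dichotomy: a compressing disk $D$ for $F$ in $M$ can be isotoped into $Y$ (its boundary is in $F \subset Y$ and $Y$ is $\pi_1$--injective in $M$, so $\bdy D$ bounds in $Y$; take an innermost/least-area such disk); then $D \subset Y = \Zunion \setminus \Delta$, and filling back in $\Delta$ and applying the isotopies in reverse exhibits $D$ as a compressing disk for $S$ in $\Zunion$, contradicting incompressibility of $S$ there (which follows from $\pi_1 S \cong \pi_1 \Qthick \hookrightarrow \pi_1 \Zunion \hookrightarrow \pi_1 N$).

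The main obstacle I anticipate is handling the longitude tubes carefully: there, removing $\delta \subset S$ disconnects the relevant piece of $S$ or at least changes it into a surface with two new punctures, and one must make sure that (a) $\bdy D$ cannot be one of these new longitude loops (this uses that $\delta$ is a \emph{closed geodesic} in $N$, hence primitive and non-peripheral in $\pi_1 N$, so its longitudes are non-trivial in $\pi_1(N \setminus \delta)$ and hence in $\pi_1 Y$), and (b) the retraction / disk-surgery argument is compatible with having isotoped $\Qthick$ and $V$ to opposite sides of the annuli $\Sigma(V)$. All of this is elementary $3$--manifold topology, but it requires bookkeeping the three tube types uniformly; I would state a single lemma to the effect that "drilling an unknotted geodesic out of an incompressible midsurface, with punctures along meridians or longitudes, preserves incompressibility" and apply it to each component of $\Vcal$ in turn.
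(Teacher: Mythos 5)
Your proposal circles around the right object (a compressing disk) but does not close the key gap, and the machinery you invoke ($\pi_1 Y$, HNN decompositions, retraction of $Y$ onto $S$) is largely a detour.

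The crux of the problem is your assertion that ``filling back in $\Delta$ and applying the isotopies in reverse exhibits $D$ as a compressing disk for $S$ in $\Zunion$.'' This does not follow: if $\alpha = \bdy D$ is essential in $F = S \setminus \Delta$, it need not be essential in $S$. For instance, $\alpha$ could bound a disk $D' \subset S$ whose interior meets two or more of the meridional intersection points $\Delta \cap S$; then $D' \setminus \Delta$ is a multiply-punctured disk in $F$, so $\alpha$ is genuinely essential in $F$ even though it is null-homotopic in $S$ and in $\Zunion$. Your case analysis for a single meridian or longitude tube (``cap off the boundary around $\delta$'') does not address this multi-tube configuration. This is precisely what the paper's argument is built to rule out: take the disk $D' \subset S$ bounded by $\alpha$ (using only that $S$ is incompressible in $N$), observe that each component of $\Delta$ meets $D'$ transversely in at most one point (meridional geodesics meet $S$ once, and a longitudinal geodesic cannot lie in a disk since it is a closed geodesic in $N$), form the embedded $2$--sphere $E = D \cup D'$ inside $N$, and invoke irreducibility of $N$: $E$ is separating, so its algebraic intersection with each $\delta$ is zero, forcing $E \cap \Delta = \emptyset$. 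Then $D' \subset F$ after all, contradicting essentiality of $\alpha$ in $F$. Your proposal contains neither the ``at most one intersection point'' observation nor the separating-sphere/irreducibility step, and these are exactly what make the argument go; the peripheral reductions ($F \subset Y$, $Y$ convex hence $\pi_1$--injective in $M$) are correct but unnecessary, since one can work directly with $D \subset M \subset N$.
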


Suppose otherwise. Then there is a compressing disk $D \subset M$, with $D\cap F = \bdy D = \alpha$, an essential simple closed curve on  $ F$. 
Since $F \subset S$ and $S$ is incompressible in $N$, there must be a disk
$D'\subset  S$ with $\bdy D'= \alpha$. This disk $D'\subset  S$ cannot contain any component of $\Delta$, because
each component is a geodesic in $N$.
Thus $D'$ meets each component of $\Delta$ transversely in at most one point.
Since $D \subset N \setminus \Delta$ is disjoint from $\Delta$ by hypothesis, the $2$--sphere $E = D \cup D'$ must meet each component  $\delta \subset \Delta$  transversely in at most one point. But the hyperbolic manifold $N$ is irreducible, so $E$ is separating, hence $E \cap \delta = \emptyset$. 
It follows that $D' = E \cap S$ is disjoint from $\Delta$, hence $D'\subset  F$, and so $\alpha$ is not essential in $ F$. This  contradiction proves the claim.

\smallskip

Recall that  $\Zunion=\Qthick \cup  \Vcal$.
 By hypothesis, $Y = \varphi^{-1}(\Zunion \setminus \Delta)$ is a convex submanifold of $M$.

\begin{claim}\label{Claim:NoAccidental}
$ F$ has no accidental parabolics in $M$. Consequently, the inclusion--induced representation $\rho\from  \pi_1 F \to \pi_1 M$ is faithful and type-preserving.
\end{claim}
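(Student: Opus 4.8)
The plan is to argue by contradiction: suppose $\gamma \in \pi_1 F$ is an accidental parabolic, so $\gamma$ is represented by a loop in $F$ that is not homotopic into a puncture of $F$, yet $\rho(\gamma)$ is parabolic in $\pi_1 M = \Gamma_M$. Since $Y = \varphi^{-1}(\Zunion \setminus \Delta)$ is convex and $F \subset S \subset \Qthick$, we have $\core(F) \subset Y$ (this will be established more carefully in a later claim, but the inclusion $\core(F) \subset \CH(\widetilde F) \subset \widetilde Y$ follows from convexity of $Y$ and the fact that $\widetilde F$ lives in $\widetilde Y$). The point is that $Y$ has finite volume, so its thin part is a union of horocusps and round tubes; a parabolic element of $\pi_1 Y \hookrightarrow \pi_1 M$ must be peripheral in $Y$, i.e.\ carried by a cusp of $Y$. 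But the cusps of $Y$ come precisely from the cusps of $M = N \setminus \Delta$ that meet $\Zunion$, namely the meridional and longitudinal geodesics in $\Delta$ and the original cusps of $N$ (if $N$ is a manifold with cusps; in the main application $N$ is closed and one only has the drilled cores). So I would show that each such peripheral class of $Y$ is homotopic in $F$ into a puncture of $F$, contradicting the assumption on $\gamma$.

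Concretely, the key step is to match the peripheral subgroups of $Y$ with the punctures of $F$. Recall $Y$ deformation retracts onto $\Zunion \setminus \Delta = (\Qthick \cup \Vcal) \setminus \delta(\Vcal)$. A meridian tube $V$, after drilling its core $\delta$, contributes a peripheral $\ZZ^2$ generated by the meridian (which bounds $D_\pm(V) \setminus \delta$) and the longitude of $\delta$; by the construction of $\Fcal$, the midsurface $S$ meets $\delta$ once transversally, so $F$ has exactly one puncture at $V$ whose slope is this meridian — and the meridian is the only peripheral curve of $V \setminus \delta$ that is null-homotopic in $V \setminus \delta$ rel nothing, so any parabolic coming from this cusp and lying in $\pi_1 F$ is a power of the meridian, hence homotopic into the puncture. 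A longitude tube $V$ with $\delta \subset S$ contributes two punctures of $F$, each a longitude of $\delta$; again the peripheral torus of $V \setminus \delta$ is generated by meridian and longitude, but now $F$ runs along the annulus $A(V)$ so its boundary slopes there are longitudes. A skirting tube contributes no puncture and (since $\alpha = \emptyset$ or $\alpha$ is an arc separated from $S$ by $\Sigma(V)$) its core is disjoint from $S$, so it contributes nothing parabolic to $\pi_1 F$. Finally, any original cusp of $N$ meets $\Qthick$ in a product neighborhood of a curve in $\bdy \Qthick$, contributing boundary of $F$ as usual.

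The main obstacle I anticipate is the bookkeeping in the longitudinal case: there the drilled manifold $Y$ has a rank-two cusp along $\delta$, but $F$ only sees the two longitude slopes, so one must check that the element $\gamma$, if parabolic and peripheral in $Y$ along this cusp, is actually a power of a longitude and not of a meridian — equivalently, that $F$ does not ``wrap'' around $\delta$. This follows from the fact that $S$ contains $\delta$ itself (so $F = S \setminus \delta$ near $\delta$ looks like the two sides of a longitudinal annulus), but stating it cleanly requires tracking the isotopy of $S$ performed earlier. Once peripheral-in-$Y$ is matched to homotopic-into-a-puncture-of-$F$, the contradiction is immediate: $\gamma$ was assumed not homotopic into a puncture.

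For the ``consequently'' clause: faithfulness of $\rho\from \pi_1 F \to \pi_1 M$ is immediate from \reflem{ConvexProperties}, since $F \subset Y$ and the inclusion $Y \hookrightarrow M$ is a local isometry of a convex manifold, hence $\pi_1$--injective, and $\pi_1 F \to \pi_1 Y$ is injective because $F$ is incompressible in $Y$ (it is incompressible in $M \supset Y$ by \refclaim{Incompressible}, and $Y$ is $\pi_1$--injective in $M$). Type-preservation is exactly the statement just proved: peripheral loops of $F$ map to parabolics (they lie in cusps of $Y$, hence of $M$), and non-peripheral loops map to non-parabolics (no accidental parabolics).
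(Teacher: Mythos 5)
Your high-level plan --- argue by contradiction, use that a parabolic in the convex finite-volume manifold $Y$ must be peripheral in $Y$, and then match the cusps of $Y$ to the punctures of $F$ --- is a reasonable alternative starting point (more algebraic than the paper's, which goes directly to an embedded annulus). Your ``consequently'' paragraph at the end is correct. However, the main body of the argument has two genuine gaps.

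First, a factual error: the meridian is not ``null-homotopic in $V \setminus \delta$.'' The manifold $V \setminus \delta$ deformation retracts onto the boundary torus, so $\pi_1(V\setminus\delta) \cong \ZZ\times\ZZ$ and no nontrivial peripheral curve dies. (What is true is that the meridian bounds the once-punctured disk $D_{\pm}(V)\setminus\delta$.) The corrected statement does not by itself deliver your conclusion that ``any parabolic coming from this cusp and lying in $\pi_1 F$ is a power of the meridian.''

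Second, and more seriously, the step from ``$\gamma$ is peripheral in $Y$'' (conjugate in $\pi_1 Y$ to an element of a cusp subgroup $P\cong\ZZ^2$) to ``$\gamma$ is peripheral in $F$'' is exactly the content of the claim, and it does not follow from bookkeeping the slopes. A priori $\gamma$ could be conjugate in $\pi_1 Y$ to $m^a\ell^b$ with $b\neq 0$ even though $F$'s puncture at that cusp has slope $m$; nothing you say excludes this. Worse, a skirting tube \emph{does} produce a rank--$2$ cusp of $Y$, and $F$ has no puncture there at all, yet you offer no argument that $\pi_1 F$ avoids every conjugate of that peripheral subgroup. This is precisely where some $3$--manifold topology is required. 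The paper invokes Jaco's Annulus Theorem to upgrade the algebraic statement ``conjugate into a cusp'' to an \emph{embedded} annulus $B\subset M$ from $\alpha\subset F$ to a horotorus $T$, and then does an intersection analysis with the surgery surfaces $\Sigma(V)$ (a disk for a skirting tube, two punctured disks for a meridian tube, two annuli for a longitude tube): since $B$ is disjoint from $\bdy Y$, all components of $B\cap\Sigma(\Vcal)$ are essential closed curves, so $B$ cannot enter a skirting tube, and the innermost intersection curve $\gamma_1$ forces $\alpha$ to be a meridian or longitude, hence peripheral. Without the embedded annulus and this intersection count, the matching you describe is an outline, not a proof.
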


Suppose, for a contradiction, that a non-peripheral loop in $ F$ is a parabolic in $M$. By Jaco's Annulus Theorem (\cite[Theorem VIII.13]{Jaco}; see also \cite[Lemma 2.1]{SSSSS}), there is an embedded annulus $B \subset M$ with one boundary component an essential loop $\alpha \subset  F$ and the other boundary component $\beta \subset T$, where $T \subset M$ is a horotorus. Since $\alpha \subset  F \subset Y$, and $Y$ is convex, $T$ must bound a neighborhood of some core curve $\delta$ of some tube of $\Vcal \subset \Zunion$.
Thus, after an isotopy, we may take $B$ and $T$ to be embedded in $Y$, with $\alpha \subset F \setminus \Vcal$. 

For each tube $V \subset \Vcal$, we have constructed a planar surface $\Sigma(V)$: this is a disk for skirting tubes, a pair of once-punctured disks  for a meridian tube, and a pair of annuli  for a longitude tube.  Define $\Sigma(\Vcal) = \bigcup_{V \subset \Vcal} \Sigma(V)$ to be the disjoint union of these planar surfaces. A small isotopy of $B$ ensures that it intersects  $\Sigma( \Vcal)$ transversely. Since $B$ and every component of $\Sigma(\Vcal)$ are incompressible in $Y$, a further isotopy ensures that each component of $B \cap \Sigma( \Vcal)$ is essential in both surfaces. No component of $B \cap \Sigma( \Vcal)$ can be an arc, because $\bdy \Sigma(V) \subset \bdy Y$ for each tube $V$, whereas $B$ is disjoint from $\bdy Y$. Thus $B \cap \Sigma( \Vcal)$
consists of simple closed curves $\gamma_1, \ldots, \gamma_k$ that are essential in both $B$ and $\Sigma(\Vcal)$. We order these curves from  $\alpha \subset \bdy B$ to $\beta \subset \bdy B$. Observe that $k >0$, because $\alpha \subset F \setminus \Vcal$ whereas  $\beta \subset \Vcal$.

It follows that
$B$ cannot enter a skirting tube  $V$, because for such a tube $\Sigma(V) = D(V)$ is a disk, which contains no essential closed curves. 

We now focus on $\gamma_1$, the curve of $B \cap \Sigma(\Vcal)$ that is closest to $\alpha$. Then $\gamma_1$ is the core curve of some annulus belonging to $\Sigma(V)$. If $V$ is a meridian tube, then $\gamma_1$ is the meridian of $\delta = \delta(V)$. Since $\alpha$ is isotopic to $\gamma_1$ through $B$, it follows that $\alpha$ is a meridian of $\delta$, hence peripheral in $F$.

If $V$ is a longitude tube, then $\gamma_1$ is isotopic to a longitude of $\delta = \delta(V)$. In addition, $\gamma_1$ is isotopic to $\alpha \subset F$ through $B$ and to the closed curve $F \cap \Sigma(V)$ through $\Sigma(V)$. Again, we conclude that $\alpha$ is peripheral in $F$. Thus, in all cases, $\alpha$ is not actually accidental.

\begin{claim}\label{Claim:QFinM}
$F$ is a quasi-Fuchsian surface in $M$, such that $\core(F) \subset Y = \varphi^{-1}(\Zunion \setminus \Delta)$.
\end{claim}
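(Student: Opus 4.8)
The plan is to apply \refthm{BonahonCanary} to the surface $F$, using the convex finite-volume submanifold $Y \subset M$ as the ambient manifold. By \refclaim{Incompressible} and \refclaim{NoAccidental}, the inclusion-induced representation $\rho\from \pi_1 F \to \pi_1 M$ is faithful and type-preserving. So the main thing that remains is to show that $F$ is contained in $Y$, and moreover that $\core(F)$ is contained in $Y$. The first of these is essentially part of the construction: recall that the midsurface $S$ lies in $\Qthick \subset \Zunion$, and the isotopies performed in each tube kept $S$ inside $H = \Qthick \cap V \subset \Zunion$, so after drilling $\Delta$ we have $\Fcal = S \setminus \Delta \subset \Zunion \setminus \Delta$, and hence $F \subset \varphi^{-1}(\Zunion \setminus \Delta) = Y$.

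For the containment $\core(F) \subset Y$, the key point is that $Y$ is convex. First I would observe that since $F$ is $\pi_1$-injective in $Y$ (by \reflem{ConvexProperties}, as $Y \hookrightarrow M$ is a local isometry and $Y$ is convex, or directly from \refclaim{NoAccidental}), the cover $Y_F$ of $Y$ corresponding to $\pi_1 F$ makes sense, and $Y$ convex implies $Y_F$ is convex. Now $F$ lifts to $Y_F$; passing to the universal thickening, $\widetilde{Y_F}$ embeds isometrically in $\HH^3$ as a convex set, and the holonomy group is $\rho(\pi_1 F) = \pi_1 F$. Since $F$ is quasi-Fuchsian in $M$ — which follows from \refthm{BonahonCanary} applied with ambient manifold $Y$ (convex, finite volume, nonempty boundary since $\bdy Y \ne \emptyset$: the tubes $\Vcal$ have been drilled, producing cusps, and $Q$ has boundary) — the limit set $\Lambda(\pi_1 F)$ is a Jordan curve or quasicircle, and $\core(F) = \CH(\Lambda(\pi_1 F))/\pi_1 F$. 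But $\CH(\Lambda(\pi_1 F)) \subset \widetilde{Y_F}$ because $\widetilde{Y_F}$ is a convex subset of $\HH^3$ containing (the elevation of) $\widetilde{F}$, hence containing $\Lambda(\widetilde{F}) = \Lambda(\pi_1 F)$ in its closure, and a convex set in $\HH^3$ contains the convex hull of its limit set. Projecting back down, $\core(F) \subset Y_F \to Y$, i.e. $\core(F) \subset Y$ as a subset of $M$. (One should also note $\core(F)$ has finite volume since $Y$ does, which is automatic.)

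I would organize the write-up as: (i) invoke \refthm{BonahonCanary} with $Y$ as the ambient manifold to conclude $F$ is QF — here one must check the hypotheses: $Y$ is convex and of finite volume by assumption of the proposition, $\bdy Y \ne \emptyset$ because the longitude and meridian tubes, once $\Delta$ is drilled, leave $Y$ with genuine cusps (and even if only skirting tubes were present, $\bdy Q \ne \emptyset$), and $\rho$ is faithful and type-preserving by \refclaim{NoAccidental}; (ii) deduce from the standard description of the convex core of a quasi-Fuchsian group that $\core(F) = \CH(\Lambda(\pi_1 F))/\pi_1 F$; (iii) argue that the convexity of $Y$ forces this convex hull to sit inside $\widetilde{Y_F}$, so $\core(F) \subset Y$.

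The main obstacle I anticipate is purely bookkeeping: being careful about which cover one works in and how the developing/holonomy picture of the convex manifold $Y$ (via \reflem{ConvexProperties}) interacts with the convex core of the quasi-Fuchsian surface $F$. One subtlety is that $\core(F)$ is the convex core of the \emph{abstract} quasi-Fuchsian manifold determined by $\rho$, and one must verify this coincides with $\CH(\Lambda)/\Gamma$ computed inside the universal thickening of $Y_F$ — but this is exactly the content of the definitions in \refsec{ConvexBackground} (the convex core of a convex manifold equals that of its completion) together with the fact that the universal thickening of $Y_F$ is complete. A second point to handle cleanly is confirming $\bdy Y \ne \emptyset$ in all cases; since the proposition's hypotheses include $\Delta \cap \Zunion = \delta(\Vcal)$ with $\Vcal$ a nonempty union of tubes whose cores are drilled, $Y$ acquires at least one cusp and thus has nonempty boundary, so \refthm{BonahonCanary} applies.
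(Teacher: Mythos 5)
Your proposal is correct and follows essentially the same route as the paper: both apply \refthm{BonahonCanary} with the convex finite-volume manifold $Y$ as the ambient manifold, using \refclaim{NoAccidental} for the faithful, type-preserving hypothesis, and both deduce $\core(F)\subset Y$ from the convexity of $Y$ together with $F\subset Y$. The paper's version is considerably terser (two sentences), treating the convexity argument as immediate, while you spell out the limit-set/convex-hull chain $\Lambda(\pi_1 F)\subset\overline{\widetilde{Y_F}}$, $\CH(\Lambda(\pi_1 F))\subset\widetilde{Y_F}$ and carefully verify $\bdy Y\neq\emptyset$; these are exactly the details the paper elides, so the two arguments match.
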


Recall that $Y \subset M$ is convex by hypothesis, and $F \subset Y$ by construction. Thus $\core(F) \subset Y$. 
 Since $\rho\from  \pi_1 F \to \pi_1 M$ is faithful and type-preserving by 
\refclaim{NoAccidental}, \refthm{BonahonCanary} says that $F$ is quasi-Fuchsian.

\begin{claim}\label{Claim:StrongSeparation}
Let $P_M \subset M$ be an embedded pancake, such that $\Vcal \cap \varphi(P_M) = \emptyset$ and
$\Qthick$ strongly separates $\varphi(P_M)$ in $N$. Let $F$ be the unique component of $ \Fcal = S \setminus \Delta $ that intersects $\varphi(P_M)$. Then each of $F$ and $\core(F)$ strongly separates $P_M$. 
\end{claim}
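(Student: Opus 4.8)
We have a component $F \subset \Fcal = S \setminus \Delta$, and $\varphi(P_M) \subset N$ is a topological pancake disjoint from $\Vcal$, with $\Qthick$ strongly separating $\varphi(P_M)$ in $N$; we want $F$ (and $\core(F)$) to strongly separate $P_M$ in $M$. The idea is to push everything into $N$ via $\varphi$, use that $\varphi$ restricts to a homeomorphism $Y \setminus$ (nothing needed) — more precisely $\varphi\from M \to N \setminus \Delta$ is a diffeomorphism — and exploit the fact that $S$ strongly separates $\varphi(P_M)$ in $N$, then transfer back.

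\begin{proof}[Proof of \refclaim{StrongSeparation}]
The plan is as follows. First, I note that since $\bdy \varphi(P_M)$ is disjoint from $\Delta$ and from $\Vcal$, while $S \setminus \Delta = \Fcal$, the separation behavior of $S$ and of $\Fcal$ with respect to $\varphi(P_M)$ are closely linked. Work in the universal cover $\widetilde N = \HH^3$. Choose an elevation $\widetilde{\varphi(P_M)}$; by hypothesis there is an elevation $\widetilde{S}$ of (a cover of) $\Qthick$'s midsurface $S$ — more precisely $\widetilde{\Qthick}$ — that strongly separates $\bdy_- \widetilde{\varphi(P_M)}$ from $\bdy_+\widetilde{\varphi(P_M)}$ in $\HH^3$. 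Since $\Qthick$ deformation retracts onto $S$ and $\bdy_\pm \varphi(P_M)$ are disjoint from $\Qthick$ (strong separation means $\bdy_\pm$ lie in different components of $N \setminus \Qthick$, so in particular are disjoint from $\Qthick$), the midsurface $S$ also strongly separates $\varphi(P_M)$ in $N$: any path from $\bdy_-\varphi(P_M)$ to $\bdy_+\varphi(P_M)$ in $N$ that avoids $S$ can be pushed off $\Qthick$ entirely (using the product structure to sweep across $\Qthick$ on the side of $S$ where the path enters), giving a path avoiding $\Qthick$, a contradiction. So fix an elevation $\widetilde S$ strongly separating $\widetilde{\varphi(P_M)}$.

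Second, transfer to $M$. The map $\varphi\from M \to N \setminus \Delta$ is a diffeomorphism, and it lifts to a diffeomorphism $\widetilde \varphi\from \widetilde M \to \widetilde{N \setminus \Delta} \subset \HH^3$ (the universal cover of $N \setminus \Delta$ maps to $\HH^3$ by the inclusion-induced map, but for separation purposes I only need that $\widetilde\varphi$ is a homeomorphism onto the preimage of $N \setminus \Delta$ in the universal cover of $N$, which contains $\widetilde S \cup \widetilde{\varphi(P_M)}$ since $\widetilde S$ is disjoint from the lift of $\Delta$ — indeed $S = \Fcal$ away from $\Delta$ and $S \cap \Delta$ is a finite set of points of $\Delta$, all removed). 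Pulling back, $\widetilde\varphi^{-1}(\widetilde S) = \widetilde{\Fcal}$ is an elevation of $\Fcal$, and it strongly separates $\widetilde\varphi^{-1}(\bdy_-\widetilde{\varphi(P_M)}) = \bdy_- \widetilde P_M$ from $\bdy_+ \widetilde P_M$ in $\widetilde M$, because a separating codimension-one submanifold pulls back under a homeomorphism to a separating one, and "strong separation of a pancake" is purely topological (a statement about path-components of the complement). Now $\widetilde{\Fcal}$ is a disjoint union of elevations of components of $\Fcal$. The component $F$ meeting $\varphi(P_M)$ — equivalently, whose elevation contains a point adjacent to $\widetilde P_M$ — is the relevant one: since $\widetilde P_M \setminus \widetilde{\Fcal}$ is disconnected with $\bdy_\pm$ in different pieces, and $\widetilde P_M$ is connected, the piece of $\widetilde{\Fcal}$ actually doing the separating inside $\widetilde P_M$ is a single elevation $\widetilde F$ of one component $F$. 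This $\widetilde F$ then strongly separates $\widetilde P_M$ in $\widetilde M$, which by definition means $F$ strongly separates $P_M$ in $M$, and $F$ is the unique such component since two disjoint surfaces cannot both strongly separate the same connected pancake with the same upper/lower labeling unless one is nested, and the nesting forces them equal by the path-component count.

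Third, upgrade from $F$ to $\core(F)$. This is where \reflem{Pancake} and convexity enter. By \refclaim{QFinM}, $F$ is quasi-Fuchsian with $\core(F) \subset Y$, and $\core(F)$ is a convex submanifold of $M$. The elevation $\widetilde{\core(F)} \subset \widetilde M$ corresponding to $\widetilde F$ is convex and contains $\widetilde F$ in its closure; more to the point, $\widetilde F$ lies in $\widetilde{\core(F)}$ up to the product collapse, so any path from $\bdy_-\widetilde P_M$ to $\bdy_+ \widetilde P_M$ avoiding $\widetilde{\core(F)}$ also avoids $\widetilde F$, contradicting the previous paragraph — hence $\widetilde{\core(F)}$ strongly separates $\widetilde P_M$ as well. (One must check $\bdy_\pm \widetilde P_M$ are themselves disjoint from $\widetilde{\core(F)}$: this follows because $\Vcal \cap \varphi(P_M) = \emptyset$ puts $\varphi(P_M)$, hence $P_M$, in the region where $\varphi$ is a controlled bilipschitz chart and $\bdy_\pm P_M$ are on the far side of $F$ from each other, so a point of $\bdy_\pm P_M$ on $\widetilde{\core(F)}$ would, by convexity of $\widetilde{\core(F)}$, force a geodesic through it to hit $\widetilde F$, the same kind of argument as in the proof of \reflem{Pancake}.)

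The main obstacle I anticipate is the careful bookkeeping in the second and third paragraphs: tracking elevations through the composition of $\varphi$ with universal covers, and verifying that "strong separation of a pancake" — which is a statement about path-components of complements in $\HH^3$ — transfers cleanly under the non-isometric diffeomorphism $\varphi$ and then from the embedded surface $F$ to its possibly-larger convex core $\core(F)$. The key points that make it work are that $\varphi$ is a genuine diffeomorphism (so it preserves all topological separation data), that $\bdy_\pm P_M$ stay far from all the tubes $\Vcal$ and from $\Delta$ (so nothing gets lost in the drilling), and that $\core(F) \supset F$ (up to isotopy through the product structure) is convex, so it can only separate \emph{more}, not less, than $F$ does.
\end{proof}
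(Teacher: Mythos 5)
Your broad strategy --- transfer the separating property from $\Qthick$ in $N$ down to $\Fcal$ in $M$, then upgrade from $F$ to $\core(F)$ using convexity --- is the same strategy the paper follows, but two specific steps in your argument do not hold, and they are exactly the places where the paper takes a different route.

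\smallskip
\emph{The covering-space transfer is incorrect.} You assert that $\widetilde\varphi$ is ``a homeomorphism onto the preimage of $N\setminus\Delta$ in the universal cover of $N$.'' This is false whenever $\Delta\neq\emptyset$: drilling $\Delta$ changes $\pi_1$, so the universal cover $\widetilde M$ of $M=N\setminus\Delta$ is an \emph{infinite} cover of the preimage $W$ of $N\setminus\Delta$ in $\HH^3=\widetilde N$, not a homeomorphic copy of it. As a consequence, separation does not pull back the way you want: if a path $\hat\gamma$ in $\widetilde M$ from $\bdy_-\widetilde P_M$ to $\bdy_+\widetilde P_M$ avoids your chosen elevation $\widetilde F$, its image in $W$ must indeed meet the elevation $\widetilde S$, but the crossing point could correspond to a \emph{different} elevation of $F$ in $\widetilde M$, so you cannot conclude that $\hat\gamma$ meets $\widetilde F$. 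The paper circumvents this by working not in $\widetilde M$ at all but in the intermediate cover $M_F$ corresponding to $\pi_1 F$, where $F$ lifts to a single embedded surface $\hat F$; it then pushes an arbitrary path $\hat\beta\subset M_F$ through $M$, into $N$ via $\varphi$, and into the quasi-Fuchsian cover $R=\Th_\infty(Q)$ to force an essential crossing of $\hat Q$, and finally rules out accidental crossings with \emph{other} components of $\Fcal$ by the observation that those would pull back to paths in $M_F$ inconsistent with $\hat\beta$.

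\smallskip
\emph{The convex-core upgrade is unjustified.} You write that ``$\widetilde F$ lies in $\widetilde{\core(F)}$ up to the product collapse, so any path avoiding $\widetilde{\core(F)}$ also avoids $\widetilde F$.'' The containment $F\subset\core(F)$ is neither established nor generally true: $F$ is the embedded surface obtained by isotoping the midsurface $S$ inside the envelope, and there is no reason for it to sit inside $\CH(\Lambda(\pi_1 F))/\pi_1 F$. What \emph{is} true, and what the paper's argument uses, is that the endpoints of $\varphi(\beta)$ lie outside $Z=\Qthick\cup\Vcal$ (since both $\Qthick$ and $\Vcal$ are disjoint from $\bdy_\pm\varphi(P_M)$), hence the endpoints of $\beta$ lie outside $Y=\varphi^{-1}(Z\setminus\Delta)$, and $\core(F)\subset Y$ by \refclaim{QFinM}. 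Combined with the fact that $\hat F$ and $\core(\hat F)$ are both sandwiched inside the preimage of $Y$ in $M_F$, while the endpoints of $\hat\beta$ lie outside it, this gives separation by $\core(\hat F)$. Your appeal to ``a geodesic through it hitting $\widetilde F$, as in \reflem{Pancake}'' does not substitute for this: \reflem{Pancake} concerns a single convex set separating a pancake in $\HH^3$, not the relation between a (non-convex) embedded surface and its convex core after drilling.

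\smallskip
In short: your first paragraph (replacing $\Qthick$ by its midsurface $S$) is fine, but the second and third paragraphs contain gaps that require the paper's intermediate cover $M_F$ and its control over $Y$ to repair.
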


As in \refclaim{QFinM}, let $M_F$ be the cover of $M$ corresponding to $\pi_1 F$. Then $F$ has an isometric lift $\hat F \subset M_F$. This determines an isometric lift  $\hat P_M \subset M_F$, such that $\hat F \cap \hat P_M \neq \emptyset$.
We will show that the two components of $\bdy_\pm \hat P_M$ are contained in different path components of $M_F \setminus \hat F$, and in different path components of $M_F \setminus \core(\hat F)$. This  implies strong separation in the universal cover $\HH^3$.

Let $\hat \beta \subset M_F$ be a path from $\bdy_- \hat P_M$ to $\bdy_+ \hat P_M$. Projecting down to $M$, we obtain a path $\beta$ from $\bdy_-  P_M$ to $\bdy_+  P_M$. Mapping over to $N$, we obtain a path $\varphi(\beta)$ from $\varphi(\bdy_-  P_M)$ to $\varphi(\bdy_+  P_M)$. Since $\Qthick$ strongly separates $\varphi(P_M)$, we have $\Qthick \cap \varphi(\bdy_\pm P_M) = \emptyset$. In addition, $\Vcal \cap \varphi(\bdy_\pm P_M) = \emptyset$. Thus $\varphi(\beta)$ starts and ends outside $\Zunion = \Qthick \cup \Vcal$.

Recall that $\Qthick$ has an isometric lift $\hat \Qthick \subset R = \Th_\infty(Q)$, where $R \to N$ is the quasi-Fuchsian cover of $N$ corresponding to $Q$.
 The covering projection $M_F \to (R \setminus \hat \Delta)$ determines an image  $\hat \varphi(\hat P_M) \subset R$ that is an isometric lift of $ \varphi(P_M) \subset N$.
It also determines a unique image $\hat \varphi (\hat \beta)$, which runs from $\hat \varphi(\bdy_- P_M)$ to $\hat \varphi(\bdy_+ P_M)$. 
Since $\Qthick$ strongly separates $\varphi(P_M)$,  the path  $\hat \varphi (\hat \beta) \subset R$ must have an essential intersection with $\hat \Qthick$. Consequently, $\varphi(\beta)$ has an essential intersection with $\Qthick$, with the midsurface $S$ of $\Qthick$, and with
$ \Fcal = S \setminus \Delta$.

Observe that $\varphi( \beta)$ cannot have an essential intersection with any component surface of $\Fcal$ besides $F$. If such an intersection was to occur, the pullback of $\varphi(\beta)$ to $M_Y$ would start on $\bdy_- \hat P_M$ and end somewhere other than $\bdy_+ \hat P_M$, which means it would not agree with $\hat \beta$. Thus $\varphi(\beta)$ must have an essential intersection with $F$. 

This means $\hat \beta$ intersects $\hat F \subset M_F$. Since $\hat \beta$ was an arbitrary path in $M_F$ from $\bdy_- \hat P_M$ to $\bdy_+ \hat P_M$, it follows that the two components of $\bdy_\pm \hat P_M$ are contained in different path components of $M_F \setminus \hat F$. Hence $\hat F$ strongly separates $\hat P_M$, and $F$ strongly separates $P_M$.

To show that $\core(F)$ strongly separates $P_M$, it remains to check that $\core(F) \cap \bdy_\pm P_M = \emptyset$.
Recall that the endpoints of $\varphi(\beta)$ lie outside $\Zunion = \Qthick \cup \Vcal$. Thus the endpoints of $\beta$ lie outside $Y = \varphi^{-1} (\Zunion \setminus \Delta)$. On the other hand, the convex set $\core(F)$ is contained in $Y$. Thus $\beta$ starts and ends outside $\core(F)$. In the cover $M_F$, the path $\hat \beta$ starts and ends outside $\core( \hat F)$. Since $\hat \beta$ was an arbitrary path in $M_F$ from $\bdy_- \hat P_M$ to $\bdy_+ \hat P_M$, the conclusion follows.

This completes the proof of \refprop{QFDrilling}.
\end{proof}

\subsection{Dehn filling and covers}\label{Sec:FillAndCover}
Here are the main steps of the proof of \refthm{CuspedSurfacePancake}.
 
 \begin{enumerate}[1.]
 \item Replace $M$ by a finite cover where (elevations of) the slopes $\alpha_1, \ldots, \alpha_n$ lie on distinct cusps, and where the pancake $P$ embeds.
\smallskip
 
 \item 
 Perform a long Dehn filling, resulting in a hyperbolic orbifold $N = M(k \alpha_1, \ldots, k \alpha_n)$. The integer $k$ is chosen large enough so that \refthm{BilipDrillFill} preserves much of the geometry of $M$ in the filling. The cusps of $M$ get replaced by a union of tubes $\Wcal$. See properties (F1)--(F5) for details.
 
 \smallskip
 \item Map the pancake $P \subset \HH^3$ into $M$, and then into $N$ via the bilipschitz map $\varphi$. The result is a pancake $P_N$. Then, find a very large immersed disk $D \to N$ that  separates $P_N$, and that  has a transverse intersection with each core geodesic.
 See \refclaim{ErgodicDisk} for details.

 \smallskip
\item Apply the Kahn--Markovic theorem \cite{KahnM} to find an immersed QF surface $S \to M$ that closely fellow-travels the disk $D$. This surface  has transverse, meridional intersections with each core curve, and also strongly separates $P_N$.

 \smallskip
\item Pass to a cover $\hat N$ of $N$ where a certain thickening $Q_0$ of $\core(S)$ is embedded, and where $Q_0$ intersects each tube of $\hat \Wcal$ at most once. A finite cover with these properties exists by Agol's work \cite{Agol}; see also \refrem{Separability}.

 \smallskip
\item Working in the cover $\hat N$, apply \refthm{CombAsymmetric} to show that the union of $Q_0$ and the tubes that intersect it has a convex thickening $\Zunion$, such that $\bdy \Zunion$ closely fellow-travels  $\core(S)$ on the regions of interest. In particular, $\Zunion$ still separates the pancake $P_N$.
 
 \smallskip
 \item Apply \refprop{QFDrilling} to the convex envelope $\Zunion \subset \hat N$, recovering a disconnected QF surface $\Fcal$. This surface is embedded in the cover $\hat M$ of $M$ corresponding to the cover $\hat N$ of $N$. By construction, one component $F \subset \Fcal$  fellow-travels the disk $D$, hence  contains at least one meridional cusp projecting to a multiple of each $\alpha_i$. It  also separates the pancake.
  \end{enumerate}
 
\noindent The above outline deliberately omits any mention of quantitative constants that control thickness, embedded collars, and geodesic curvature. We now proceed to the full proof, with constants.

\begin{proof}[Proof of \refthm{CuspedSurfacePancake}]
Let $P = P(\eta,r)$ be a pancake in $\HH^3$.
Without loss of generality, assume that $5 \eta/4$ is smaller than the constant $\tau$ from \reflem{QFproduct}.
(Otherwise, make the pancake $P$ thinner and apply the same proof.) 
Let $\kappa = \kappa(\eta/4)$ be as in  \reflem{KappaConvex}. Then, for every convex set $X \subset \HH^3$,  the boundary $\bdy \Th_{\eta/4}(X)$ is $\kappa$--convex. In particular, the pancake $P = P(\eta, r) = \Th_{\eta/4}(\Th_{3\eta/4}(D_r))$ has $\kappa$--convex boundary.

By residual finiteness, there is a finite regular cover of $M$ where the pancake $P$ embeds, and where elevations of the peripheral curves $\alpha_1, \ldots, \alpha_n$ lie on distinct cusp tori. (See \cite[Lemma 2.1]{CooperLongReid} for an explicit recipe ensuring that a cusp torus $T \subset M$ has many preimages in a cover.) Observe that any QF surface in a cover $\overline M \to M$, satisfying the desired conditions \refitm{BoundarySlopes} and \refitm{SepPancake},   projects to a QF surface in $M$ with the same properties. Thus no generality is lost by assuming that every cusp  of $M$ contains exactly one slope  $\alpha_i$.

After the above reduction, we can consider the embedded pancake $P_M  \subset M$ that lifts to $P \subset \HH^3$. Thus $\bdy_+ P_M$ and $\bdy_- P_M$ are $\kappa$--convex, embedded surfaces in $M$.

Let $\epsilon < \mu_3$ be small enough to ensure that $P_M \subset M^{\geq \epsilon}$ and that $d( P_M,  \, \bdy M^{\geq \epsilon}) \geq 1$. 
Let $R = R(\eta/4)$ be as in \refthm{CombAsymmetric}. Then there is a constant $\epsilon' = e^{-(R+2)} \epsilon$ with the property that in every horocusp $C \subset M$, the $\epsilon'$--thin part of $C$ is at least distance $R+1$ away from $M^{\geq \epsilon}$.

Let $C_1, \ldots, C_n$ denote the horocusp components of $M^{\leq \epsilon}$, and let $C_i' \subset C_i$ be the corresponding horocusp components of $M^{\leq \epsilon'} \!$. By the above choice of $\epsilon'$, we have $d(\bdy C_i', \, \bdy C_i)  \geq R + 1 .$ The horotori $\bdy C_i$ and $\bdy C_i'$ are $1$--convex. 

Let $A = M \setminus \bigcup C_i$ and  $A' = M \setminus \bigcup C_i'$.
 Then we may think of $\alpha_i$ as a slope on either $\bdy C_i$ or $\bdy C_i'$, with the same normalized length. 
For a large integer $k$, let $N = M(k\alpha_1, \ldots, k \alpha_n)$ be a closed hyperbolic orbifold obtained by Dehn filling on $M$. More precisely, $k \geq 1$ needs to be large enough so that \refthm{BilipDrillFill} ensures a diffeomorphic embedding $\varphi\from  A' \to N$ with the following properties. 
\begin{enumerate}[(F1)]
\item If $S \subset A'$ is a $\kappa$--convex surface, then $\varphi(S)$ is $\kappa/2$--convex. If $\varphi(S) \subset \varphi(A')$ is a $\kappa$--convex surface, then $S$ is $\kappa/2$--convex.
\item The image $\varphi ( P_M)$ is a convex ball in $N$, with $\kappa/2$--convex boundary.
\item There is a thinner pancake $P_N = P(3\eta/4, r_N)$ isometrically embedded in $\varphi(A')$, so that $P_N$ separates $\varphi  (\bdy_- P_M)$ from $\varphi (\bdy_+ P_M)$.
\item For every $i \leq n$, the tori $\varphi(\bdy C_i) $ and $\varphi(\bdy C_i') $ are $\kappa/2$--convex. As a consequence, there is a nested pair of (convex) tubes $W_i' \subset W_i$, with $\bdy W_i' = \varphi(\bdy C_i')$ and $\bdy W_i = \varphi(\bdy C_i)$. 
\item\label{Itm:TubeRThicken} The lipschitz constants on $\varphi$ give $d(\bdy W_i', \, \bdy W_i) \geq R +  \eta$ and $d(\bdy W_i, \, \bdy P_N) \geq \eta$.
\end{enumerate}
Each of the above statements is ensured by conclusions \refitm{Bilip} and \refitm{CurvBound} of \refthm{BilipDrillFill}.

Let $\delta_i$ be the core of $W_i$. We define $\Wcal = W_1 \cup \ldots \cup W_n$ and $\Wcal' = W_1' \cup \ldots \cup W_n'$. Then each of $\Wcal$  and $\Wcal'$ is a disjoint union of convex tubes, with cores along $\Delta =  \delta_1 \cup \ldots \cup \delta_n$.

Consider a covering map $p \from \hat N \to N$. Let $\hat \Delta = p^{-1}(\Delta)$ and $\hat \Wcal = p^{-1}(\Wcal)$. This defines a corresponding cover $\hat M = \hat N \setminus  \hat \Delta$, whose hyperbolic metric is lifted from $M$.
The pancake $P_M \subset M$ has a lift $\hat P_M \subset \hat{M}$, namely the image of the pancake $P \subset \HH^3$ stipulated in the hypotheses. Then the diffeomorphic image $\hat \varphi( \hat P_M) \subset \hat N$ defines an isometric lift $\hat P_N \subset \hat N$, which still separates $\hat \varphi  \big( \bdy_- \hat P_M \big)$ from $\hat \varphi \big( \bdy_+ \hat P_M \big)$, as in condition (F3).
We call $\hat P_N \subset \hat{N}$ the \emph{preferred lift} of $P_N$ to $\hat N$.

Applying the above construction to $\widetilde{N} = \HH^3$ gives a preferred lift $\widetilde P_N \subset \HH^3$.

\begin{claim}\label{Claim:ErgodicDisk}
There is a totally geodesic disk $D \subset \HH^3$ with the following properties:
\begin{enumerate}[{\rm (D1)}]
\item The disk $D$ intersects at least one elevation of each $\delta_i \subset N$.
\item Every intersection of $D$ with an elevation of $\delta_i$ is transverse.
\item There is a number $\zeta > 0$ such that $ \neb_{\zeta + \eta/2}(D)$ separates the preferred lift $\widetilde P_N \subset \widetilde{N} = \HH^3$.
\end{enumerate}
\end{claim}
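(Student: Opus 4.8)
The plan is to let $D$ be a large totally geodesic disk inside a geodesic plane $\widetilde\Pi\subset\HH^3$ whose image in $N$ is \emph{dense}, arranged so that near the pancake $\widetilde P_N$ the plane $\widetilde\Pi$ closely approximates the central totally geodesic disk of $\widetilde P_N$. Density of the image will force $\widetilde\Pi$ to meet an elevation of each core $\delta_i$, giving (D1); a generic choice will make those intersections transverse, giving (D2); and the approximation near $\widetilde P_N$ will give (D3). Write the preferred lift as $\widetilde P_N=\neb_{3\eta/4}(\widetilde D_0)$, where the central disk $\widetilde D_0$ has radius $r_N$, lies in a geodesic plane $\widetilde\Pi_0$, and is centred at a point $\widetilde y_0$; note $\widetilde P_N\subset\overline B(\widetilde y_0,r_N+\eta)$. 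Fix once and for all a constant $\zeta>0$ with $\zeta+\eta/2+\eta/100<3\eta/4$. We will choose $\widetilde\Pi$ so that: (i) the image of $\widetilde\Pi$ under $\HH^3\to N$ is dense; (ii) $\widetilde\Pi$ contains no elevation of any $\delta_i$; (iii) $\widetilde\Pi\cap\overline B(\widetilde y_0,r_N+2\eta)$ is within Hausdorff distance $\eta/100$ of $\widetilde\Pi_0\cap\overline B(\widetilde y_0,r_N+2\eta)$.

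Such a plane exists. Condition (iii) cuts out an open neighbourhood $\mathcal U$ of $\widetilde\Pi_0$ in the space $\mathcal P$ of geodesic planes of $\HH^3$. By the theorem of Ratner and Shah \cite{Ratner:Topological,Shah:Closures} --- applied to $N$, or to a finite manifold cover of $N$ so as to avoid the orbifold locus along $\bigcup\delta_i$ --- a geodesic plane of $N$ has closed or dense image, and the planes with closed image form a null (in fact countable) subset of $\mathcal P$; hence the planes satisfying (i) are co-countable in $\mathcal P$, in particular dense and residual, so they meet $\mathcal U$. For a fixed geodesic line $\ell\subset\HH^3$, the planes containing $\ell$ form a closed, nowhere dense subset of $\mathcal P$; the union over the countably many elevations of the finitely many $\delta_i$ is therefore meagre, and its complement still meets, inside $\mathcal U$, the residual set of planes obeying (i). Choose $\widetilde\Pi\in\mathcal U$ obeying (i)--(iii).

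Since the image of $\widetilde\Pi$ in $N$ is dense, the $\Gamma$--orbit of the boundary circle $c\subset S^2_\infty$ of $\widetilde\Pi$ is dense in the space of round circles in $S^2_\infty$ (again the Ratner--Shah dichotomy: the only intermediate subgroups between $\Isom^+(\HH^2)$ and $\Isom^+(\HH^3)$ are these two). Fix $i$ and an elevation $\widetilde\delta$ of $\delta_i$, with distinct ideal endpoints $a,b$; the circles separating $a$ from $b$ form a non-empty open set, so some $\gamma c$ separates them, whence $c$ separates $\gamma^{-1}a$ from $\gamma^{-1}b$ and $\widetilde\Pi$ meets the elevation $\gamma^{-1}\widetilde\delta$ of $\delta_i$ --- in a single point by (ii), hence transversely. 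Doing this for each $i$ produces a point $q_i\in\widetilde\Pi$ in one elevation of $\delta_i$. Let $z_0\in\widetilde\Pi$ be the point nearest $\widetilde y_0$ (so $d(z_0,\widetilde y_0)\le\eta/100$ by (iii)), put $R_0=\max\{\,r_N+2\eta,\ 1+\max_i d(z_0,q_i)\,\}$, and set $D=\widetilde\Pi\cap\overline B(z_0,R_0)$, a totally geodesic disk containing every $q_i$; this gives (D1), and every intersection of $D$ with an elevation of a $\delta_i$ is transverse, which is (D2). For (D3), let $X=\neb_{\zeta+\eta/2}(D)$. Each face $\bdy_\pm\widetilde P_N$ lies at distance exactly $3\eta/4$ from $\widetilde\Pi_0$, because for such a point the foot on $\widetilde D_0$ is interior and hence is also its foot on $\widetilde\Pi_0$; so by (iii), applied on $\overline B(\widetilde y_0,r_N+2\eta)\supset\widetilde P_N$, every point of $\bdy_\pm\widetilde P_N$ is at distance at least $3\eta/4-\eta/100>\zeta+\eta/2$ from $\widetilde\Pi$, hence from $D$, so $\bdy_\pm\widetilde P_N\cap X=\emptyset$. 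On the other hand, any path in $\widetilde P_N$ from $\bdy_-\widetilde P_N$ to $\bdy_+\widetilde P_N$ crosses the separating plane $\widetilde\Pi_0$ at some point $x$, which lies in the metric disk of radius $r_N+3\eta/4$ about $\widetilde y_0$ inside $\widetilde\Pi_0$; by (iii) there is a point of $\widetilde\Pi$ within $\eta/100$ of $x$, and it lies in $D$ since $R_0\ge r_N+2\eta$, so $x\in\neb_{\eta/100}(D)\subset X$. Thus $\bdy_-\widetilde P_N$ and $\bdy_+\widetilde P_N$ lie in different path components of $\widetilde P_N\setminus X$, which is exactly what it means (see \refdef{Pancake}) for $X$ to separate $\widetilde P_N$.

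The main obstacle is the construction of $\widetilde\Pi$ in the second paragraph: this is where the Ratner--Shah equidistribution of geodesic planes is essential, first to exhibit a dense-image plane arbitrarily close to the prescribed plane $\widetilde\Pi_0$ (so that the resulting disk $D$ still controls the pancake), and then --- through the orbit-closure dichotomy and the consequent density of the circle orbit $\Gamma c$ --- to force $\widetilde\Pi$ to cross an elevation of each filling core $\delta_i$. Everything after that reduces to the elementary hyperbolic geometry sketched above.
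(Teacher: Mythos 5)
Your proof is correct and follows essentially the same approach as the paper, which simply cites Ratner--Shah and asserts that a small perturbation of the midplane of $\widetilde P_N$ works; you have carefully filled in the unstated details (the Baire category argument combining density, transversality, and proximity to $\widetilde\Pi_0$; the use of density of the circle orbit $\Gamma c$ to force intersections with elevations of the $\delta_i$; and the metric estimates for (D3)).
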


This follows as a corollary of a result by Shah \cite{Shah:Closures} and Ratner \cite{Ratner:Topological}: almost every geodesic plane $\Pi \subset \HH^3$ has dense image in $N$. Hence, an arbitrarily small perturbation of the midplane of $\widetilde P_N$  contains a disk with the desired properties.

Define a pancake $\widetilde P' = \neb_{\zeta}(D) \subset \HH^3$. Then, by (D3), the thicker pancake $\Th_{\eta/2} ( \widetilde P' ) =  \neb_{\zeta + \eta/2}(D)$ separates  $\widetilde P_N \subset \HH^3$. This thickened pancake $\Th_{\eta/2} ( \widetilde P' )$ maps to $N$ by a local isometry.

So far, $N$ is a closed hyperbolic orbifold, with singular locus of order $k$ along the core of every tube $W_i \subset \Wcal$. By Selberg's lemma,  a finite regular cover $\hat{N}$ of $N$ is a closed hyperbolic manifold. 
The preimage of $\Wcal$ in this cover, denoted $\hat \Wcal \subset \hat N$, is a disjoint union of tubes with non-singular core 
along the preimage $\hat \Delta$ of $\Delta$. 
Then $\hat{N}$ is a Dehn filling of $\hat{M} = \hat{N} \setminus \hat \Delta$. Note that the meridian of every component of $\hat \Delta$ is a primitive slope in $\hat{M}$, which maps to $k \alpha_i \subset M$ for some $i$.

 By residual finiteness, we may select $\hat N$ so 
that the locally isometric immersion $\Th_{\eta/2} ( \widetilde P' ) \to N$ lifts to an embedding in $\hat N$, with image an embedded pancake $\Th_{\eta/2} ( \hat P'_N )$.

We will pass to finite covers of $N$ several more times, keeping the name $\hat N$. Each newly constructed $\hat N$  is a cover of the previously constructed covers of $N$.
Each time we pass to a cover, we keep the complete preimage of the nested sets $\Delta \subset \Wcal' \subset \Wcal$, denoted $ \hat \Delta \subset \hat \Wcal' \subset \hat \Wcal$, respectively.
Each time, the manifold $\hat M = \hat N \setminus \hat \Delta $ is a finite cover of $M$. In contrast with the complete preimage $\hat \Wcal$, we keep \emph{only} the preferred lift of  the pancakes 
$P_M$ and $P_N$, 
denoted $\hat P_M$ and $\hat P_N$ respectively. 
Each time, the lifted diffeomorphism $\hat \varphi\from  \hat M \to \hat N \setminus \hat \Delta $  continues to satisfy  (F1)--(F5). 
Combining (F3) with (D3), we have an embedded pancake $\hat P'_N \subset \hat N$, with an embedded thickening $\Th_{\eta/2} (\hat P'_N)$ that separates $\hat \varphi  \big( \bdy_- \hat P_M \big)$ from $\hat \varphi \big( \bdy_+ \hat P_M \big)$.
See \reffig{NestedPancakes}.

\begin{figure}
\vspace{0.05in}
\begin{overpic}[width=5in]{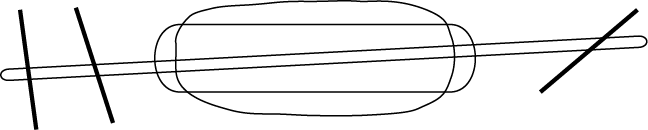}
\put(58,-1){$\hat \varphi(\hat P_M)$}
\put(72.5,5){$\hat P_N$}
\put(78,15.5){$\hat P'_N$}
\put(1,20){$\hat \Delta$}
\put(10,20){$\hat \Delta$}
\put(98,20){$\hat \Delta$}
\end{overpic}
\caption{A cross-sectional view of the nested pancakes appearing in the proof of \refthm{CuspedSurfacePancake}. All of these objects are embedded in a finite cover $\hat N$ of $N$.}
\label{Fig:NestedPancakes}
\end{figure}

By the Kahn--Markovic theorem \cite{KahnM}, there is a ubiquitous collection of immersed QF surfaces in $\hat N$, with arbitrarily small extrinsic curvature. 
    By \reflem{AlmostFlatThin}, a surface with small extrinsic curvature has a convex core  with very small thickness. 
Thus their theorem implies

\begin{claim}\label{Claim:KM}
There is a quasi-Fuchsian covering
 $\pi \from S \times \RR \to \hat N$, such that $\core(S)$ has a convex thickening $Q_0 = Q(S) \cong S \times I$ with the following properties.

\begin{enumerate}[{\rm (KM1)}]
    \item $t(Q_0) < \eta/4$. 

\item 
$\pi: Q_0 \to \hat N$ has an elevation  $\widetilde Q_0 \subset \widetilde{N} = \HH^3$, which strongly separates the preferred lift $\widetilde P'_N$.

\item Every component of $\pi^{-1} (\hat \Delta)$ that intersects $\pi^{-1} (\hat P'_N)$ is meridional in $Q_0$
 (see \refdef{SkirtMerLong}). 
\end{enumerate}
\end{claim}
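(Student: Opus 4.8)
The plan is to feed the Kahn--Markovic collection a carefully chosen pair of planes and then read off (KM1)--(KM3) from thinness, from convexity of the region between the two planes, and from the transversality already built into $D$. First I would fix the curvature cutoff: set $\tau=\min\{\eta/4,\ \zeta/20\}$ (which is below $5\eta/4$, hence below the constant of \reflem{QFproduct}), let $\kappa'=\kappa(\tau)>0$ be the constant of \reflem{AlmostFlatThin}, and apply the Kahn--Markovic theorem \cite{KahnM} only to surfaces of extrinsic curvature $<\kappa'$. Any such $S$ has $t(Q(S))<\tau$, which gives (KM1) outright and, via \reflem{QFproduct}, equips $Q_0=Q(S)$ with a nice product structure, so that the trichotomy of \refdef{SkirtMerLong} (and \reflem{SkirtMerLong}) applies. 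Next I would set up the planes: let $\Pi_D$ be the geodesic plane containing $D$, let $\rho_0$ be the radius of $D$, let $\gamma$ be the geodesic through the center of $D$ orthogonal to $\Pi_D$, and for a small $\delta_0>0$ let $\Pi_\pm$ be the geodesic planes orthogonal to $\gamma$ at distance $\delta_0/2$ from $\Pi_D$ on the two sides, with $\Sigma$ the convex region bounded by $\Pi_-\cup\Pi_+$. Over any fixed bounded set $\Sigma$ shrinks to $\Pi_D$ as $\delta_0\to0$, so I may choose $\delta_0=\delta_0(\rho_0,\zeta)$ with $\Sigma\cap\neb_2(D)\subset\neb_{\zeta/20}(\Pi_D)$. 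Applying Kahn--Markovic ubiquity to the pair $\Pi_-,\Pi_+$ (at distance $\delta_0>0$) produces an immersed QF surface $S\to\hat N$ of extrinsic curvature $<\kappa'$ with an embedded elevation $\widetilde S\subset\HH^3$ separating $\Pi_-$ from $\Pi_+$; set $\widetilde Q_0=\CH(\widetilde S)$, the corresponding elevation of $Q_0$. (If $S$ is Fuchsian the same argument runs with $\widetilde Q_0$ a thin slab about a plane.)

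For (KM2): being connected and disjoint from $\Pi_+$, the surface $\widetilde S$ lies in one open half-space bounded by $\Pi_+$, and it must be the half-space containing $\Pi_-$ (otherwise $\Pi_-$ could be joined to $\Pi_+$ outside it, contradicting separation); likewise for $\Pi_-$, so $\widetilde S$, and hence $\widetilde Q_0=\CH(\widetilde S)$, lies in $\Sigma$. Therefore $\widetilde Q_0\cap\neb_2(D)\subset\neb_{\zeta/20}(\Pi_D)$, while $\bdy_\pm\widetilde P'_N$ lie at distance exactly $\zeta$ from $\Pi_D$, so $\widetilde Q_0$ is disjoint from $\bdy_\pm\widetilde P'_N$. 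Moreover $\widetilde Q_0\supset\widetilde S$ separates $\Pi_-$ from $\Pi_+$, so $\HH^3\setminus\widetilde Q_0$ has $\Pi_\pm$ in distinct components $U_\pm$; and no point of $\bdy_-\widetilde P'_N$ lies in $\Sigma$, while it lies below $\Pi_D$, hence on the far side of $\Pi_-$, hence in $U_-$ --- and symmetrically $\bdy_+\widetilde P'_N\subset U_+$. Thus $\widetilde Q_0$ strongly separates $\widetilde P'_N$.

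For (KM3): I would first refine the choice of $D$ in \refclaim{ErgodicDisk}. The elevations of $\delta_1,\dots,\delta_n$ form a discrete family $E$ of geodesics in $\HH^3$; for a generic perturbation $\Pi_D$ of the midplane of $\widetilde P_N$, no member of $E$ is tangent to $\Pi_D$, the finitely many members meeting $\neb_2(D)$ that cross $\Pi_D$ do so transversely with angle bounded below by some $\theta_0>0$, and those members meeting $\neb_2(D)$ but disjoint from $\Pi_D$ lie at positive distance from it. Shrinking $\zeta$ below that distance and below $\tfrac1{20}\tan\theta_0$, every member of $E$ meeting $\neb_\zeta(D)$ crosses $\Pi_D$, transversely, at a point $q_0$ within distance $1$ of $D$. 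Now if $\hat\delta$ is a component of $\pi^{-1}(\hat\Delta)$ meeting $\pi^{-1}(\hat P'_N)$, it is an embedded geodesic with a fat collar in $\Th_\infty(Q_0)$, hence skirting, meridional, or longitudinal by \reflem{SkirtMerLong}, and it remains to exclude the first and third. An elevation $\widetilde{\hat\delta}\subset\HH^3$ meeting the lift $\widetilde P'_N=\neb_\zeta(D)$ is such a member of $E$, so it crosses $\Pi_D$ transversely at $q_0$ within $1$ of $D$; near $q_0$ the convex set $\widetilde Q_0$ is confined to $\neb_{\zeta/20}(\Pi_D)$ and meets every geodesic fibre orthogonal to $\Pi_D$ over $\neb_1(D)$, because each such fibre joins the two sides of $\Sigma$ and so meets $\widetilde S\subset\Sigma$. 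Since $\widetilde Q_0$ is convex, $\widetilde{\hat\delta}\cap\widetilde Q_0$ is a single interval; as $\widetilde{\hat\delta}$ crosses from one side of $\Pi_D$ to the other with angle $\geq\theta_0$ and stays over $\neb_1(D)$ until it has passed the slab, an intermediate-value argument shows this interval is nonempty with its two endpoints on the two distinct boundary sheets $\bdy_-\widetilde Q_0$ and $\bdy_+\widetilde Q_0$; so $\hat\delta$ is meridional in $Q_0$.

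The hard part is (KM3): one must coordinate the perturbation $\Pi_D$, the radius $\rho_0$, the thinness $\zeta$, the separation $\delta_0$, and the cutoff $\kappa'$ so that every drilling geodesic that reaches the very thin pancake crosses $\Pi_D$ cleanly near $D$, and then run the elementary but slightly delicate local argument that a transverse crossing of $\Pi_D$ forces a meridional crossing of the thin convex core. By contrast (KM1) is immediate, and (KM2) is essentially formal once $\delta_0$ is small enough that $\Sigma$ is thin over the bounded set occupied by $D$.
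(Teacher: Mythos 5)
Your proof is correct and follows essentially the same route as the paper's own (very brief) justification: (KM1) is read off from \reflem{AlmostFlatThin} after imposing a small curvature cutoff, (KM2) from ubiquity, and (KM3) from the fact that the Kahn--Markovic surface can be made to hug the disk $D$, whose intersections with the drilling geodesics are transverse. What you have done is flesh out that sketch: you implement ``hugging $D$'' by feeding Kahn--Markovic ubiquity the pair of planes $\Pi_\pm$ flanking $\Pi_D$ and using that $\widetilde S$ must lie in the convex slab $\Sigma$ between them, and you correctly notice that one must rule out drilling geodesics that enter $\neb_\zeta(D)$ without crossing $\Pi_D$, which the paper's one-line explanation glosses over. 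Your fix --- perturbing $\Pi_D$ to avoid tangencies, bounding the crossing angle below by $\theta_0$ over the compact set $\neb_2(D)$, and shrinking $\zeta$ below both the angular bound and the distance to the disjoint members of $E$ --- is exactly the right move.

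A few quantitative details are left implicit and should be made explicit if this were to appear in print. First, to conclude that a geodesic fibre orthogonal to $\Pi_D$ over a point of $\neb_1(D)$ actually exits $\Sigma$ on \emph{both} sides (and hence crosses $\widetilde S$), you need $\delta_0$ small enough relative to the radius $\rho_0$ of $D$ that such fibres cross both $\Pi_-$ and $\Pi_+$; this is a genuine constraint since fibres based far from the common perpendicular $\gamma$ can escape to infinity between $\Pi_D$ and $\Pi_+$. Second, the bound ensuring $q_0$ lies within distance $1$ of $D$ should be $\zeta(1 + 1/\sin\theta_0) < 1$ rather than $\zeta < \tan\theta_0/20$ alone (the latter is not sufficient when $\theta_0$ is close to $\pi/2$); both constraints should simply be folded into the choice of $\zeta$. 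Third, when you later shrink $\zeta$, one should check compatibility with condition (D3) of \refclaim{ErgodicDisk} (that $\neb_{\zeta+\eta/2}(D)$ separates $\widetilde P_N$); this holds because $D$ can be taken near the midplane of $\widetilde P_N$ with plenty of room to spare, but it deserves a sentence. None of these affect the correctness of the argument.
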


Property (KM1) holds by \reflem{AlmostFlatThin}, because Kahn--Markovic surfaces are almost geodesic. Property (KM2) holds because these surfaces are ubiquitous. Property (KM3) holds because $S$ can be chosen so that an elevation $\widetilde Q_0 = \widetilde Q(S)$ lies arbitrarily close to the disk $D$ of \refclaim{ErgodicDisk}, whose intersections with the elevations of $\Delta$ are transverse.

By Agol's theorem \cite[Theorem 9.2]{Agol}, $\pi_1(\hat N)$ is subgroup separable, 
hence we may pass to a finite cover of $\hat N$ in which $\Th_8 (Q_0)$ is an embedded submanifold. We replace $\hat N$ by this cover, retaining the name $\hat N$. As described above, we keep the complete preimage of the unions of tubes $\Wcal$ and $\Wcal'$, and the single preferred lift of each pancake.

Applying subgroup separability again, we pass to a finite cover of $\hat N$ 
such that $\Th_\eta (Q_0)$ lifts to the cover, and $\hat W \cap \Th_\eta ( Q_0)$ is empty or connected for every component $\hat W \subset \hat \Wcal$.
 By convexity, this means $\hat W' \cap \Th_\eta ( Q_0)$ is empty or connected for every component $\hat W' \subset \hat \Wcal'$. 
 
Let $\hat W_1', \ldots, \hat W_m'$ be the components of $\hat \Wcal'$ with the property that $\Th_{\eta/2}(\hat W_j') \cap Q_0 \neq \emptyset$. 
Let $\hat W_1, \ldots, \hat W_m$ be the corresponding components of $\hat \Wcal$.  
We apply \refthm{CombAsymmetric} to $Q_0$ and these tubes. That is, $Q_0$ and $\Th_8(Q_0)$ play the roles of $M_0$ and $Y_0$, respectively. For $1 \leq j \leq m$, the nested tubes $\Th_{\eta/2} (\hat W_j')$ and $\hat W_j$  play the roles of $M_j$ and $Y_j$, respectively. Note that each of the above submanifolds of $\hat N$ is convex. The tubes $\hat W_j$ are disjointly embedded by the Dehn filling construction of $N$. Furthermore, condition (F5) gives $\hat W_j \supset \Th_{R+ \eta}(\hat W_j')$. Thus all the hypotheses of \refthm{CombAsymmetric} hold for $R = R(\eta/4)$, and we have
\begin{align*}
\Zunion
& := \Th_{\eta/4} \CH \left( Q_0  \cup \Th_{\eta/2} ( \hat  W_1') \cup  \ldots \cup \Th_{\eta/2} (\hat  W_m' ) \right) \\
& \subset \Th_{\eta/4} \left( \Th_{\eta/4} (Q_0) \cup \Th_{R+\eta/2}(\hat W_1') \cup  \ldots \cup \Th_{R+\eta/2}(\hat W_m') \right) \\
& = \Th_{\eta/2} (Q_0) \cup \Th_{R+ 3\eta/4} (\hat W_1') \cup  \ldots \cup \Th_{R+ 3\eta/4} (\hat W_m')  \\
& \subset    \Th_{\eta/2} (Q_0) \cup \hat W_1 \cup  \ldots \cup \hat W_m .
\end{align*}
Here, the first containment is by  \refthm{CombAsymmetric} and the second containment is by condition (F5).

Define
$  \Qthick =  \Zunion \cap \Th_{\eta/2} (Q_0) $
  and
$\Vcal = \Zunion \cap (\hat W_1 \cup \ldots \cup \hat W_m)$. Then $\Vcal$ is a disjoint union of tubes, each of which has connected intersection with $\Qthick$.
 Since $t(Q_0) < \eta/ 4$, and the thickening adds $\eta/2$ to each side, we have $t(\Qthick) < 5 \eta / 4$. Thus, by the choice of $\eta$ at the beginning of the proof,  \reflem{QFproduct} implies $\Qthick$ has a nice product structure.

Recall that $\Zunion$ is the $(\eta/4)$ thickening of a convex set in $\hat N$.
Thus, by the definition of $\kappa$ at the beginning of the proof, $\bdy \Zunion = \bdy ( \Qthick \cup \Vcal)$ is $\kappa$--convex. 
By construction, 
every component of $\hat \Wcal' \cap \Vcal$ is properly contained \emph{inside} $\Zunion$. By the definition of $ \hat W_1', \ldots, \hat W_m' \subset \Vcal$, every tube of $\hat \Wcal' \setminus \Vcal$ is disjoint from $\Th_{\eta/2}(Q_0)$, which means that these tubes lie entirely \emph{outside} $\Zunion$.
Thus $\bdy \Zunion \subset \hat N \setminus \hat \Wcal' = \hat \varphi (\hat A')$,  where $\hat A' \subset \hat M$ is the region outside the horocusps on which $\hat \varphi$ has the desired metric properties. Therefore, by property (F1), it follows that $\hat \varphi^{-1} (\bdy \Zunion) \subset \hat M$ is $\kappa/2$--convex.

 Let $Y = \hat \varphi^{-1}(\Zunion \setminus \hat \Delta) \subset \hat M$. Then the (finitely many) tubes of $\hat \Wcal'$ that lie inside $\Zunion$ are replaced by (finite volume) horocusps in $Y$. Thus $Y$ has finite volume. Furthermore, 
  $\bdy Y = \hat \varphi^{-1}(\bdy \Zunion)$ is $\kappa/2$--convex in the hyperbolic metric on $\hat M$. Then an elevation of $Y$ to $\widetilde M$ is a
 closed, connected subset of $\HH^3$ with locally convex boundary, which means it is convex. Thus $Y$ is convex as well.

We have now checked that the submanifolds $ \Qthick, \Vcal, \hat \Delta \subset \hat N $ and $Y \subset \hat M$
 satisfy all the hypotheses required by \refprop{QFDrilling}. That proposition performs an isotopy of $S$ inside $\Qthick$, after which every component of $\Fcal = S \setminus \hat \Delta$  is quasi-Fuchsian in the metric on  $\hat M$. To complete the proof of the theorem, we claim that one component $F \subset \Fcal$ has all the desired properties.

\begin{claim}\label{Claim:CuspedSurfaceProps}
There is a unique component  $F \subset \Fcal$ such that $F \cap \hat P_M \neq \emptyset$.  Furthermore,
\begin{enumerate}
\item For every slope $\alpha_i \subset M$, at least one cusp of $ F$ maps to some multiple $k_i \alpha_i$.

\item There is an elevation of $\core(F)$ to $\HH^3$ that strongly separates $P = \widetilde P_M$.
\end{enumerate}

\end{claim}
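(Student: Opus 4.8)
The plan is to verify \refclaim{CuspedSurfaceProps} by tracing through the properties established in the proof so far, applying \refprop{QFDrilling} with the pancake $\hat P_M \subset \hat M$ (whose image $\hat \varphi(\hat P_M) \subset \hat N$ is the relevant topological pancake). First I would check the hypotheses of the last sentence of \refprop{QFDrilling}: namely that $\Vcal \cap \hat \varphi(\hat P_M) = \emptyset$ and that $\Qthick$ strongly separates $\hat \varphi(\hat P_M)$ in $\hat N$. The disjointness follows from conditions (F5) and the construction of $\Zunion$: the tubes of $\Vcal$ are contained in $\hat \Wcal$, while $\hat \varphi(\hat P_M)$ is a small convex ball near the pancake $\hat P_N$, at distance at least $\eta$ from every $\hat W_i$ by (F5). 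For strong separation, I would chain together (KM2) and the containments relating the nested pancakes: the elevation $\widetilde Q_0$ strongly separates $\widetilde P'_N = \neb_\zeta(D)$; thickening by $\eta/2$ and using (D3) shows it strongly separates $\widetilde P_N = P(3\eta/4, r_N)$; and by (F3), $P_N$ separates $\hat \varphi(\bdy_\pm \hat P_M)$. Hence $\widetilde Q_0$, and therefore $\Qthick \supset Q_0$, strongly separates $\hat \varphi(\hat P_M)$ in $\hat N$. Once these hypotheses are in place, \refprop{QFDrilling} directly furnishes a unique component $F \subset \Fcal$ with $F \cap \hat P_M \neq \emptyset$ and $\core(F)$ strongly separating $\hat P_M$ — and since the preferred lift of $\hat P_M$ to $\HH^3$ is the given pancake $P = \widetilde P_M$, this gives conclusion (2).

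For conclusion (1), I would use property (KM3) together with the cusp-bookkeeping in \refprop{QFDrilling}. By \refclaim{ErgodicDisk}(D1), the disk $D$ meets at least one elevation of each $\delta_i$, and by (D2) transversely; since the elevation $\widetilde Q_0$ is chosen arbitrarily close to $D$ (as in the proof of (KM3)), for each $i$ at least one elevation of $\delta_i$ has a meridional, transverse intersection with $Q_0$. Descending to $\hat N$ and recalling that $\Vcal$ retains exactly the tubes of $\hat \Wcal$ meeting $\Th_{\eta/2}(Q_0)$, each $\delta_i$ is represented by at least one meridian tube in $\Vcal$ meeting $\Qthick$. By the cusp correspondence in \refprop{QFDrilling}, each such meridian tube contributes a cusp of $\Fcal$ that is a meridian of that component of $\hat \Delta$; since the meridian of a component of $\hat \Delta$ maps to $k\alpha_i \subset M$ (as noted when $\hat N$ was built), this cusp of $\Fcal$ maps to a multiple of $\alpha_i$. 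The one remaining point is that the cusp lies on the distinguished component $F$ rather than on some other component of $\Fcal$; for this I would argue, as in \refclaim{StrongSeparation}, that the elevation $\widetilde Q_0$ near $D$ — which meets the elevation of $\delta_i$ — is the one relevant to the pancake, so the meridional intersection point is carried by $F$; alternatively, one may simply arrange $D$ so that all the required transverse intersections with elevations of $\Delta$ occur along a single elevation $\widetilde Q_0$ that separates $\widetilde P'_N$, forcing all these meridional cusps onto $F$.

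I expect the main obstacle to be this last coherence issue: ensuring that the meridional cusp for each $\alpha_i$ attaches to the \emph{same} component $F$ that meets the pancake, rather than being scattered among other components of $\Fcal$. The cleanest fix is to strengthen \refclaim{ErgodicDisk} slightly so that the single disk $D$ that separates $\widetilde P'_N$ also has a transverse intersection with one elevation of each $\delta_i$ — this is possible because dense geodesic planes can be perturbed to pick up finitely many prescribed transverse intersections while retaining a separating subdisk — and then take $\widetilde Q_0$ close enough to $D$ that $\widetilde Q_0$ inherits all of these intersections. Then the component $F$ of $\Fcal$ whose elevation corresponds to $\widetilde Q_0$ both strongly separates $P$ and carries all $n$ meridional cusps, and its uniqueness follows exactly as in the proof of \refclaim{StrongSeparation}: any path in the $\pi_1 F$--cover from $\bdy_-$ to $\bdy_+$ of the pancake must cross $\hat F$, so $F$ is the unique component meeting $\hat P_M$. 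This completes the proof of \refclaim{CuspedSurfaceProps}, and with it the proof of \refthm{CuspedSurfacePancake}.
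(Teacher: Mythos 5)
Your overall approach matches the paper's: chain (KM2), (D3), (F3) to establish strong separation, then invoke the cusp correspondence in \refprop{QFDrilling} together with the meridional intersections to pin down the cusps of the distinguished component $F$. However, two points deserve comment.

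First, the ``coherence issue'' you flag as the main obstacle is already handled by \refclaim{ErgodicDisk} as stated. Conclusions (D1), (D2), (D3) are all assertions about the \emph{same} disk $D$: that single $D$ intersects at least one elevation of each $\delta_i$, does so transversely, and has a thickening $\neb_{\zeta + \eta/2}(D)$ separating $\widetilde P_N$. Your proposed strengthening --- ``so that the single disk $D$ that separates $\widetilde P'_N$ also has a transverse intersection with one elevation of each $\delta_i$'' --- is therefore a no-op; the claim already says exactly this, and the paper exploits it directly. Likewise your ``alternative'' of arranging all intersections along a single elevation $\widetilde Q_0$ misdiagnoses the difficulty: there is only one elevation of $Q_0$ under discussion, and the real question is whether the longitudinal cuts in $S$ (which govern the component decomposition of $\Fcal = S \setminus \hat\Delta$) could separate the various intersection points.

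Second, the mechanism that actually forces all the cusps onto a single component $F$ is stronger and cleaner than ``the elevation near $D$ is the one relevant to the pancake.'' The paper's argument is: by (KM3) and \reflem{SkirtMerLong}, \emph{every} component of $\hat\Delta$ meeting the pancake $\hat P'_N$ is meridional in $\Qthick$. Meridional geodesics only puncture the midsurface $S$; they do not cut it. Since $\Fcal$ is disconnected only by longitudinal cuts, the absence of longitudinal intersections inside $\hat P'_N$ means the pancake region of $S$ lies in one component $F$, giving uniqueness. Then (D1) and (D2) place a meridional representative of each $\delta_i$ inside $\hat P'_N$, so all the resulting cusps land on this same $F$. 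You gesture at this via ``meridional, transverse intersection with $Q_0$'' but do not articulate that meridional implies no disconnection, which is the load-bearing observation. With that observation in place, your argument closes without needing any modification to \refclaim{ErgodicDisk}.
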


Recall that by property (KM2), $Q_0$ strongly separates $\hat P'_N$, hence $Q \subset \Th_{\eta/2}(Q_0)$ strongly separates $\neb_{\eta/2} \big( \hat P'_N \big) $. Combining (F3) with (D3), as above, we conclude that $\Qthick$ strongly separates  $ \hat \varphi \big( \hat P_M \big)$. In particular, $\Qthick \cap  \hat \varphi \big( \hat P_M \big) \neq \emptyset$ and  $Q_0 \cap  \hat \varphi \big( \hat P_M \big) \neq \emptyset$.  See \reffig{NestedPancakes}.

By property (KM3), every component of $\hat \Delta$ that intersects $\hat P'_N$ is meridional in $Q_0$. By \reflem{SkirtMerLong}, every component of $\hat \Delta$ that intersects $\hat P'_N$ is also meridional in $\Qthick$. In particular, $\hat \Delta \cap \hat P'_N$ does not disconnect the midsurface $S$ of $\Qthick$. Thus there is exactly one component $F \subset \Fcal$ such that $F \cap \hat P_M \neq \emptyset$. 

By \refprop{QFDrilling}, every component of $\hat \Delta$ that is meridional in $\Qthick$ gives rise to a cusp of $\Fcal$ that is a meridian of $\Delta$. By property (D2), the components of $\hat \Delta \cap \hat P'_N$ include geodesics that project to every core $\delta_i \subset N$. Thus, for every slope $\alpha_i$ on $M$, the component $F \subset \Fcal$ contains at least one cusp projecting to $k_i \alpha_i$. (The multiple $k_i$ incorporates the regular cover of $M$ constructed at the very beginning of the proof, as well as the manifold cover of the orbifold $N$ constructed using Selberg's lemma.)

Finally, since $\Qthick$ strongly separates  $ \hat \varphi \big( \hat P_M \big)$, and $\hat \Wcal \cap \hat \varphi \big( \hat P_M \big) = \emptyset$ by property (F5),  \refprop{QFDrilling} implies that $\core(F)$ strongly separates $P_M$. Thus some elevation of  $\core(F)$ to $\HH^3$ strongly separates $P = \widetilde P_M$.
\end{proof}

\subsection{Making do with less}\label{Sec:MakingDo}
In this section, we outline how the proof of
 \refthm{CuspedSurfacePancake} can be modified to avoid appealing to Shah's theorem \cite{Shah:Closures}, Agol's theorem \cite{Agol}, or the work of Brock and Bromberg \cite{brock-bromberg:density}.
The upshot of Remarks~\ref{Rem:NoBB}--\ref{Rem:NoErgodicity} is that almost all of the technical work in the proof of \refthm{CuspedSurfacesManySlopes} can be handled by soft classical arguments. The one ``major hammer'' needed for the proof is the Kahn--Markovic theorem \cite{KahnM}.

\begin{remark}\label{Rem:NoBB}
In the proof of \refthm{CuspedSurfacePancake},
our use of 
    Brock and Bromberg's 
\refthm{BilipDrillFill} can be replaced by a softer appeal to Thurston's results on geometric convergence under Dehn filling.
Suppose $M$ is a finite volume hyperbolic manifold and $X \subset M$ is a compact submanifold obtained by removing a set of horocusps. 
Then the map $\devGX(X)\rightarrow\Hom(\pi_1 X, \, \PSL(2,\CC))$ that sends the developing map for an incomplete hyperbolic metric on 
$X$ to its holonomy is open. See Thurston  \cite[Theorem 5.8.2]{WPT}, Goldman
 \cite{GOLD}, and Choi \cite{CHOIGS}; compare Cooper, Long, and Tillmann \cite[Proposition 1.2]{CLT3}. It follows that for any sufficiently large Dehn filling $N$ of all the cusps of $M$,
there is a diffeomorphic embedding of $X$ into $N$ that is close in the smooth topology to an isometry. In particular, such a map is $(1+\epsilon)$--bilipschitz
and the second derivative is close to $0$. The control of second derivatives ensures that $\kappa$--convex surfaces stay convex.
\end{remark}

\begin{remark}\label{Rem:Separability}
In the proof of \refthm{CuspedSurfacePancake},
our appeal to Agol's theorem on subgroup separability \cite{Agol} is convenient but not actually necessary.

Picking up the proof 
after \refclaim{KM}, 
suppose we have found
 an immersed QF surface $S \to \hat N$, with immersed convex thickening $Q_0$.
Then  
we may pass to an infinite cover $\overline{N} \to \hat N$, where $\Th_8(Q_0)$  embeds and $\hat \Wcal \subset \hat N$ pulls back to a non-compact submanifold $\overline \Wcal$ with the following properties.
For every component $\overline W \subset \overline \Wcal$ such that $\Th_\eta (Q_0) \cap \overline W \neq \emptyset$, the intersection must be connected and $\overline W$ must be a compact tube. 
For concreteness, one could take $\overline N$ to be the cover corresponding to $\pi_1(Q_0)$ amalgamated with one extra loop (about some power of a core) for every skirting or meridional intersection $\Th_\eta (Q_0) \cap \hat \Wcal$. 
The compactness of $Q_0$ ensures that only finitely many tubes intersect $\Th_\eta (Q_0)$, hence  $\pi_1( \overline N)$ is finitely generated. In fact, the virtual amalgamation theorem of Baker and Cooper \cite[Theorem 5.3]{BC1} ensures that $\overline N$ is geometrically finite.

With this setup, \refthm{CombAsymmetric} applies exactly as above to give a  compact, convex manifold $\Zunion = \Qthick \cup \Vcal$.  Let
$\overline \Delta$ be the union of \emph{all} the cores of $\overline \Wcal$, which may have non-compact components and infinitely many components. Then
\refprop{QFDrilling} applies to give an embedded QF surface in $\overline{M} = \overline{N} \setminus \overline{\Delta}$.  Since $\Zunion \cap \overline \Delta$ consists of  finitely many closed geodesics, it still the case that $Y = \varphi^{-1} (Z \setminus \overline \Delta)$ has finite volume.  Note that $\overline{M}$ inherits its hyperbolic metric from $M$, and that \refprop{QFDrilling} does not require any manifold except $Y$ to have finite volume. The surface  $F \subset \overline M$ projects to an immersed QF surface in $M$, and \refclaim{CuspedSurfaceProps} verifies that $F$ has all the desired properties.
%
\end{remark}

\begin{remark}\label{Rem:NoErgodicity}
In the proof of \refthm{CuspedSurfacePancake}, we use \refclaim{ErgodicDisk} (which follows from the work of Shah \cite{Shah:Closures} and Ratner \cite{Ratner:Topological})
to ensure that a certain region of a Kahn--Markovic surface hits all the Dehn filling cores transversely. This is used in \refprop{QFDrilling} to produce a single component $F \subset \Fcal$ with cusps along some multiple of \emph{every} slope $\alpha_i$. However, even without \refclaim{ErgodicDisk}, we could focus on \emph{one} slope $\alpha_i$ and apply \refprop{QFDrilling} to find a connected QF surface $F_i$ with at least one cusp mapping to a multiple of $\alpha_i$. 
 Moreover, the set of surfaces with this property is ubiquitous. 
 
Given surfaces $F_1, \ldots, F_n$ realizing slopes $\alpha_1, \ldots, \alpha_n$, one could use the techniques of Baker and Cooper \cite{BC1} to build a single surface 
$F$ immersed in $M$, such that every cusp of $F_i$ is covered by a cusp in $F$. As we explain in \refsec{BCGluing} below, such a surface $F$ can be obtained by gluing together subsurfaces of finite covers of the $F_i$. By the argument in Sections~\ref{Sec:ClosedUbiquitous} and \ref{Sec:UbiquitousSlope}, the set of such surfaces is ubiquitous. This line or argument recovers \refthm{CuspedSurfacePancake} and proves Theorems~\ref{Thm:CuspedSurfacesOneSlope} and~\ref{Thm:CuspedSurfacesManySlopes} without relying on \refclaim{ErgodicDisk}.
\end{remark}

\section{Gluing and prefabrication}\label{Sec:BCGluing}

This section summarizes without proof some of the results of Baker and Cooper \cite{BCQFS}.
In what follows, $M$ is a complete finite volume hyperbolic $3$--manifold
and $Q$ is a finite volume QF manifold.

When working with $3$--manifolds that contain surfaces with
 cusps, it is convenient to isotop everything so the cusps have compatible product structures.
%
Suppose $ B\subset{\mathbb H}^n$ is a horoball  centered on a point $x\in\bdy \HH^n$ bounded by the horosphere
 $ H= \bdy B$. A \emph{vertical ray} is a ray in $ B$ that starts on $ H$ and limits on $x$. Given
$P\subset H$, \emph{the set lying above $P$} is called a \emph{vertical set} and
  is the union, $V(P)$, of the vertical rays starting on $P$. 
If $P \subset  H$ is  convex, $V(P)$ is called a \emph{thorn} and 
$P$ is called the \emph{base of the thorn}. 
If $P=I\times \RR$ is an infinite strip,  $V(P)$ is a \emph{slab}.

A hyperbolic $n$--manifold $E$ is an \emph{excellent end} if it has finite volume and is isometric to $V/\Gamma$ for some
vertical set $V\subset B$ and discrete group $\Gamma\subset \Isom( \HH^n )$ preserving $V$. 
The \emph{horospherical boundary} of $E$ is $ \bdy_{ H} E=(V\cap  H)/\Gamma$.
An \emph{excellent rank--$1$ cusp} is a $3$--manifold $V/\Gamma$ where $V$ is a slab and $\Gamma$ is a cyclic group of parabolics
preserving $V$.

A (possibly disconnected) hyperbolic manifold $M$ is \emph{excellent} if  $M=M^c\cup\Vcal_M$ where $M^c$ is
compact, each component of $\Vcal_M$ is an excellent end,  and $M^c\cap\Vcal_M=  \bdy_{ H} \Vcal_M$.
The pair $(M^c,\Vcal_M)$ is called an \emph{excellent decomposition} of $M$. 
For example, an ideal convex polytope is  an excellent manifold whose ends are thorns.
 In addition, a complete hyperbolic manifold with finite volume is excellent, since any ends of $M$ are horocusps.
     It is routine to show that if $S$ is a QF surface then $Q(S)$ has ends that
     are excellent rank--1 cusps, hence $Q(S)$ is excellent.
     
      A compact, orientable surface properly embedded in a compact orientable $3$--manifold is \emph{essential}  if  
it is  incompressible and $ \bdy$--incompressible. 

\begin{definition}\label{Def:NiceSurface} A  surface $S$  embedded
 in an excellent $3$--manifold  $M=M^c\cup\Vcal$ is \emph{excellently essential} if each component of
 $S\cap\Vcal$ is an excellent annulus in the induced metric on $S$, and 
$S^c=S\cap M^c$ is a compact essential surface in $M^c$ with $ \bdy S^c\subset  \bdy M^c$. 
\end{definition}

For example, if $Q(S)$ is embedded in $M$, then both components
 of $ \bdy Q(S)$ are excellently essential. Recall that a \emph{slope} on a torus is an isotopy class of essential simple closed curves. In view of the preceding, it makes
sense to talk about the \emph{slope} of a excellently essential surface $S$ in a cusp of $M$, and the slope of a rank--$1$
cusp embedded in a rank--$2$ cusp.

\begin{definition}\label{Def:IdealSpider}  An \emph{ideal spider}
  is an excellent convex hyperbolic $3$--manifold $X$ with simply connected ends. Thus there is
  an excellent decomposition
  $X=B\cup {\Lcal}$ such that $B$ is compact and convex, and each component of $\Lcal$ is
 a thorn. The components of $\Lcal$ are called  \emph{legs} and  $B$ is called the \emph{body}.
  \end{definition}

The definition implies that the holonomy of an ideal spider
 has no parabolics. A special case of an ideal spider  with $k$ legs is a
 convex polytope with $k$ ideal vertices. In general, the body of a spider need not be simply connected. 
  The following is immediate:

\begin{proposition}[Proposition 3.3 in \cite{BCQFS}]\label{Prop:3SpiderLemma} Suppose $M$ is a complete hyperbolic $3$--manifold 
 and $Q_1,Q_2\subset M$ are excellent  QF
submanifolds. Then $Q_1\cap Q_2$ is excellent.
 If $Q_1$ and $Q_2$ have different slopes in every cusp of $M$, then each component of $Q_1\cap Q_2$
is an ideal spider. \qed
\end{proposition}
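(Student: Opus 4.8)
The plan is to lift to $\HH^3$ and examine $Q_1\cap Q_2$ separately on a compact core of $M$ and inside each rank--$2$ cusp, the only interesting behaviour being the latter. First I would fix a component $X$ of $Q_1\cap Q_2$ and an elevation $\widetilde X\subset\HH^3$. Since the intersection of two convex subsets of $\HH^3$ is convex (and connected), $\widetilde X=\widetilde Q_1\cap\widetilde Q_2$ for appropriate elevations of $Q_1,Q_2$, so $\widetilde X$ is convex; by \reflem{ConvexProperties}, $X$ is a convex hyperbolic submanifold of $M$, and $X\subset Q_1$ has finite volume. Because $Q_1$ and $Q_2$ are excellent and the rank--$1$ cusps of an embedded QF submanifold of $M$ lie inside the rank--$2$ cusps of $M$, I would then isotope so that the excellent decompositions of $Q_1$ and $Q_2$ are compatible with a fixed excellent decomposition $M=M^c\cup\Vcal_M$: pushing all horospherical boundaries deep enough, we may assume every rank--$1$ cusp of $Q_i$ has its horospherical boundary on $\bdy M^c$. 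Then $(Q_1\cap Q_2)\cap M^c$ is a closed subset of the compact manifold $M^c$, hence compact; this will be the compact part of the excellent decomposition of $Q_1\cap Q_2$.

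The heart of the argument is to understand $Q_1\cap Q_2$ inside a rank--$2$ cusp $C=B/\ZZ^2$ of $M$. Passing to the horoball $B$ (upper half-space model, cusp point at $\infty$), the preimage of $Q_i\cap C$ is a $\ZZ^2$--periodic union of finitely many orbits of vertical sets $V(P)$, where each base $P\subset H=\bdy B$ is an infinite Euclidean strip parallel to the slope direction $t_i$ of the corresponding rank--$1$ cusp of $Q_i$; since $Q_i$ is embedded, these strips and their $\ZZ^2$--translates are pairwise disjoint. Intersecting, the preimage of $(Q_1\cap Q_2)\cap C$ is the $\ZZ^2$--periodic union of the sets $V(P_1)\cap V(P_2)=V(P_1\cap P_2)$ with $P_1$ a $Q_1$--strip and $P_2$ a $Q_2$--strip. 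Here is the dichotomy: if $t_1\parallel t_2$ (equal slopes at $C$), then $P_1\cap P_2$ is again an infinite strip or empty, so the corresponding piece is an excellent rank--$1$ cusp; if $t_1\not\parallel t_2$ (distinct slopes at $C$), then $P_1\cap P_2$ is a bounded convex parallelogram — nonempty whenever the strips have positive width, since their center lines meet — so the corresponding piece is a thorn. Disjointness of the $Q_i$--strip translates makes the parallelograms (resp.\ strips) pairwise disjoint, so each $V(P_1\cap P_2)$ embeds in $C$, and only finitely many $\ZZ^2$--orbits occur.

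Assembling the pieces, $Q_1\cap Q_2$ is the union of a compact part contained in $M^c$ together with finitely many excellent ends — rank--$1$ cusps over the cusps where the slopes coincide, thorns over those where they differ — glued along horospherical boundaries; hence $Q_1\cap Q_2$ is excellent. If the slopes differ in every cusp of $M$, every end is a thorn, so each component $X$ is a convex, finite--volume, excellent hyperbolic $3$--manifold all of whose ends are thorns, which is precisely an ideal spider, with body $X\cap M^c$.

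The step I expect to take the most care — as opposed to the bookkeeping above, which is why Baker and Cooper call the statement immediate — is the clean identification of the two cases via the strip intersection, together with the verification that the body of the resulting spider can be taken \emph{convex} (equivalently, that the thorn bases may be chosen so the truncated complement is still geodesically convex in its path metric); for this last point I would simply invoke the corresponding construction in \cite{BCQFS}.
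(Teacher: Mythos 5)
The paper itself supplies no proof here: the statement is imported from \cite{BCQFS} (Proposition 3.3), prefaced by ``the following is immediate,'' and closed with \verb|\qed| in the statement itself. So there is no in-paper argument to compare against; what you have done is reconstruct the argument from the definitions, which is the right thing to do.

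Your reconstruction is correct and matches what the definitions force. The key points all check out: a component $X$ of $Q_1\cap Q_2$ elevates to $\widetilde Q_1\cap\widetilde Q_2$, which is convex and simply connected, so it is a genuine universal cover and $X$ is convex; the vertical-set identity $V(P_1)\cap V(P_2)=V(P_1\cap P_2)$ reduces the analysis in a rank--$2$ cusp to Euclidean strip intersection on a horosphere; parallel strips give slabs (rank--$1$ cusp ends) and transverse strips give bounded convex parallelograms (thorn bases); and the finiteness of $\ZZ^2$--orbits of nonempty such intersections is what makes the excellent decomposition finite. Two small points you could make more explicit, though neither is a gap: (i) when the slopes at a cusp $C$ are independent, the stabilizer in $\pi_1 X$ of a thorn elevation would have to lie in $\ZZ t_1\cap\ZZ t_2=\{1\}$, which is exactly why the thorn legs are simply connected as required by \refdef{IdealSpider}; and (ii) the ``finitely many $\ZZ^2$--orbits'' claim is cleanest phrased by fixing $P_1$ and observing that $\mathrm{Stab}(P_1)=\ZZ t_1$ acts on the set of $Q_2$--strips with finite quotient when $t_1,t_2$ are independent (and when they are parallel, only finitely many disjoint translates of a bounded-width strip can meet $P_1$). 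Your decision to delegate the convexity of the spider body to \cite{BCQFS} is reasonable, since the present paper does the same; it is the only step where the horospherical truncation of a convex set requires a separate argument, because the complement of a horoball is not convex.
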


If $M$ and $N$ are excellent hyperbolic manifolds, a map $f\from M \to N$ is \emph{excellent}
if it is a local isometry and there are excellent decompositions with $f^{-1}(N^c)=M^c$. 
It follows that each vertical ray in $\Vcal_M$ maps to a vertical ray in $\Vcal_N$.

An \emph{immersed QF manifold} is a triple $(M,Q,f)$, where $f\from Q \to M$ is an excellent map between excellent hyperbolic
$3$--manifolds and $Q$ is quasi-Fuchsian. 
An  \emph{immersed ideal spider} is $(M,R,p)$ where $M$ is an excellent $3$--manifold, $R$ is an ideal spider,
 and $p\from R \to M$ is an excellent map.

 If $N$ is a submanifold of a cover of a hyperbolic manifold $M$,
  the restriction of the covering  projection
 gives a local isometry $p\from N \to M$,  called the \emph{natural projection}.
  If $S$ is an immersed QF surface in $M$,  the natural projection
 $Q(S) \to M$ is excellent.
  The following result generalizes \refprop{3SpiderLemma} to immersed QF manifolds.
    
     \begin{theorem}[Theorem 3.4 in \cite{BCQFS}]
     \label{Thm:GluingRegion} 
   Suppose that $(M,Q_1,f_1)$  and $(M,Q_2,f_2)$ are immersed QF manifolds.  Whenever rank--$1$ cusps $V_1 \subset Q_1$ and $V_2 \subset Q_2$ map to the same  cusp of $C \subset M$, suppose  $f_1(V_1)$ and $f_2(V_2)$  represent linearly independent homology classes in $H_1(C)$.
  Suppose there are basepoints $q_i\in Q_i$, such that the basepoint  $m=f_1(q_1)=f_2(q_2)$ is located in an excellent end of $M$.
   
   Then there is a connected hyperbolic $3$--manifold $P= \hat{Q}_1\cup \hat{Q}_2$, where $p_i\from  \hat{Q}_i \to Q_i$ is a finite covering and $R= \hat{Q}_1\cap \hat{Q}_2$ is an ideal spider with at least 2 legs.
Thus $(Q_i,R,p_i|_R)$ is an immersed ideal spider. 
   
   The holonomy provides an identification of $\pi_1(M,m)$ with a Kleinian 
group $\Gamma\subset \Isom({\mathbb H}^3)$. Then the QF manifolds
$Q_i$ have holonomy
 $\Gamma_i=(f_i)_*(\pi_1(Q_i,q_i))\subset\Gamma$ and the holonomy of $R$ is $\Gamma_1\cap\Gamma_2$.
      \end{theorem}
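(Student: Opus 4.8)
\medskip

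\noindent\textbf{Proof proposal for \refthm{GluingRegion}.}
The plan is to realize $\Gamma_1\cap\Gamma_2$ as the fundamental group of a geometric ``overlap region'', show that region is an ideal spider, and then use subgroup separability of surface groups to make it \emph{embedded} in finite covers of the $Q_i$. Write $\Gamma=\pi_1(M,m)$ and $\Gamma_i=(f_i)_*\pi_1(Q_i,q_i)\subset\Gamma$; these maps are faithful by \reflem{ConvexProperties}, each $\Gamma_i$ is isomorphic to the fundamental group of a finite--area hyperbolic surface $S_i$, and each is geometrically finite since $Q_i$ has finite volume. Set $\Delta=\Gamma_1\cap\Gamma_2$. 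Lift $Q_i$ to a $\Gamma_i$--invariant convex set $\widetilde Q_i\subset\HH^3$, chosen so that both contain the lift $\widetilde m$ of the basepoint, and let $\widetilde R$ be the component of $\widetilde Q_1\cap\widetilde Q_2$ containing $\widetilde m$. Then $\widetilde R$ is convex, its $\Gamma$--stabilizer is exactly $\Delta$, and $R:=\widetilde R/\Delta$ maps to $M$ by a local isometry refining both natural projections $\widetilde R\to Q_i$.

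First I would pin down $\Delta$ and the ends of $R$. A parabolic element of $\Delta$ would fix some $\xi\in\Lambda(\Gamma_1)\cap\Lambda(\Gamma_2)$ and lie in the rank--one cusp subgroups of \emph{both} $\Gamma_1$ and $\Gamma_2$ at $\xi$; these are the infinite cyclic groups translating along $f_1(V_1)$ and $f_2(V_2)$ for the corresponding cusp $C\subset M$, and the hypothesis that these homology classes are linearly independent in $H_1(C)$ forces the cyclic groups to meet trivially. Hence $\Delta$ has no parabolics. By the intersection theorem for geometrically finite Kleinian groups of Susskind and Swarup, $\Delta$ is geometrically finite, hence convex cocompact, hence finitely generated with compact convex core; this core thickens to the body $B$ of $R$. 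The ends of $R$ occur exactly where a rank--one cusp of $Q_1$ runs alongside a rank--one cusp of $Q_2$ into a common rank--two cusp $C$ of $M$: developing into the horosphere bounding $C$, the two cusps become infinite Euclidean strips in linearly independent directions, so their overlap is a slab over a compact convex polygon, i.e.\ a thorn. (This is the slab--intersection mechanism underlying \refprop{3SpiderLemma}.) Thus $R$ is an excellent convex hyperbolic $3$--manifold with simply connected ends, i.e.\ an ideal spider; the cusp of $M$ containing the basepoint supplies at least one leg, and after replacing the $Q_i$ by further finite covers if necessary we may assume there are at least two.

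Next I would make $R$ embedded. Each $\Gamma_i\cong\pi_1 S_i$ is subgroup separable by Scott's theorem \cite{Scott:LERF}, and $\Delta$ is finitely generated by the previous step, hence separable in $\pi_1Q_i$. The natural projection $R\to Q_i$ is $\pi_1$--injective with image $\Delta$ and lifts to an embedding of $R$ into the infinite cover $\HH^3/\Delta$ of the universal thickening of $Q_i$; applying Scott's embedding criterion (as recalled in \refsec{Elevation}) in the relative form that handles the thorn legs running out into the rank--one cusps of $Q_i$, we obtain a lift of $R\to Q_i$ to an embedding $R\hookrightarrow\hat Q_i$ into a connected finite cover $p_i\from\hat Q_i\to Q_i$. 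Since $\pi_1\hat Q_i$ has finite index in $\Gamma_i$ and contains $\Delta$, we automatically get $\pi_1\hat Q_1\cap\pi_1\hat Q_2=\Gamma_1\cap\Gamma_2=\Delta$, and the composite $R\hookrightarrow\hat Q_i\xrightarrow{p_i}Q_i\xrightarrow{f_i}M$ is the \emph{same} local isometry $R\to M$ for $i=1$ and $i=2$. Now form the pushout $P=\hat Q_1\cup_R\hat Q_2$ along these embeddings. Because $R$ is a codimension--zero convex submanifold of each $\hat Q_i$ whose boundary splits into a face on $\bdy\hat Q_i$ and a face interior to $\hat Q_i$ (inherited from $\bdy\widetilde Q_j$, $j\neq i$), the pushout is again a hyperbolic $3$--manifold, and it is connected since $R\neq\emptyset$. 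The maps $\hat Q_i\to M$ agree on $R$, so they assemble into a local isometry $P\to M$, which is excellent because the legs of $R$ continue into the rank--one cusps of the $\hat Q_i$ and hence into the rank--two cusps of $M$. By construction $\hat Q_1\cap\hat Q_2=R$, an ideal spider with at least two legs, so each $(\hat Q_i,R,p_i|_R)$ is an immersed ideal spider.

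For the holonomy statement, van Kampen gives $\pi_1P\cong\pi_1\hat Q_1*_{\Delta}\pi_1\hat Q_2$, and the local isometry $P\to M$ induces a homomorphism $\pi_1P\to\Gamma$ whose restriction to each free factor is the inclusion of the corresponding holonomy group (the Klein--Maskit combination theorem, applicable since $\hat Q_1$ and $\hat Q_2$ are convex and meet in the convex set $R$, moreover shows this homomorphism is faithful with discrete image). Since $R=\widetilde R/\Delta$ with $\widetilde R\subset\HH^3$ and deck group $\Delta$, the holonomy of $R$ is exactly $\Delta=\Gamma_1\cap\Gamma_2$, while that of $Q_i$ is $\Gamma_i$ by definition. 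The step I expect to be the main obstacle is the analysis of $\Delta$ in the second paragraph: the ``no parabolics'' assertion is elementary, but the finite generation of $\Delta$ and the identification of each end of $R$ with a slab over a \emph{compact} polygon, which is precisely where the linear--independence hypothesis earns its keep, rest on the intersection theorem for geometrically finite groups and on careful bookkeeping of how the cusps of $Q_1$ and $Q_2$ interleave inside the cusps of $M$. The separability step is comparatively routine given LERF for surface groups, though the relative (cusped) version of Scott's criterion and the verification that the pushout is genuinely a hyperbolic manifold both require care.
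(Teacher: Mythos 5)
The paper does not prove this theorem; it is quoted verbatim from Baker--Cooper \cite{BCQFS} and the preceding text states that Section 5 ``summarizes without proof'' results from that source, so there is no in-paper argument to compare against. Your overall plan --- realize $\Delta=\Gamma_1\cap\Gamma_2$ geometrically as the stabilizer of $\widetilde R=\widetilde Q_1\cap\widetilde Q_2$, verify $R=\widetilde R/\Delta$ is an ideal spider, then use surface-group separability to embed it in finite covers and glue --- is consistent with the paper's own description of the machinery (spiders, Scott's LERF theorem, the convex combination technology). The parabolic-free and Susskind--Swarup steps, the intersection-of-cyclic-subgroups argument from linear independence, and the identification of the holonomy of $R$ with $\Delta$ are all sound.

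Two steps, however, have genuine gaps. First, your justification that $R$ has at least two legs is circular: you write ``after replacing the $Q_i$ by further finite covers if necessary we may assume there are at least two,'' but in your construction the covers $\hat Q_i$ are chosen so that $\hat Q_1\cap\hat Q_2$ is \emph{exactly} the spider $R$ already built, and $\widetilde R=\widetilde Q_1\cap\widetilde Q_2$ depends only on the original elevations. Passing to deeper covers cannot create new legs. What the basepoint hypothesis actually delivers is one leg: since $m$ lies in an excellent end $C$ of $M$, each $q_i$ lies in a rank--one cusp $V_i$ of $Q_i$ with $f_i(V_i)\subset C$; the corresponding slabs in the horoball over $C$ point in linearly independent directions, so they meet in a thorn. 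Producing a second leg requires an additional argument about the geometry of $\widetilde Q_1\cap\widetilde Q_2$ (or about how the covers and basepoints are chosen in advance), and you have not supplied it. Second, your appeal to ``Scott's embedding criterion ... in the relative form that handles the thorn legs'' is exactly the step you flag as needing care, and the version recalled in Section 2.5 of this paper applies only to compact $X$. The spider $R$ is non-compact, so one has to truncate the legs, apply Scott to the compact body, and then argue separately that the (simply connected) thorns embed and remain disjoint in the finite cover; as written this is asserted rather than proved. A third, more minor, point: the claim that the pushout $P=\hat Q_1\cup_R\hat Q_2$ is itself a hyperbolic $3$--manifold with a local isometry to $M$ needs the developing maps of $\hat Q_1$ and $\hat Q_2$ to agree along all of $R$ (including on the corner set $\bdy\widetilde Q_1\cap\bdy\widetilde Q_2$), which is plausible given that both develop into the same $\widetilde Q_i\subset\HH^3$ but is not spelled out.
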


Part of the point of \refprop{3SpiderLemma} and \refthm{GluingRegion} is that
 the spider $R = \hat Q_1 \cap \hat Q_2$ retains all the information
  for gluing $\hat Q_1$ to $\hat Q_2$. More evocatively, \emph{spiders know how to sew a web}. This  is useful in our constructions, where we
   want to glue further covers of $\hat Q_1$ and $\hat Q_2$. If the spiders lift to the covers, they retain all the necessary gluing instructions.

The goal in our proof of \refthm{ClosedQFSurfaces} is to take several immersed QF submanifolds of $M$ and some rank--2 cusps of $M$, and
glue copies of finite covers of these manifolds to create
a geometrically finite hyperbolic manifold $\prefab$ that has a convex thickening, such that $ \bdy \prefab$
consists of closed QF surfaces.

\begin{definition}[Definition 1.1 in \cite{BCQFS}]\label{Def:Prefab}
A \emph{prefabricated manifold} is a connected, metrically complete, finite-volume, hyperbolic $3$--manifold 
$$\prefab=\Ccal\cup \Qcal_1\cup \Qcal_2 .$$
Each component of $\Qcal_i$ and of $\Ccal$ is a convex hyperbolic $3$--manifold called a \emph{piece}. 
Each component of $\Qcal_i$ is a 
quasi-Fuchsian $3$--manifold with at least one cusp. 
Each component of $\Ccal$ is  a horocusp. These pieces satisfy the 
following conditions for $i\in\{1,2\}$, and for each component $C$ of $\Ccal$:
\begin{enumerate}[(P1)]
\item $\Qcal_i\cap\Ccal$ is the disjoint union of all the cusps in $\Qcal_i$,
\item  $\Qcal_i\cap  \bdy C$  is an annulus with core curve $\alpha_i(C)$,
\item $\alpha_1(C)$ intersects  $\alpha_2(C)$ once transversely,
\item Each component of $\Qcal_1\cap \Qcal_2$ intersects $\Ccal$.
\end{enumerate}
Note that by \refprop{3SpiderLemma}, each component of $\Qcal_1\cap \Qcal_2$ is an ideal spider. 
\end{definition}

If pieces of a prefrabricated manifold $\prefab$ are sufficiently far apart, it  has a convex thickening. 

\begin{corollary}[Corollary 1.4 in \cite{BCQFS}]\label{Cor:PrefabConvex} Suppose $r\ge 8k$ where $k=(|\Ccal|+|\Qcal_1|+|\Qcal_2|-1)$,  and suppose
\begin{enumerate}[{\rm (Z1)}]
\item $\prefab^{r}={\mathcal C}^{r}\cup \Qcal_1^{r}\cup\Qcal_2^{r}$  is a prefabricated manifold, 
\item  $\prefab={\mathcal C}\cup \Qcal_1\cup \Qcal_2$ is a prefabricated manifold contained in $\prefab^{r}$,
\item $\Qcal_i^{r}$ is a thickening of $\Qcal_i$,
\item $\Ccal^{r}=\Th_{r}(\Ccal)$,
\item $\Qcal_i^{r}$ contains an $r$-neighborhood of $\Qcal_i\setminus \Ccal$,
\item Every component of $\Qcal_1^{r}\cap \Qcal_2^{r}$ contains a point of $\Qcal_1\cap \Qcal_2$.
\end{enumerate} Then $\prefab$ has a convex thickening that is a submanifold of $\prefab^{r}$. Moreover,
every component of $ \bdy \prefab$ is compact and quasi-Fuchsian.
\end{corollary}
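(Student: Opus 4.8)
The plan is to deduce \refcor{PrefabConvex} from the combination theorem \refthm{CombAsymmetric} by induction on the number of pieces, peeling one piece off at a time. Label the pieces of $\prefab$ as $X_0,X_1,\dots,X_k$ — there are $k+1$ of them, matching $k=|\Ccal|+|\Qcal_1|+|\Qcal_2|-1$ — with $X_i^r$ the corresponding thickened piece of $\prefab^r$. Since $\prefab$ is connected, we may choose the labeling so that $\prefab_j:=X_0\cup\dots\cup X_j$ is connected for every $j$; then $X_j$ meets $\prefab_{j-1}$ whenever $j\ge 1$. I claim, by induction on $j$, that $\prefab_j$ has a convex thickening and
\[
\CH(\prefab_j)\ \subseteq\ \bigcup_{i\le j}\Th_{8j}(X_i),
\]
where $\Th_{8j}(X_i)$ is the $8j$--thickening of the convex piece $X_i$ as in \refsec{Thickening}. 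Granting this for $j=k$, the hypothesis $8k\le r$ gives $\Th_{8k}(X_i)\subseteq\Th_r(X_i)$, and for each piece $\Th_r(X_i)\subseteq X_i^r\cup\Ccal^r$: this is an equality for horocusps by (Z4), while for a quasi-Fuchsian piece $Q$ the part of $\Th_r(Q)$ lying within $r$ of $Q\setminus\Ccal$ is contained in $Q^r$ by (Z5) and the remainder is buried in the surrounding horocusp $\Ccal^r$. Hence $\CH(\prefab)\subseteq\Ccal^r\cup\Qcal_1^r\cup\Qcal_2^r=\prefab^r$. A slightly smaller smooth thickening $W=\Th_\epsilon(\CH(\prefab))$ is then a convex submanifold of $\prefab^r$; since $W$ is convex it $\pi_1$--injects into $\pi_1\prefab^r$ (\reflem{ConvexProperties}), and the composite $\pi_1\prefab\to\pi_1 W\to\pi_1\prefab^r$ is the isomorphism induced by the prefab inclusion $\prefab\hookrightarrow\prefab^r$ (a homotopy equivalence by (Z3)--(Z5) and compatibility of the gluing patterns), so $\prefab\hookrightarrow W$ is a homotopy equivalence and $W$ is the desired convex thickening.

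The base case $j=0$ is immediate, as $\CH(X_0)=X_0$. For the inductive step, set $A=\CH(\prefab_{j-1})$, a convex set lying in $\bigcup_{i<j}\Th_{8(j-1)}(X_i)$ by hypothesis. As $\CH(\prefab_j)=\CH(A\cup X_j)$, I would apply the two--piece combination theorem — the $m=1$ case of \refthm{CombAsymmetric}, namely \cite[Theorem~1.3]{BCQFS} — with $M_0=A$, $M_1=X_j$, $Y_0=\Th_8(A)$ and $Y_1=\Th_8(X_j)$. Since a neighbourhood of a convex set is convex and iterating thickenings adds radii, so that $\Th_8(\Th_{8(j-1)}(X_i))=\Th_{8j}(X_i)$, this yields
\[
\CH(\prefab_j)\ \subseteq\ \Th_8(A)\cup\Th_8(X_j)\ \subseteq\ \Big(\bigcup_{i<j}\Th_{8j}(X_i)\Big)\cup\Th_{8j}(X_j),
\]
which closes the induction; the theorem also supplies the embedding $\widetilde{\prefab}_j\hookrightarrow\HH^3$ and the convex thickening of $\prefab_j$, so $\CH(\prefab_j)$ is legitimately defined.

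The main obstacle is verifying the intersection hypothesis (condition (5) of \refthm{CombAsymmetric}, i.e.\ (C6) of \cite[Theorem~1.3]{BCQFS}) at each step: every component of $Y_0\cap Y_1=\Th_8(A)\cap\Th_8(X_j)$ must contain a point of $M_0\cap M_1=A\cap X_j$. Using $\Th_8(A)\subseteq\bigcup_{i<j}\Th_{8j}(X_i)\subseteq\bigcup_{i<j}(X_i^r\cup\Ccal^r)$ together with $\Th_8(X_j)\subseteq X_j^r\cup\Ccal^r$, this reduces to showing that every component of $X_i^r\cap X_j^r$ contains a point of $X_i\cap X_j\subseteq A\cap X_j$, for each earlier piece $X_i$ that actually meets $X_j$. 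This is exactly what the prefabricated hypotheses furnish: if $X_i$ is a horocusp and $X_j$ a quasi-Fuchsian piece (or vice versa), conditions (P1)--(P2) force their intersection, thickened or not, to be a single annular/cuspidal region that stays connected and nested; if $X_i,X_j$ both lie in $\Ccal$ then they and their $r$--thickenings are disjoint because $\prefab^r$ is again a prefab (Z1); and if one lies in $\Qcal_1$ and the other in $\Qcal_2$, then (Z6) says precisely that every component of $\Qcal_1^r\cap\Qcal_2^r$ meets $\Qcal_1\cap\Qcal_2$. Keeping track of how the $8$--thickenings behave once they enter the horocusps $\Ccal^r$ — so that no spurious new component of intersection is created deep in a cusp — is the delicate point, and this is where the budget $8k\le r$ is genuinely used.

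It remains to see that each component of $\bdy\prefab$ is compact and quasi-Fuchsian. First, $\bdy\prefab$ is isotopic in $W$ to $\bdy W$: this is \reflem{ConvexCollar}, whose hypotheses hold because $W\subseteq\prefab^r$ has finite volume, every end of $W$ is a rank--$2$ horocusp coming from $\Ccal^r$ and containing the corresponding cusp of $\prefab$, and $\bdy\prefab$ is incompressible — the boundary surfaces of the quasi-Fuchsian pieces and the horotori are incompressible, and they are glued along $\pi_1$--injective annuli and ideal spiders. Next, $\bdy\prefab$ is a union of \emph{closed} surfaces: the only ends of $\prefab$ are the rank--$2$ horocusps of $\Ccal$, since by (P1) every rank--$1$ cusp of a quasi-Fuchsian piece sits inside one of them, and condition (P3) — that $\alpha_1(C)$ and $\alpha_2(C)$ meet once — forces the leftover part of each horotorus $\bdy C$ to be a disk that caps off the boundary circles of the compact subsurfaces $\bdy_\pm Q\cap\prefab$, so $\bdy\prefab$ has empty boundary. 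Being closed, $\bdy\prefab$ has no peripheral loops, and it has no accidental parabolics: the quasi-Fuchsian pieces have none, and an accidental parabolic on $\bdy\prefab$ would yield, via the annulus theorem, an essential annulus from $\bdy\prefab$ into a cusp of $W$ that is incompatible with the ideal-spider gluing pattern. Hence the inclusion of each component $F\subset\bdy W\cong\bdy\prefab$ into the convex, finite-volume manifold $W$ is faithful and type-preserving, so \refthm{BonahonCanary} shows $F$ is quasi-Fuchsian; as a closed surface it is compact.
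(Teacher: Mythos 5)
Your high‑level strategy — peel off one piece at a time, apply the two‑piece convex combination theorem (the $m=1$ case of \refthm{CombAsymmetric}, i.e.\ Theorem~1.3 of \cite{BCQFS}), and track the accumulated thickening radius so that $8k\le r$ keeps everything inside $\prefab^r$ — is the intended route, and it is a sensible reconstruction. The bookkeeping $\CH(\prefab_j)\subseteq\bigcup_{i\le j}\Th_{8j}(X_i)$ is also correctly set up.

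The genuine gap is in verifying hypothesis (5) of \refthm{CombAsymmetric} (equivalently (C6) of \cite[Theorem~1.3]{BCQFS}) at each inductive step, and you essentially admit this without closing it. The claimed reduction is not valid: knowing that
\[
\Th_8(A)\cap\Th_8(X_j)\ \subseteq\ \Bigl(\bigcup_{i<j}(X_i^r\cup\Ccal^r)\Bigr)\cap(X_j^r\cup\Ccal^r)
\]
does not allow one to conclude that every component of $\Th_8(A)\cap\Th_8(X_j)$ contains a point of $A\cap X_j$ just because every component of $X_i^r\cap X_j^r$ contains a point of $X_i\cap X_j$. A component of $\Th_8(A)\cap\Th_8(X_j)$ is merely a subset of the union on the right; it need not be a component of any single $X_i^r\cap X_j^r$, it can be a proper sub‑region of such a component, and it can run across several pieces and through $\Ccal^r$. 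The whole content of hypothesis (5) is to rule out exactly this kind of spurious overlap created by thickening a convex hull, and the prefabricated conditions (Z1)--(Z6) certainly suffice to do so, but one has to actually carry out the argument — for instance by tracking how the thickened hull $\Th_8(A)$ sits relative to the decomposition of $\prefab^r$ into pieces, and using (P1)--(P4), (Z6), and the fact that a horocusp only meets a quasi‑Fuchsian piece in a single rank--$1$ cusp. As written, this is exactly the ``delicate point'' you name and then leave unaddressed, so the induction does not close.

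Two smaller issues. First, the passage from ``$W=\Th_\epsilon(\CH(\prefab))$ is convex and $\pi_1$--injects into $\pi_1\prefab^r$'' to ``$\prefab\hookrightarrow W$ is a homotopy equivalence'' leans on the assertion that $\prefab\hookrightarrow\prefab^r$ is a homotopy equivalence; this needs an argument (the pieces individually thicken by homotopy equivalences, but that the gluings are compatible is not automatic from (Z1)--(Z6) and should be spelled out). Second, the argument that $\bdy\prefab$ is closed and without accidental parabolics is in the right spirit — the paper's own analogous argument appears as \refprop{PrefabBoundary} in the modified‑prefab setting and uses a homological intersection‑number count with $\alpha_1(C),\alpha_2(C)$ via (P3) rather than an appeal to ``incompatibility with the ideal‑spider gluing pattern'' — but as stated it is informal, and in particular you never use (P4), which is one of the prefab hypotheses and plays a role in ensuring the spiders are anchored in cusps so that the homological argument applies.
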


To achieve the metric separation needed for \refcor{PrefabConvex}, one starts with a prefabricated
manifold $\prefab$ and replaces it by another prefabricated manifold $\prefab'$, constructed  from large finite covers of the pieces
of $\prefab$ with the property
that all spiders used for the gluing lift. (Compare \refsec{Elevation}.) Choices of lifts of the spiders then determine $\prefab'$. 
If the cover of each piece is regular (e.g.\ cyclic) and large enough, then one can choose lifts that are far apart.

Since a QF manifold is the product of a surface and an interval, constructing such covers
reduces to questions about coverings of surfaces that contain various immersed surfaces (corresponding to the spiders).
The existence of the right coverings of surfaces is shown in \cite[Theorem 2.8]{BCQFS}. 
 The argument makes heavy use of subgroup separability arguments in surface groups \cite{Scott:LERF}.
 
  A covering space $p\from \hat F\rightarrow F$ is called 
 {\em conservative} if the surfaces $F$ and $\hat{F}$ have the same number of boundary components. 
  We frequently
 wish to take large conservative covers of surfaces 
 with the property that certain immersed spiders lift to embedded spiders that are far apart. This is done using
 
 \begin{theorem}[Theorem 0.1 of \cite{BC2}; compare Theorem 9.1 of  \cite{MZ1}] \label{Thm:Conservative}
 Let $F$ be a compact, connected surface with 
$\bdy F\ne\emptyset$  and $H\subset\pi_1F$  a finitely generated subgroup. Assume that no loop representing
an element of 
$H$ is freely homotopic into $\bdy F$. Given a  finite subset $B\subset\pi_1F\setminus H$, there exists a finite-sheeted conservative cover $p\from \hat{F}\rightarrow F$ 
and a compact, connected, $\pi_1$-injective subsurface $S\subset\hat{F}$ such that $p_*(\pi_1S)=H$ and $p_*(\pi_1\hat F)\cap B=\emptyset$ and $F\setminus S$ is connected.
\end{theorem}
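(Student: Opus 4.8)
\emph{Step 1: geometric reduction.} First I would fix a complete, finite-area hyperbolic metric on $\interior F$, so that $\pi_1 F$ is a finitely generated, torsion-free Fuchsian group whose maximal parabolic subgroups are exactly the peripheral subgroups coming from the components of $\bdy F$. With this normalization, the hypothesis that no loop representing an element of $H$ is freely homotopic into $\bdy F$ says precisely that $H$ contains no parabolic element. A finitely generated, parabolic-free Fuchsian group is convex cocompact, so the cover $F_H = \HH^2/H$ has compact convex core
\[
\Sigma := \CH(\Lambda(H))/H ,
\]
a compact incompressible subsurface of $F_H$ with geodesic boundary onto which $F_H$ deformation retracts. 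Thus $\Sigma$ is connected, $\pi_1\Sigma = H$, and the composite $\Sigma \hookrightarrow F_H \to F$ is a $\pi_1$--injective immersion. It therefore suffices to produce a finite, connected, conservative cover $p\from \hat F \to F$ in which $\Sigma$ lifts to an embedded subsurface $S$ with $p_*(\pi_1 S) = H$, such that $\hat F \setminus S$ is connected and $p_*(\pi_1\hat F) \cap B = \emptyset$. (The conclusion ``$F\setminus S$ is connected'' should read ``$\hat F \setminus S$ is connected''.)

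\emph{Step 2: embedding $\Sigma$ in a finite cover.} The immersion $\Sigma \to F$ lifts to the embedding $\Sigma \hookrightarrow F_H$ in the infinite cover $F_H$, and $\pi_1\Sigma = H$ is separable in $\pi_1 F$ because surface groups are LERF \cite{Scott:LERF}. By Scott's criterion (recalled in \refsec{Elevation}), the immersion then lifts to an embedding $S_0 \hookrightarrow \hat F_0$ into some finite cover $\hat F_0 \to F$; after adjusting the basepoint of the lift we may take $(p_0)_*(\pi_1 S_0) = H$. This part is routine, and the substance of the theorem is the upgrade carried out next.

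\emph{Step 3: conservativity, connected complement, and avoiding $B$.} Here I would follow the construction of Masters and Zhang \cite[Theorem 9.1]{MZ1}. The boundary geodesics $c_1,\dots,c_k$ of $\Sigma$ project to closed geodesics $\gamma_1,\dots,\gamma_k$ of $F$, none of them peripheral by the hypothesis on $H$, and the complementary ends of $\Sigma$ in $F_H$ flare out along these $\gamma_j$. I would build $\hat F$ by gluing, onto each boundary circle of a fixed copy $S$ of $\Sigma$, a suitable finite cover of the part of $F$ lying outside a neighborhood of the corresponding $\gamma_j$, with the gluing data chosen so that $\hat F = S \cup (\text{glued pieces})$ is genuinely a finite cover of $F$. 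The three remaining properties are then engineered into the choice of glued pieces: conservativity (exactly one boundary circle of $\hat F$ over each component of $\bdy F$) by requiring the cover to be connected over each cusp of $F$, i.e.\ $\langle c\rangle \cap \pi_1\hat F = \langle c^{d}\rangle$ with $d = [\pi_1 F : \pi_1\hat F]$ for each peripheral class $c$; connectedness of $\hat F \setminus S$ by taking the glued pieces connected and large enough to carry the complementary part of $\pi_1\hat F$; and disjointness of $p_*(\pi_1\hat F)$ from the finite set $B$ by taking the glued pieces large in the finitely many ``$B$--directions'', which is just an application of separating $H$ from each element of $B$ inside $\pi_1 F$.

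\emph{The main obstacle} is that these last requirements pull against one another: enlarging the cover in order to merge boundary circles (for conservativity) or to dodge $B$ threatens to re-create self-overlaps of $\Sigma$, destroying the embeddedness of $S$. Controlling the embedded copy of $\Sigma$, the peripheral cyclic subgroups, and the elements of $B$ simultaneously --- never merging two distinct elevations of $\Sigma$ --- is the delicate bookkeeping carried out in \cite{BC2} (and, in a closely related form, in \cite{MZ1}), and I would import that argument essentially verbatim; everything else above is soft.
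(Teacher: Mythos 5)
The paper does not prove this statement; it is quoted directly from \cite{BC2} (Theorem 0.1), so there is no in-paper argument to compare against. Your Steps 1 and 2 give the correct and standard reduction: since $H$ is finitely generated and contains no peripheral element, it is convex cocompact in any finite-area hyperbolic structure on $\interior F$, so $\Sigma = \core(\HH^2/H)$ is a compact connected surface with geodesic boundary carrying $H$ (when $H$ is trivial or cyclic, the convex core is a point or a closed geodesic and one should pass to a regular neighborhood), and LERF for surface groups lifts the resulting immersion $\Sigma \to F$ to an embedding in some finite cover. You also correctly flag the typo ``$F\setminus S$'' for ``$\hat F \setminus S$.''

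Step~3 is where essentially all of the content of the theorem lives, and you explicitly defer it to \cite{BC2} and \cite{MZ1} rather than supplying the argument. That is consistent with the paper's own treatment (a bare citation), but it means the proposal establishes only the easy reduction and then delegates the theorem itself. The paraphrase you offer --- gluing, onto the boundary circles of a fixed copy of $\Sigma$, suitable finite covers of $F$ minus tubular neighborhoods of the $\gamma_j$ --- is not by itself a well-posed recipe for producing a covering of $F$; the real work is a Stallings-type completion of the truncated cover $F_H$ to a genuine finite cover, organized by permutation representations over $\bdy F$, in which conservativity (one boundary circle of $\hat F$ over each component of $\bdy F$), embeddedness of the lift of $\Sigma$, connectedness of the complement, and avoidance of the finite set $B$ must be achieved simultaneously. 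You identify that these constraints pull against one another, which is exactly right, but that tension is what the cited proof resolves, and your sketch gestures at it without doing it. As a pointer to the literature the sketch is adequate; as a standalone proof it has a gap exactly where the theorem is nontrivial.
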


The crucial ingredient for constructing a prefabricated manifold 
is a supply of QF surface subgroups with the property that for every cusp $V$ of $M$,
there are (at least) two cusps of this collection that are contained in $V$ and have different slopes. This is
used to ensure property (P3). In turn, (P3) implies that $ \bdy \prefab$ 
contains no accidental parabolics; see \refcor{PrefabConvex} and \refprop{PrefabBoundary}.

\section{Ubiquitous closed surfaces}\label{Sec:ClosedUbiquitous}

In this section, we prove \refthm{ClosedQFSurfaces}. The proof is a modification
of
the proof of  \cite[Theorem 4.2]{BCQFS}. In that proof, Baker and Cooper construct a QF surface $\Sigma$  from
a pair of (possibly disconnected) cusped surfaces $\Jcal_1, \Jcal_2$, with all components quasi-Fuchsian. The surface $\Sigma$
is the boundary of a prefabricated manifold $\prefab$, and is obtained by
gluing together subsurfaces
 of the components of $\Jcal_1$ and $\Jcal_2$, together with subsets of horotori. In \cite{BCQFS}
the components of $\Jcal_1$ and $\Jcal_2$ are produced from ideal points of character varieties and group actions on trees, via the work of Culler and Shalen \cite{CS1}. 

In our setting, $\Jcal_1$ and $\Jcal_2$ will be cusped surfaces
produced by \refthm{CuspedSurfacePancake}. 
Given a pancake $P \subset \HH^3$, \refthm{CuspedSurfacePancake} allows us to assume
 that $S = J_1$ has an elevation to $\HH^3$ that strongly separates  $P$.
Thus $S$ has a finite cover containing a very large embedded disk $G$, whose elevation $\widetilde{G} \subset \HH^3$
also separates $P$. By a slight modification of 
 the proof of  \cite[Theorem 4.2]{BCQFS}, we may ensure $\Sigma$
lies extremely close to  $G$, hence $Q(\Sigma)$ also has an elevation 
that separates a slightly thicker pancake ${\Ncal}_{\epsilon}(P)$. 
This ensures the \emph{ubiquitous} condition in \refthm{ClosedQFSurfaces}.

Immersed curves $\alpha, \beta$ in a torus $T$ \emph{have an essential intersection} if they map to multiples of distinct slopes, or  equivalently if the homology classes $[\alpha], [\beta]$ are linearly independent in $H_1(T)$. The definition extends to cusps of surfaces mapping to a rank--$2$ cusp of a $3$--manifold $M$.

\begin{theorem}\label{Thm:ConstructPrefab} 
Suppose $M$ is a cusped hyperbolic $3$--manifold
and $\Scal$ is a finite set
of QF surfaces immersed in $M$. Suppose that 
for each cusp $V$ of $M$, there are at least two cusps of surfaces in $\Scal$ that
both map into $V$ and have an essential intersection in $V$. 

Suppose $P = P(\eta, r)$ is a pancake in $\HH^3$, and
$S$ is a surface in $\Scal$, such that an elevation of $Q(S)$ to $\HH^3$ strongly separates $P$. Then, for any $\epsilon>0$, 
 there is a 
closed QF surface $\Sigma$ immersed in $M$ and an elevation of $\core(\Sigma)$ that strongly separates ${\Ncal}_{\epsilon}(P)$. 
Furthermore, $\Sigma \subset \bdy Z$, where $Z$ is a prefabricated manifold whose quasi-Fuchsian pieces are thickenings of covers of surfaces in $\Scal$.
\end{theorem}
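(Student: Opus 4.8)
The plan is to combine the gluing machinery of Baker and Cooper (Theorems~\ref{Thm:GluingRegion}, \ref{Thm:Conservative}, and Corollary~\ref{Cor:PrefabConvex}) with a careful tracking of the separating disk through all the covers. The surface $\Sigma$ will be $\bdy Z$ for a prefabricated manifold $Z = \Ccal \cup \Qcal_1 \cup \Qcal_2$, where $\Qcal_1$ is built mostly from covers of $S$ (the distinguished surface separating $P$), and $\Qcal_2$ from covers of a second surface $S'$ chosen so that, on each cusp $V$ of $M$, some cusp of $S$ and some cusp of $S'$ have an essential intersection. When no single pair of surfaces has cusps meeting essentially on \emph{every} cusp, one first replaces $S$ and $S'$ by the surfaces $\Jcal_1, \Jcal_2$ obtained by gluing together the needed finite covers — exactly as in \cite[proof of Theorem 4.2]{BCQFS} — being careful that this preliminary gluing can be arranged to lift the elevation of $Q(S)$ that strongly separates $P$. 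This is a soft point: since $Q(S)$ is compact and the gluing is along cusps, the large embedded disk $G$ in the relevant finite cover of $S$ survives into $\Jcal_1$.

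First I would extract from the hypothesis ``an elevation of $Q(S)$ strongly separates $P$'' a \emph{compact} certificate: by Carath\'eodory and compactness of $P$, there is a compact, convex, embedded submanifold of a finite cover of $S$ — concretely a large totally-geodesic-ish disk $G$, or rather a compact piece $Q(G) \subset Q(S)$ — with an elevation $\widetilde{Q(G)} \subset \HH^3$ that already strongly separates a slightly fattened pancake $\neb_{\epsilon/2}(P)$. The point is that strong separation of a pancake is witnessed on a compact region, so it is preserved under passing to covers and under small perturbations. Next I would run the Baker--Cooper prefabrication construction: by Theorem~\ref{Thm:Conservative} (applied, via the product structure $Q(S) \cong S \times I$, to the surface $S$ and the subgroups corresponding to the gluing spiders of Theorem~\ref{Thm:GluingRegion}) one obtains large conservative covers $\hat Q_i$ of the pieces in which all the spiders lift to embedded, far-apart spiders, and such that the distinguished piece $Q(G)$ also lifts embedded. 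Assembling these along the lifted spiders produces a prefabricated manifold $Z' = \Ccal' \cup \Qcal_1' \cup \Qcal_2'$ satisfying the separation hypotheses (Z1)--(Z6) of Corollary~\ref{Cor:PrefabConvex} for $r \geq 8k$.

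Then Corollary~\ref{Cor:PrefabConvex} gives a convex thickening of $Z'$ with $\bdy Z'$ compact and quasi-Fuchsian, and property (P3) — guaranteed by the essential-intersection hypothesis on cusps — ensures $\bdy Z'$ has no accidental parabolics, so Theorem~\ref{Thm:BonahonCanary} confirms each component is QF. Set $\Sigma$ to be a component of $\bdy Z'$; projecting to $M$ gives the required immersed closed QF surface, and by construction $Z$ (a copy of $Z'$) is a prefabricated manifold whose QF pieces are thickenings of covers of surfaces in $\Scal$. The remaining task is the ubiquity clause: I would choose the cover of the piece containing $Q(G)$ regular (cyclic) and large enough, and choose lifts of the spiders far enough apart, that the convex hull $\CH(Z')$ — hence $\core(\Sigma) \subset \CH(Z')$ for a boundary component $\Sigma$ adjacent to the piece containing $\widehat{Q(G)}$ — stays within $\neb_{\epsilon/2}$ of $\widehat{Q(G)}$ on the region that matters. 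This is exactly the asymmetric combination theorem \refthm{CombAsymmetric}, with $M_0$ playing the role of the piece containing $\widehat{Q(G)}$ and $M_1, \ldots, M_m$ the remaining pieces and cusps: it gives $\CH(Z') \subset \neb_{\epsilon/2}(M_0) \cup \neb_R(\text{rest})$, and since $\widehat{Q(G)}$'s elevation stays far from all the other pieces (they were glued along cusps far out), the elevation of $\core(\Sigma)$ lands in $\neb_{\epsilon/2}(\widetilde{Q(G)})$, hence strongly separates $\neb_{\epsilon/2}(P)$, which contains... wait — it strongly separates $\neb_{\epsilon/2 + \epsilon/2}(P) = \neb_\epsilon(P)$ after absorbing the $\epsilon/2$ thickening we built in. I expect the main obstacle to be precisely this bookkeeping: arranging a single geometric configuration in which (i) all the spiders are simultaneously embedded and mutually far, (ii) the distinguished compact piece $\widehat{Q(G)}$ is embedded, far from the other pieces, and adjacent to the chosen boundary component $\Sigma$, and (iii) the $\epsilon$-budget for the pancake thickening is consistent with the thickening constant $r = 8k$ forced by the number of pieces $k$. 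All three are handled by taking the covers regular and sufficiently large, but verifying that the choices can be made compatibly — in particular that \refthm{CombAsymmetric} applies with $M_0$ the \emph{specific} piece we need and that $\core(\Sigma)$ is genuinely pinned near $\widehat{Q(G)}$ rather than near some other piece — is where the real work lies, and it mirrors the modification of \cite[Theorem 4.2]{BCQFS} sketched at the start of this section.
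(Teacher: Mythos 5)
Your overall strategy---build a prefabricated manifold via the proof of \cite[Theorem~4.2]{BCQFS}, with a distinguished piece coming from $Q(S)$ near the pancake, then invoke \refthm{CombAsymmetric} with that piece as $M_0$ and everything else pushed far away so that $\CH(Z)$ is pinned $\epsilon$--close to $Q(S)$ on the region of interest---matches the paper's argument closely, down to the observation that the BCQFS separability machinery (\refthm{Conservative}) lets you increase the separation $\delta$ without increasing the number of pieces.

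However, the ``compact certificate'' step contains a genuine gap. You claim that a \emph{compact}, convex $\widetilde{Q(G)} \subset \HH^3$ strongly separates the pancake $\neb_{\epsilon/2}(P)$. This cannot happen: a compact convex subset of $\HH^3$ has connected complement, so by \refdef{Pancake} it strongly separates no pancake at all. You have conflated ``$X$ separates $P$'' (a statement about the compact set $P \setminus X$) with ``$X$ strongly separates $P$'' (a statement about $\HH^3 \setminus X$). Only strong separation feeds into \reflem{Pancake}, and strong separation is not ``witnessed on a compact region''---a path from $\bdy_- P$ to $\bdy_+ P$ is free to wander arbitrarily far from $P$ before returning, around the edge of any compact $\widetilde{Q(G)}$.

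The paper sidesteps this by never truncating the elevation $\widetilde{Q(S)}$. It picks a point $\widetilde x \in \bdy\widetilde{Q(S)} \cap A$, where $A$ is the axis of the pancake, and arranges the conservative covers so that the projected point $\hat x$ lies more than $R_{\rm big} = R(\epsilon) + r$ from every piece of $Z$ except the distinguished component $\hat Q \subset \Qcal_1$. Then \refthm{CombAsymmetric} yields
\[
\widetilde{\core}(\Sigma) \cap \neb_r(\widetilde x) \subset \Th_\epsilon\bigl(\widetilde{Q(S)}\bigr),
\]
and it is the noncompact, convex $\widetilde{\core}(\Sigma)$, whose boundary planes run to $S^2_\infty$, together with this containment and the strong separation of $P$ by $\widetilde{Q(S)}$, that gives the strong separation of $\neb_\epsilon(P)$. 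Your argument recovers the paper's once you replace ``compact $\widetilde{Q(G)}$ strongly separates a fattened pancake'' by ``the full elevation $\widetilde{Q(S)}$ strongly separates $P$, and control $\widetilde{\core}(\Sigma)$ near a point on the axis.'' (A separate slip: you assert $Q(S)$ is compact to justify persistence of the disk $G$ through the preliminary gluing, but $S$ has cusps so $Q(S)$ is only finite-volume; the persistence is handled instead by the ample-spiders condition and \refthm{Conservative}.)
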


 \begin{proof} 
 For most of the proof, we restrict attention to the following special case.
Suppose that $\Scal$ consists of exactly two immersed QF surfaces, $f_1 \from J_1 \to M$
 and $f_2 \from J_2 \to M$.  Moreover, suppose that  every cusp of $J_1$ has an essential intersection
 with some cusp of $J_2$, and vice versa.
Finally, suppose 
that $S = J_1$.

At the end of the proof, we will briefly describe the (purely notational) changes needed to address the general case. For now, we reassure the reader that the special case described above is all that  is needed in the proof of Theorems~\ref{Thm:ClosedQFSurfaces} and \ref{Thm:CuspedSurfacesOneSlope}.
 
 The maps $f_i \from J_i \to M$ extend to locally isometric immersions $f_i\from Q_i\rightarrow M$, where $Q_i=Q(J_i)$.
There is a decomposition of $M$ into a compact set $K$ and a union of horocusps $\Vcal$,  such that for each component $V\subset \Vcal$
and for each $i\in\{1,2\}$, the preimage $f_{i}^{-1}(V)$ is  a non-empty
union of vertical rank--1 cusps in $Q_i$. Moreover, each component of $J_i\cap f_{i}^{-1}(V)$ is an excellent
annulus. By a small isotopy, we may arrange that $f_1 \vert_{J_1}$ is transverse to $f_2 \vert_{J_2}$. 

Then $\Dcal_i=J_i\cap f_i^{-1}(\bdy\Vcal)$ is a finite set of disjoint, horocyclic simple closed curves. These horocyclic curves cut off
the cusps of $J_i$. Moreover $\Mcal=f_1(\Dcal_1)\cap f_2(\Dcal_2)\subset\bdy\Vcal$ is a finite set and $|f_i^{-1}(x)\cap \Dcal_i|=1$ for each $x\in \Mcal$.   
 The hypotheses imply that
that $f_i^{-1}(\Mcal)$ contains at least one point in every component of $\Dcal_i$. This ensures the \emph{ample spiders} condition formulated in \cite[Definition 3.6, part (W4)]{BCQFS}, which is crucial for the proof of  \cite[Theorem 4.2]{BCQFS}.

From here, we follow the proof of  \cite[Theorem 4.2]{BCQFS}, starting at the second paragraph, with $\Jcal_i$ consisting of the single connected surface $J_i$.
 That proof constructs a prefabricated manifold
 $\prefab={\mathcal C}\cup \Qcal_1\cup \Qcal_2$, where each component of $\Qcal_i$ is a 
finite cover of $Q_i$,  and each component of $\Ccal$ has a thickening that is a finite cover of a component of
     $\Vcal$. That proof ends by verifying the hypotheses of \refcor{PrefabConvex}, which implies that $\prefab$ has a convex thickening $\prefab^+$.
 
 This construction of \cite[Theorem 4.2]{BCQFS} involves a parameter $\delta>0$ with the following meaning. For $i\in\{1,2\}$ the distance in $\prefab$ between distinct components of $\Qcal_i$
 is at least $\delta$. Moreover, the conservative separability \refthm{Conservative} ensures that increasing $\delta$ does not change the number of \emph{pieces} $k + 1 = |\Ccal|+|\Qcal_1|+|\Qcal_2|$ in the construction. (More precisely, this follows from the spider pattern theorem \cite[Theorem 2.8]{BCQFS}.)
 As a result, one eventually achieves $\delta > 8k$, hence \refcor{PrefabConvex} ensures that $Z$ has a convex thickening $\prefab^+$ with $\bdy \prefab^+$ consisting of closed QF surfaces.
 
For our purposes, we modify the construction slightly to ensure the pancake condition in the statement of the theorem.
This requires increasing $\delta$ even further than what is needed for \refcor{PrefabConvex}, but again without increasing the number of pieces in $Z$.
 
Recall that $S = J_1$ has a convex thickening $Q(S)$, with an elevation $\widetilde{Q(S)}$ that strongly separates a pancake $P = P(\eta, r)$. Let $A$ be the rotational axis of $P$, and let $\widetilde{x} \in \bdy \widetilde{Q(S)} \cap A$. Let $x \in \bdy Q(S)$ be the projection of  $\widetilde{x}$.
For the value $\epsilon > 0$ in the statement of the theorem,  fix $R_{\rm big} = R(\epsilon) + r$, where $R(\epsilon)$ is  as in \refthm{CombAsymmetric} and $r$ is the radius of the pancake.
 
Now, we construct a prefabricated manifold $\prefab$ exactly as above, with the same number of pieces. 
Recall that $\Qcal_1$ is a disjoint union
 of finite covers of $Q(S) = Q(J_1)$.
By increasing $\delta$, we ensure that  in some component $\hat Q$ of $\Qcal_1$, the point $x$ has a preimage $\hat x$ whose distance in $\prefab$ from every component of $\Qcal_2$ and $\Qcal_1 \setminus \hat Q$ is larger than $R_{\rm big}$.
 By shrinking the cusps 
 $\Ccal$, we may also ensure the distance (in $\prefab$) from $\hat x$ to every
 component of $\Ccal$ is larger than $R_{\rm big}$. In short, $\hat x$ lies further than $R_{\rm big}$ from every piece of $\prefab$ except its own.
 
 By \refcor{PrefabConvex}, $\prefab$ has a convex thickening $Y$, which may be taken to contain $\Th_8(\prefab)$.
Applying \refthm{CombAsymmetric} to $\prefab$, with $\hat Q$ playing the role of $M_0$, gives
\[
\prefab^+ := \CH(\prefab)  \subset \neb_\epsilon(\hat Q; \, Y) \cup \neb_{R(\epsilon)}\big( (\Qcal_1 \setminus \hat Q) \cup \Qcal_2 \cup \Ccal ; \: Y\big).
\]
Since $\hat x$ has distance greater than $R_{\rm big} = R(\epsilon) + r$ to any other piece of $\prefab$ besides $\hat Q$, it follows that
\[
\neb_r(\hat x ; \, \prefab^+) \subset \neb_\epsilon(\hat Q; Y) \subset \Th_\epsilon(\hat Q).
\]

 Let $\Sigma$ be a component of
 $\partial \prefab^+$ that passes $\epsilon$--close to $\hat x$. Then $\core(\Sigma) \subset \prefab^+ = \CH(\prefab)$, hence
\[
\core(\Sigma) \cap \neb_r(\hat x) \subset \Th_\epsilon(\hat Q).
\]
Now, the projection $Z^+ \to M$ immerses $\core(\Sigma)$ in $M$, sending $\hat x$ to $x$. Choose an elevation of $\core(\Sigma)$ to $\HH^3$ so that $\hat x$ lifts to $\widetilde{x}$. Recall that $\widetilde{x}$ lies on the axis $A$ of $P = P(\eta, r)$. By the above equation, 
\[
\widetilde{\core}(\Sigma) \cap \neb_r(\widetilde x) \subset \Th_\epsilon(\widetilde{ Q(S)}).
\]
Since $\widetilde Q(S)$ strongly separates $P = P(\eta, r)$, it follows that $\widetilde{\core}(\Sigma)$ strongly separates $\neb_\epsilon(P)$.
 
Finally, we discuss how to prove the theorem in the general case. The proof is exactly the same, except that  the  connected surfaces $J_1$ and $J_2$ are replaced by $\Jcal_1$ and $\Jcal_2$, where each $\Jcal_i$ is a (separate) copy of the finite set of immersed surfaces $\Scal$. Then $\Jcal_i$ satisfies the \emph{ample spiders condition}, and can be inserted into 
 the proof of \cite[Theorem 4.2]{BCQFS} exactly as above. In fact, the proof of \cite[Theorem 4.2]{BCQFS} is already adapted to finite collections of surfaces, and contains all the necessary book-keeping notation for keeping track of components of $\Jcal_i$. At the end of the construction of \cite[Theorem 4.2]{BCQFS}, one needs to define $R_{\rm big}$ exactly as above, and argue in the same way that a surface $\Sigma \subset \bdy \prefab^+$ lies very close to $S$ on a disk of big radius.
 \end{proof}

 \begin{proof}[Proof of \refthm{ClosedQFSurfaces}] 
By the Kahn--Markovic theorem \cite{KahnM}, it suffices to treat the case where $M$ has cusps.
Let $\Pi, \Pi' \subset \HH^3$ be a pair of planes whose distance is $4\eta$. Let 
 $P^+ = P(2\eta, r) = \neb_{2\eta}(D_r)$ be a pancake as in \reflem{Pancake}. Let $P = P(\eta, r) = \neb_{\eta}(D_r)$ be a thinner pancake, such that $P^+ = \neb_\eta(P)$.

Let $M = \HH^3 / \Gamma$ be a cusped hyperbolic $3$--manifold. For every cusp $V \subset M$, select a pair of slopes $\alpha(V) \neq \beta(V)$. By \refthm{CuspedSurfacePancake}, there are immersed QF surfaces $J_1 \to M$ and $J_2 \to M$, such that each $J_i$ has an elevation that strongly separates $P$, and each has a cusp mapping to a multiple of every $\alpha(V)$ and every $\beta(V)$.
Since every slope on $\bdy V$ intersects either $\alpha(V)$ or $\beta(V)$ or both, it follows that every cusp of $J_1$ has an essential intersection with some cusp of $J_2$, and vice versa.

 Now, apply
 \refthm{ConstructPrefab} with  $\Scal = \{J_1, J_2 \}$ and with $\epsilon = \eta$.  That theorem produces a closed QF surface $\Sigma$ such that an elevation of $\core(\Sigma)$ strongly separates $P^+ = \neb_\epsilon(P)$. By \reflem{Pancake}, this elevation also separates $\Pi$ from $\Pi'$.
  \end{proof}

\section{Ubiquitous surfaces with prescribed immersed slope}\label{Sec:UbiquitousSlope}

In this section, we prove \refthm{CuspedSurfacesOneSlope}, producing a ubiquitous collection of surfaces with prescribed immersed slope. The argument proceeds in two stages. We begin  by proving \refthm{QFslope}, 
 which says that given a slope $\alpha$ in some cusp
of  $M$, there is a QF surface $F$ immersed in $M$
 and an integer $m>0$ such that all the cusps of $F$ have slope $m \cdot \alpha$. 
 
In the special case where $M$ has one cusp, \refthm{QFslope} follows from a result of Przytycki and Wise \cite[Proposition 4.6]{Przytycki-Wise:mixed-3manifolds}. If $M$ has multiple cusps, the result appears to be new.
%
In fact, \refthm{QFslope} can be proved without appealing to the Kahn--Markovic theorem \cite{KahnM}, by using only the gluing idea behind prefabricated manifolds \cite{BCQFS}. See \refrem{NoKM}.

Next,
we prove  \refthm{CuspedSurfacesOneSlope} by another analogue of the argument of \refthm{ConstructPrefab}. We take the single QF surface $F$ produced by  \refthm{QFslope} and the ubiquitous collection of closed QF surfaces produced by \refthm{ClosedQFSurfaces}. Then, we apply the convex combination theorem to glue together a large regular cover of $F$ with some large cover of a closed surface splitting a desired pancake.

\subsection{One surface with prescribed immersed slope}
The idea of of \refthm{QFslope}  is to build a variant of a prefabricated manifold, whose boundary has cusps.
This entails the following altered definition.

\begin{definition}\label{Def:ModPrefab}
A \emph{modified prefabricated manifold} is a connected, metrically complete, finite-volume, hyperbolic $3$--manifold 
$$\prefab=\Ccal\cup \Qcal_1\cup \Qcal_2 .$$
Each component of $\Qcal_i$ and of $\Ccal$ is a convex hyperbolic $3$--manifold called a \emph{piece}. 
Each component of $\Qcal_i$ is a 
QF $3$--manifold with at least one cusp. 
Each component of $\Ccal$ is  a horocusp. There is a component $Q\subset\Qcal_1$ called the {\em special component}.
One or more rank--1 cusps of $Q$  are called the {\em special cusps}. 
These pieces satisfy the 
following conditions for $i\in\{1,2\}$, and for each component $C$ of $\Ccal$:
\begin{enumerate}[(M1)]
\item  $\Qcal_i\cap\Ccal$ is the disjoint union of all the cusps in $\Qcal_i$ that are not special,
\item  $\Qcal_i\cap \bdy C$  is an annulus with core curve $\alpha_i(C)$,  
\item $\alpha_1(C)$ intersects  $\alpha_2(C)$ once transversely,
\item Each component of $\Qcal_1\cap \Qcal_2$ intersects $\Ccal$.
\end{enumerate}
\end{definition}

Thus each component of $\Ccal$ contains one cusp of $\Qcal_1$ and one
cusp of $\Qcal_2$. In addition to  some number of rank--2 cusps (the components of $\Ccal$),  $\prefab$   also has
some rank--1 cusps, namely the special cusps of $Q$. The next result is a minor modifications of  \cite[Proposition 1.6]{BCQFS}. 

\begin{proposition}\label{Prop:PrefabBoundary} Let $\prefab = \Qcal_1 \cup \Qcal_2 \cup \Ccal$ be a modified prefabricated manifold with a convex thickening. 
Then $\partial \prefab \neq \emptyset$, 
and each component of $\bdy \prefab$ is an incompressible surface. 
Moreover, every loop in $\partial \prefab$ with parabolic holonomy is homotopic into a special cusp.
\end{proposition}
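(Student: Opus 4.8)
The plan is to follow the proof of \cite[Proposition 1.6]{BCQFS} almost verbatim. The only new feature of a \emph{modified} prefabricated manifold is the presence of the special cusps: these are rank--$1$ cusps of $Z$ that, unlike every other cusp of a quasi-Fuchsian piece, are left un-filled by horocusps (compare condition (M1) of \refdef{ModPrefab} with condition (P1) of \refdef{Prefab}). Thus most of the argument is a transcription, and I will only highlight where the special cusps enter.

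First I would note that $\bdy Z \neq \emptyset$. The special component $Q \subset \Qcal_1$ has at least one special cusp; by (M1) it lies in no horocusp of $\Ccal$, and by \refprop{3SpiderLemma} it lies in no component of $\Qcal_1 \cap \Qcal_2$ (an ideal spider has no parabolics). Hence this special cusp is a genuine rank--$1$ end of $Z$, and the slab defining it has nonempty boundary on $\bdy Z$. More generally, exactly as in \cite[Proposition 1.6]{BCQFS}, I would identify each component of $\bdy Z$ by a local analysis: away from the horocusps $\Ccal$, the spiders $\Qcal_1 \cap \Qcal_2$, and the other quasi-Fuchsian pieces, $Z$ agrees with a single piece $Q_i$, so $\bdy Z$ is built by gluing essential subsurfaces of the surfaces $\bdy_\pm Q_i$ along horocyclic annuli and along boundaries of spiders, with non-compact ends occurring exactly at the special cusps. (This uses the separation of the pieces, just as in the proof of \refcor{PrefabConvex}.) Incompressibility of $\bdy Z$ then follows as in \cite[Proposition 1.6]{BCQFS}: a compressing disk $D$ is isotoped to meet the spiders and horocusps transversely and minimally, and innermost-curve arguments --- using that each quasi-Fuchsian piece (which has incompressible boundary), each spider, and each horocusp is $\pi_1$--injective in $Z$ by \reflem{ConvexProperties} --- remove all such intersections, forcing $D$ into a single quasi-Fuchsian piece, which is absurd.

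The step that requires care is the claim about parabolic loops, since this is where special and non-special cusps must be distinguished. Let $\gamma \subset \bdy Z$ have parabolic holonomy in $\pi_1 Z \subset \Isom(\HH^3)$. Since $Z$ is geometrically finite of finite volume, $\gamma$ is freely homotopic in $Z$ into one of the cusps of $Z$: either a rank--$2$ cusp coming from a component $C$ of $\Ccal$, or a special rank--$1$ cusp of $Q$. The first case should be ruled out by a computation local to $C$: by (M2) and (M3), $\Qcal_1 \cap \bdy C$ and $\Qcal_2 \cap \bdy C$ are annular neighborhoods in the horotorus $\bdy C$ of curves $\alpha_1(C), \alpha_2(C)$ meeting once transversely, so their union is a once-punctured torus and $\bdy Z \cap \bdy C = \bdy C \setminus (\Qcal_1 \cup \Qcal_2)$ is a disjoint union of disks; hence $\bdy Z \cap C$ is a disjoint union of simply connected half-open strips, carrying no essential loop. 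Since $\bdy Z$ and $\bdy C$ are incompressible in $Z$, an essential-annulus argument then shows $\gamma$ is homotopic within $\bdy Z$ into $\bdy Z \cap C$, so $\gamma$ is null-homotopic in $\bdy Z$ --- contradicting that its holonomy is a nontrivial parabolic. Therefore $\gamma$ is homotopic into a special cusp, and in particular the only peripheral structure on $\bdy Z$ comes from the special cusps.

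I expect the main obstacle to be precisely this essential-annulus step. Ruling out a parabolic loop of $\bdy Z$ freely homotopic into a rank--$2$ cusp, and making the reduction ``homotope $\gamma$ into $\bdy Z \cap C$'' rigorous, requires a careful (if standard) annulus-surgery argument in the manifold-with-boundary $Z$. An alternative is to pass to the convex thickening $Z^+$, where $\bdy Z$ is isotopic to $\bdy Z^+$ (by \reflem{ConvexCollar}, once incompressibility is in hand), and $\bdy Z^+$ is a quasi-Fuchsian surface with cusps; the parabolics of a quasi-Fuchsian surface group are exactly its peripheral loops, which here are the loops into the special cusps. Either way, everything else is a routine transcription of \cite[Proposition 1.6]{BCQFS}, with ``special cusp'' inserted wherever that proof fills in a rank--$1$ cusp.
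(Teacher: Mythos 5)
Your treatment of $\bdy \prefab \neq \emptyset$ and incompressibility is sound and essentially matches the paper in spirit, though the paper packages it more cleanly: rather than re-transcribing the proof of \cite[Proposition 1.6]{BCQFS}, the paper fills each special cusp of $Q$ with a new horocusp to obtain $\prefab^+ = \prefab \cup \Ccal'$, to which that proposition applies directly, and then observes that $\bdy \prefab$ (truncated) is an incompressible subsurface of $\bdy \prefab^+$. This is a genuine reduction rather than a transcription, but both routes work.

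The gap is in the parabolic-loops step, and it is exactly where you flagged uncertainty. You correctly compute that $\bdy \prefab \cap \bdy C$ is a disk (the ``half-open strips'' phrasing is a minor misdescription but harmless), and you correctly reduce the problem to showing a parabolic loop $\gamma \subset \bdy \prefab$ cannot be freely homotopic through $\prefab$ into a rank--$2$ cusp $C \subset \Ccal$. But the ``essential-annulus argument'' that would push $\gamma$ into $\bdy \prefab \cap C$ is left as a sketch, and your proposed alternative is circular: to conclude that $\bdy \prefab^+$ is quasi-Fuchsian with only peripheral parabolics, you would invoke \refthm{BonahonCanary}, which requires the inclusion $\pi_1(\bdy \prefab^+) \to \pi_1 \prefab^+$ to be type-preserving --- i.e.\ to have no accidental parabolics --- and that is precisely the statement under proof. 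The paper closes this gap with a short homological computation that avoids annulus surgery altogether: if $\gamma$ were homotopic to a loop $\beta \subset \bdy C$, then since $\alpha_1(C), \alpha_2(C)$ form a basis of $H_1(\bdy C)$ by (M3), $\beta$ has nonzero algebraic intersection $n$ with some $\alpha_i(C)$; but $n$ equals the algebraic intersection of $[\gamma]$ with the (separating) midsurface of $\Qcal_i$, and $\gamma \subset \bdy \prefab$ is disjoint from the midsurfaces, so $n = 0$, a contradiction. You should replace the essential-annulus sketch with this intersection-number argument or carry the annulus argument out in full.
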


\begin{proof} Let $F$ be a surface with non-empty boundary and $\chi(F) < 0$. Following \cite[Section 7]{BC1}, a  \emph{tubed surface} is a 2--complex formed by gluing each component of $\bdy F$ to an essential simple closed curve in a torus. Distinct  components of $\bdy F$ are glued to distinct tori.

Given $\prefab=\Ccal\cup \Qcal_1\cup \Qcal_2$, as in \refdef{ModPrefab},   let $\Ccal^+=\Ccal\sqcup\Ccal'$ where $\Ccal'$ has one
horocusp for each special cusp of $\Qcal_1$. Let $\prefab^+=\prefab\cup\Ccal'$ be the manifold obtained by obtained by isometrically 
gluing each special cusp
of $Q_1$ into a distinct cusp in $\Ccal'$.  It is clear that $\bdy \prefab\ne\emptyset$. 

Set
$\Qcal_1^+=\Qcal_1\cup\Ccal^+$ and $\Qcal_2^+=\Qcal_2\cup\Ccal$. Then $\prefab^+=\Qcal_1^+\cup\Qcal_2^+$.
Each component of
$\Qcal_i^+$ is a geometrically
finite manifold that retracts to a \emph{tubed surface}. Thus $\prefab^+$ is the union of convex submanifolds, each of which 
retracts to a tubed surface. 
The proof that  $\bdy \prefab^+$ is  incompressible is now the same as that of \cite[Proposition 1.6]{BCQFS}.
Since $\bdy\prefab$ with the cusps truncated is an incompressible subsurface of $\bdy \prefab^+$, it follows that $\bdy\prefab$ is incompressible in $Z$.

Suppose that a loop $\gamma \subset \bdy \prefab$ has parabolic holonomy. Since $\prefab$ has a convex thickening, $\gamma$ must be homotopic into some cusp of $\partial\prefab$. Suppose, for a contradiction, that $\gamma$ is homotopic to a loop $\beta \subset  \bdy C$ for a cusp $C \subset \Ccal$. 
By property (M2), the intersection $\Qcal_i \cap \partial C$ is an annulus, and by  property (M3) the core curves $\alpha_1(C)$ and 
 $\alpha_2(C)$ of these annuli
 have intersection number $1$.
  It follows that
$\beta$ has  intersection number $n \neq 0$ with at least one of $\alpha_1(C)$ and 
 $\alpha_2(C)$. 
Furthermore, $n$ depends only on the homology class $[\beta] = [\gamma] \in H_1(\prefab)$. Since $\gamma$ is disjoint from the mid-surfaces of $\Qcal_1$ and $\Qcal_2$, it follows that $n=0$, which
contradicts the hypothesis that $C \in \Ccal$. 

Since $\gamma \subset \bdy \prefab$ cannot be homotopic into $\Ccal$, it must be homotopic into a special cusp.
\end{proof}

\begin{theorem}\label{Thm:QFslope} Suppose $M=\HH^3/\Gamma$ is a cusped hyperbolic $3$--manifold and  $\alpha$ is a 
slope on a cusp of $M$.  Then there is a cusped quasi-Fuchsian surface immersed in $M$ 
with immersed slope $\alpha$. 
\end{theorem}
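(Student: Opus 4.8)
The plan is to realize the desired surface as a boundary component of a \emph{modified prefabricated manifold} in the sense of \refdef{ModPrefab}. The first step is to assemble the raw material: a finite set $\Scal$ of cusped QF surfaces immersed in $M$ such that (i) some $S \in \Scal$ has a cusp mapping to a multiple $k_0\alpha$ of $\alpha$ in the cusp $V \subset M$ containing $\alpha$, and (ii) for every cusp $W \subset M$, at least two cusps of surfaces in $\Scal$ map into $W$ with an essential intersection there. Both conditions follow from \refthm{CuspedSurfacesManySlopes}: applying it with the slope $\alpha$ on $V$, and an arbitrary slope on each remaining cusp, yields a surface satisfying (i); applying it a second time with a family of slopes chosen, cusp by cusp, distinct from those in the first family yields the complementary surfaces needed for (ii).

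Next I would run the prefabrication construction of Baker and Cooper almost verbatim, following the proof of \cite[Theorem 4.2]{BCQFS} (equivalently \refthm{ConstructPrefab}), with one modification. Let $\Jcal_1$ and $\Jcal_2$ be two copies of $\Scal$; inside $\Jcal_1$, designate the copy of $S$ from (i) as the \emph{special component}, with its cusp mapping to $k_0\alpha$ (and, after passing to covers, every cusp lying over it) designated as a \emph{special cusp}. Using the conservative covering theorem \refthm{Conservative} and the spider gluing of \refthm{GluingRegion}, glue together large finite covers of the surfaces in $\Jcal_1, \Jcal_2$ along all of their cusps \emph{except} the special ones, inserting rank--$2$ horocusp pieces $\Ccal$; condition (ii) guarantees that every non-special cusp can be paired with a cusp of an essentially intersecting slope, so the spider gluing goes through. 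The output is a modified prefabricated manifold $Z = \Ccal \cup \Qcal_1 \cup \Qcal_2$ all of whose special cusps map into $V$ at multiples of $\alpha$. Exactly as in \refthm{ConstructPrefab}, enlarging the covers pushes the pieces arbitrarily far apart without increasing their number, so $Z$ has a finite-volume convex thickening $Z^{+}$ by the analogue of \refcor{PrefabConvex} for modified prefabricated manifolds (proved, as there, via the asymmetric combination theorem \refthm{CombAsymmetric}). The natural projection immerses $Z^{+}$ in $M$ by a local isometry, which is $\pi_1$--injective by \reflem{ConvexProperties}.

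It then remains to extract the surface. By \reflem{ConvexCollar}, $\bdy Z^{+}$ is isotopic in $Z^{+}$ to $\bdy Z$, which is non-empty because the special cusps are genuine rank--$1$ cusps of $Z$ and so contribute cusps to $\bdy Z$. Let $F$ be a component of $\bdy Z^{+}$ that has a cusp. By \refprop{PrefabBoundary}, $F$ is incompressible in $Z^{+}$, and every loop in $\bdy Z$ with parabolic holonomy is homotopic into a special cusp; since the special cusps account precisely for the punctures of $F$, the inclusion $F \hookrightarrow Z^{+}$ is faithful and type-preserving. As $Z^{+}$ is convex, of finite volume, and has non-empty boundary, \refthm{BonahonCanary} shows that $F$ is quasi-Fuchsian, and composing with the local isometry $Z^{+} \to M$ gives an immersed cusped QF surface in $M$. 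Every cusp of $F$ is a copy of a special cusp, hence maps into $V$ along a loop freely homotopic to a multiple of $\alpha$. Passing to a finite cover of $F$ --- still QF and immersed in $M$ --- in which all boundary loops have a common degree over the core curve of $V$, one obtains a surface whose boundary maps to $\pm m\cdot\alpha$ for a single $m > 0$; this is the required surface of immersed slope $\alpha$.

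The main obstacle is the middle step: checking that the Baker--Cooper prefabrication construction survives the presence of a designated special cusp that participates in no spider and no gluing, and in particular establishing the modified-prefab analogue of \refcor{PrefabConvex}. The special cusp ought to behave as an inert appendage of the special component, so the separation-of-pieces bookkeeping and the convexity estimate of \refthm{CombAsymmetric} should apply with only cosmetic changes; but making this precise, and verifying that leaving the special cusp unglued does not obstruct lifting the spiders to the chosen covers, is where the genuine work lies.
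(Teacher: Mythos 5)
Your overall architecture --- realize the desired surface as a boundary component of a modified prefabricated manifold, with special cusps mapping to a multiple of $\alpha$, then apply \refprop{PrefabBoundary} and \refthm{BonahonCanary} --- is the same as the paper's. But there is a genuine gap at the step you flagged as the ``middle step,'' and it is not merely a bookkeeping issue.

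The obstruction is the spider structure. In a prefabricated manifold, \emph{every} cusp of every QF piece $Q_i \subset \Qcal_i$ contains exactly one spider leg (this follows from conditions (P1)--(P3) of \refdef{Prefab}: each horocusp $C$ contains one cusp of $\Qcal_1$ and one of $\Qcal_2$, intersecting in a single thorn). The Baker--Cooper construction of \cite[Theorem 4.2]{BCQFS} uses precisely this structure --- the ample spiders condition (W4) --- to glue covers of the $Q_i$ along lifted spiders. If you simply designate a cusp of the special piece as ``unglued,'' that cusp still has a spider leg, and the gluing along that spider has nowhere to go: there is no horocusp to receive both legs, so conditions (M1), (M2) fail. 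Your appeal to \refthm{Conservative} does not help here, because conservative covers by definition preserve the number of boundary components, whereas you need the special cusp to \emph{split} so that some lifts can shed their spider legs. The paper resolves this by first building a genuine prefabricated manifold $Z$ (all cusps glued), then passing to a connected cyclic $3$--fold cover $\hat Q \to Q$ of the relevant piece, chosen so that the cusp $W$ lifts to three isometric copies $\hat W \sqcup \hat W' \sqcup \hat W''$ while all spiders lift to $\hat Q$. Since $W$ contains exactly one spider leg, exactly one of the three lifts ($\hat W$, say) inherits it; $\hat W'$ and $\hat W''$ are then genuinely free of spider legs and become the special cusps. This cover trick also yields the convex thickening of $Z'$ for free --- it is obtained by applying the corresponding modifications to the already-constructed convex thickening of $Z$ --- so no new version of \refcor{PrefabConvex} needs to be proved, which disposes of your other worry.

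Two smaller points. First, \refprop{PrefabBoundary} does not quite say that $F \hookrightarrow Z^{+}$ is type-preserving; it says every parabolic loop in $\bdy Z$ is homotopic into a special cusp, but such a loop could be non-peripheral in $F$ (an accidental parabolic). The paper explicitly removes these by Jaco's annulus theorem surgeries before invoking \refthm{BonahonCanary}. Second, your final ``pass to a finite cover of $F$ to unify the multiplicity'' is unnecessary in the paper's version, because the special cusps $\hat W'$ and $\hat W''$ are isometric lifts of the same cusp $W$ and hence already map to the same multiple $m\alpha$.
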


As mentioned above, the $1$--cusped case of \refthm{QFslope} is due to Przytycki and Wise  \cite[Proposition 4.6]{Przytycki-Wise:mixed-3manifolds}.
Before giving the full proof of the theorem, we outline the main steps:

 \begin{enumerate}[1.]
 \item Construct a prefabricated manifold $\prefab = \Ccal \cup \Qcal_1 \cup \Qcal_2$, with a convex thickening and a local isometry into $M$. We arrange things so that $\Qcal_1$ contains a piece $Q$ with a cusp mapping to $m \alpha$.
 
 \smallskip
 \item Modify some of the pieces of $\prefab$. We replace $Q \subset \Qcal_1$ by a $3$--fold cyclic cover $\hat Q$. We also replace some of the rank--2 cusps in $\Ccal$ by cyclic $3$--fold covers. 
 The result is a modified prefabricated manifold $\prefab' = \Ccal' \cup \Qcal_1' \cup \Qcal_2$, whose special cusps map to $m \alpha$.

\smallskip
\item Pick a cusped component $F \subset \bdy \prefab'$, and surger it until has the right properties. \refprop{PrefabBoundary} gives a way to remove accidental parabolics by surgery, and \refthm{BonahonCanary} ensures the resulting surface is quasi-Fuchsian.
\end{enumerate}

\begin{proof}[Proof of \refthm{QFslope}]
We claim that  there is a pair of immersed QF surfaces $f_1 \from J_1 \to M$ and $f_2 \from J_2 \to M$, 
such that every cusp of $J_1$ has an essential intersection with some cusp of $J_2$, and vice versa. Furthermore some cusp of $J_1$ is mapped to  $m \cdot  \alpha$ 
for some $m \neq 0$. 
This claim follows immediately from   \refthm{CuspedSurfacePancake}. See also \refrem{NoKM} for an alternate argument.

Now, we plug  the immersed QF manifolds $f_1 \from J_1 \to M$ and $f_2 \from J_2 \to M$ into the construction of
\cite[Theorem 4.2]{BCQFS}. By that theorem, there is a prefabricated manifold $\prefab=\Qcal_1\cup\Qcal_2\cup\Ccal$, with a convex thickening and a local isometry into $M$, where the quasi-Fuchsian pieces of $\Qcal_i$ are thickenings of covers of $J_i$. 
 By \cite[Theorem 2.16]{BCQFS}, we may build $\prefab$ so that every quasi-Fuchsian piece $Q_i \subset \Qcal_i$ has an even number of cusps, with $| \bdy Q_i| \geq 4$. In addition, the spiders meeting $Q_i$ do not disconnect $Q_i$.

Next, we  modify the pieces of
 $\prefab=\Qcal_1\cup\Qcal_2\cup\Ccal$ 
to create a modified prefabricated manifold $\prefab'=\Qcal_1'\cup\Qcal_2\cup\Ccal'$.

By construction, $\prefab$ contains a quasi-Fuchsian piece $Q\subset \Qcal_1$, with a cusp $W\subset Q$ mapping to $m\cdot\alpha$ for some $m \neq 0$. (This manifold $Q$ is a convex thickening of some cover of $J_1$.) 
There is a connected cyclic 3--fold cover $p\from \hat Q\rightarrow Q$ with the following properties.
First, the cusp $W \subset Q$ has a disconnected preimage 
 $p^{-1}(W)= \hat W \sqcup \hat W' \sqcup \hat W''$ consisting of three isometric lifts of $W$.
Every other cusp $V\subset (Q \setminus W)$ has a connected preimage $\hat{V}=p^{-1}(V)$ that is a 
3--fold cyclic cover of $V$. In addition, each component $X\subset Q\cap\Qcal_2$  lifts to $\hat{Q}$. (By \refdef{Prefab}, each component $X\subset Q\cap\Qcal_2$ is an ideal spider.)
The cover $\hat Q$ can be constructed by cutting the mid-surface $S$ of $Q$ along some carefully chosen arcs that avoid the spiders, taking three copies of the cut-up surface, and reassembling. 
Since $|\bdy Q| \geq 4$ and is even,
the existence of  the arcs with the desired properties and the cover $\hat Q \to Q$  is explained in 
the proof of Case 2 of  \cite[Theorem 2.16]{BCQFS}. See  \cite[Page 1215]{BCQFS}.

    For each spider  $X \subset Q$, we choose one lift $\hat X \subset \hat Q$. Since each cusp of $\Qcal_1$ contains exactly one spider leg by \refdef{Prefab},
    it follows that exactly one of $\hat W,\hat W'$ and $\hat W''$ contains a leg of the chosen  lift of some spider. 
    Label the lifts of $W$ so that $\hat W$  is the component that contains a spider leg.

We can now describe the QF pieces of $\prefab'$, as well as how to glue them.
Define $\Qcal_1'=(\Qcal_1\setminus Q)\cup \hat Q$. Every component  $X\subset \Qcal_1\cap\Qcal_2$ is a spider that
corresponds to an isometric spider $\hat X \subset \Qcal_1'$. If $X\subset Q$, the lift $\hat X \subset \hat Q$ was chosen in the previous paragraph. Otherwise, a spider $X\subset(\Qcal_1\setminus Q)$  corresponds to 
 itself (but is also labeled $\hat X$). Now glue $\Qcal_1'$ to $\Qcal_2$ by isometrically identifying each spider $X\subset\Qcal_2$ with the corresponding
 spider $\hat X \subset \Qcal_1'$ to obtain a manifold $Z^*$.

 It remains to construct a union of cusps $\Ccal'$, and to glue these rank--2 cusps onto 
 $Z^*$ to obtain $Z'$. 
Every leg of every spider  $\hat X \subset \Qcal_1' \cap \Qcal_2$ will run into exactly one rank--2 cusp $\hat C \subset \Ccal'$, with distinct legs terminating on distinct cusps. Furthermore, every component $\hat C \subset \Ccal'$ will
correspond to a rank--2 cusp $C \subset \Ccal$, and is a $1$--fold or $3$--fold cover of $C$.
There are three cases, as follows. 
 
First, if $X \subset (\Qcal_1 \setminus Q) \cap \Qcal_2$, then every leg of $X$ lands on a cusp  $C \subset \Ccal$ disjoint from $Q$. For every such cusp $C$ meeting a leg of $X$, we take an isometric copy $\hat C$ and glue it onto the corresponding leg of $\hat X$. 
    Second, if $X \subset Q \cap \Qcal_2$ has a leg on $W$, there is a unique rank--2 cusp  $C_W\in\Ccal$ with  $W \subset C_W$. Construct an isometric lift $\hat C_W$, and embed $\hat W$ into it. By construction, the above-chosen lift $\hat X$ has a leg on $\hat W \subset \hat C_W$.
This specifies a way to glue $\hat C_W$ onto $Z^* = \Qcal_1' \cup \Qcal_2$.

Third, let $\Ccal_Q$ be the union of all other cusps of $\Ccal$ (which meet $Q$ but are disjoint from $W$).
For every $C \subset \Ccal_Q$, \refdef{Prefab} says that $\bdy C$ contains two simple closed curves $\alpha_1(C)$ and $\alpha_2(C)$ that are the cores of $\Qcal_i \cap \bdy C$. These curves form a basis for $\pi_1(C)$. 
Construct a connected 3--fold cyclic cover $\hat C \rightarrow C$ corresponding to the subgroup 
$\langle 3\alpha_1(C), \, \alpha_2(C) \rangle \subset \pi_1(C)$. Then we may glue $\hat C$ onto $\hat Q \subset \Qcal_1'$ along a neighborhood of $3 \alpha_1(C)$ and onto $\Qcal_2$ along a neighborhood of $\alpha_2(C)$.

We have now constructed a modified prefabricated manifold $\prefab' = \Ccal' \cup \Qcal_1' \cup \Qcal_2$. 
 The special component is $\hat Q$ and
the special  cusps are $\hat W'$ and $\hat W''$.

Since $\prefab$ has a convex thickening, then so does $\prefab'$. In fact, one choice of convex thickening of $\prefab'$ is obtained
by doing the corresponding modifications to the thickened pieces of $\prefab$.  We rename $\prefab'$ to be this convex thickening.
The local isometries $\Qcal_i \to M$ and $\Ccal \to M$ define local isometries $\Qcal_1' \to M$ and $\Qcal_2 \to M$ and $\Ccal' \to M$, which agree on the overlaps. Thus $\prefab'$ has a local isometry into $M$. Note that the special cusps $\hat W'$ and $\hat W''$ both map to $m \alpha$.

Let $F$ be a component of $\partial \prefab'$ that contains a (rank--1) cusp. By \refprop{PrefabBoundary}, 
$F$ is incompressible and every parabolic in $F$ is homotopic to
a special slope.  Any accidental parabolics in $F$ can be removed by surgery, as follows. 
If there is an accidental parabolic in $F$, then Jaco's theorem \cite[Theorem VIII.13]{Jaco}
provides an annulus $A$ embedded in $\prefab$, with one end an essential simple non-peripheral
loop $\gamma \subset F$ and the other end a simple closed curve that is a special slope
in a special cusp. Surgering $F$ using $A$
produces a new surface $F'$ with $\chi(F') < 0$, isotopic to a subsurface $F$, and with fewer accidental parabolics. 
After a finite number of steps one obtains from $F$
 a surface $E$ without accidental parabolics 
that is (isotopic to) a subsurface of $F$ and has at least one cusp. 
By \refprop{PrefabBoundary}, the cusps of $E$ all project to the slope $m\cdot\alpha$ in $M$.

 The modified prefabricated manifold $Z'$ is convex, has finite volume, and has $\bdy Z' \neq \emptyset$. Since $E \subset Z'$ and $\rho \from \pi_1 E \to \pi_1 Z' \subset \pi_1 M$ is type-preserving,  
\refthm{BonahonCanary} implies it is  quasi-Fuchsian.
\end{proof}

\begin{remark}\label{Rem:NoKM}
\refthm{QFslope} can be proved without relying on the Kahn--Markovic theorem. In the above argument, the only appeal to Kahn--Markovic was in the claim at the start:
 that
 there is a pair of immersed QF surfaces $f_1 \from J_1 \to M$ and $f_2 \from J_2 \to M$, 
such that every cusp of $J_1$ has an essential intersection with some cusp of $J_2$, and vice versa. Furthermore, we claimed some cusp of $J_1$ is mapped to  $m \cdot  \alpha$ 
for some $m \neq 0$. 
Here is a sketch of an alternate proof of the claim.

By  \cite[Theorem 4.2]{BCQFS}, there is a prefrabricated 3-manifold $Z=\Qcal_1\cup\Qcal_2\cup\Ccal$
 and a local isometry $g \from Z \to M$. Let $\Fcal_i$ be the midsurface of each $\Qcal_i$, and let $\Fcal=\Fcal_1\sqcup\Fcal_2$.
For each component $F$ of $\Fcal$, let $d(F)$ be a non-zero integer, and let
 $\nu(F)\in\{\pm 1\}$ be an orientation of $F$.
There is an incompressible, oriented, possibly disconnected, surface 
$\Fcal(d)$ properly embedded in $Z$ obtained by cutting and cross-joining $|d(F)|$ copies of each component
$F$ of $\Fcal$ using the orientation $\nu(F)\cdot \mathop{sgn}(d(F))$, and then compressing as much as possible. Finally, one
surgers away accidental parabolics. Then $\Fcal(d)$ is QF by Theorem \ref{Thm:BonahonCanary}, and $[\Fcal(d)]=\sum d(F)[F,\nu(F)]\in H_2(Z)$.
If $C$ is a cusp of $Z$, let $\alpha_i(C)=[\Fcal_i\cap \bdy C]\in H_1(C)$ be the slope determined by the oriented surface $\Fcal_i$
in $C$.  Let $F_i\subset\Fcal_i$ be the component that meets $C$. Then the slope of $\Fcal(d)$ in $C$ is $d(F_1)\alpha_1(C)+d(F_2)\alpha_2(C)$. Thus, by choosing $d$ appropriately, we may arrange that $g(\Fcal(d)) \subset M$ has a prescribed slope in a 
prescribed cusp of  $M$, and meets every cusp. By performing this construction twice, we obtain $J_1$ and $J_2$ as required for the claim.
\end{remark}

\subsection{Intersecting and gluing ubiquitous surfaces}
In the remainder of this section, we employ another cut-and-paste construction to derive \refthm{CuspedSurfacesOneSlope}  from \refthm{QFslope}. Before proceeding, we note that only \refthm{ClosedQFSurfaces} and \refthm{QFslope}, both of which have already been established, are needed to derive the cubulation statement of \refcor{Cubulation} in the next section. 

The proof of \refthm{CuspedSurfacesOneSlope} requires a topological lemma that ensures our surface will not have accidental parabolics.

\begin{lemma}\label{Lem:BookNoAccidental} 
Suppose $F$ and $G$ are compact, connected, orientable surfaces with negative Euler characteristic and non-empty boundary.
In addition, suppose $Y =P\cup Q$ is a compact 3--manifold, where $P= F \times I$ and $Q= G \times I$. 
Suppose $P$ and $Q$ intersect along a collection of annuli $\Acal=P\cap Q= \bdy G \times I \subset  \interior(F) \times \bdy I$. 
Let $\Sigma$ be a component of $\bdy Y \setminus \interior(\bdy F \times  I)$.
Suppose every component $X\subset \Sigma\setminus \Acal$ has $\chi(X)<0$. 

Then $\Sigma$ is incompressible in $Y$. 
If $\gamma \subset \Sigma$ is a loop 
  that is freely homotopic 
into $\bdy\Sigma$ through $Y$, then $\gamma$ is freely homotopic  into $\bdy\Sigma$ through $\Sigma$.
\end{lemma}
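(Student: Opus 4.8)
The statement is a ``book of $I$--bundles'' version of the incompressibility arguments already used for prefabricated manifolds, so the plan is to mimic the proof of \cite[Proposition 1.6]{BCQFS} / \refprop{PrefabBoundary}, but in this purely topological $I$--bundle setting. The key point is that $Y = P \cup Q$, with $P = F \times I$ and $Q = G\times I$ glued along the annuli $\Acal$, deformation retracts onto a $2$--complex $W$: take the spine $F \times \{1/2\}$ of $P$ and the spine $G \times \{1/2\}$ of $Q$, and observe that the retraction of each $I$--bundle onto its spine can be chosen compatibly along $\Acal$ (each annulus $A \subset \Acal$ is $\bdy G$--component $\times I$ sitting in $\interior(F)\times \bdy I$, so it retracts onto an arc/circle in the spine of $F$ and onto a boundary circle of the spine of $G$). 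Thus $W$ is the union of the surface $F$ and the surface $G$ glued along $\bdy G \hookrightarrow \interior F$; this is exactly a ``tubed surface'' / graph-of-surface-groups complex, and $\Sigma$ is (isotopic to) a subsurface-with-boundary inside $\bdy Y$, which in turn is built from pieces of $\bdy P$ and $\bdy Q$.

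\textbf{Incompressibility.} First I would show $\pi_1 \Sigma \to \pi_1 Y$ is injective. Write $\Sigma \setminus \Acal = X_1 \sqcup \dots \sqcup X_k$; by hypothesis each $\chi(X_j) < 0$, and each $X_j$ lies in one of $\bdy P$ or $\bdy Q$, hence includes $\pi_1$--injectively into the corresponding $I$--bundle ($F\times I$ or $G\times I$), since a $\pi_1$--injective subsurface of a surface $S$ is $\pi_1$--injective in $S \times I$. The $X_j$ are glued along core curves of the annuli $\Acal$, each of which is a nontrivial (indeed boundary-parallel but nonperipheral-in-$X_j$, hence essential) curve. So $\pi_1 \Sigma$ is a graph of groups whose vertex groups $\pi_1 X_j$ inject into the vertex groups $\pi_1(F\times I)=\pi_1 F$, $\pi_1(G\times I)=\pi_1 G$ of the graph of groups decomposition of $\pi_1 Y$ along $\pi_1 \Acal$, and whose edge maps are compatible. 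A standard fact about graphs of groups (monotonicity of injectivity: if every vertex and edge map injects into the corresponding piece of a larger graph of groups with the same underlying structure, the total map injects) then gives that $\pi_1\Sigma \hookrightarrow \pi_1 Y$. Since $\pi_1 Y$ is a nontrivial graph of (free or surface) groups it is torsion-free and $\Sigma$ is two-sided in $Y$, injectivity of $\pi_1$ is equivalent to incompressibility by the loop theorem.

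\textbf{No accidental parabolics.} For the second assertion, suppose $\gamma \subset \Sigma$ is freely homotopic in $Y$ into $\bdy\Sigma \subset \bdy F \times \bdy I$. I would run an annulus/normal-form argument: the free homotopy gives a map of an annulus $B \to Y$ with one boundary on $\gamma$ and the other on $\bdy\Sigma$; put $B$ in general position with respect to the spine surfaces $F \times \{1/2\}$ and $G\times\{1/2\}$ (equivalently with respect to the annuli $\Acal$ and to the ``separating surfaces'' $F, G$ of $Y$), and surger away trivial intersection circles using incompressibility of these surfaces in $Y$ plus irreducibility (each $I$--bundle is irreducible, $Y$ is a Haken-like union, so no essential $S^2$). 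After surgery $B$ is decomposed into subannuli and rectangles living inside $F\times I$ or $G\times I$; in such an $I$--bundle any essential annulus between boundary curves is vertical (isotopic to $c \times I$ for a curve $c$ in the base), and any rectangle with two opposite sides on $\Acal$-circles is likewise a product. Concatenating, the free homotopy class of $\gamma$ is carried by the spine data, and tracking it through the graph-of-groups structure (using that $\gamma$'s conjugacy class is peripheral only if it already lies in an edge/peripheral subgroup) forces $\gamma$ to be conjugate into $\pi_1$ of a boundary curve of one of the $X_j$ on $\bdy F$, i.e.\ homotopic into $\bdy\Sigma$ \emph{within} $\Sigma$. The annulus theorem of Jaco \cite[Theorem VIII.13]{Jaco}, as used in \refprop{PrefabBoundary}, can be invoked directly to produce $B$ embedded, which streamlines the normal-form step.

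\textbf{Main obstacle.} The delicate part is the annulus-in-normal-form bookkeeping for the second statement: one must be careful that after cutting $B$ along $\Acal$ and the spine surfaces, every piece is a product annulus or rectangle in its $I$--bundle (no ``fractional Dehn twist'' or boundary-compressible pieces survive), and that reassembling these products does not allow $\gamma$ to wander into an $X_j$ that meets $\bdy F$ only along an inessential arc. This is exactly the type of argument carried out in \cite[Proposition 1.6]{BCQFS} and \cite[Section 7]{BC1} for tubed surfaces, and I expect to be able to cite or transcribe it; the only genuinely new feature here is the combinatorial pattern of the annuli $\Acal \subset \interior(F)\times\bdy I$, which changes nothing essential because each annulus is still two-sided and vertical in both $P$ and $Q$.
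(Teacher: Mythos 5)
Your outline is in the same spirit as the paper's proof, which is itself only a one-paragraph sketch (``standard topological argument in the same spirit as Claim 4.4; consider intersections between $\Acal$ and an annulus $B$ realizing the homotopy; $\chi(X)<0$ forces $B\cap\Acal\neq\emptyset$''). Your second half — Jaco's annulus theorem, general position with respect to $\Acal$, normal form in each $I$--bundle, concatenation — is precisely the argument the authors have in mind, and the way you flagged the bookkeeping obstacle matches the one step the paper singles out as the crux.

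For the incompressibility half, a few points deserve more care than your graph-of-groups framing gives them. First, $\chi(X_j)<0$ alone does \emph{not} guarantee that the inclusion $X_j\hookrightarrow F$ is $\pi_1$--injective; you also need every boundary circle of $X_j$ (equivalently, the image of $\bdy G$ in $F$) to be essential in $F$. That does follow from the Euler characteristic hypothesis, but only via an innermost-disk argument: if a circle of $\bdy\Acal$ bounded a disk in $F$, an innermost such disk would be a component of $\Sigma\setminus\Acal$ with $\chi=1$. Second, the ``monotonicity of injectivity'' criterion you invoke requires that the underlying graphs of the two graphs of groups agree, which is not the case here: the decomposition of $\pi_1\Sigma$ along $\Acal\cap\Sigma$ and the decomposition of $\pi_1 Y$ along $\Acal$ have different underlying graphs (one is a ``subdivision/cover'' of the other), so the clean Bass–Serre statement needs a local double-coset condition you don't check. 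The cleaner route, and the one closer to what \refclaim{Incompressible} actually does, is purely topological: take an embedded compressing disk $\Delta$ for $\Sigma$ with $\bdy\Delta$ in a single $X_j$, put it transverse to $\Acal$, and eliminate circles of $\Delta\cap\Acal$ innermost-first, using incompressibility of the annuli (which follows from the innermost-disk observation above) and irreducibility of the $I$--bundles; once $\Delta$ lies in a single product piece, $\pi_1$--injectivity of $X_j$ there gives the contradiction. With that repair your proposal is correct and matches the paper.
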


\begin{proof}
This is a standard topological argument in the same spirit as \refclaim{NoAccidental}. One needs to consider the intersections between $\Acal$ and an annulus $B \subset Y$ realizing a homotopy of $\gamma$ into $\bdy \Sigma$. The hypothesis $\chi(X) < 0$ for every component $X \subset \Sigma \setminus \Acal$ ensures that intersections $B \cap \Acal$ must indeed occur.
\end{proof}

We also need a geometric lemma about convex hulls. If $Q$ and $N$ are convex subsets in $\RR^3$, then $Q\cap N$ is convex, but $\partial Q$ can undulate in and out of $N$,
so that $N\cap\partial Q$ might be an arbitrary planar surface. For example, consider how a concentric ball and cube might intersect. 
The same phenomenon happens if $Q$ and $N$ are convex submanifolds of a hyperbolic
3--manifold. In particular,  $N\cap\bdy Q$ might be compressible in $\bdy Q$. The situation is better when $Q$ is equal to its convex core. We also need the mild assumption that  $\pi_1 Q$ is finitely generated.

\begin{lemma}\label{Lem:ConvexIntersect} 
Suppose that $Q$ and $N$ are convex 3--dimensional submanifolds of  a complete hyperbolic 3--manifold $Y$, where  $Q \cap N$ is compact,  $Q = \core(Q)$, and $\pi_1 Q$ is finitely generated.
If $X$ is a component of $\bdy Q \setminus \interior(N)$, then $\chi(X)<0$.
\end{lemma}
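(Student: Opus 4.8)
The plan is to argue by contradiction: suppose some component $X \subset \bdy Q \setminus \interior(N)$ has $\chi(X) \geq 0$. Since $Q = \core(Q)$ and $\pi_1 Q$ is finitely generated, $\bdy Q$ is a finite-area hyperbolic surface (with cusps corresponding to rank--$1$ cusps of $Q$), so each of its components has strictly negative Euler characteristic. Thus $X$ is a proper subsurface of a component of $\bdy Q$ with $\chi(X) \geq 0$, which forces $X$ to be a disk or an annulus. In either case I want to show $X$ can be ``capped off'' or ``pushed into $N$'' using the convexity of both $Q$ and $N$, ultimately contradicting either the minimality of the convex core $Q = \core(Q)$ or the incompressibility of $\bdy Q$.

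**The disk case.** If $X$ is a disk, then since $\bdy Q$ is incompressible in $Q$ (indeed $\bdy \core(Q)$ is always incompressible, as $\pi_1 \bdy Q \to \pi_1 Q$ is injective for a finite-area hyperbolic surface boundary), the curve $\bdy X$ must bound a disk $X'$ in $\bdy Q$ as well. But $\bdy X \subset \bdy N$, and $\bdy X = \bdy X'$ with $X' \subset \bdy Q$. Now $\interior(X) \cap N = \emptyset$ by definition of $X$, whereas I will argue the disk $X \cup X'$ together with convexity of $N$ produces a contradiction: lift to $\widetilde Y = \HH^3$, where $\widetilde Q$ and $\widetilde N$ are genuine convex subsets of $\HH^3$. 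A lift $\widetilde X$ is a disk in $\bdy \widetilde Q$ whose boundary lies on $\bdy \widetilde N$ and whose interior is disjoint from $\widetilde N$; the geodesic disk spanning $\bdy \widetilde X$ lies in $\widetilde N$ by convexity, and combined with $\widetilde X$ bounds a ball whose interior meets $\widetilde Q$ only near $\bdy \widetilde Q$ — but this forces a point of $\bdy \widetilde Q$ into the interior of $\widetilde N$, contradicting that $\widetilde X$ lies outside $\widetilde N$ (more carefully: one uses that $\widetilde Q$ is convex and $\bdy \widetilde Q$ has no local ``dents,'' so a supporting halfspace at an interior point of $\widetilde X$ separates $\widetilde X$ from too much of $\widetilde N$).

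**The annulus case.** If $X$ is an annulus, the two components of $\bdy X$ lie on $\bdy N$. I would consider the lift $\widetilde X \subset \bdy \widetilde Q$ in $\HH^3$ and the two components of $\partial \widetilde X$; if $X$ is essential in $\bdy Q$ then $\bdy X$ consists of essential curves, each with loxodromic holonomy $g \in \pi_1 Q$, and the axis of $g$ lies in both $\widetilde Q$ (since $Q$ is convex and contains a geodesic representative) and in $\widetilde N$ (since $\bdy X \subset \widetilde N$ convex, both endpoints at infinity of $g$ are limits of $\widetilde N$, so the axis lies in $\widetilde N$). But then this axis, and hence a neighborhood of it inside $\widetilde Q$, lies in $\widetilde N$, while $\widetilde X$ (which separates this axis-region from the rest of $\widetilde Q$ on its side) is outside $\widetilde N$ — again a convexity contradiction, since a geodesic through a point of $\interior \widetilde X$ connecting to the axis must exit and re-enter $\widetilde N$, impossible for convex $\widetilde N$. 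If $X$ is inessential in $\bdy Q$, then it is boundary-parallel or compressible, reducing to a variant of the disk case or to a cusp annulus, which is ruled out since $Q \cap N$ is compact so $\bdy X$ cannot be a cusp curve.

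**Main obstacle.** The technical heart, and the step I expect to require the most care, is making the ``convexity forces a contradiction'' argument precise in both cases — specifically, showing that a component $X$ of $\bdy \widetilde Q \setminus \interior \widetilde N$ with nonnegative Euler characteristic cannot sit with its boundary on $\bdy \widetilde N$ and its interior outside $\widetilde N$. The cleanest route is probably: pick an interior point $x \in \widetilde X$, let $\Pi$ be a supporting hyperbolic plane to $\widetilde Q$ at $x$ (exists since $\widetilde Q$ is convex), so $\widetilde Q$ lies on one side of $\Pi$; since $\partial \widetilde X \subset \widetilde N$ and $\widetilde N$ is convex, $\CH(\partial \widetilde X) \subset \widetilde N \cap \widetilde Q$ and this convex hull must cross to the far side of $\Pi$ (because $\partial \widetilde X$ cannot all be concentrated near $x$ — here is where the topology of $X$, being a disk or annulus with $\partial X$ essential-or-not, is used to locate $\partial \widetilde X$), contradicting $\widetilde Q \subset$ one side of $\Pi$. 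I would need to handle the disk and annulus sub-cases slightly differently in locating $\partial \widetilde X$, and handle cusps by invoking compactness of $Q \cap N$. Once that geometric lemma is isolated, the rest is bookkeeping about which subsurfaces of a hyperbolic surface have $\chi \geq 0$.
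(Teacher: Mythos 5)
Your strategy starts out on the right track: you rule out spheres and tori by noting $\bdy Q$ is a finite-area hyperbolic surface, reduce to disks and annuli, and realize the crux is showing $X$ ends up inside $N$. But the central step in your ``Main obstacle'' paragraph does not work. You pick $x \in \interior \widetilde X$, take a supporting plane $\Pi$ of $\widetilde Q$ at $x$, and assert that $\CH(\bdy \widetilde X)$ ``must cross to the far side of $\Pi$.'' It cannot: $\CH(\bdy \widetilde X) \subset \widetilde Q$, and $\widetilde Q$ lies entirely on one closed side of $\Pi$, so there is no far side to cross to. The statement you actually need is that $x \in \CH(\bdy \widetilde X)$, and a supporting plane argument gives you no purchase on this --- indeed the claim $X \subset \CH(\bdy X)$ is false for a general convex $Q$ (the boundary of a convex set can bulge outward past the convex hull of any curve drawn on it). What makes it true here is precisely the hypothesis $Q = \core(Q)$, which your argument never uses. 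The paper's proof applies Carath\'eodory's theorem to $\widetilde Q = \CH(\Lambda(\Gamma))$: every point of $\bdy \widetilde Q$ lies in an ideal $1$-- or $2$--simplex with vertices in $\Lambda(\Gamma)$, and one then traces rays from $x$ toward those ideal vertices, showing they must exit through $\bdy \widetilde X$ (when $X$ is compact) or limit onto the endpoint of an axis that lies in the limit set of $\bdy \widetilde X$ (when the holonomy of $\pi_1 X$ is loxodromic). This is the missing idea.

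There are also secondary gaps worth flagging. You omit the non-compact case $X \cong S^1 \times [0,\infty)$ (a finite-area end of $\bdy Q$), which the paper rules out using the fact that $\Lambda(\Gamma)$ is a perfect set --- a compressing disk for such an $X$ would cut off a thorn with an isolated limit point. In your essential-annulus case, you claim the axis of the loxodromic $g$ lies in $\widetilde N$ because both of its ideal endpoints are limits of $\widetilde N$; but that argument requires $g$ to preserve $\widetilde N$, i.e.\ $g \in \pi_1 N$, which is not given (you only know $\bdy X \subset N$, not that $\pi_1(\bdy X)$ maps into $\pi_1 N$). Finally, ``compactness of $Q \cap N$ rules out cusp curves'' is the right instinct but needs the actual mechanism: if the holonomy of a boundary curve $\gamma \subset \bdy X$ were parabolic, $\CH(\gamma)$ would be non-compact yet contained in the compact set $Q \cap N$.
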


\begin{proof}  Let $\Gamma$ be the holonomy of $Q$.
The hypothesis $Q = \core(Q) =\CH(\Lambda(\Gamma))/\Gamma$ implies
 that the universal cover $\widetilde Q$ is the convex hull of $\Lambda(\Gamma)$.
Since $Q$ is 3--dimensional, it follows that
 that $\Gamma$ is non-elementary. Since $\Gamma$ is finitely generated, the Ahlfors finiteness theorem \cite{Ahlfors:Finiteness, Bers:AreaBound} says that  every component of $\bdy Q = \bdy \! \core(Q)$ is intrinsically a finite-area hyperbolic surface. (Observe that if $Q$ is quasi-Fuchsian, this conclusion is immediate without citing any theorems.) 
 Therefore, $X \subset \bdy Q$ cannot be homeomorphic to $S^2$ or $T^2$, since such a closed surface would form an entire component of $\bdy Q$. Furthermore, $X$ has finite area.

Now, suppose that $\chi(X) \geq 0$. Then 
 $\bdy X = X\cap N \neq \emptyset$, and $\bdy X \subset N$ consists of one or two circle components.
If $X$ is compact, then it is a disk or annulus. Otherwise,  $X\cong S^1\times[0,\infty)$
is a finite-area end of $\bdy Q$.
We claim that
 $X$ is contained in the convex hull of $\bdy X$. 
 Since $N$ is convex and $\bdy X\subset N$, it follows
that $X\subset N$, which is a contradiction.

  By Carath\'eodory's theorem \cite[Proposition 5.2.3]{Papadopoulos},  
  every point of $\widetilde Q$ is contained
in an ideal  simplex with vertices in 
$\Lambda(\Gamma)$. It follows that each $x\in\bdy \widetilde Q$
 is contained in an ideal  $1$-- or $2$--simplex $\Delta\subset \bdy \widetilde Q$.
Let $\rho \from \pi_1 X \to \pi_1 Q$ be the  inclusion--induced homomorphism.
Since $\pi_1X$ is trivial or cyclic, there are three possibilities.

First, suppose that $\rho(\pi_1 X)$ is generated by a parabolic element. Then some component $\gamma$ of 
$\bdy X$ is homotopic to a horocycle in $Q$. Thus the convex hull $C$
 of $\gamma$ is not compact.
But $\gamma\subset Q\cap N$ and $Q$ and $N$
are both convex, so $C\subset Q\cap N$ is not compact, 
contradicting the hypothesis that $Q\cap N$ is compact.

Next, suppose that $\rho (\pi_1 X)$  is generated by a hyperbolic element. Then the
core curve of $X$ is homotopic to a closed geodesic $\gamma  \subset Q$, and $X$ is a compact annulus. Hence $X$ has an elevation $\widetilde X \subset \widetilde Q$, which lies within a bounded neighborhood
of a geodesic $\widetilde \gamma  \subset \HH^3$ covering $\gamma$. Given $x\in\widetilde X$,
let $\Delta\subset\bdy \widetilde Q$ be an ideal simplex that contains $x$. Each geodesic ray from 
$x$ to a vertex of $\Delta$ either crosses
$\bdy \widetilde X$, or else limits on an endpoint $z$ of $\widetilde\gamma$. Since $z$ is in the limit set of $\bdy\widetilde X$, 
it follows that $X$ is contained in the convex hull of $\bdy X$. 

Next, suppose that $\rho(\pi_1 X)$ is trivial. Then $X$ has an isometric lift $ \widetilde X\subset \bdy\widetilde Q$. 
If $X$ is not compact, then $X\cong S^1\times[0,\infty)$  is 
a finite-area end of $\bdy Q$. In this case, a compressing disk for $X$ lifts to a compressing disk of $\widetilde X$, which cuts off a thorn end of $\widetilde Q$. But then the limit point of this thorn must be an isolated point of $\Lambda(\Gamma)$, 
 which contradicts the well-known fact that $\Lambda(\Gamma)$ is a perfect set whenever $\Gamma$ is non-elementary \cite{Ahlfors:Finiteness}.

The remaining case is that $\rho(\pi_1 X)$ is trivial and $X$ is compact. Then 
each point $x\in \widetilde X$ lies in some ideal simplex $\Delta\subset\bdy \widetilde Q$. 
Since $ \widetilde X = X$ is compact, 
each geodesic ray  from $x$ to
an ideal vertex of $\Delta$ must cross a component of  $\bdy \widetilde X$, so $x$ is in the convex hull of these crossing points in $\bdy \widetilde X$.
This proves the claim and the lemma.
\end{proof}

\begin{remark}\label{Rem:TotallyGeodesicIntersect}
A version of \reflem{ConvexIntersect} also holds if $Q \subset Y$ is a totally geodesic finite-area surface (hence $Q = \core(Q)$) and $N \subset Y$ is a convex submanifold of any dimension such that $Q \cap N$ is compact. Under these hypotheses, any component $X$ of $Q \setminus \interior(N)$ has $\chi(X) < 0$. The proof is easier than \reflem{ConvexIntersect}, because a totally geodesic annulus always lies in the convex hull of its boundary.
\end{remark}

  A  map $f \from A\to B$  is {\em conjugacy injective} if the induced homomorphism $f_*$ sends distinct conjugacy classes in $\pi_1 A$ to distinct conjugacy classes in $\pi_1 B$.
       The inclusion of a subsurface $A$ into a surface $B$ is conjugacy injective if and only if $\chi(X)<0$ for
     any non-peripheral component $X\subset B \setminus A$.

A pair of 
QF manifolds $Q_1$ and $Q_2$ that are isometrically immersed in a complete hyperbolic 3-manifold $M$  \emph{have an essential intersection} 
if there are elevations  $\widetilde Q_1, \widetilde  Q_2 \subset\HH^3$ such that the limit set $\Lambda( \widetilde  Q_1)$ intersects
both components of $S^2_{\infty}\setminus\Lambda( \widetilde Q_2 )$. It is easy to see this condition is symmetric in $Q_1$ and $Q_2$,
and is equivalent to the statement that both components of
$\HH^3\setminus\widetilde Q_2$ contains points in $\widetilde Q_1$ at unbounded distance from $\widetilde Q_2$.

 If $Q$ is a quasi-Fuchsian $3$--manifold, then a finite covering $\hat Q\to Q$
  is \emph{conservative} if $Q$ and $\hat Q$ have the same number of cusps.
 The next result  says one can glue together
 large conservative covers of two QF manifolds to obtain a geometrically finite
 manifold with connected QF boundary containing large parts of covers of the two surfaces.

\begin{proposition}\label{Prop:Glue2QF} 
For $i\in\{1,2\}$  suppose $f_i \from Q_i \to M$ is an isometric immersion of a QF manifold $Q_i$ into a complete hyperbolic $3$--manifold $M$.
 Suppose that $Q_1$ and $Q_2$  have an essential intersection in $M$, and that one of the $Q_i$ is compact.

Fix $K >0$.  Then there exist conservative covers  $p_i \from \hat Q_i\to Q_i$ and 
 a hyperbolic 3-manifold  $\hat Y= \hat Q_1\cup \hat Q_2$, with an isometric immersion $g \from \hat Y\to M$ such that $g \vert_{\hat Q_i} =f_i \circ p_i$.
Moreover, $\hat Y$ has a convex thickening such that $\Ncal_K(\hat Q_1\cap \hat Q_2 ; \, \Th_{\infty}(\hat Y))=\Th_K (\hat Q_1\cap \hat Q_2)$. 
Finally, 
$\bdy \hat Y$ has at most two components, each of which is quasi-Fuchsian and meets every
cusp of $\hat Y$. 
\end{proposition}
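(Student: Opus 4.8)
The plan is to run a two--piece version of the prefabricated--manifold construction of Baker and Cooper, with no rank--$2$ cusp pieces $\Ccal$, using the asymmetric combination theorem \refthm{CombAsymmetric} in place of the convex combination theorem behind \refcor{PrefabConvex}. Since essential intersection is symmetric, relabel so that $Q_1$ is compact; then $Q_1$ is a closed QF manifold, so $\Gamma_1 := (f_1)_*(\pi_1 Q_1) \subset \Gamma := \pi_1 M$ contains no parabolics. Fix elevations $\widetilde Q_1, \widetilde Q_2 \subset \HH^3$ witnessing the essential intersection --- equivalently, compatible basepoints --- so that $\Lambda(\widetilde Q_1)$ meets both components of $S^2_\infty \setminus \Lambda(\widetilde Q_2)$. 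Because $Q_1$ has no cusps, the cusp--matching hypothesis of \refthm{GluingRegion} is vacuous, and that theorem (in the finite--volume case relevant to the applications) supplies finite covers $p_i \from \hat Q_i \to Q_i$, a connected hyperbolic $3$--manifold $\hat Y = \hat Q_1 \cup \hat Q_2$ whose overlap $R := \hat Q_1 \cap \hat Q_2$ is an ideal spider with holonomy $\Gamma_1 \cap \Gamma_2$ --- having no rank--$1$ cusps because $\Gamma_1$ is parabolic--free --- and a local isometry $g \from \hat Y \to M$ restricting to $f_i \circ p_i$ on each $\hat Q_i$. By \reflem{ConvexProperties} the inclusions $R \hookrightarrow \hat Q_i$ are $\pi_1$--injective, and \reflem{ConvexIntersect} applied to $\core(\hat Q_i)$ and $R$ (using \refrem{TotallyGeodesicIntersect} in the Fuchsian case, and truncating on the $\hat Q_2$ side any legs of $R$ running into cusps of $\hat Q_2$) shows every component of $\bdy \hat Q_i \setminus \interior R$ has negative Euler characteristic. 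Note also that $\Gamma_1 \cap \Gamma_2$ is finitely generated, free, of infinite index in both $\Gamma_i$ (else the two limit circles would coincide, contradicting essential intersection), and contains no element peripheral in the surface underlying $Q_2$ (such an element would be a parabolic lying in $\Gamma_1$).

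\textbf{Large conservative covers.} I would next replace $\hat Q_1, \hat Q_2$ by large conservative covers to which the spider lifts, using the ``spiders know how to sew a web'' mechanism of \cite{BCQFS}. By \cite{Scott:LERF} for the closed surface underlying $Q_1$ and \refthm{Conservative} for the cusped surface underlying $Q_2$ (conservative there), pick covers $\hat S_i$ of the two base surfaces in which $\Gamma_1 \cap \Gamma_2$ is carried by a compact, $\pi_1$--injective subsurface $T_i$ with $\hat S_i \setminus T_i$ connected, and large enough that: (i) $R$ embeds in $\hat Q_i := \hat S_i \times I$ with connected complement; (ii) every $\pi_1 \hat Y$--translate of $\widetilde R := \widetilde Q_1 \cap \widetilde Q_2$ by an element outside $\Gamma_1 \cap \Gamma_2$ lies at distance $>2K$ from $\widetilde R$; and (iii) away from a neighbourhood of $R$, the sets $\hat Q_1$ and $\hat Q_2$ are more than $16$ apart in $M$. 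Conditions (ii)--(iii) are arranged using the ``avoid a finite set'' clause of \refthm{Conservative} to kill the finitely many double cosets recording near--coincidences of $\widetilde Q_1$ with $\Gamma$--translates of $\widetilde Q_2$; as in \cite[Theorem 2.8]{BCQFS}, enlarging the covers never increases the number of pieces beyond two. Identifying the two copies of $R$ by the orientation--compatible isometry covering the identity on their common convex core, reassemble $\hat Y = \hat Q_1 \cup \hat Q_2$ with $\hat Q_1 \cap \hat Q_2 = R$ and the desired local isometry $g \from \hat Y \to M$.

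\textbf{Convex thickening, boundary, and quasi-Fuchsianness.} Apply \refthm{CombAsymmetric} with $m = 1$, $M_0 = \hat Q_1$, $M_1 = \hat Q_2$, and $Y_i = \Th_r(\hat Q_i)$ for $r = \max(8,K) + 1$: hypotheses (1)--(3) are immediate, (4) is vacuous, and (5) follows from (iii), since $\Th_r(\hat Q_1)$ and $\Th_r(\hat Q_2)$ can meet only near $R$, where $\hat Q_1 \cap \hat Q_2 \ne \emptyset$. Hence $\hat Y$ has a finite--volume convex thickening $\hat Y^+ = \CH(\hat Y) \subset \Th_r(\hat Q_1) \cup \Th_r(\hat Q_2)$; and by (ii) the $K$--neighbourhood of $\widetilde R$ is disjoint from all its nontrivial $\pi_1 \hat Y$--translates, giving $\Ncal_K(\hat Q_1 \cap \hat Q_2; \, \Th_\infty(\hat Y)) = \Th_K(\hat Q_1 \cap \hat Q_2)$. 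For the boundary, $\bdy \hat Y$ is assembled from $\bdy \hat Q_1 \setminus \interior R$, $\bdy \hat Q_2 \setminus \interior R$, and the part of $\bdy R$ exposed in $\hat Y$; using the products $\hat Q_i \cong \hat S_i \times I$ together with the connectivity of $\hat S_i \setminus T_i$, the ``top'' faces fit together into at most one surface and the ``bottom'' faces into at most another, so $\bdy \hat Y$ has at most two components. The rank--$1$ cusps of $\hat Y$ are exactly the cusps of $\hat Q_2$, which the gluing leaves alone, so each boundary component meets every cusp of $\hat Y$. Finally, the negative--Euler--characteristic fact from \reflem{ConvexIntersect} makes each inclusion $R \cap \bdy \hat Q_i \hookrightarrow \bdy \hat Q_i$ conjugacy injective, so a standard annulus argument along the lines of \refclaim{NoAccidental} (compare \reflem{BookNoAccidental}) shows every component $\Sigma$ of $\bdy \hat Y$ is incompressible in $\hat Y$ and that $\pi_1 \Sigma \to \pi_1 \hat Y^+$ is faithful and type--preserving; \refthm{BonahonCanary}, with $Y = \hat Y^+$, then gives that $\Sigma$ is quasi-Fuchsian.

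\textbf{Expected main obstacle.} The hard part will not be any single step but the simultaneous fulfilment of the largeness conditions (i)--(iii) on the conservative covers while keeping the piece count equal to two --- exactly the control packaged by the conservative--separability and spider--pattern theorems (\refthm{Conservative} and \cite[Theorem 2.8]{BCQFS}). A subsidiary difficulty is confirming that the ideal spider $R$, whose two product structures inside $\hat Q_1$ and $\hat Q_2$ need not coincide, still sits in the two pieces compatibly enough to be glued to a manifold with the asserted boundary; here the decisive point is that $R$ carries the same convex core from both sides, so the gluing isometry is essentially forced.
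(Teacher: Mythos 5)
Your overall strategy (conservative covers with the spider lifted, the asymmetric combination theorem to produce a convex thickening, conjugacy injectivity plus a Jaco-type annulus argument plus \refthm{BonahonCanary} for the quasi-Fuchsian boundary) matches the paper's proof closely. However, your opening step contains a genuine gap.

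You invoke \refthm{GluingRegion} (Theorem~3.4 of \cite{BCQFS}) to produce the initial amalgamated manifold $\hat Y = \hat Q_1 \cup \hat Q_2$. That theorem carries two hypotheses that cannot be met in the present setting. First, it requires compatible basepoints $q_i \in Q_i$ with common image $m = f_1(q_1) = f_2(q_2)$ lying in an excellent end of $M$, i.e.\ in a cusp of $M$; but a compact $Q_1$ is disjoint from every cusp of $M$, so no such basepoint exists. You dismiss the cusp-matching hypothesis as vacuous (correct), but the basepoint-in-a-cusp hypothesis is the one that actually blocks the application. Second, the conclusion of \refthm{GluingRegion} is that the gluing region $R$ is an ideal spider with at least two legs; as you yourself note, $\Gamma_1$ is parabolic-free, so $R$ has no legs at all. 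These are not mere technicalities: the whole mechanism of \refthm{GluingRegion} is to locate a cusp of $M$ where both $Q_i$ have rank--$1$ ends and use the transverse cusp slopes to produce the spider legs, and this mechanism has nothing to grip when one side is compact. The paper instead appeals to the virtual convex combination theorem (Theorem~5.4 of \cite{BC1}), which amalgamates along an arbitrary compact convex intersection and does not require a shared cusp, precisely so that the compact case goes through.

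If you replace the citation with the virtual convex combination theorem, the rest of your proposal is essentially the paper's argument, modulo minor points: the paper does not relabel to force $Q_1$ compact (it treats both pieces symmetrically and handles the Fuchsian case via \reflem{ConvexCollar} and \refrem{TotallyGeodesicIntersect} at the end); your separation conditions (ii) and (iii) play the role of the paper's (Y3) and the $\delta$--separation in \refcor{PrefabConvex}; and your use of the product structures $\hat Q_i \cong \hat S_i \times I$ to count boundary components is what the paper does with the $A_{\pm}$ and $B$ decomposition of $\bdy \hat Y^+$. One small caveat: the paper also explicitly shrinks $\hat Q_2$ so that the vertical boundary of $\hat N$ lies entirely in $\bdy \hat Q_2$, a normalization you would need before invoking \reflem{BookNoAccidental}.
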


\begin{proof} 
  For most of the proof, we will work under the additional hypothesis that each 
 $Q_i$ is not Fuchsian.  In this case, by Lemma \ref{Lem:ConvexCollar}, no additional generality is lost by assuming  $Q_i =\Core(Q_i)$. The Fuchsian case is explained at the end.
 
Fix elevations $\widetilde Q_i \subset \HH^3$ whose limit sets intersect as required, and define $\widetilde N=\widetilde Q_1\cap\widetilde Q_2$.
The virtual convex combination theorem \cite[Theorem 5.4]{BC1} implies that,
 after replacing each $Q_i$ by a certain finite cover,
there is  a connected, geometrically finite hyperbolic 3--manifold $Y = Q_1\cup Q_2$ 
and a local isometric immersion $g\from Y\rightarrow M$ such that $g \vert_{ Q_i} =f_i$, and moreover
$Y$ has a convex thickening. Since one of the $Q_i$ is compact, it follows that $N=Q_1\cap Q_2$ is compact and convex.
The essential intersection hypothesis implies that $Q_1$ meets both boundary components of $Q_2$, 
and vice versa.  By \refthm{Conservative},
we may assume these covers are conservative.
 
 The subgroup $\pi_1N$ is finitely generated (because $N$ is compact),  free (because it is an infinite index subgroup of a surface group), 
 and contains no parabolics (because $N$ is compact and convex). Since $N$ is irreducible, it must be a handlebody. 
 If $A$ is a path connected
 subspace of $B$, and
the map $\pi_1A\to\pi_1B$ induced by inclusion is injective, then we will regard $\pi_1A$ as a subgroup of $\pi_1B$.

Next, we pass to further conservative covers $\hat Q_i \to Q_i$ that satisfy several conditions.

\begin{claim}\label{Claim:ConservProps}
Given $K > 8$, there is a hyperbolic 3-manifold $\hat Y = \hat Q_1 \cup \hat Q_2$ such that 
 \begin{enumerate}[{\rm (Y1)}]
\item $\hat Q_i\to Q_i$ is a conservative cover,
\item $\hat N=\hat Q_1 \cap \hat Q_2$ is an isometric lift of $N$, 
\item the inclusion $\hat N \hookrightarrow \hat Q_i$ extends to an isometric embedding $\Th_K (\hat N) \hookrightarrow \Th_{\infty}(\hat Q_i)$,
\item   $\pi_1\hat N$ is conjugate in $\pi_1\hat Q_i$ to $\pi_1F_{i,\pm}$ for some connected incompressible subsurface $F_{i,\pm}\subset \bdy_{\pm}\hat Q_i$,
\item $F_{i,\pm}$  is compact,   and $\bdy_{\pm}\hat Q_i\setminus F_{i,\pm}$ is connected,
\item $\hat Y$ has a convex thickening $\Th_{\infty}(\hat Y)$.
\end{enumerate}
\end{claim}

Indeed, conclusions (Y1)--(Y5) follow by applying \refthm{Conservative}. 
Meanwhile, once we set $K > 8$, condition (Y6) follows from the convex combination theorem \cite[Theorem 1.3]{BCQFS}.

Next, we show that each component of $\bdy \hat Y$ is QF.
To do this, we will replace $ \hat Y$
by a certain thickening.
Define $\hat Q_2^+= \Th_K(\hat Q_2)$  and  $\hat N^+= \hat Q_1\cap \hat Q_2^+$ and  $\hat Y^+= \hat Q_1\cup \hat Q_2^+$.
Since $\hat Y$ and $\hat Y^+$ have a common thickening, it follows from \reflem{ConvexCollar} that they have isotopic boundaries.

By (Y5), we know that  $\bdy_{\pm} \hat Q_1\setminus F_{1,\pm}$ is connected. Set $F'_{1, \pm}= \bdy_{\pm}\hat Q_1\cap \hat Q_2^+$. As $K$ increases, the set $F'_{1, \pm}$ also increases. Thus
  we may assume that $K$ is large enough to ensure that $F_{1,\pm} \subset F'_{1,\pm}$.
As $K$ increases, distinct components of $F'_{1,\pm}$ may combine, but because 
  $\hat Q_1$ and $\hat Q_2^+$ are
  convex, new components cannot be
  created. Thus we may assume $K$ is also chosen so that $F'_{1,\pm}$
    is connected.
   Since $\hat Q_1= \core(\hat Q_1)$,  it follows from
 \reflem{ConvexIntersect} that no component of $\bdy_{\pm} \hat Q_1\setminus F_{1,\pm}'$ is a disk or annulus.
  Thus $F'_{1,\pm}$ is just $F_{1,\pm}$ with  collars added onto the boundary components. We now replace $F_{1,\pm}$ by $F_{1,\pm}'$. Since $F_{1,\pm}$ is conjugacy injective in $\bdy_{\pm} \hat Q_1$, it follows that $\hat N$ is conjugacy injective in $\hat Q_1$. The same argument, interchanging the roles of $Q_1$ and $Q_2$, shows that  $\hat N$ is
conjugacy injective in $\hat Q_2$. 

For notational simplicity, we now replace $\hat Q_2$ by $\hat Q_2^+$ and $\hat N$ by $\hat N^+$ and $\hat Y$ by $\hat Y^+$.  Since $\hat N$ is a handlebody, it is irreducible, and
  it follows from the $h$--cobordism theorem for 3--manifolds that there is a homeomorphism $h \from F\times [-1,1]\to \hat N$ with $h(F\times(\pm 1))= F_{1,\pm}$.
  The \emph{vertical boundary of $\hat N$} is $\bdy_v \hat N=h(\bdy F\times [-1,1])$. Observe that $\bdy_v \hat N \subset\bdy \hat Q_1 \cup \bdy \hat Q_2$.
  
  If $\bdy_v \hat N$ is not entirely contained in $\bdy \hat Q_2$, then
  $\bdy_v \hat N \cap \bdy \hat Q_1\neq \emptyset$ and we can isotop $\bdy \hat Q_2$, shrinking $\hat Q_2$, and keeping $\hat Q_2$ convex, 
  until $\bdy_v \hat N\subset \bdy \hat Q_2$.
 Let $X$ be a  component of $\bdy \hat Q_2\setminus\interior(\bdy_v \hat N)$. 
 Since $\hat N$ is conjugacy
 injective in $\hat Q_2$, it follows that either $\chi(X) < 0$ or $X$ is peripheral in $\bdy \hat Q_2$. But $X$ cannot be peripheral because $\bdy X \subset \hat N$ cannot be parabolic. Thus $\chi(X)<0$.

  Define $A_{\pm}=\cl(\bdy_{\pm}\hat Q_1\setminus F_{1,\pm})$, and set $P := \cl(\hat Q_1\setminus \hat N) \cong A_+\times I$.
 We wish to apply \reflem{BookNoAccidental} 
  with $Q = \hat Q_2$ and $P$ as above to deduce that  $\bdy \hat Y^+$ (and hence $\bdy\hat Y$) is incompressible and contains no accidental parabolics. However, one of 
  $P$ or $Q$ might not be compact,
 in which case the ends are rank--$1$ cusps. Since $N$ is compact, we may truncate any cusps of $P$ or $Q$
 away from $N$ and replace the manifold by a homotopy equivalent compact one before appealing to \reflem{BookNoAccidental}.
Now, it follows from \refthm{BonahonCanary} that every component of $\bdy Y$ is QF.
  
Observe that $\bdy \hat Y^+=A_+\cup A_-\cup B$, where  $B\subset\bdy \hat Q_2$. 
Each of $A_{\pm}$ is connected by (Y5), and
each component of $B$ intersects at least one component of $A_{\pm}$. Hence $\bdy \hat Y^+$ (and so $\bdy\hat Y$) 
has at most two components. Finally, if $Y$ contains cusps, we may relabel the $Q_i$ so that $Q_1$ contains cusps. 
Since $F_{1,\pm}$ is compact, each of $A_{\pm}$ meets every cusp of $\hat Y$, hence every component of  $\bdy\hat Y$ meets every cusp of $\hat Y$.

If some $Q_i$ is Fuchsian, we merely need to modify the argument that establishes the conjugacy injectivity of $\hat N$ in $\hat Q_i$. The argument stays the same up to the construction of $\hat Q_i$ in \refclaim{ConservProps}. If  $\hat J_1 = \core(\hat Q_1)$ is Fuchsian, then $\hat N = \hat Q_1 \cap \hat Q_2$ deformation retracts onto the compact Fuchsian surface
 $\hat J_1 \cap \hat Q_2 = \hat J_1 \cap \hat N$. By \refrem{TotallyGeodesicIntersect}, every component of $\hat J_1 \setminus \hat N$ has negative Euler characteristic. Thus $\hat N \cap \hat J_1$ is conjugacy injective in $\hat J_1$, implying that $\hat N$ is conjugacy injective in $\hat Q_1$. The rest of the proof is the same.
\end{proof}

   We can now complete the proof of \refthm{CuspedSurfacesOneSlope}.

\begin{proof}[Proof of \refthm{CuspedSurfacesOneSlope}] 
Let $M = \HH^3 / \Gamma$ be a cusped byperbolic $3$--manifold, and let  $\alpha$ be a slope on one cusp of $M$.  Let $\Pi_-$ and $\Pi_+$ be a pair of planes in $\HH^3$ whose distance is $4\eta >0$. Let $P^+ = P(2 \eta, r) = \neb_{2 \eta}(D_r)$ be a pancake as in \reflem{Pancake}.    Set $\epsilon = \eta$ and let $P = P(\eta, r)$ be a thinner pancake, so that $P^+ = \neb_\epsilon(P)$. Let $R = R(\epsilon)$ be the constant of \refthm{CombAsymmetric}.

By \refthm{QFslope}, there is an immersed QF surface $f_1 \from J_1 \rightarrow M$
with immersed slope $\alpha$. Let $Q_1 = Q(J_1)$.
By the work of Shah \cite{Shah:Closures} and Ratner \cite{Ratner:Topological}, there is a hyperbolic plane $\Pi' \subset \HH^3$ that strongly separates $P$,  and has an essential intersection with some elevation $\widetilde  Q_1 $. We also require that $\widetilde  Q_1$ lies very far from the pancake $P$; specifically, $d(P, \widetilde Q_1) > R$.
Observe that
 any small perturbation of $\Pi'$ also separates $P$ and has an essential intersection with the same elevation $\widetilde  Q_1$.

By \refthm{ClosedQFSurfaces}, closed quasi-Fuchsian surfaces are ubiquitous in $M$. Thus there is an
 immersed closed QF surface $f_2 \from J_2 \to M$ with an elevation that lies arbitrarily close to $\Pi'$. In particular, $Q_2 =Q(J_2)$
  has an elevation $\widetilde Q_2$ that strongly separates the pancake $P$ and has limit points in both components of $S^2_\infty \setminus \widetilde  Q_1 $.
  
   Apply \refprop{Glue2QF} with $Q_1 $ and $Q_2$ as above, and with $K = d(P, \widetilde Q_1) > R$, to obtain a  hyperbolic 3--manifold $\hat Y = \hat Q_1 \cup \hat Q_2$ whose convex thickening $\hat Y^+$ has QF boundary. By \refthm{CombAsymmetric}, the portion of $\hat Y^+ = \CH(\hat Y)$ that   is $R$--far from $\hat Q_1$ must lie $\epsilon$--close to $\hat Q_2$. Thus there is a component $\Sigma \subset \bdy \hat Y^+$ with an elevation $\widetilde \Sigma$ that lies $\epsilon$--close to $\bdy \widetilde Q_2$ on a region that includes the pancake $P$. Therefore, 
 $\widetilde \core( \Sigma)$ strongly separates $P^+ = \neb_\epsilon(P)$.

By  \refprop{Glue2QF}, the component $\Sigma$ has cusps, 
all of which are cusps of  $\bdy \hat Q_1$. Since
the cover $\hat Q_1 \to Q_1$ used to construct $\hat Y$ is conservative,  all cusps of $\Sigma$ map to the same multiple of $\alpha$.
   \end{proof}

\section{Cubulating the fundamental group}\label{Sec:Cubulation}

In this section, we explain how \refcor{Cubulation} follows from the preceding theorems and \cite{BergeronWise, Hruska-Wise:RelativeCocompact}. We begin by briefly reviewing  the terminology associated to cube complexes and the groups that act on them. The references \cite{Hruska-Wise:RelativeCocompact, Sageev:survey, Wise:Riches2Raags} give an excellent and detailed description of this material.

For $0 \leq n < \infty$, an \emph{$n$--cube} is $[-1, 1]^n$. A \emph{cube complex} is the union of a number of cubes, possibly of different dimensions, glued by isometry along their faces. A cube complex $X$ is called  \emph{CAT(0)} if it is simply connected, and if the link of every vertex is a flag simplicial complex. By a theorem of Gromov, this combinatorial definition is equivalent to the CAT(0) inequality for geodesic triangles.

A \emph{midcube} of an $n$--cube is the $(n-1)$ cube obtained by restricting one coordinate to $0$. A \emph{hyperplane} $H \subset \widetilde X$ is a connected union of midcubes, with the property that $H$ intersects every cube of $\widetilde X$ in a midcube or in the empty set. Hyperplanes in a CAT(0) cube complex $\widetilde X$ are embedded and two-sided, hence they can be used to inductively cut $\widetilde X$ (and its quotients) into smaller pieces. This endows cube complexes and the groups that act on them with a hierarchical structure. See Wise \cite{Wise:Riches2Raags}, where this philosophy is extensively fleshed out.

Suppose that a group $G$ acts freely (that is, without fixed points) on a CAT(0) cube complex $\widetilde X$. 
Then the quotient $X = \widetilde X / G$ is a \emph{non-positively curved} cube complex.
The quotients of hyperplanes in $\widetilde X$ are \emph{immersed hyperplanes} in $X$.
If the immersed hyperplanes of $X$ are embedded and two-sided and avoid two other pathologies (see \cite[Definition 4.2]{Wise:Riches2Raags}), $X$ and $G$ are called \emph{special}. By a theorem of Haglund and Wise, $G$ is special if and only if it embeds into a right-angled Artin group \cite{Haglund-Wise:Special}.

Following Sageev \cite{Sageev:EndsOfGroups}, group actions on cube complexes can be constructed in the following way. Suppose that $G = \pi_1 Y$, where $Y$ is a compact cell complex. 
Then $G$ acts by deck transformations on $\widetilde Y$, and is quasi-isometric to $\widetilde Y$. 
Suppose that $H_1, \ldots, H_k$ are \emph{codimension--$1$} subgroups of $G$, meaning that some metric thickening of an orbit $(H_i) y$ separates $\widetilde Y$ into non-compact components. 
Sageev uses this data to build a $G$--action on a \emph{dual cube complex} $\widetilde X$, whose hyperplanes are in bijective correspondence with cosets of the $H_i$. See Hruska and Wise \cite{Hruska-Wise:RelativeCocompact} or Sageev \cite{Sageev:survey} for detailed, self-contained expositions.

In our application, $Y$  is the compact part of a hyperbolic $3$--manifold $M$. The codimension--$1$ subgroups of $G = \pi_1 Y = \pi_1 M$  are QF surface groups. The quasi-isometry $G \to \widetilde Y$ identifies the cosets of $\pi_1 S $ with the elevations of $S$ in $\widetilde Y$. These elevations give rise to hyperplanes in $\widetilde X$.


The following is a special case of a theorem of Bergeron and Wise \cite[Theorem 5.1]{BergeronWise}.

\begin{theorem}\label{Thm:Proper}
Let $M = \HH^3/\Gamma$ be a cusped hyperbolic $3$--manifold. Suppose that $M$ contains an ubiquitous collection $\Scal$ of quasi-Fuchsian surfaces, with the property that for every cusp $V \subset M$, the cusps of surfaces in $\Scal$ map to at least two distinct immersed slopes in $V$. Then there are finitely many surfaces $S_1, \ldots, S_k \in \Scal$ such that $\Gamma = \pi_1 M$ acts freely on the CAT(0) cube complex $\widetilde X$ dual to all elevations of $S_1, \ldots, S_k$ to $\HH^3$.
\end{theorem}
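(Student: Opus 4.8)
The plan is to check that the collection $\Scal$ satisfies the hypotheses of the cubulation criterion of Bergeron and Wise \cite[Theorem 5.1]{BergeronWise}. That criterion applies to a relatively hyperbolic group $G$ --- here $G = \Gamma = \pi_1 M$, relatively hyperbolic with respect to its cusp subgroups --- equipped with a $G$--invariant family of \emph{walls}. In our situation the walls are the elevations to $\HH^3$ of the surfaces in $\Scal$, together with their translates, and the associated $G$--action on the dual CAT(0) cube complex is the Sageev construction \cite{Sageev:EndsOfGroups}. The hypotheses to verify are: (a) each wall stabilizer is a relatively quasiconvex, codimension--one subgroup of $\Gamma$; and (b) the walls \emph{fill}, meaning that any two distinct points of the Bowditch boundary of $\Gamma$ are separated by the limit set of some wall. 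Given these, \cite[Theorem 5.1]{BergeronWise} produces finitely many $S_1,\dots,S_k\in\Scal$ whose elevations give a free $\Gamma$--action on the dual cube complex, which is exactly the conclusion.

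For (a): each $S\in\Scal$ is quasi-Fuchsian, so $\pi_1 S$ is geometrically finite in $\Gamma$ and hence relatively quasiconvex, and an elevation $\widetilde S\subset\HH^3$ is a properly embedded plane whose limit set $\Lambda(\pi_1 S)$ is a Jordan curve; it separates $S^2_\infty$ into two disks, so $\widetilde S$ separates $\HH^3$ and $\pi_1 S$ is codimension one in the absolute sense. The extra hypothesis that, in every cusp $V\subset M$, the surfaces of $\Scal$ realize two distinct immersed slopes is precisely what upgrades this to codimension one \emph{relative} to the peripheral structure: in the cusped space model for $\Gamma$, a surface with cusp slope $\alpha$ cuts the region over $V$ into two non-compact halves, because a complementary slope $\beta$ is present (compare the discussion in \cite{Hruska-Wise:RelativeCocompact}). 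Thus $\{\pi_1 S : S\in\Scal\}$ is a family of relatively quasiconvex codimension--one subgroups of $\Gamma$.

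For (b): let $a\neq b$ be points of $S^2_\infty$ and let $\gamma$ be the bi-infinite geodesic joining them. Pick totally geodesic disks $D,D'$ perpendicular to $\gamma$ at two interior points; the planes $\Pi\supset D$ and $\Pi'\supset D'$ are disjoint and non-asymptotic, so by ubiquity some $S\in\Scal$ has an embedded elevation $\widetilde S$ that separates $\Pi$ from $\Pi'$. Since $\widetilde S$ is connected and separates $\HH^3$, with $\gamma$ meeting both sides, one checks that the two complementary disks of $\Lambda(\pi_1 S)$ contain $a$ and $b$ respectively; when $a$ or $b$ is a parabolic fixed point one instead separates a deep horospherical piece in the corresponding cusp from a bounded piece, and the two-slope hypothesis again supplies such an $S$. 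This is the filling input required by \cite[Theorem 5.1]{BergeronWise}, so that theorem applies and yields the desired $S_1,\dots,S_k$ and free action.

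The substantive content is internal to \cite[Theorem 5.1]{BergeronWise}: the passage from ``every pair of disjoint convex pieces is separated by some wall'' to ``finitely many walls already suffice for a free action'' is an Arzel\`a--Ascoli compactness argument on the cusped space of $M$, using cocompactness of the $\Gamma$--action. On our side the only delicate point is to match the geometric notion of ubiquity with the \emph{relative} forms of ``codimension one'' and ``filling'' appearing in that theorem, and this is precisely where the assumption of two distinct immersed slopes per cusp is indispensable.
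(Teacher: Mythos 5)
The paper gives no proof of this statement at all: it simply declares it to be ``a special case of a theorem of Bergeron and Wise \cite[Theorem 5.1]{BergeronWise}'' and moves on. Your proposal therefore takes essentially the same route as the paper, namely verify the hypotheses of that theorem; the boundary-separation step via ubiquity and the identification of the wall stabilizers as relatively quasiconvex codimension-one subgroups are both the right ingredients.

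That said, your account of where the two-slope hypothesis is used is off in two places. First, a single cusp slope $\alpha$ already makes $\pi_1 S$ codimension one relative to the peripheral structure: an elevation of a cusp of $S$ at slope $\alpha$ already separates the combinatorial horoball over $V$ into two non-compact pieces, with no need for a complementary slope $\beta$, so this is not what the hypothesis buys. Second, the boundary separation criterion is supplied by ubiquity alone, even at parabolic fixed points: given distinct $a,b \in S^2_\infty$, place two planes perpendicular to the geodesic $[a,b]$ at interior points and apply ubiquity; the separating elevation then separates $a$ from $b$ in the boundary, whether or not either is parabolic. What the two-slope hypothesis actually guards against is a failure of \emph{freeness}: if every surface of $\Scal$ met the cusp $V$ in the same slope $\alpha$, then the walls deep inside $V$ would all run parallel to $\alpha$, so a parabolic translation $\beta^k$ in the transverse direction would cross only boundedly many walls and would therefore fix a principal ultrafilter deep in the cusp. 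This is the degenerate situation that the ``at least two distinct slopes per cusp'' hypothesis rules out, and it is the input needed for Bergeron--Wise's criterion to conclude a free action. Since the paper provides no proof, this is a matter of correctly reading off the hypotheses of the cited theorem rather than a gap in the present paper's argument, but you should fix the attribution so the appeal to \cite{BergeronWise} is accurate.
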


Combining \refthm{Proper} with \refthm{CuspedSurfacesManySlopes} gives

\begin{corollary}\label{Cor:WeakCubulation}
Let $M = \HH^3 /\Gamma $ be a cusped hyperbolic $3$--manifold. Let $\Scal$ be the collection of surfaces guaranteed by \refthm{CuspedSurfacesManySlopes}. 
Then $\Gamma = \pi_1 M$ acts freely on a finite dimensional CAT(0) cube complex $\widetilde X$, with finitely many $\Gamma$--orbits of hyperplanes. Each immersed hyperplane in $X = \widetilde X/ \Gamma$ corresponds to an  immersed, cusped quasi-Fuchsian surface in $\Scal$.
\end{corollary}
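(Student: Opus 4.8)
The proof is a direct application of \refthm{Proper} to the collection of surfaces supplied by \refthm{CuspedSurfacesManySlopes}. First, for each of the finitely many cusps $V \subset M$, I would choose two distinct slopes $\alpha(V) \neq \beta(V)$ on $\bdy V$. Enumerating all of these chosen slopes as $\alpha_1, \ldots, \alpha_n$, I apply \refthm{CuspedSurfacesManySlopes} to this list: it produces integers $k_i > 0$ and a ubiquitous set $\Scal$ of cusped QF surfaces immersed in $M$ such that, for each $i$, every surface of $\Scal$ has at least one cusp mapping to the homology class $k_i \alpha_i$. Since each $\alpha(V)$ and each $\beta(V)$ is primitive, a cusp of such a surface mapping to a nonzero multiple of $\alpha(V)$ (resp.\ $\beta(V)$) has immersed slope $\alpha(V)$ (resp.\ $\beta(V)$). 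Hence for every cusp $V$ the cusps of surfaces in $\Scal$ realize at least the two distinct immersed slopes $\alpha(V), \beta(V)$ in $V$, so $\Scal$ is a ubiquitous collection of QF surfaces satisfying the hypotheses of \refthm{Proper}.

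Invoking \refthm{Proper}, there are finitely many surfaces $S_1, \ldots, S_k \in \Scal$ such that $\Gamma = \pi_1 M$ acts freely on the CAT(0) cube complex $\widetilde X$ dual to all elevations of $S_1, \ldots, S_k$ to $\HH^3$. The walls used in Sageev's construction of $\widetilde X$ are precisely these elevations, and the elevations of a fixed $S_i$ form a single $\Gamma$--orbit, namely the $\Gamma$--translates of the wall whose stabilizer is a conjugate of the image of $\pi_1 S_i$ in $\Gamma$. Consequently $\widetilde X$ has at most $k$ orbits of hyperplanes, in particular finitely many, and the $i$-th orbit descends to an immersed hyperplane in $X = \widetilde X / \Gamma$ corresponding to the immersed, cusped QF surface $S_i \in \Scal$.

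It remains only to observe that $\widetilde X$ is finite dimensional, which is the one point not contained verbatim in \refthm{Proper}. The dimension of a Sageev dual cube complex equals the largest size of a family of pairwise--crossing walls, so one needs a bound on the number of pairwise--intersecting elevations among $S_1, \ldots, S_k$. Such a bound holds because $\Gamma$ is hyperbolic relative to its cusp subgroups and each QF subgroup $\pi_1 S_i$ is relatively quasiconvex (being geometrically finite), so the collection $\{\pi_1 S_1, \ldots, \pi_1 S_k\}$ has finite relative width in the sense of Hruska and Wise \cite{Hruska-Wise:RelativeCocompact}; this forces $\dim \widetilde X < \infty$. I expect this last finite--dimensionality bookkeeping to be the only step requiring any real care; the remainder is a mechanical concatenation of \refthm{CuspedSurfacesManySlopes} with \refthm{Proper}.
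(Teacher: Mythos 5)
Your proposal is correct and follows the same route the paper takes: the paper states \refcor{WeakCubulation} as an immediate combination of \refthm{Proper} with \refthm{CuspedSurfacesManySlopes}, and your write-up simply fills in the two routine verifications the paper leaves implicit (that choosing two slopes per cusp realizes at least two distinct immersed slopes in each cusp, so the hypothesis of \refthm{Proper} is met, and that finite width of the relatively quasiconvex wall collection yields finite dimensionality of the dual cube complex).
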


We emphasize that in \refcor{WeakCubulation},  the quotient $X = \widetilde X / \Gamma$ need not be compact. However, this quotient has finitely many immersed hyperplanes. Such a cubulation of $\Gamma$ is called \emph{co-sparse}. While weaker than the cocompact cubulation described below, a co-sparse cubulation can frequently be promoted to be virtually special. See \cite[Proposition 3.3]{BergeronWise}  and \cite[Theorem 15.10]{Wise:Hierarchy}. 

Stronger hypotheses on $\Scal$ guarantee that the action on $\widetilde X$ is cocompact. The following result is implicit in Hruska and Wise  \cite[Theorem 1.1]{Hruska-Wise:RelativeCocompact}.

\begin{theorem}\label{Thm:ProperCocompact}
Let $M = \HH^3/\Gamma$ be a cusped hyperbolic $3$--manifold. Suppose that $M$ contains an ubiquitous collection $\Scal$ of quasi-Fuchsian surfaces, with the property that for every cusp $V \subset M$, the cusps of surfaces in $\Scal$ map to exactly two distinct  immersed slopes $\alpha(V)$ and $\beta(V)$. Then there are finitely many surfaces $S_1, \ldots, S_k \in \Scal$ such that $\Gamma = \pi_1 M$ acts freely and cocompactly on the CAT(0) cube complex $\widetilde X$ dual to all elevations of $S_1, \ldots, S_k$ to $\HH^3$.
\end{theorem}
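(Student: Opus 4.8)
The plan is to deduce \refthm{ProperCocompact} from the relative cubulation criterion of Hruska and Wise \cite{Hruska-Wise:RelativeCocompact}, so essentially all of the work is in checking its hypotheses; the one input beyond \refthm{Proper} is that the two-slope condition forces each cusp subgroup to act cocompactly on its portion of the dual complex. First I would set up the relatively hyperbolic structure: $\Gamma = \pi_1 M$ is hyperbolic relative to the collection $\mathcal{P} = \{ P_V \}$, where $P_V \cong \ZZ^2$ is the stabilizer of the horoball over a cusp $V \subset M$. Each immersed QF surface $S \in \Scal$ has geometrically finite fundamental group, so $\pi_1 S \subset \Gamma$ is a relatively quasiconvex codimension--$1$ subgroup whose elevations to $\HH^3$ are quasiconvex walls; moreover $\pi_1 S \cap P_V$ is either trivial or infinite cyclic, generated by a multiple of one of the immersed boundary slopes of $S$ in $V$.

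Next I would extract the finite sub-collection. Applying \refthm{Proper} to the ubiquitous family $\Scal$ produces finitely many surfaces whose elevations give a free (and, as in \refcor{WeakCubulation}, finite-dimensional) action of $\Gamma$ on the dual cube complex, and ubiquity guarantees this action is proper. I would then enlarge this finite list, if necessary, so that for every cusp $V$ both immersed slopes $\alpha(V)$ and $\beta(V)$ are realized by cusps of surfaces on the list; this is possible since by hypothesis the cusps of surfaces in $\Scal$ realize precisely those two slopes, and adding finitely many walls preserves freeness of the action (a fixed vertex of the larger dual complex restricts equivariantly to a fixed vertex of the smaller one). Call the resulting list $S_1, \ldots, S_k$ and let $\widetilde X$ be the CAT(0) cube complex dual to all their elevations.

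The main step is the analysis of a peripheral subgroup $P_V \cong \ZZ^2$. The elevations of $S_1, \ldots, S_k$ that meet the horoball fixed by $P_V$ intersect the corresponding horotorus in curves parallel to multiples of $\alpha(V)$ or of $\beta(V)$, so the walls they determine, restricted to that horoball, form a $P_V$--invariant wallspace in $\RR^2$ whose walls run in two transverse directions, with finitely many $P_V$--orbits in each direction. The dual of such a wallspace is quasi-isometric to a product of two simplicial trees, on which $\ZZ^2 = P_V$ acts cocompactly; equivalently, a combinatorially convex $P_V$--invariant subcomplex of $\widetilde X$ is $P_V$--cocompact. This is exactly where the hypothesis of \emph{exactly two} slopes is used: three or more transverse families of walls in the cusp would produce a subcomplex of dimension greater than $2$ on which $\ZZ^2$ cannot act cocompactly, and cocompactness of the action on $\widetilde X$ near $V$ would fail. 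Finally I would assemble: with $\Gamma$ hyperbolic relative to $\mathcal{P}$, a $\Gamma$--finite collection of relatively quasiconvex walls that is "full" in the sense needed for properness (from \refthm{Proper}), and each $P \in \mathcal{P}$ acting cocompactly on a convex subcomplex as above, \cite[Theorem 1.1]{Hruska-Wise:RelativeCocompact} yields a free and cocompact action of $\Gamma$ on $\widetilde X$, with immersed hyperplanes of $X = \widetilde X / \Gamma$ corresponding to $S_1, \ldots, S_k$.

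The hard part will be the third step: reconciling the abstract wallspace picture in the cusp with the combinatorial geometry of hyperplanes in $\widetilde X$, and verifying carefully that the two-slope condition gives $P_V$--cocompactness of the relevant convex subcomplex in the form demanded by \cite[Theorem 1.1]{Hruska-Wise:RelativeCocompact} — which is precisely the content that is "implicit" in that theorem and must be made explicit here.
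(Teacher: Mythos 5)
Your proposal is correct and follows essentially the same route as the paper: apply \refthm{Proper} (Bergeron--Wise) to obtain a free action on the dual cube complex, then invoke Hruska--Wise \cite[Theorem 1.1]{Hruska-Wise:RelativeCocompact} to reduce cocompactness to an analysis of the $\ZZ^2$--cusp pieces, where the two-slope hypothesis makes the cusp wallspace dual to the standard grid cubulation of $\RR^2$, on which $\ZZ^2$ acts cocompactly. Two minor observations: your intermediate step of enlarging the finite list so that both slopes are realized at each cusp is actually automatic, since Bergeron--Wise properness already forces walls in two transverse directions in every cusp; and the paper packages the appeal to Hruska--Wise slightly differently, treating relative cocompactness as the \emph{conclusion} of their theorem and then directly verifying compactness of each cusp piece $C_j = \widetilde{C_j}/\ZZ^2$, rather than presenting $P_V$--cocompactness as a hypothesis to be fed back in --- but this is the same verification either way.
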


\begin{proof}
Let $S_1, \ldots, S_k$ be the finite collection of surfaces guaranteed by \refthm{Proper}. Then $\Gamma$ acts freely on the cube complex $\widetilde X$ dual to these surfaces. 
Let $V_1, \ldots, V_n$ be the horocusps of $M$.

Hruska and Wise  \cite[Theorem 1.1]{Hruska-Wise:RelativeCocompact} show that the $\Gamma$--action on the dual cube complex $\widetilde X$ is \emph{relatively cocompact}. 
This means that the quotient $X = \widetilde X/\Gamma$ decomposes into a compact cube complex $K$ and cube complexes $C_1, \ldots, C_n$ corresponding to the cusps $V_1, \ldots, V_n$. Each $C_j$ is the quotient under $\pi_1(V_j) \cong \ZZ^2$ of a cube complex $\widetilde{C_j} \subset \widetilde X$. Furthermore, $\widetilde{C_j} \subset \widetilde X$ is exactly the dual cube complex constructed from the parabolic subgroups $\pi_1(S_i) \cap \pi_1(V_j)$ acting on $\widetilde{V_j}$.

In the case at hand, every peripheral group $\pi_1(V_j) \cong \ZZ^2$ can intersect a conjugate of  $\pi_1(S_i)$ in one of two parabolic subgroups, namely $\langle \alpha(V_j) \rangle$ or $\langle \beta(V_j) \rangle$. The slopes $\alpha(V_j)$ and $\beta(V_j)$ are distinct. Thus $\widetilde{C_j}$ is quasi-isometric to the standard cubulation of $\RR^2$, constructed from two families of parallel lines in the plane. See \cite[Figure 6.4]{Wise:Riches2Raags}. It follows that every $C_j = \widetilde{C_j} / \ZZ^2$ is a quasi-isometric to a compact torus, hence  $X = \widetilde X/\Gamma$ is compact as well.
\end{proof}

\begin{proof}[Proof of \refcor{Cubulation}]
%
Let $V_1, \ldots, V_n$ be the cusps of $M$. For every $V_j$, choose two distinct slopes $\alpha(V_j)$ and $\beta(V_j)$. For each $V_j$, let $S_{\alpha,j}$ and $S_{\beta,j}$ be two quasi-Fuchsian surfaces produced by \refthm{QFslope}, whose immersed boundary slopes are $\alpha(V_j)$ and $\beta(V_j)$ respectively. Let $\Scal$ be the (infinite) collection of QF surfaces consisting of $\{ S_{\alpha,1}, \ldots, S_{\alpha,n}, \, S_{\beta,1}, \ldots, S_{\beta,n} \}$ and the ubiquitous collection of closed QF surfaces guaranteed by \refthm{ClosedQFSurfaces}. 
    In particular, every $S \in \Scal$ is either closed or has all cusps mapping to exactly one multiple of  $\alpha(V_j)$ or $\beta(V_j)$, for one $V_j$.

Now, \refthm{ProperCocompact} applied to $\Scal$ gives a free and cocompact action on a dual cube complex $\widetilde X$.
\end{proof}

 \small
 
\bibliography{refs.bib} 

\providecommand{\bysame}{\leavevmode\hbox to3em{\hrulefill}\thinspace}
\providecommand{\MR}{\relax\ifhmode\unskip\space\fi MR }
\providecommand{\MRhref}[2]{%
  \href{http://www.ams.org/mathscinet-getitem?mr=#1}{#2}
}
\providecommand{\href}[2]{#2}
\begin{thebibliography}{10}

\bibitem{agol:fibering-criterion}
Ian Agol, \emph{Criteria for virtual fibering}, J. Topol. \textbf{1} (2008),
  no.~2, 269--284. \MR{2399130 (2009b:57033)}

\bibitem{Agol}
\bysame, \emph{The virtual {H}aken conjecture}, Doc. Math. \textbf{18} (2013),
  1045--1087, , With an appendix by Agol, Daniel Groves, and Jason Manning.
  \MR{3104553}

\bibitem{Ahlfors:Finiteness}
Lars~V. Ahlfors, \emph{Finitely generated {K}leinian groups}, Amer. J. Math.
  \textbf{86} (1964), 413--429. \MR{0167618}

\bibitem{BAKER}
Mark~D. Baker, \emph{On boundary slopes of immersed incompressible surfaces},
  Ann. Inst. Fourier (Grenoble) \textbf{46} (1996), no.~5, 1443--1449.
  \MR{1427132 (98a:57023)}

\bibitem{BC0}
Mark~D. Baker and Daryl Cooper, \emph{Immersed, virtually-embedded, boundary
  slopes}, Topology Appl. \textbf{102} (2000), no.~3, 239--252. \MR{1745446
  (2000m:57006)}

\bibitem{BC1}
\bysame, \emph{A combination theorem for convex hyperbolic manifolds, with
  applications to surfaces in 3-manifolds}, J. Topol. \textbf{1} (2008), no.~3,
  603--642. \MR{2417445 (2009i:57035)}

\bibitem{BC2}
\bysame, \emph{Conservative subgroup separability for surfaces with boundary},
  Algebr. Geom. Topol. \textbf{13} (2013), no.~1, 115--125. \MR{3031638}

\bibitem{BCQFS}
\bysame, \emph{Finite-volume hyperbolic 3-manifolds contain immersed
  quasi-{F}uchsian surfaces}, Algebr. Geom. Topol. \textbf{15} (2015), no.~2,
  1199--1228. \MR{3342690}

\bibitem{BergeronWise}
Nicolas Bergeron and Daniel~T. Wise, \emph{A boundary criterion for
  cubulation}, Amer. J. Math. \textbf{134} (2012), no.~3, 843--859.
  \MR{2931226}

\bibitem{Bers:AreaBound}
Lipman Bers, \emph{Inequalities for finitely generated {K}leinian groups}, J.
  Analyse Math. \textbf{18} (1967), 23--41. \MR{0229817}

\bibitem{Bonahon:Bouts}
Francis Bonahon, \emph{Bouts des vari\'et\'es hyperboliques de dimension
  {$3$}}, Ann. of Math. (2) \textbf{124} (1986), no.~1, 71--158. \MR{847953}

\bibitem{brock-bromberg:density}
Jeffrey~F. Brock and Kenneth~W. Bromberg, \emph{On the density of geometrically
  finite {K}leinian groups}, Acta Math. \textbf{192} (2004), no.~1, 33--93.
  \MR{2079598}

\bibitem{brock-bromberg:erratum}
\bysame, \emph{Correction to ``{O}n the density of geometrically finite
  {K}leinian groups''}, Acta Math. \textbf{219} (2017), no.~1, 17--19.
  \MR{3765657}

\bibitem{Canary}
Richard~D. Canary, \emph{A covering theorem for hyperbolic {$3$}-manifolds and
  its applications}, Topology \textbf{35} (1996), no.~3, 751--778. \MR{1396777}

\bibitem{CHOIGS}
Suhyoung Choi, \emph{Geometric structures on orbifolds and holonomy
  representations}, Geom. Dedicata \textbf{104} (2004), 161--199. \MR{2043960
  (2005b:57031)}

\bibitem{SSSSS}
Daryl Cooper and Darren~D. Long, \emph{Some surface subgroups survive surgery},
  Geom. Topol. \textbf{5} (2001), 347--367 (electronic). \MR{1825666
  (2002g:57031)}

\bibitem{CooperLongReid}
Daryl Cooper, Darren~D. Long, and Alan~W. Reid, \emph{Essential closed surfaces
  in bounded {$3$}-manifolds}, J. Amer. Math. Soc. \textbf{10} (1997), no.~3,
  553--563. \MR{1431827}

\bibitem{CLT3}
Daryl Cooper, Darren~D. Long, and Stephan Tillmann, \emph{Deforming convex
  projective manifolds}, Geom. Topol. \textbf{22} (2018), no.~3, 1349--1404.
  \MR{3780436}

\bibitem{CS1}
Marc Culler and Peter~B. Shalen, \emph{Varieties of group representations and
  splittings of {$3$}-manifolds}, Ann. of Math. (2) \textbf{117} (1983), no.~1,
  109--146. \MR{683804 (84k:57005)}

\bibitem{Cooperfest}
Kelly Delp, Diane Hoffoss, and Jason~Fox Manning, \emph{Problems in groups,
  geometry, and three-manifolds}, arXiv:1512.04620, 2015.

\bibitem{FPS:EffectiveBilipschitz}
David Futer, Jessica~S. Purcell, and Saul Schleimer, \emph{Effective
  bilipschitz bounds on drilling and filling}, In preparation.

\bibitem{GOLD}
William~M. Goldman, \emph{Geometric structures on manifolds and varieties of
  representations}, Geometry of group representations ({B}oulder, {CO}, 1987),
  Contemp. Math., vol.~74, Amer. Math. Soc., Providence, RI, 1988,
  pp.~169--198. \MR{957518 (90i:57024)}

\bibitem{GrovesManning:Quasiconvexity}
Daniel Groves and Jason~Fox Manning, \emph{Quasiconvexity and {D}ehn filling},
  arXiv:1708.07968, 2017.

\bibitem{Haglund-Wise:Special}
Fr\'ed\'eric Haglund and Daniel~T. Wise, \emph{Special cube complexes}, Geom.
  Funct. Anal. \textbf{17} (2008), no.~5, 1551--1620. \MR{2377497}

\bibitem{Haglund-Wise:Coxeter}
\bysame, \emph{Coxeter groups are virtually special}, Adv. Math. \textbf{224}
  (2010), no.~5, 1890--1903. \MR{2646113}

\bibitem{HRW}
Joel Hass, J.~Hyam Rubinstein, and Shicheng Wang, \emph{Boundary slopes of
  immersed surfaces in 3-manifolds}, J. Differential Geom. \textbf{52} (1999),
  no.~2, 303--325. \MR{1758298 (2001h:57023)}

\bibitem{HATCHER}
Allen~E. Hatcher, \emph{On the boundary curves of incompressible surfaces},
  Pacific J. Math. \textbf{99} (1982), no.~2, 373--377. \MR{658066 (83h:57016)}

\bibitem{hk:univ}
Craig~D. Hodgson and Steven~P. Kerckhoff, \emph{Universal bounds for hyperbolic
  {D}ehn surgery}, Ann. of Math. (2) \textbf{162} (2005), no.~1, 367--421.
  \MR{2178964 (2006g:57031)}

\bibitem{hk:shape}
\bysame, \emph{The shape of hyperbolic {D}ehn surgery space}, Geom. Topol.
  \textbf{12} (2008), no.~2, 1033--1090. \MR{2403805 (2010b:57021)}

\bibitem{Hruska-Wise:RelativeCocompact}
G.~Christopher Hruska and Daniel~T. Wise, \emph{Finiteness properties of
  cubulated groups}, Compos. Math. \textbf{150} (2014), no.~3, 453--506.
  \MR{3187627}

\bibitem{Jaco}
William Jaco, \emph{Lectures on three-manifold topology}, CBMS Regional
  Conference Series in Mathematics, vol.~43, American Mathematical Society,
  Providence, R.I., 1980. \MR{565450 (81k:57009)}

\bibitem{KahnM}
Jeremy Kahn and Vladimir Markovic, \emph{Immersing almost geodesic surfaces in
  a closed hyperbolic three manifold}, Ann. of Math. (2) \textbf{175} (2012),
  no.~3, 1127--1190. \MR{2912704}

\bibitem{KahnMarkovic:Ehrenpreis}
\bysame, \emph{The good pants homology and the {E}hrenpreis {C}onjecture}, Ann.
  of Math. (2) \textbf{182} (2015), no.~1, 1--72. \MR{3374956}

\bibitem{KahnWright}
Jeremy Kahn and Alex Wright, \emph{Nearly {F}uchsian surface subgroups of
  finite covolume {K}leinian groups}, Preprint.

\bibitem{magid:deformation}
Aaron~D. Magid, \emph{Deformation spaces of {K}leinian surface groups are not
  locally connected}, Geom. Topol. \textbf{16} (2012), no.~3, 1247--1320.
  \MR{2967052}

\bibitem{MAHER}
Joseph Maher, \emph{Virtually embedded boundary slopes}, Topology Appl.
  \textbf{95} (1999), no.~1, 63--74. \MR{1691932 (2000d:57003)}

\bibitem{MZ1}
Joseph~D. Masters and Xingru Zhang, \emph{Closed quasi-{F}uchsian surfaces in
  hyperbolic knot complements}, Geom. Topol. \textbf{12} (2008), no.~4,
  2095--2171. \MR{2431017 (2009g:57029)}

\bibitem{MZ2}
\bysame, \emph{{Quasi-Fuchsian surfaces in hyperbolic link complements}},
  arXiv:0909.4501, 2009.

\bibitem{Minasyan:GFERF}
Ashot Minasyan, \emph{Separable subsets of {GFERF} negatively curved groups},
  J. Algebra \textbf{304} (2006), no.~2, 1090--1100. \MR{2264291}

\bibitem{OERTEL}
Ulrich Oertel, \emph{Boundaries of {$\pi_1$}-injective surfaces}, Topology
  Appl. \textbf{78} (1997), no.~3, 215--234. \MR{1454601 (98f:57029)}

\bibitem{Papadopoulos}
Athanase Papadopoulos, \emph{Metric spaces, convexity and nonpositive
  curvature}, IRMA Lectures in Mathematics and Theoretical Physics, vol.~6,
  European Mathematical Society (EMS), Z\"urich, 2005. \MR{2132506}

\bibitem{Przytycki-Wise:mixed-3manifolds}
Piotr Przytycki and Daniel~T. Wise, \emph{Mixed 3-manifolds are virtually
  special}, J. Amer. Math. Soc. \textbf{31} (2018), no.~2, 319--347.
  \MR{3758147}

\bibitem{Ratner:Topological}
Marina Ratner, \emph{Raghunathan's topological conjecture and distributions of
  unipotent flows}, Duke Math. J. \textbf{63} (1991), no.~1, 235--280.
  \MR{1106945}

\bibitem{Sageev:EndsOfGroups}
Michah Sageev, \emph{Ends of group pairs and non-positively curved cube
  complexes}, Proc. London Math. Soc. (3) \textbf{71} (1995), no.~3, 585--617.
  \MR{1347406}

\bibitem{Sageev:survey}
\bysame, \emph{{$\rm CAT(0)$} cube complexes and groups}, Geometric group
  theory, IAS/Park City Math. Ser., vol.~21, Amer. Math. Soc., Providence, RI,
  2014, pp.~7--54. \MR{3329724}

\bibitem{Scott:LERF}
Peter Scott, \emph{Subgroups of surface groups are almost geometric}, J. London
  Math. Soc. (2) \textbf{17} (1978), no.~3, 555--565. \MR{0494062}

\bibitem{Shah:Closures}
Nimish~A. Shah, \emph{Closures of totally geodesic immersions in manifolds of
  constant negative curvature}, Group theory from a geometrical viewpoint
  ({T}rieste, 1990), World Sci. Publ., River Edge, NJ, 1991, pp.~718--732.
  \MR{1170382}

\bibitem{WPT}
William~P. Thurston, \emph{The geometry and topology of three-manifolds}, 1979.

\bibitem{Tidmore:cocompact}
Joseph Tidmore, \emph{Cocompact cubulations of mixed 3-manifolds}, Groups Geom.
  Dyn. (to appear), , arXiv:1612.06272.

\bibitem{waldhausen:suff-large}
Friedhelm Waldhausen, \emph{On irreducible {$3$}-manifolds which are
  sufficiently large}, Ann. of Math. (2) \textbf{87} (1968), 56--88.
  \MR{0224099}

\bibitem{Wise:Riches2Raags}
Daniel~T. Wise, \emph{From riches to raags: 3-manifolds, right-angled {A}rtin
  groups, and cubical geometry}, CBMS Regional Conference Series in
  Mathematics, vol. 117, Published for the Conference Board of the Mathematical
  Sciences, Washington, DC, 2012. \MR{2986461}

\bibitem{Wise:Hierarchy}
\bysame, \emph{The structure of groups with a quasiconvex hierarchy}, Ann. of
  Math. Stud., to appear.

\bibitem{ZHANG}
Qiang Zhang, \emph{Boundary slopes of immersed surfaces in {H}aken manifolds},
  J. Knot Theory Ramifications \textbf{18} (2009), no.~5, 591--603. \MR{2527678
  (2010j:57030)}

\end{thebibliography}

\bibliographystyle{amsplain}

\end{document}